%rgg-01-10-2010  Colin  emailed to Tobias 01-01-2010
%changes to section 1.3 (subsec.explicit), still not sure about best names!
%If we go with these, we obviously need to change later uses of the phrase `scaling function'
%also a few marked changes in section 1.4 (subsec.chif)
%I have made no changes outside these two sections, to keep edits under control.

%rgg-17-08-10.tex   4:40pm  emailed to Tobias
%12-08-10 changed defn of random geometric graph
%On the chromatic number of random geometric graphs
%joint with Tobias
\documentclass[12pt]{article}
%%%%%%%%%%%%%%%%%%%%%%%%%%%%%%%%%%%%%%%%%%%
%%\usepackage{showkeys}
%\usepackage[notref,notcite]{showkeys}
\usepackage{amssymb}
\usepackage{amsfonts}
\usepackage{epsf}
\usepackage{latexsym}
\usepackage{amscd}
\usepackage{eucal}
\usepackage{a4wide}
\usepackage{makeidx}
\usepackage[dvips]{graphicx}
\usepackage[english]{babel}
\usepackage{amsmath}
\usepackage{color}
%%%%%%%%%%%%%%%%%%%%%%%%%%%%%%%%%%%%%%%%%%%%%%%%%%%%%%%%
\newtheorem{theorem}{Theorem}[section]
 %[chapter]
\newtheorem{lemma}[theorem]{Lemma} %[chapter]
\newtheorem{corollary}[theorem]{Corollary} %[chapter]
\newtheorem{proposition}[theorem]{Proposition} %[chapter]
 %[chapter]
%%%%%%%%%%%%%%%%%%%%%%%%%%%%%%%%%%%%%%%%%%%%%%%%%%%%%%%%
\newenvironment{proof}
{ \noindent {\bf Proof:} }{ \hfill $\blacksquare$ \bigskip }
\newenvironment{pf}[1]%
{ \noindent {\bf Proof #1:} }{ \hfill $\blacksquare$ \bigskip }
\newcommand{\noproof}[0]{\hfill $\blacksquare$}
\newcounter{ExNr}
\setcounter{ExNr}{0}

%%%%%%%%%%%%%%%%%%%%%%%%%%%%%%%%%%%%%%%%%%%%%%%%%%%%%%%%%

%%%%%%%%%%%%%%%%%%%%%%%%%%%%%%%%%%%%%%%%%%%%%%%%%%%%%%%%%
\newcommand{\norm}[1]{\ensuremath{\|#1\|}}

\newcommand{\tel}[1]{\ensuremath{ |#1| }}

\newcommand{\eR}[0]{\ensuremath{ \mathbb R}}

\newcommand{\eN}[0]{\ensuremath{ \mathbb N}}
\newcommand{\Zed}[0]{\ensuremath{ \mathbb Z}}

\newcommand{\Pee}[0]{\ensuremath{{\mathbb P}}}
\newcommand{\Ee}[0]{\ensuremath{{\mathbb E}}}

\newcommand{\downto}{\ensuremath{\downarrow}}
\newcommand{\Fcal}[0]{\ensuremath{{\cal F}}}
%%%%%%%%%%%%%%%%%%%%%%%%%%%%%%%%%%%%%%%%%%%%%%%%%%%%%%%%%%%%%%%%%%%%%%%%%%%%%%%%%%%%%%
\DeclareMathOperator{\clo}{cl}

\DeclareMathOperator{\dd}{d}
\DeclareMathOperator{\Bi}{Bi}
\DeclareMathOperator{\Po}{Po}
\DeclareMathOperator{\mult}{mult}
\DeclareMathOperator{\diam}{diam}
\DeclareMathOperator{\vol}{vol}
\DeclareMathOperator{\supp}{supp}

\DeclareMathOperator{\conv}{conv}
%%%%%%%%%%%%%%%%%%%%%%%%%%%%%%%%%%%%%%%%%%%%%%%%%%%%%%%%%%%%
\newcommand{\volhalfBdelta}[0]{\ensuremath{\frac{\vol(B)}{2^d\delta}}}
\newcommand{\volhalfB}[0]{\ensuremath{\frac{\vol(B)}{2^d}}}
\newcommand{\volB}[0]{\ensuremath{\vol(B)}}
\newcommand{\fcol}[0]{\ensuremath{f_{\chi}}}
\newcommand{\fcli}[0]{\ensuremath{f_{\omega}}}
\newcommand{\frat}[0]{\ensuremath{f_{\chi/\omega}}}
\newcommand{\kvsparse}[0]{\ensuremath{{\textstyle\left\lfloor|\frac{\ln n}{\ln(nr^d)}| + \frac12\right\rfloor}}}
%%%%%%%%%%%%%%%%%%%%%%%%%%%%%%%%%%%%%%%%%%%%%%%%%%%%%%%%%%%%
%%%%%%%%%%%%%%%%%%%%%%%%%%%%%%%%%%%%%%%%%%%%%%%%%%%%%%%%%%%%%%%
%%% Colin commands
%%%%%%%%%%%%%%%%%%%%%%%%%%%%%%%%%%%%%%%%%%%%%%%%%%%%%%%%%%%%%%%
 %% \newcommand{\m}[1]{\marginpar{\tiny{#1}}}
\newcommand{\m}[1]{}
  \newcommand{\eps}{\varepsilon}
%  \newcommand{\diam}{{\rm diam}}
%  \newcommand{\conv}{{\rm conv}}
%%%%%%%%%%%%%%%%%%%%%%%%%%%%%%%%%%%%%%%%%%%%%%%%%%%%%%%%%%%%%%%%%%

\begin{document}

% \begin{center}
%   {\bf On the chromatic number of random geometric graphs}\\
%   Running title: Colouring RGGs
%   \vspace{1cm}
% 
% Mathematics Subject Classification codes: \\
% 05C80 (primary), 60D05, 05C15 (secondary)
% \vspace{1cm}
% 
% Colin McDiarmid and Tobias M\"uller\\
% University of Oxford and Technische Universiteit Eindhoven \\
% \vspace{1cm}
% 
% \begin{tabular}{rl}
% Corresponding author : & Tobias M\"uller \\
% & EURANDOM \\
% & P.O.~Box 513 \\
% & 5600 MB Eindhoven \\
% & The Netherlands \\
% & (t.muller@tue.nl).
% \end{tabular}
% \end{center}
% \newpage

 \title{On the chromatic number of random geometric graphs}
  \author{Colin McDiarmid\footnotemark[1] and Tobias M\"uller\footnotemark[2]\\
  University of Oxford and Centrum Wiskunde \& Informatica}

\footnotetext[1]{Department of Statistics, 1 South Parks Road, Oxford, OX1 3TG, United Kingdom.  Email address: \tt{cmcd@stats.ox.ac.uk}}
\footnotetext[2]{Centrum Wiskunde \& Informatica, P.O.~Box 94079, 1090 GB Amsterdam, The Netherlands. Email address: {\tt tobias@cwi.nl}. The research in this paper was conducted while this author was a research student at the University of Oxford. He was partially supported by Bekker-la-Bastide fonds, Hendrik Muller's Vaderlandsch fonds, EPSRC, Oxford University Department of Statistics and Prins Bernhard Cultuurfonds.}

\maketitle

\begin{abstract}
Given independent random points $X_1,\dots,X_n\in\eR^d$ with common probability distribution $\nu$, and a positive distance  $r=r(n)>0$, we construct a random geometric graph $G_n$ with vertex set $\{1,\dots,n\}$ where distinct $i$ and $j$ are adjacent when $\norm{X_i-X_j}\leq r$.   Here $\norm{.}$ may be any norm on $\eR^d$, and $\nu$ may be any probability distribution on $\eR^d$  with a bounded density function.   We consider the chromatic number $\chi(G_n)$ of $G_n$ and its relation to the clique number $\omega(G_n)$ as $n \to \infty$.  Both McDiarmid~\cite{cmcdplane} and Penrose~\cite{penroseboek}  considered the range of $r$ when $r \ll (\frac{\ln n}{n})^{1/d}$ and the range when 
$r \gg (\frac{\ln n}{n})^{1/d}$, and their results showed a dramatic difference between these two cases.
Here we sharpen and extend the earlier results, and in particular we consider the `phase change' range 
when $r \sim (\frac{t\ln n}{n})^{1/d}$ with $t>0$ a fixed constant.  
Both~\cite{cmcdplane} and~\cite{penroseboek} asked for the behaviour of the chromatic number in this range.  We determine constants $c(t)$ such that $\frac{\chi(G_n)}{nr^d}\to c(t)$ almost surely.
Further, we find a ``sharp threshold'' (except for less interesting choices of the norm when the unit ball tiles $d$-space): 
there is a constant $t_0>0$ such that  if $t \leq t_0$ then $\frac{\chi(G_n)}{\omega(G_n)}$ tends to 1 almost surely,  but if $t > t_0$ then $\frac{\chi(G_n)}{\omega(G_n)}$ tends to a limit $>1$ almost surely.
%%% there is a constant $t_0>0$ such that if $r \leq r_0 := (\frac{t_0 \ln n}{n})^{1/d}$ then 
%%%$\frac{\chi(G_n)}{\omega(G_n)}$ tends to 1 almost surely, but if $r > (1+\eps)r_0$ then 
%%%%$\liminf_n \frac{\chi(G_n)}{\omega(G_n)} > 1$ almost surely.
\end{abstract}

%%%%%%%%%%%%%%%%%%%%%%%%%%%%%%%%%%%%%%%%%%%%%%%%%%%%%%%%%%%%%%%%%%%%%%%%%%%%%%%%%%%%%%%%
%%%%%%%%%%%%%%%%%%%%%%%%%%%%%%%%%%%%%%%%%%%%%%%%%%%%%%%%%%%%%%%%%%%%%%%%%%%%%%%%%%%%%%%%

\section{Introduction and statement of results}
\label{sec.intro}

  In this section, after giving some initial definitions including that of the random geometric graphs $G_n$,
  we present our main results on colouring these graphs. We then introduce more notation and
  definitions so that we can specify explicit limits; we discuss fractional chromatic numbers and
  % give a limiting result on
  `generalised scan statistics', which will be key tools in our proofs;
  and we sketch the overall plan of the proofs.

%%%%%%%%%%%%%%%%%%%%%%%%%%%%%%%%%%%%%%%%%%%%%%%%%%%%%%%%%%%%%%%%%%%%%%%%%%%%%%%%%%%

\subsection{Some definitions and notation: $G_n$, $\delta$, $\sigma$, $\chi$ and $\omega$} \label{subsec.rgg}
%\subsection{The random geometric graph $G_n$} \label{subsec.rgg}

  To set the stage, we fix a positive integer $d$ and a norm $\norm{.}$ on $\eR^d$.
  Given points $x_1,\ldots,x_n$ in $\eR^d$ and a `threshold distance' $r>0$,
  the corresponding {\em geometric graph} $G(x_1,\ldots,x_n;r)$ has vertices $1,\ldots,n$ and distinct vertices
  $i$ and $j$ are adjacent when $\norm{x_i-x_j} \leq r$.
  It will be convenient sometimes (when the $x_i$ are distinct) to consider $G(x_1,\ldots,x_n;r)$
  as having as vertices the points $x_i$ rather than their indices $i$.
  
  Now introduce a probability distribution $\nu$ with bounded density function,
  and consider a sequence $X_1, X_2,\dots$ of independent random variables each with this distribution.
  Also we need a sequence $r=(r(1),r(2),..)$ of positive real numbers such
  that $r(n) \to 0$ as $n \to \infty$.  The {\em random geometric graph} $G_n$
  is the geometric graph $G(X_1,\ldots,X_n;r(n))$ corresponding to $X_1, \dots, X_n$ and $r(n)$. 
  % has vertex-set $V(G_n) := \{X_1, \dots, X_n\}$ and an
  %\m{or $V(G_n) := \{1, \dots, n\}$}
  %edge $X_iX_j \in E(G_n)$ ($i\neq j$) if $\norm{X_i - X_j} < r(n)$.
  Observe that almost surely the $X_i$ are all distinct and we never have $\norm{X_i - X_j} = r(n)$;
  thus it would not really matter if we took the vertex set of $G_n$ as $\{X_1, \dots, X_n\}$ and said that
  vertices $X_i$ and $X_j$ (for $i\neq j$) are adjacent when $\norm{X_i - X_j} < r(n)$.

  The distance $r=r(n)$ plays a role similar to that of the edge-probability $p(n)$ for Erd\H{o}s-Renyi
  random graphs $G(n,p)$.  Depending on the choice of $r(n)$, qualitatively different types of behaviour
  can be observed.
  %For technical reasons We shall always assume that $r(n)=o(1)$, that is $r(n) \to 0$ as $n \to \infty$.
  The various cases are best described in terms of the quantity $n r^d$,
  which scales with the average degree of the graph
  (for a precise result see appendix A of the paper~\cite{twopoint} by the second author).
  We will often refer to the case when $nr^d / \ln n \to 0$ as the {\em sparse case}, and
  the case when $nr^d / \ln n \to \infty$ as the {\em dense case}.

Throughout this paper, we shall use the terminology {\em almost surely} (or a.s.) in the standard sense from probability
theory. That is, a.s.~means ``with probability one'', and if
$Z_1, Z_2, \dots$ are random variables and $c \in\eR$ a constant then 
$Z_n \to c$ a.s. means that $\Pee( Z_n \to c ) = 1$.
The notation $f(n) \ll g(n)$ means the same as $f(n)=o(g(n))$, and the notation
$f(n)\sim g(n)$ means that $f(n)/g(n) \to 1$ as $n\to\infty$.
So in particular, if $Z_1, Z_2, \dots$ are random variables and $(k_n)_n$ is a sequence of numbers
then $Z_n \sim k_n$ a.s.~means that $\Pee( Z_n/k_n \to 1 ) = 1$.

Before we can state our first result we will need some further notation and definitions.
\m{first $\delta$ then $\sigma$?}
We use $\sigma$ to denote the essential supremum of
the probability density function $f$ of $\nu$, that is
\[
\sigma := \sup\{ t : \vol( \{ x: f(x)> t\} ) > 0 \}.
\]
Here and in the rest of the paper $\vol(.)$ denotes the $d$-dimensional volume (Lebesgue measure).
We call $\sigma$ the {\em maximum density} of $\nu$.

  We also need to define the `packing density' for the given norm $\norm{.}$.
  Informally this is the greatest proportion of $\eR^d$ that can be filled
  with disjoint translates of the unit ball $B:=\{x \in\eR^d : \norm{x}<1\}$.
%with respect to the (arbitrary) norm with which we have equipped $\eR^d$.
For $K>0$ let $N(K)$ be the maximum cardinality of a collection of pairwise disjoint translates of $B$ with centers in $(0,K)^d$.
The (translational) {\em packing density} $\delta$ (of the unit ball $B$ with respect to $\norm{.}$) may be defined as
\[
  \delta := \lim_{K\to \infty} \frac{N(K)\vol(B)}{K^d}.
\]
  This limit always exists, and $0<\delta \leq 1$. 
In the case of the Euclidean norm in $\eR^2$ we have $\delta = \frac{\pi}{2 \sqrt{3}} \approx 0.907$.
For an overview
  of results on packing see for example the books~\cite{pachboek} of Pach and
  Agarwal or~\cite{rogersboek} of Rogers.

  Recall that a $k$-{\em colouring} of a graph $G$ is a map
  $f:V(G)\to\{1,\dots,k\}$ such that $f(v)\neq f(w)$ whenever
  $vw\in E(G)$, and that the {\em chromatic number} $\chi(G)$ is the least
  $k$ for which $G$ admits a $k$-colouring. Also a {\em clique} in $G$ is
  a set of vertices which are pairwise adjacent,
  %$C\subseteq V(G)$ with the property that $vw\in E(G)$ for all pairs $v\neq w\in C$,
  and the {\em clique number} $\omega(G)$ is the largest cardinality of a clique
  %; and $\Delta(G)$ denotes the maximum degree of a vertex.
  %It is well known and easy to see that $\omega(G) \leq \chi(G) \leq \Delta(G)+1$.
  %
  (note that $\omega(G)$ is a trivial lower bound for $\chi(G)$).
  In this paper we are interested mainly in the behaviour of
  the chromatic number $\chi(G_n)$, and its relation to the clique
  number $\omega(G_n)$, of the random geometric graph $G_n$ as $n$
  grows large.
  \medskip
  
  \noindent
  {\bf Assumptions and notation for the random geometric graph $G_n$.} \hspace{.1in}
  For convenience of reference, we collect our standard assumptions.
  %For $n=1,2,\ldots,$ the {\em random geometric graph $G_n$ is defined as follows.
  We assume throughout that we are given a fixed positive integer $d$ and a fixed norm $\norm{.}$ on $\eR^d$,
  with packing density $\delta$.
  Also $\nu$ is a probability distribution with finite maximum density $\sigma$;
  $X_1, X_2, \dots$ are independent random variables each with this distribution;
  $r=(r(1),r(2),\ldots)$ is a sequence of positive reals such that $r(n) \to 0$ as $n \to \infty$;
  and for $n=1,2,\ldots,$ the {\em random geometric graph} $G_n$ is the geometric graph
  $G(X_1,\ldots,X_n;r(n))$.
  % corresponding to the $n$ points $X_1, \dots, X_n$ and the threshold distance $r(n)$.
  %
  %has vertex-set $\{X_1, \dots, X_n\}$
  %and for $i\neq j$ the vertices $X_i$ and $X_j$ are adjacent when $\norm{X_i - X_j} < r(n)$.
  %It will be convenient also to define $t(n,r):= \frac{\sigma n r^d}{\ln n}$.

%%%%%%%%%%%%%%%%%%%%%%%%%%%%%%%%%%%%%%%%%%%%%%%%%%%%%%%%%%%%%%%%%%%%%%%

\subsection{Main results}  % on $\chi(G_n)$ and $\omega(G_n)$: qualitative limits}
\label{subsec.main-results}

Our first theorem gives quite a full picture of the different behaviours of 
the chromatic number of the random geometric graph depending on the choice of the sequence $r$.

\begin{theorem}\label{thm.tobiaschi}
For the random geometric graph $G_n$ as in Section~\ref{subsec.rgg}, the following hold.
\begin{enumerate}
 \item\label{itm:tobiaschi.vsparse}
Suppose that $nr^d \leq n^{-\alpha}$ for some fixed $\alpha > 0$. 
Then%%%, 
%%%with
%%%\begin{equation} \label{eqn.kdef}
%%%   k=k(n)=\kvsparse,%%\left\lfloor{\Big|}\frac{\ln n}{\ln(nr^d)}{\Big|} + \frac12 \right\rfloor 
%%%\end{equation}
%%%%%%
%%%we have 
%%%
%%%\[  \Pee\left( \chi( G_n ) \in \{ k, k+1 \} \text{ for all but finitely many } n \right) = 1. \]
%%%%

\[
\hspace{-1cm} 
\Pee\left( \chi( G_n ) \in \left\{ \kvsparse , \kvsparse+1 \right\} 
\text{ for all but finitely many } n \right) = 1.
\]

\item\label{itm:tobiaschi.sparse}
Suppose that $n^{-\eps} \ll nr^d \ll \ln n$ for all $\eps > 0$. Then

\[ 
\chi(G_n) \sim \ln n / \ln{\Big(}\frac{\ln n}{nr^d}{\Big)} 
\;\; \text{ a.s. } 
\]

\item\label{itm:tobiaschi.intermediate}
Suppose that $\frac{\sigma n r^d}{\ln n} \to t \in (0,\infty)$. Then 

\[ 
\chi(G_n) \sim \fcol(t) \cdot \sigma n r^d \;\; \text{ a.s.,}
\]

\noindent
where $\fcol$ is given by~\eqref{eqn.chi-defn} below. It depends only on $d$ and $\norm{.}$, is continuous 
and non-increasing; and satisfies $\fcol(t) \to \volhalfBdelta$  as $t \to \infty$, and
  $\fcol(t) \to \infty$ as $t \downto 0$.
\item\label{itm:tobiaschi.dense}
Suppose that $nr^d \gg \ln n$ (but still $r\to0$). Then

\[ \chi(G_n) \sim \volhalfBdelta \cdot \sigma n r^d \;\; \text{ a.s. } \]
\end{enumerate}

\end{theorem}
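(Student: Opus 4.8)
The plan is to handle all four parts within a single framework. First I would de-Poissonise in the usual way, replacing the fixed sample size $n$ by an independent $\Po(n)$ so that the numbers of sample points in disjoint regions become independent, prove each statement for the Poissonised graph, and transfer back by the standard coupling. Rescaling all lengths by $1/r(n)$, the sample then looks locally like a Poisson process of intensity $\lambda:=\sigma nr^d$ near the density maximum (and smaller elsewhere), with two points joined when their rescaled distance is at most $1$. The single quantity $\lambda$ decides the regime: $\lambda\ll\ln n$ in parts~\ref{itm:tobiaschi.vsparse} and~\ref{itm:tobiaschi.sparse}, $\lambda\asymp\ln n$ in part~\ref{itm:tobiaschi.intermediate}, and $\lambda\gg\ln n$ in part~\ref{itm:tobiaschi.dense}. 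In every case $\chi(G_n)$ is governed by the \emph{worst local configuration}, which one locates with a generalised scan statistic: slide a window of bounded rescaled size over the point cloud and maximise, over window positions, an appropriate functional $F$ of the induced point set --- the clique number in the sparse regimes, and essentially the (fractional) chromatic number in the dense and intermediate ones. A Poisson large-deviations estimate for the number and arrangement of sample points in a fixed window, a union bound over the $\Theta(n/\lambda)$ roughly independent window positions, and a Borel--Cantelli argument then identify this maximum almost surely; and since $\ln(n/\lambda)\sim\ln n$ when $\lambda\sim t\ln n$, the optimal window trades off how atypically crowded it is against how much that crowding raises $F$, which is precisely the optimisation whose value is $\fcol(t)$.

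For the \emph{lower bounds} it suffices to exhibit a single bad local configuration. In parts~\ref{itm:tobiaschi.vsparse} and~\ref{itm:tobiaschi.sparse} this configuration is a clique: a Poisson / second-moment computation for the number of sample points in a ball of radius $\frac12 r$ near the density maximum shows that, almost surely and for all but finitely many $n$, $G_n$ contains a clique of size $\kvsparse$ (respectively of size $(1-o(1))\ln n / \ln(\tfrac{\ln n}{nr^d})$), whence $\chi\ge\omega$ settles the lower bound. In parts~\ref{itm:tobiaschi.intermediate} and~\ref{itm:tobiaschi.dense} I would instead bound $\chi$ below through the fractional chromatic number: any colour class is an independent set, hence after rescaling a packing of translates of $\frac12 B$, so a region of large rescaled volume $V$ meets each colour class in at most $(1+o(1))\frac{2^d\delta}{\volB}V$ points. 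Since the scan produces, almost surely, a window whose empirical measure matches the large-deviations optimiser --- of density tending to $\sigma$ throughout, in part~\ref{itm:tobiaschi.dense} --- this gives $\chi(G_n)\ge\chi_f(G_n)\ge(1-o(1))\volhalfBdelta\,\sigma nr^d$ in part~\ref{itm:tobiaschi.dense} and $\chi(G_n)\ge(1-o(1))\fcol(t)\,\sigma nr^d$ in part~\ref{itm:tobiaschi.intermediate}.

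For the \emph{upper bounds} I would tile $\eR^d$ by a grid of axis-parallel cubes of rescaled side equal to a large constant, colour the induced subgraph in each cube, and re-use colour sets between cubes that are non-adjacent in the grid; rather than suffering the resulting multiplicative loss, one processes the cubes along a space-filling sweep and chooses each cube's colouring to agree with its already-coloured neighbours at the cost only of a boundary term whose relative contribution tends to $0$ as the cube side grows, so that these overheads add instead of multiplying. For each cube one uses that its induced geometric graph satisfies $\chi\le(1+o(1))\chi_f$, that $\chi_f\le(1+o(1))\fcol(t)\,\sigma nr^d$ for \emph{every} cube --- this being, in essence, the scan statistic for $\chi_f$, whose worst value over all bounded window shapes is exactly what the optimisation defining $\fcol$ computes --- and that anomalously crowded cubes are rare enough to be absorbed into a bounded number of extra colour banks. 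The near-tightness $\chi\approx\chi_f$ comes from the fact that a packing of density close to $\delta$ can be chosen periodic, so that an optimal fractional colouring is realised by lattice translates and then rounded, together with concentration of the local empirical measure around Lebesgue measure times the density. In the sparse parts the grid argument is easier: each component is small and one needs only $\chi\le\omega+1$, which follows from a first-moment computation excluding every subgraph of minimum degree $\ge\omega+1$, with the scan statistic pinning $\omega$ down (to $\{k,k+1\}$ in part~\ref{itm:tobiaschi.vsparse}, asymptotically in part~\ref{itm:tobiaschi.sparse}). The claimed properties of $\fcol$ then follow from the variational formula~\eqref{eqn.chi-defn} defining it: monotonicity and continuity are immediate; $\fcol(t)\to\volhalfBdelta$ as $t\to\infty$ because the large-deviation cost of a density-$\sigma$ window vanishes, leaving the uniform window whose chromatic cost per point equals $\volhalfBdelta$; and $\fcol(t)\to\infty$ as $t\downto0$ because the optimal window collapses onto a near-clique.

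The step I expect to be the main obstacle is part~\ref{itm:tobiaschi.intermediate}, and within it two interlocking points: making the generalised scan-statistic limit precise \emph{uniformly} over the continuum of admissible window shapes and density profiles --- with the correct Poisson rate function and no slack lost in the union bound --- so that the almost-sure lower bound is exactly $\fcol(t)$; and matching it from above with an integral colouring, that is, proving a sufficiently uniform ``$\chi\approx\chi_f$'' for these geometric graphs, since the fractional value is what the packing and scan arguments deliver most naturally. A secondary difficulty lies in the very-sparse part~\ref{itm:tobiaschi.vsparse}: extracting the exact two-value window $\{k,k+1\}$ ``for all but finitely many $n$'' requires the full strength of the Poisson approximation near the threshold, not merely a statement in probability.
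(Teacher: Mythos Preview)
Your overall architecture --- reduce both bounds to a generalised scan statistic, with the intermediate regime being the crux --- is exactly the paper's, and your identification of ``$\chi\approx\chi_f$'' as the linchpin is correct. But the way you propose to prove the upper bound in parts~\ref{itm:tobiaschi.intermediate} and~\ref{itm:tobiaschi.dense} has a genuine gap, and differs from what the paper does.

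Your upper bound plan is: tile by large cubes, colour each cube with about $\chi_f$ colours (via rounding a periodic fractional colouring), then glue along a ``space-filling sweep'' so that boundary conflicts contribute only an additive $o(1)$ term. Neither step is justified at the level of a proof. The sweep idea is not standard and I do not see how to make it work: when you process a new cube, its coloured neighbours constrain a constant fraction of its vertices, not a vanishing boundary layer, so there is no obvious reason the overhead should be additive rather than multiplicative. And the claim ``$\chi\le(1+o(1))\chi_f$ per cube via a periodic near-optimal packing'' hides all the difficulty: you would need this uniformly over all cubes, including the anomalously crowded ones selected by the scan, and the rounding error must be controlled.

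What the paper does instead is a single deterministic lemma (Lemma~\ref{lem.detchi2}) that sidesteps gluing altogether. Fix $\eps>0$ and discretise $\eR^d$ on an $\eps$-grid; the graph on one reference cube has a fixed vertex--stable-set incidence matrix $B$ (independent of the point set $V$), so the LP for $\chi_f$ of any ``blown-up'' cube has a fixed feasible polytope $\{B^T y\le 1,\ y\ge 0\}$ with finitely many vertices $y^1,\dots,y^m$. Each vertex $y^j$ unfolds into a simple, tidy, $(1+\eps)$-feasible function $\varphi_j$, and LP rounding (adding at most one to each of the $(2K)^d$ coordinates of a basic optimal primal solution) plus an overlapping-cover averaging trick gives, for every finite $V\subset\eR^d$,
\[
  \chi(G(V,1)) \;\le\; (1+\eps)\max_{j\le m} M(V,\varphi_j)\;+\;c.
\]
This is the key reduction: the continuum of window shapes in your scan is replaced by finitely many test functions $\varphi_j$, to each of which Theorem~\ref{thm.Mphi} applies directly. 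The matching lower bound is Lemma~\ref{lem.chifg}, $\chi_f(G)=\sup_{\varphi\in\Fcal} M(V,\varphi)$, together with a separate finite reduction (Lemma~\ref{lem.psi-lb}) picking finitely many tidy $\psi_i\in\Fcal$ that nearly achieve $\fcol(t)$ uniformly in $t\ge\tau$. No gluing, no per-cube $\chi\approx\chi_f$, no sweep.

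Two smaller points. In the sparse regimes the paper does not argue via ``excluding every subgraph of minimum degree $\ge\omega+1$''; it simply sandwiches $M_{B(0;1/2)}\le\omega\le\chi\le\Delta+1\le M_{\clo(B)}$ and shows both scan statistics have the same asymptotics (Lemmas~\ref{lem:scanstatisticverysparse} and~\ref{lem:scanstatisticsparse}); in part~\ref{itm:tobiaschi.vsparse} Brooks' theorem handles the boundary case $\Delta=\omega$. And the paper does not Poissonise globally and then transfer: Poissonisation is used only inside the scan-statistic proofs (Lemma~\ref{lem:MphilargeT} and Theorem~\ref{thm.Mphi}), where independence across disjoint regions is actually needed.
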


Part%s~\ref{itm:tobiaschi.vsparse} and
~\ref{itm:tobiaschi.sparse} can basically already be found in
Penrose's book~\cite{penroseboek}, and %%part~\ref{itm:tobiaschi.sparse} 
also in~\cite{cmcdplane} for the case 
when $d=2$ and $\norm{.}$ is the euclidean norm.
We have however settled the minor technical issue of improving the type of convergence
in~\ref{itm:tobiaschi.sparse} from convergence in probability to
almost sure convergence, which was mentioned as an open 
problem in~\cite{penroseboek} and~\cite{cmcdplane}.

In part~\ref{itm:tobiaschi.dense}
  we obtain an improvement over a result in Penrose~\cite{penroseboek}, %Penrose (\cite{penroseboek})
  where an almost sure upper bound for $\limsup \frac{\chi(G_n)}{\sigma nr^d}$ of $\frac{\vol(B)}{2^d\delta_L}$
  and an almost sure lower bound for $\liminf \frac{\chi(G_n)}{\sigma nr^d}$ of $\frac{\vol(B)}{2^d\delta}$ are given.
  Here $\delta_L$ is the {\em lattice packing density} of $B$ (that is, the proportion of $\eR^d$ that can be filled 
with disjoint translates of $B$ whose centres are the integer linear combinations of some basis for $\eR^d$).
  The paper~\cite{cmcdplane} by the first author considers only the Euclidean norm in the plane, where
  $\delta$ and $\delta_L$ coincide.
  However, let us note that in general dimension the question of whether $\delta = \delta_L$ is open,
  even for the Euclidean norm, and it may well be that $\delta > \delta_L$ for some dimensions $d$.

Part~\ref{itm:tobiaschi.intermediate} settles the open problem of a ``law of large numbers for $\chi$'' in this regime 
posed in~\cite{penroseboek}. %%%and~\cite{cmcdplane}.
Both Penrose~\cite{penroseboek} and the first author~\cite{cmcdplane} have studied
the chromatic number in the cases when $nr^d \ll \ln n$ and $nr^d \gg \ln n$, but 
not much was known previously about the behaviour of the chromatic number in the 
``intermediate'' regime when $nr^d = \Theta(\ln n)$.
The limiting constant $\fcol(t)$ is given explicitly by~\eqref{eqn.chi-defn} below: since
it requires an involved sequence of definitions we defer the precise definition until then.

For comparison to Theorem~\ref{thm.tobiaschi} above, we shall now give a result on the clique number in the same flavour as Theorem~\ref{thm.tobiaschi}.
The results listed in Theorem~\ref{thm.tobiasomega} below were already shown by Penrose~\cite{penroseboek}; except that there was an extra assumption that the probability density function of $\nu$ has compact support, and in the regime considered in part~\ref{itm:tobiasomega.sparse} only convergence in probability was shown and we have added some detail in the regime considered in part~\ref{itm:tobiasomega.vsparse}.

\begin{theorem}\label{thm.tobiasomega}
For the random geometric graph $G_n$ as in Section~\ref{subsec.rgg}, the following hold.
\begin{enumerate}
 \item\label{itm:tobiasomega.vsparse}
Suppose that $nr^d \leq n^{-\alpha}$ for some fixed $\alpha > 0$. 
%%%%Then, with $k$ as in~\eqref{eqn.kdef},
%%%\[ \Pee\left( \omega( G_n ) \in  \{ k, k+1 \} \text{ for all but finitely many } n \right) = 1.\]
%%%% 
Then

\[ 
\hspace{-1cm}
\Pee\left( \omega( G_n ) \in \left\{ \kvsparse, \kvsparse + 1 \right\} 
\text{ for all but finitely many } n \right) = 1.
\]
  
\item\label{itm:tobiasomega.sparse}
Suppose that $n^{-\eps} \ll nr^d \ll \ln n$ for all $\eps > 0$. Then

\[ \omega(G_n) \sim \ln n / \ln{\Big(}\frac{\ln n}{nr^d}{\Big)} \;\; \text{ a.s. } \]

\item\label{itm:tobiasomega.intermediate}
Suppose that $\frac{\sigma n r^d}{\ln n} \to t \in (0,\infty)$. Then 

\[ \omega(G_n) \sim \fcli(t) \cdot \sigma n r^d \;\; \text{ a.s.}
\]

\noindent
Here $\fcli(t)$ is the unique $f\geq \volhalfB$  that solves
$H(f 2^d / \vol(B) ) = \frac{2^d}{t \vol(B)}$, where $H(x) := x \ln x - x + 1$ for $x>0$.
The function $\fcli$ is continuous and strictly decreasing; and satisfies $\fcli(t) \to \volhalfB$  as $t \to \infty$, and $\fcli(t) \to \infty$ as $t \downto 0$.
\item\label{itm:tobiasomega.dense}
Suppose that $nr^d \gg \ln n$ (but still $r\to0$). Then

\[ \omega(G_n) \sim \volhalfB \cdot \sigma n r^d \;\; \text{ a.s. } \]
\end{enumerate}

\end{theorem}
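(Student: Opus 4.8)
The plan is to recognise $\omega(G_n)$, up to a $(1+o(1))$ factor, as a ``generalised scan statistic'' of the kind analysed in the preceding sections, and then to read off the four regimes from its behaviour together with Theorem~\ref{thm.tobiaschi} (using $\omega(G_n)\le\chi(G_n)$). The starting point is that a set of vertices of $G_n$ is a clique exactly when the corresponding points have diameter at most $r$, so
\[
\omega(G_n)=\max\bigl\{\,|Q|\;:\;Q\subseteq\{X_1,\dots,X_n\},\ \diam(Q)\le r\,\bigr\}.
\]
For the lower bound one only needs that any set of points lying in a translate of $\frac{r}{2}B$ forms a clique, so $\omega(G_n)\ge M:=\max_x|\{i:X_i\in x+\frac{r}{2}B\}|$. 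For the upper bound the essential geometric ingredient is the isodiametric inequality in a normed space: if $\diam(S)\le r$ then $S-S\subseteq rB$, so applying Brunn--Minkowski to $\conv(S)$ gives $\vol(S)\le\volhalfB r^d=\vol(\tfrac{r}{2}B)$. Thus every clique lies inside a region of $\nu$-measure at most $(1+o(1))\sigma\volhalfB r^d$, a value essentially attained by a half-ball centred at a point of near-maximal density; so the theorem reduces to the first-order behaviour of $M$ and of the maximal point-count over regions of diameter $\le r$, both governed by the Poisson large-deviation rate $H$.

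For parts~\ref{itm:tobiasomega.vsparse} and~\ref{itm:tobiasomega.sparse} the upper bounds are immediate, since $\omega(G_n)\le\chi(G_n)$ and Theorem~\ref{thm.tobiaschi}\ref{itm:tobiaschi.vsparse},\ref{itm:tobiaschi.sparse} already give the stated value (resp.\ membership in the two-element set) for $\chi(G_n)$. It remains to exhibit, almost surely for all but finitely many $n$, a clique of the required size $k$. This is a routine first/second-moment argument: bounding the probability that $k$ given points are pairwise within $r$ from below by $\Ee[\nu(B(X_1,\tfrac{r}{2}))^{k-1}]$ and using that the density is bounded below on a set of positive measure, one checks that $\Ee[\#\{k\text{-cliques}\}]\to\infty$ for the stated $k$; a variance estimate, or Janson's inequality (distinct small cliques being essentially independent because localised), then makes $\Pee(\omega(G_n)<k)$ summable, and Borel--Cantelli finishes it.

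Parts~\ref{itm:tobiasomega.intermediate} and~\ref{itm:tobiasomega.dense} are the substance. For the lower bound, fix $x_0$ with the density exceeding $\sigma-\eps$ on a neighbourhood of positive volume, pack $\Theta(1/r^d)$ translates of $\frac{r}{2}B$ with disjoint interiors into that neighbourhood, and apply a binomial large-deviation bound to their (negatively associated) point-counts: each is roughly $\Bi(n,\mu_0/n)$ with $\mu_0\approx(\sigma-\eps)\volhalfB nr^d$, and since $\Pee(\Bi(n,\mu_0/n)\ge a\mu_0)\approx e^{-\mu_0 H(a)}$ with $\Theta(1/r^d)$ nearly-independent trials, the maximum count is $\ge(1-o(1))a^*\mu_0$ where $\mu_0 H(a^*)\sim\ln(1/r^d)$. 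In the intermediate case $r^d\sim t\ln n/(\sigma n)$, so $\ln(1/r^d)\sim\ln n$ and $\mu_0\sim\volhalfB\,t\ln n$, whence, letting $\eps\downto 0$, $H(a^*)\to 2^d/(t\vol(B))$, which is precisely the equation defining $\fcli$; in the dense case $\mu_0/\ln n\to\infty$ while $\ln(1/r^d)=O(\ln n)$, so $a^*\to 1$ and the constant is $\volhalfB$. Here the failure probability is super-polynomially small, so this lower bound holds almost surely at once. For the upper bound, fix a small $\theta>0$, overlay a grid of cubes of side $\theta r$, and observe (using the isodiametric bound and a boundary-shell estimate) that every region of diameter $\le r$ is covered by a ``pattern'' of $O(\theta^{-d})$ cubes of total volume at most $\volhalfB r^d(1+O(\theta))$; for fixed $\theta$ there are only $\mathrm{poly}(n)$ such patterns (a standard truncation handles unbounded support), each a union of $\nu$-measure at most $\sigma\volhalfB r^d(1+O(\theta))$, so a union bound combined with the binomial large-deviation estimate gives $\Pee\bigl(\omega(G_n)\ge(\fcli(t)+\eps)\sigma nr^d\bigr)\le\mathrm{poly}(n)\cdot n^{-(1+\gamma)}$ with $\gamma=\gamma(\eps,\theta)>0$ bounded away from $0$ as $\theta\downto 0$; the defining equation $H\bigl(2^d\fcli(t)/\vol(B)\bigr)=2^d/(t\vol(B))$ is exactly what pushes the exponent past $1$ once $\theta$ is small relative to $\eps$. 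Letting $n\to\infty$, then $\theta\downto 0$, then $\eps\downto 0$ gives the upper bound in probability. Finally, the asserted continuity, strict monotonicity, and limiting values of $\fcli$ follow immediately from the fact that $H$ is a continuous strictly increasing bijection of $[1,\infty)$ onto $[0,\infty)$.

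The one genuinely delicate point is the upper bound in part~\ref{itm:tobiasomega.intermediate}. The discretisation must be fine enough that the isodiametric bound $\volhalfB r^d$ is not diluted --- otherwise the constant comes out too large --- yet coarse enough (the grid size $\theta$ fixed, not shrinking with $n$) that the number of candidate regions stays polynomial in $n$; the normed-space isodiametric inequality is the indispensable geometric input here. Moreover, in this regime $\mu_0=\Theta(\ln n)$, so the bound above is only polynomially small and is not summable, and to obtain almost sure convergence one must run Borel--Cantelli along a subsequence $(n_k)$ with $n_{k+1}/n_k\to 1$ --- for instance $n_k=\lceil e^{(\ln k)^2}\rceil$, which makes the polynomially small probabilities summable --- and then interpolate, controlling $\omega(G_n)$ for $n_k\le n<n_{k+1}$; this last step is standard but fiddly, complicated slightly because $\omega(G_n)$ need not be monotone in $n$ when $r=r(n)$ is an arbitrary sequence.
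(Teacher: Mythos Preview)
Your proposal is essentially correct in its mathematical content, but it is organised quite differently from the paper's proof, and there is one place where your sketch is a little thin.

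The paper does not prove Theorem~\ref{thm.tobiasomega} from scratch. Instead, it routes everything through the generalised scan statistic Theorem~\ref{thm.Mphi}, which was proved independently in Section~\ref{sec:scan}. For parts~\ref{itm:tobiasomega.intermediate} and~\ref{itm:tobiasomega.dense} the argument (Section~\ref{sec.clique-proof}) is very short: the lower bound is $\omega(G_n)\ge M_{B(0;1/2)}$, exactly as you say; for the upper bound, one discretises to a finite list of convex sets $W_1,\dots,W_m$ of diameter $\le 1+2\eps\rho$ with $\omega(G_n)\le\max_i M_{W_i}$, uses the Bieberbach inequality to bound $\vol(W_i)$, and then invokes Theorem~\ref{thm.Mphi} together with Lemma~\ref{lem:xibasic}\ref{itm:phiindicatorbiggest},\ref{itm:philambda}. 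All of the hard work you describe --- the Poisson large-deviation computation, the $\theta$-grid discretisation, the subsequence trick to turn a polynomially small bound into a summable one, the interpolation to handle non-monotonicity in $r(n)$ --- is already packaged inside the proof of Theorem~\ref{thm.Mphi} (see Section~\ref{subsec.Mphi-proof}, where the subsequence $m^L$ and the enlarged function $\varphi_{\eta'}$ play exactly the roles you assign to $n_k$ and a slight enlargement of the scanning region). So your approach and the paper's coincide in substance for these two parts; the paper just factors the argument through a reusable lemma, which it then also uses for $\chi(G_n)$.

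For parts~\ref{itm:tobiasomega.vsparse} and~\ref{itm:tobiasomega.sparse} the paper does \emph{not} borrow the upper bound from Theorem~\ref{thm.tobiaschi}; rather, it proves both theorems simultaneously (Sections~\ref{subsec.sparse-proof} and~\ref{subsec.vsparse-proof}) via the sandwich $M_{B(0;1/2)}\le\omega(G_n)\le\chi(G_n)\le\Delta(G_n)+1\le M_{\clo(B)}$ and the scan-statistic Lemmas~\ref{lem:scanstatisticverysparse} and~\ref{lem:scanstatisticsparse}. Your route via $\omega\le\chi$ and Theorem~\ref{thm.tobiaschi} is logically fine provided the latter is already available, but your lower bound sketch is the one genuinely underdeveloped step: a plain second-moment or Janson bound on the number of $k$-cliques gives $\Pee(\omega(G_n)<k)=o(1)$, not a summable bound, so Borel--Cantelli does not follow directly. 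The paper sidesteps this by counting points in $\Omega(r^{-d})$ \emph{disjoint} translates of $\frac{r}{2}B$ and using negative association of multinomial coordinates (Lemma~\ref{lem:negassmult}), which yields a bound of the form $\exp(-n^{c+o(1)})$; this is the cleanest way to get summability here, and is worth adopting in place of the moment method.
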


Comparing Theorem~\ref{thm.tobiaschi} to Theorem~\ref{thm.tobiasomega}, it is rather striking that 
when $n r^d \ll \ln n$ the chromatic and clique number have the same behaviour, while when 
$n r^d \gg \ln n$ they differ by a multiplicative factor (provided $\delta < 1$).
Clearly there is some switch in behaviour when $nr^d = \Theta( \ln n )$.
%%This was already noticed in~\cite{cmcdplane} and~\cite{penroseboek} but not much was
%%known about 
%%The definition of $\fcol$ in~\eqref{eqn.chi-def} below is pretty involved and it is
%%not clear at first sight 
The following result shows a threshold phenomenon occurs (provided $\delta<1$).

\begin{theorem}\label{thm.frat}
The following hold for $\fcol, \fcli$.
\begin{enumerate}
\item\label{itm:thm.frat.i} If $\delta = 1$ then $\fcol(t) = \fcli(t)$ for all $t \in (0,\infty)$.
\item\label{itm:thm.frat.ii} 
If $\delta < 1$ then there exists a constant $0 < t_0 < \infty$ such that
$\fcol(t) = \fcli(t)$ for all $t \leq t_0$ and the ratio $\fcol(t)/\fcli(t)$ is continuous and strictly increasing for 
$t \geq t_0$.
\end{enumerate}
\end{theorem}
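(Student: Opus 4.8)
The plan is to read the statement off the explicit descriptions of $\fcli$ and $\fcol$. Recall from Theorem~\ref{thm.tobiasomega}\ref{itm:tobiasomega.intermediate} that $\fcli(t)=u_\ast H^{-1}(\tfrac1{tu_\ast})$, where $u_\ast:=\volhalfB$ and $H^{-1}$ is the inverse of $H$ on $[1,\infty)$. The argument uses the formula~\eqref{eqn.chi-defn} for $\fcol$ only through the following features, which I expect it to have (extracting them from~\eqref{eqn.chi-defn} is part of the work): $\fcol(t)$ is a supremum, over the volume $u>0$ of an atypically dense region, of $\Theta(u)\cdot H^{-1}(\tfrac1{tu})$, where the second factor is the relevant generalised scan statistic and $\Theta$ is a purely geometric ``chromatic capacity'' with $\Theta(u)=u$ for $0<u\le u_\ast$ (small dense regions induce cliques), $\Theta$ continuous, non-decreasing and concave, $\Theta(u)<u$ for $u>u_\ast$ (no region of diameter $>1$ is a clique), $\Theta(u)\uparrow\volhalfBdelta$ as $u\to\infty$ with $\Theta(u)<\volhalfBdelta$ for every finite $u$, and — crucially — $\Theta(u)=\min(u,u_\ast)$ when $\delta=1$. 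A one-line computation (differentiate $H(s)=\tfrac1{tu}$, with $s=H^{-1}(\tfrac1{tu})>1$) shows $u\mapsto u\,H^{-1}(\tfrac1{tu})$ has derivative $\tfrac{s-1}{\ln s}>0$, so it is strictly increasing on $(0,\infty)$; since $\Theta(u)=u$ on $(0,u_\ast]$ this gives $\sup_{0<u\le u_\ast}\Theta(u)H^{-1}(\tfrac1{tu})=u_\ast H^{-1}(\tfrac1{tu_\ast})=\fcli(t)$. Hence $\fcol(t)=\max\{\fcli(t),M(t)\}$ with $M(t):=\sup_{u>u_\ast}\Theta(u)H^{-1}(\tfrac1{tu})$, so that $\fcol(t)=\fcli(t)$ if and only if $\Theta(u)H^{-1}(\tfrac1{tu})\le\fcli(t)$ for all $u>u_\ast$ (this also re-proves $\fcli\le\fcol$, which anyway follows from $\omega(G_n)\le\chi(G_n)$ with parts~\ref{itm:tobiaschi.intermediate} and~\ref{itm:tobiasomega.intermediate}). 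Part~\ref{itm:thm.frat.i} is then immediate: if $\delta=1$ then $\Theta(u)=\min(u,u_\ast)$, so for $u>u_\ast$, $\Theta(u)H^{-1}(\tfrac1{tu})=u_\ast H^{-1}(\tfrac1{tu})<u_\ast H^{-1}(\tfrac1{tu_\ast})=\fcli(t)$ by monotonicity of $H^{-1}$, and the criterion yields $\fcol\equiv\fcli$.

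For part~\ref{itm:thm.frat.ii}, put $T:=\{t>0:\fcol(t)=\fcli(t)\}$. Since $\fcol,\fcli$ are continuous and $\fcli>0$, $T$ is closed and $\frat=\fcol/\fcli$ is continuous. The heart of the matter is that $T$ is downward closed. Fix $u>u_\ast$, write $s_u(t):=H^{-1}(\tfrac1{tu})$ and $x(t):=s_{u_\ast}(t)=\fcli(t)/u_\ast$; differentiating $H(s_a(t))=\tfrac1{ta}$ gives $\tfrac{d}{dt}\ln s_a(t)=-\tfrac1t\cdot\tfrac{H(s_a(t))}{s_a(t)\ln s_a(t)}$, whence
\[
\frac{d}{dt}\,\ln\frac{\Theta(u)H^{-1}(\tfrac1{tu})}{\fcli(t)}
=\frac1t\!\left(\frac{H(x(t))}{x(t)\ln x(t)}-\frac{H(s_u(t))}{s_u(t)\ln s_u(t)}\right).
\]
Now $u>u_\ast$ forces $s_u(t)<x(t)$, and $v\mapsto\tfrac{H(v)}{v\ln v}=1-\tfrac{v-1}{v\ln v}$ is strictly increasing on $(1,\infty)$ (because $\tfrac{v-1}{v\ln v}$ is decreasing there, since $\ln v<v-1$), so the right-hand side is positive: the competitiveness $\Theta(u)H^{-1}(\tfrac1{tu})/\fcli(t)$ of each non-clique region is strictly increasing in $t$. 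Therefore if this ratio exceeds $1$ at some pair $(t',u_0)$, it does so for all $t>t'$, i.e.\ $t\in T$ and $0<t'<t$ imply $t'\in T$; with closedness this gives $T=(0,t_0]$ for $t_0:=\sup T$. And $t_0<\infty$, since $\frat(t)\to\volhalfBdelta/\volhalfB=1/\delta>1$ as $t\to\infty$ by the limits in Theorems~\ref{thm.tobiaschi} and~\ref{thm.tobiasomega}, so $t\notin T$ for $t$ large.

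It remains to show $t_0>0$ and that $\frat$ is strictly increasing on $[t_0,\infty)$. For $t_0>0$: by the monotonicity just established it suffices to show $M(t)\le\fcli(t)$ for all small $t$. As $t\downto0$, using $H^{-1}(w)\sim w/\ln w$, the ratio $\Theta(u)H^{-1}(\tfrac1{tu})/\fcli(t)$ tends to $\Theta(u)/u$ uniformly on each compact subinterval of $(u_\ast,\infty)$, and $\Theta(u)/u<1$ there by concavity; while for $u\ge M$ it is at most $\tfrac1\delta\cdot H^{-1}(\tfrac1{tM})/H^{-1}(\tfrac1{tu_\ast})\to u_\ast/(\delta M)$, which is $<1$ once $M$ is fixed large. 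The one delicate point is uniformity near $u_\ast$ — one needs the right-derivative of $\Theta$ at $u_\ast$ to be at most $u_\ast$, so the ratio does not pop above $1$ just to the right of $u_\ast$ for small $t$; granting this (a property of $\Theta$), $t_0>0$. For strict monotonicity: for $t>t_0$ the criterion fails, so by the down-set property the supremum in~\eqref{eqn.chi-defn} is attained at some $u^\ast(t)>u_\ast$, and by the envelope theorem $\tfrac{d}{dt}\ln\fcol(t)=\tfrac{d}{dt}\ln H^{-1}(\tfrac1{tu^\ast(t)})=-\tfrac1t\cdot\tfrac{H(s^\ast)}{s^\ast\ln s^\ast}$ with $s^\ast:=H^{-1}(\tfrac1{tu^\ast(t)})$, whereas $\tfrac{d}{dt}\ln\fcli(t)=-\tfrac1t\cdot\tfrac{H(x(t))}{x(t)\ln x(t)}$; since $u^\ast(t)>u_\ast$ gives $s^\ast<x(t)$ and $v\mapsto\tfrac{H(v)}{v\ln v}$ is increasing, $\tfrac{d}{dt}\ln\frat(t)>0$ on $(t_0,\infty)$. (In the degenerate case where $\fcol\equiv\volhalfBdelta$ on a final interval, $\frat=\volhalfBdelta/\fcli$ there, strictly increasing because $\fcli$ is strictly decreasing.) As $\frat$ is continuous with $\frat(t_0)=1<\frat(t)$ for $t>t_0$, it is strictly increasing on all of $[t_0,\infty)$, completing part~\ref{itm:thm.frat.ii}.

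The main obstacle is thus not the comparison above but supplying its inputs. First, one must establish the analytic package for the chromatic capacity $\Theta$ — continuity, monotonicity, concavity, the strict inequality $\Theta(u)<u$ for $u>u_\ast$, and the bound on its right-derivative at $u_\ast$ — from the definition~\eqref{eqn.chi-defn} (a limit of fractional chromatic numbers of Poisson geometric graphs); and the identity $\Theta=\min(\cdot,u_\ast)$ when $\delta=1$ — that, when the unit ball tiles space, no region has chromatic capacity beyond a single clique's — is itself essentially the content of part~\ref{itm:thm.frat.i} and is not a formality. Second, the envelope step needs some regularity (existence and behaviour of the maximiser $u^\ast(t)$, possible non-uniqueness, differentiability of $\fcol$); this is cleanest handled with one-sided derivatives as in Danskin's theorem, the point being that for $t>t_0$ every maximiser lies in $(u_\ast,\infty)$, so the sign of $\tfrac{d^\pm}{dt}\ln\frat$ is still forced to be positive, with $t=t_0$ covered separately by continuity.
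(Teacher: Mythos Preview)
Your whole argument stands on the posited one-parameter form $\fcol(t)=\sup_{u>0}\Theta(u)\,H^{-1}(\tfrac1{tu})$, and you rightly flag ``supplying its inputs'' as the main obstacle. But this is not a technicality to be filled in later: the representation is simply not available. The quantity $\xi(\varphi,t)$ has the shape $\Theta_\varphi\cdot H^{-1}(\tfrac1{tu_\varphi})$ (with $t$-independent $\Theta_\varphi,u_\varphi$) only when $\varphi$ is a scaled indicator; for a two-level step function such as the paper's $\varphi_\beta$ (value $1$ on a small ball, $\tfrac12$ on an annulus) one gets a genuine mixture of $e^{s}$ and $e^{s/2}$, which is not a single $H^{-1}$ evaluated at a linear function of $\tfrac1t$. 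There is no reason the supremum over the infinite-dimensional family $\mathcal F$ should collapse to a one-dimensional envelope of this special form, and you give none. Worse, for part~\ref{itm:thm.frat.i} you need $\Theta=\min(\cdot,u_\ast)$ when $\delta=1$, which you yourself note ``is itself essentially the content of part~\ref{itm:thm.frat.i}''; the argument is circular there. (A minor symptom of the same looseness: your condition ``right-derivative of $\Theta$ at $u_\ast$ at most $u_\ast$'' is dimensionally off.)

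The paper's proof works directly in $\mathcal F$ and does not pass through any such $\Theta$. Part~\ref{itm:thm.frat.i} is a two-line consequence of $\sup_{\varphi\in\mathcal F}\int\varphi=\volhalfBdelta$ (Lemma~\ref{prop:densesup}) together with the comparison principle Lemma~\ref{lem:xibasic}\ref{itm:phiindicatorbiggest}, which bounds every $\xi(\varphi,t)$ by $c(\volhalfBdelta,t)$; when $\delta=1$ this upper bound is exactly $\fcli(t)$. For $t_0>0$ the paper uses a probabilistic locality argument (Lemma~\ref{lem:island}) to restrict to $\supp(\varphi)\subseteq B(0;2)$ for small $t$, then a convexity reduction to $\{0,\tfrac12,1\}$-valued functions, then B\"or\"oczky's intersection-volume inequality (Lemma~\ref{prop:karcsi}) to reach the explicit one-parameter family $\varphi_\beta$, and finally a direct calculus computation (Lemma~\ref{lem.max-mubeta}) on that family. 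For strict monotonicity the paper first proves attainment of the supremum whenever $\fcol(t)>\volhalfBdelta$ via a compactness-type argument in $\mathcal F$ (Lemma~\ref{lem:supismax}), fixes a maximiser $\varphi$, and then shows by an ODE-style analysis of the defining equations that $t'\mapsto\xi(\varphi,t')/\xi(\varphi_0,t')$ crosses any level $\lambda>1$ at most once --- no envelope theorem, and no $\Theta$.
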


\noindent
For $t \in (0,\infty)$ let us write

\begin{equation}\label{eq:fratdef}
 \frat(t) := \frac{\fcol(t)}{\fcli(t)}.
\end{equation}

\noindent
Observe that, by the properties of $\fcol$ and $\fcli$ listed in Theorem~\ref{thm.tobiaschi} 
and Theorem~\ref{thm.tobiasomega}, the function $\frat$ depends only on the choice of $d$ 
and $\norm{.}$, it is continuous on $(0,\infty)$ and 

\begin{equation}\label{eq:fratlim}
 \lim_{t\to\infty} \frat(t) = \frac{1}{\delta}.
\end{equation}

%%%%%%%%%%%%%%%%%%%%%%%%%%%%%%%%%%%%%%%%%
%%% suppressed this next bit, because we dont need it for thm.tobiasfrat (or anywhere else really)
%%%%%%%%%%%%%%%%%%%%%%%%%%%%%%%%%%%%%%%%%%%%%
% \noindent
% For notational convenience, Let us extend the definition of $\frat$ by setting 
% 
% \begin{equation}\label{eq:fratdefnulinf}
% \frat(0) := 1, \quad \frat(\infty) :=\frac{1}{\delta}.
% \end{equation}
%
\noindent
By adding a small amount of work to the proofs of Theorem~\ref{thm.tobiaschi} and Theorem~\ref{thm.tobiasomega}
we will also show that

\begin{theorem}\label{thm.tobiasfrat}
For the random geometric graph $G_n$ as in Section~\ref{subsec.rgg}, 
with $r=r(n)$ an arbitrary sequence that tends to 0, the following holds.
Set $t(n) := \sigma n r^d / \ln n$ then

\[ \frac{\chi(G_n)}{\omega(G_n)} \sim \frat( t(n) ) \;\;\; \text{ a.s. } \]

\end{theorem}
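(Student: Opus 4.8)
The plan is to deduce Theorem~\ref{thm.tobiasfrat} from (the proofs of) Theorems~\ref{thm.tobiaschi} and~\ref{thm.tobiasomega}, which already determine $\chi(G_n)$ and $\omega(G_n)$ up to $1+o(1)$ factors in every regime. The only genuinely new point is to combine them uniformly over the whole range of $t(n)=\sigma nr^d/\ln n$, in particular for sequences $r=r(n)$ along which $t(n)$ oscillates between regimes, and to deal with the very sparse regime, where $\chi(G_n)$ and $\omega(G_n)$ may stay bounded.

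First I would record the uniform content of Theorems~\ref{thm.tobiaschi} and~\ref{thm.tobiasomega}: their proofs show that for an \emph{arbitrary} sequence $r(n)\to 0$ and every $\eps>0$, almost surely for all large $n$
\[
(1-\eps)\,F_\chi(n)\le\chi(G_n)\le(1+\eps)\,F_\chi(n),\qquad (1-\eps)\,F_\omega(n)\le\omega(G_n)\le(1+\eps)\,F_\omega(n),
\]
where $F_\chi(n)$ and $F_\omega(n)$ are the deterministic quantities occurring in those theorems, depending on $n$ only through $r(n)$: namely $F_\chi(n)=\fcol(t(n))\,\sigma nr^d$, $F_\omega(n)=\fcli(t(n))\,\sigma nr^d$ when $nr^d=\Theta(\ln n)$; $F_\chi(n)=\volhalfBdelta\,\sigma nr^d$, $F_\omega(n)=\volhalfB\,\sigma nr^d$ when $nr^d\gg\ln n$; and $F_\chi(n)=F_\omega(n)=\ln n/\ln\big(\tfrac{\ln n}{nr^d}\big)$ when $n^{-\eps'}\ll nr^d\ll\ln n$ for all $\eps'>0$. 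The key is that the Borel--Cantelli bounds behind those theorems are summable uniformly in $r$, so they survive an oscillating choice of $r$; concretely one fixes $\eps$, partitions $\eN$ deterministically (according to $r$) into the finitely many pieces on which $t(n)$ lies in $(0,\tau]$, in $(\tau,T]$, or in $(T,\infty)$ --- with $\tau$ small and $T$ large depending on $\eps$ --- treats each piece with the relevant part of the two theorems, and finally lets $\eps\downarrow 0$ along a countable set.

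Second comes the arithmetic of gluing. On each piece one checks that $F_\chi(n)/F_\omega(n)=\frat(t(n))(1+o(1))$: in the intermediate range this is the definition~\eqref{eq:fratdef}; in the dense range $F_\chi(n)/F_\omega(n)=1/\delta$ while $t(n)\to\infty$, which matches $\frat(t(n))$ by~\eqref{eq:fratlim} and continuity of $\frat$; and in the sparse range $F_\chi(n)=F_\omega(n)$ while $t(n)\to 0$, so the ratio is $1=\lim_{t\downarrow 0}\frat(t)$ by Theorem~\ref{thm.frat} (this limit equals $1$ in both cases of that theorem). Continuity of $\fcol$ and $\fcli$ together with $\fcol(t)\to\volhalfBdelta$, $\fcli(t)\to\volhalfB$ as $t\to\infty$ guarantees that the two descriptions of $F_\chi,F_\omega$ agree up to $1+o(1)$ where the regimes meet, so no mismatch appears when $t(n)$ crosses from one regime to another. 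Dividing the two chains of inequalities and using that $\frat$ is bounded on $(0,\infty)$ gives $\chi(G_n)/\omega(G_n)=\frat(t(n))(1+o(1))$ a.s., which is the assertion.

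The genuinely new work --- the ``small amount'' in the statement --- is confined to the very sparse regime $nr^d\le n^{-\alpha}$, where $k=\kvsparse$ may stay bounded, so that by part~\ref{itm:tobiaschi.vsparse} of Theorem~\ref{thm.tobiaschi} and part~\ref{itm:tobiasomega.vsparse} of Theorem~\ref{thm.tobiasomega} both $\chi(G_n)$ and $\omega(G_n)$ lie in $\{k,k+1\}$, so that $\chi(G_n)/\omega(G_n)$ could be as large as $(k+1)/k$, while $\frat(t(n))=1$. Here one must show that almost surely $\chi(G_n)=\omega(G_n)$ for all large $n$ with $nr^d\le n^{-c}$ (any fixed $c>0$); the reduction to this case from an arbitrary $r$ is routine via a subsequence argument, since parts~\ref{itm:tobiaschi.sparse}--\ref{itm:tobiaschi.intermediate} already give $\chi(G_n)\sim\omega(G_n)$ whenever $t(n)\le t_0$ and $nr^d\ge n^{-o(1)}$. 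As in the proof of part~\ref{itm:tobiaschi.vsparse}, $\chi(G_n)>\omega(G_n)$ forces $\omega(G_n)=k$, $\chi(G_n)=k+1$, and the presence of a component $H$ of $G_n$ that is $(k+1)$-chromatic and $K_{k+1}$-free, hence on at least $k+3$ vertices. A first-moment bound at a single $n$ is not enough, since the probability of such an $H$ is of order $n(nr^d)^{k+2}$, whose sum over $n$ need not converge when $k$ is bounded. The remedy I would use is to bound, over all $n$ at once, the number of ``obstructing'' configurations that are \emph{freshly created} at step $n$ --- by the insertion of $X_n$, or by the shrinking of $r$ (which only deletes edges) --- whose expectation is of order $\sum_n (nr^d)^{k(n)+2}$ (one power of $n$ less than the per-step probability, as $X_n$ is a prescribed vertex of the new configuration), plus a strictly smaller shrink-creation term; since $(nr^d)^{k(n)}$ equals $n^{-1}$ times a bounded power of $nr^d\le n^{-c}$, this series converges, so by Borel--Cantelli only finitely many obstructing configurations ever arise and hence $\chi(G_n)=\omega(G_n)$ for all large $n$. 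I expect the main difficulty to lie precisely in making this ``creation at step $n$'' bookkeeping rigorous --- a configuration can be created, destroyed by a later point, and recreated by a later edge deletion --- while keeping the total count summable.
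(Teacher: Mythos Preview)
Your overall strategy---splitting according to the regime of $t(n)=\sigma nr^d/\ln n$, applying the already-proved limit theorems on each piece, and checking that the pieces glue by continuity of $\frat$---is exactly what the paper does. The paper makes the splitting concrete by defining three modified sequences $r_1,r_2,r_3$ (via $\min/\max$ with the regime boundaries) so that each is a legitimate sequence to which Proposition~\ref{thm.verysparse}, Lemma~\ref{lem:sparsereal}, and Theorems~\ref{thm.tobiasomegareal}--\ref{thm.tobiaschireal} apply directly; since for every $n$ the graph $G_n$ coincides with one of the three $G_n^{(k)}$, and finitely many probability-one events intersect in a probability-one event, the conclusion follows. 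Your version of this glue is fine.

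Where you diverge from the paper is in the very sparse regime, and here your proposal is substantially more complicated than necessary. You propose to count ``freshly created'' obstructing configurations (components $H$ with $\chi(H)>\omega(H)$) over the whole process, tracking creation by vertex insertion and by edge deletion as $r$ shrinks, and then sum a series of the form $\sum_n (nr^d)^{k(n)+2}$. You yourself flag the difficulty: a configuration can be created, destroyed, and recreated by a later edge deletion, so the bookkeeping is delicate. In particular, edge deletions can genuinely \emph{create} the obstruction $\chi>\omega$ (delete edges from a larger structure down to an odd cycle), so the ``shrink-creation term'' is not obviously strictly smaller, and bounding the total number of recreations needs real work.

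The paper avoids all of this. Its proof of Proposition~\ref{thm.verysparse} runs through the scan-statistic Lemma~\ref{lem:scanstatisticverysparse}, applied not only to $W_1=B(0;\tfrac12)$ but also to a \emph{large} ball $W_2=B(0;100)$. From $M_{W_2}\le k+1$ and $M_{W_1}\ge k$ a.a.a.s.\ one deduces $\Delta(G_n)\in\{\omega(G_n)-1,\omega(G_n)\}$. If $\Delta=\omega-1$ the greedy bound gives $\chi=\omega$; if $\Delta=\omega$, Brooks' theorem forces $\chi=\omega$ unless $\omega=2$ and $G_n$ contains an odd cycle of length $\ge 5$. But $\omega=2$ implies $k\le 2$, hence $M_{W_2}\le 3$, so every component has at most three vertices and no such cycle exists. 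This is a two-line deterministic argument on top of the scan-statistic lemma you already have, and it completely sidesteps the process-level bookkeeping you were worried about. I would replace your very-sparse plan with this Brooks-plus-large-ball argument.
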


\noindent
So in  particular, when $\delta<1$, we see a sharp threshold at $r_0 := (\frac{t_0\ln n}{\sigma n})^\frac{1}{d}$: 

\begin{corollary}\label{cor:fratthreshold}
Consider the random geometric graph $G_n$ as in Section~\ref{subsec.rgg}.
Suppose $\delta < 1$ and let $t_0$ be the constant in part~\ref{itm:thm.frat.ii} of Theorem~\ref{thm.frat}. 
If we set $r_0 := (\frac{t_0\ln n}{\sigma n})^\frac{1}{d}$ then the following hold
\begin{enumerate}
\item If $\limsup_{n\to\infty} \frac{r}{r_0}\leq 1$ then 

\[ \frac{\chi(G_n)}{\omega(G_n)}\to 1 \;\;\; \text{ a.s. } \]

\item If $\liminf_{n\to\infty} \frac{r}{r_0} > 1$ then 

\[ \liminf_{n\to\infty} \frac{\chi(G_n)}{\omega(G_n)} > 1 \;\;\; \text{ a.s. } \]

\end{enumerate}
\end{corollary}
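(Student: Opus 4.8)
The plan is to obtain Corollary~\ref{cor:fratthreshold} as an essentially formal consequence of Theorem~\ref{thm.tobiasfrat} together with the description of $\frat$ in Theorem~\ref{thm.frat}; no fresh probabilistic estimates are needed. Write $t(n):=\sigma n r^d/\ln n$ as in Theorem~\ref{thm.tobiasfrat}. Since $r_0^d=t_0\ln n/(\sigma n)$ and $r^d=t(n)\ln n/(\sigma n)$ by the definition of $t(n)$, we have the exact identity $(r/r_0)^d=t(n)/t_0$, and because $x\mapsto x^{1/d}$ is an increasing homeomorphism of $[0,\infty)$ this yields
\[
\limsup_{n\to\infty}\tfrac{r}{r_0}\le 1 \iff \limsup_{n\to\infty} t(n)\le t_0,
\qquad
\liminf_{n\to\infty}\tfrac{r}{r_0}> 1 \iff \liminf_{n\to\infty} t(n)> t_0.
\]
By part~\ref{itm:thm.frat.ii} of Theorem~\ref{thm.frat} (and the remarks following it), $\frat\equiv 1$ on $(0,t_0]$, $\frat$ is continuous on $(0,\infty)$ and strictly increasing on $[t_0,\infty)$; in particular $\frat(t_0)=1$, $\frat$ is non-decreasing, and $\frat\ge 1$ everywhere with $\frat(t)>1$ exactly when $t>t_0$.

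For part~(i), fix $\eps>0$. Continuity (and monotonicity) of $\frat$ at $t_0$ gives $\eta>0$ with $1\le\frat(t)<1+\eps$ for all $t\in(0,t_0+\eta]$. If $\limsup_n t(n)\le t_0$ then $t(n)\le t_0+\eta$ for all large $n$, so $1\le\frat(t(n))<1+\eps$ eventually; as $\eps$ was arbitrary, $\frat(t(n))\to 1$. Since $(\frat(t(n)))_n$ is a deterministic sequence, combining $\frat(t(n))\to1$ with the almost sure statement $\chi(G_n)/\omega(G_n)\sim\frat(t(n))$ of Theorem~\ref{thm.tobiasfrat} yields $\chi(G_n)/\omega(G_n)\to 1$ a.s.

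For part~(ii), suppose $\liminf_n t(n)>t_0$ and choose $t_1$ with $t_0<t_1<\liminf_n t(n)$, so that $t(n)\ge t_1$ for all large $n$. Since $\frat$ is non-decreasing and strictly increasing past $t_0$, we get $\frat(t(n))\ge c:=\frat(t_1)>1$ for all large $n$. Again invoking $\chi(G_n)/\omega(G_n)\sim\frat(t(n))$ a.s.\ from Theorem~\ref{thm.tobiasfrat}, for almost every realisation we have $\chi(G_n)/\omega(G_n)\ge(1-\delta')\frat(t(n))\ge(1-\delta')c$ for all large $n$ and every $\delta'\in(0,1)$, hence $\liminf_n\chi(G_n)/\omega(G_n)\ge c>1$ a.s., as required.

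I do not expect a genuine obstacle here, since all the substance lies in Theorems~\ref{thm.tobiaschi}--\ref{thm.tobiasfrat}. The only points that need a little care are: (a) that $\frat$ is actually continuous across the threshold $t_0$, so the $\limsup$ argument in part~(i) produces the convergence $\frat(t(n))\to1$ rather than merely an upper bound; and (b) the routine bookkeeping that, because $t(n)$ is deterministic, "$\chi(G_n)/\omega(G_n)\sim\frat(t(n))$ a.s." combines transparently with the deterministic asymptotics of $\frat(t(n))$, with possible oscillation of $t(n)$ absorbed by phrasing everything through $\limsup$ and $\liminf$.
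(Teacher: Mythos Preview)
Your argument is correct and is exactly the intended derivation: the paper states Corollary~\ref{cor:fratthreshold} as an immediate consequence of Theorem~\ref{thm.tobiasfrat} and the properties of $\frat$ from Theorem~\ref{thm.frat} (together with the continuity of $\frat$ noted after~\eqref{eq:fratdef}), and does not spell out a separate proof. Your bookkeeping via the identity $(r/r_0)^d=t(n)/t_0$ and the continuity/monotonicity of $\frat$ is precisely what is needed.
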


In the course of proving Theorem~\ref{thm.tobiasfrat} we shall 
prove the following result, which may be of independent interest.
It shows that for very small $r$ the clique number and chromatic number are not only concentrated on the
same two consecutive integers (as shown by parts~\ref{itm:tobiaschi.vsparse} of Theorems~\ref{thm.tobiaschi} and~\ref{thm.tobiasomega}), but 
in fact the chromatic number and clique number are equal.

\begin{proposition} 
  \label{thm.verysparse}
  For the random geometric graph $G_n$ as in Section~\ref{subsec.rgg},
  if $n r^d \leq n^{-\alpha}$ for some fixed $\alpha > 0$ then
\[
  \Pee( \chi(G_n) = \omega(G_n) \text{ for all but finitely many } n) = 1.%a.a.a.s.\\
\]%

\end{proposition}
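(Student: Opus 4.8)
The plan is to show that in the very sparse regime every connected component of $G_n$ is, with high probability, a \emph{clique} — in fact a very small clique — so that $\chi(G_n) = \omega(G_n)$ follows immediately, since for a graph all of whose components are complete the chromatic number equals the size of the largest component, which is exactly the clique number. The key quantitative fact, already implicit in the proofs of part~\ref{itm:tobiaschi.vsparse} of Theorems~\ref{thm.tobiaschi} and~\ref{thm.tobiasomega}, is that when $nr^d \leq n^{-\alpha}$ the order of the largest component is $k_n := \kvsparse$ (or one more), a slowly growing but bounded-by-$O(\log n/\log(1/(nr^d)))$ quantity. So the real content is: \emph{every component of size at most roughly $k_n$ is complete}, a.s.\ for all but finitely many $n$.

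First I would fix $k = k(n)$ slightly larger than the a.s.\ upper bound on $\omega(G_n)$ and $\chi(G_n)$ coming from the cited parts of the two theorems (so that, a.s.\ eventually, no component has more than $k$ vertices), and estimate the probability that $G_n$ contains a connected, \emph{non-complete} subgraph on exactly $m$ vertices for some $3 \leq m \leq k$. A connected non-complete graph on $m$ vertices that is an induced subgraph and also a component must contain an induced path on $3$ vertices, i.e.\ a configuration $X_i, X_j, X_\ell$ with $\norm{X_i-X_j}\le r$, $\norm{X_j-X_\ell}\le r$, but $\norm{X_i-X_\ell} > r$. The standard first-moment / union-bound computation gives: the expected number of such induced paths on $3$ vertices is at most $n^3 \cdot (\sigma V r^d)^2$ for a suitable constant $V = \vol(B)$ (choose $X_i$ within distance $r$ of $X_j$, and $X_\ell$ within distance $r$ of $X_j$), which is $O(n^3 r^{2d}) = O(n (nr^d)^2) \le O(n^{1-2\alpha})$. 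This tends to $0$ when $\alpha > 1/2$, but not for small $\alpha$, so a naive $3$-vertex count is not enough; one must instead count a non-complete component \emph{of the actual (small) size} and exploit the tree-like edge structure.

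The correct bookkeeping is to bound the expected number of pairs $(S, H)$ where $S \subseteq \{1,\dots,n\}$ with $|S| = m$ and $H$ is a connected non-complete graph on $S$ realizable as a component of $G_n$; requiring $H$ connected forces at least $m-1$ edges to be present (each contributing a factor $O(r^d)$), requiring $H$ non-complete forces at least one non-edge to be \emph{absent}, and the remaining freedom of placing the $m$ points contributes $n^m$. So the expected count of non-complete components of size $m$ is at most $C^m n^m \cdot (\sigma V r^d)^{m-1} = C^m n (nr^d)^{m-1}\sigma^{m-1}V^{m-1}$ for an absolute constant $C$ (the number of connected graphs on $m$ labelled vertices is at most $2^{\binom m2} \le C^{m^2}$, but since $m \le k_n = O(\log n/\log(1/nr^d))$ one checks $C^{m^2}(nr^d)^{m-1} \to 0$; this is exactly the kind of estimate that makes the $k_n$ in the statement of part~\ref{itm:tobiaschi.vsparse} come out as it does). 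Summing over $3 \le m \le k_n$ and using $nr^d \le n^{-\alpha}$ gives a bound that is $O(n \cdot (nr^d)^2 \cdot \mathrm{polylog})$ when $m=3$ dominates, hence $O(n^{1-2\alpha+o(1)})$ — which again is $o(1)$ only for $\alpha>1/2$. To cover \emph{all} $\alpha>0$ one passes to a subsequence / Borel–Cantelli argument along powers $n_j = \lceil 2^{j}\rceil$ together with a monotonicity (sandwiching) argument between consecutive $n_j$: standard in this literature, and indeed the same device is needed to upgrade convergence in probability to a.s.\ convergence in parts~\ref{itm:tobiaschi.sparse}, so I would reuse whatever subsequence machinery the paper has already set up for Theorem~\ref{thm.tobiaschi}. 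Concretely: let $A_n$ be the event ``$G_n$ has a non-complete component''; show $\Pee(A_n) = O(n^{-\beta})$ for some $\beta>0$ after the subsequence trick (the point being that on the rare event we use a cruder but still summable bound, e.g.\ controlling the number of points falling in any ball of radius $3r$), conclude $\sum_j \Pee(A_{n_j}) < \infty$, apply Borel–Cantelli, and finally bridge the gaps using that adding $o(n_j)$ points cannot create a non-complete component without creating a short induced path, whose count we have already controlled.

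The main obstacle will be the range $0 < \alpha \le 1/2$, where the crude first-moment bound on a $3$-vertex induced path does not by itself go to $0$: there one genuinely needs the observation that in this regime all components have $O(\log n / \log(1/nr^d))$ vertices (so a non-complete component is not just "contains a $P_3$" but "is a small graph, most of whose potential edges are actually present"), and one must combine the component-size control from Theorems~\ref{thm.tobiaschi}/\ref{thm.tobiasomega} with a Borel–Cantelli argument along a geometric subsequence plus a sandwiching step between consecutive terms of that subsequence. Once that machinery is in place the algebra is routine. I would therefore structure the proof as: (1) recall the a.s.\ upper bound $k_n$ on component sizes; (2) first-moment bound, conditional on all components having size $\le k_n$, on the number of non-complete components, showing it is $o(1)$ along $n_j = 2^j$; (3) Borel–Cantelli to get: a.s.\ eventually no non-complete component along the subsequence; (4) sandwich between $n_j$ and $n_{j+1}$; (5) conclude $\chi(G_n) = \omega(G_n)$ since all components are then complete.
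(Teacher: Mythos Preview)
Your central claim --- that in the regime $nr^d \le n^{-\alpha}$ every component of $G_n$ is a clique a.a.a.s.\ --- is false for $0<\alpha<1/2$, and this is not a technicality that a subsequence trick can repair. Take $nr^d = n^{-\alpha}$ with $\alpha \in (1/3,1/2)$, say. The expected number of \emph{isolated} induced paths $P_3$ (three vertices with exactly two edges and no further neighbours) is of order $n^3 (r^d)^2 (1-O(nr^d)) = \Theta(n(nr^d)^2) = \Theta(n^{1-2\alpha}) \to \infty$; a routine second-moment computation shows such components exist with probability tending to $1$. Your bound $C^m n(nr^d)^{m-1}$ for non-complete components of size $m$ is essentially just the count of connected subgraphs of size $m$ --- the non-edge condition only contributes a factor $1-O(r^d)\sim 1$ --- and for $m=3$ it diverges. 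So $\Pee(A_n)\to 1$, not $\Pee(A_n)=O(n^{-\beta})$, and no Borel--Cantelli or sandwiching argument can rescue the strategy.

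The paper's proof does \emph{not} attempt to show components are cliques. Instead it splits into finitely many parallel sequences so that on each, $k:=\kvsparse$ is constant; then scan-statistic bounds (Lemma~\ref{lem:scanstatisticverysparse} applied to $W_1=B(0;\tfrac12)$ and $W_2=B(0;100)$) give $k\le M_{W_1}\le\omega(G_n)\le\chi(G_n)\le\Delta(G_n)+1\le M_{W_2}\le k+1$ a.a.a.s., so $\Delta(G_n)\in\{\omega(G_n)-1,\omega(G_n)\}$. If $\Delta=\omega-1$ the greedy bound finishes; if $\Delta=\omega$, Brooks' theorem gives $\chi=\omega$ unless some component is an odd cycle of length $\ge 5$, which would force $\omega=2$, hence $k\le 2$, hence $M_{W_2}\le 3$, which rules out any connected $4$-set (four mutually connected points lie in a ball of radius $<100r$) and so certainly any $5$-cycle. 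The non-clique components you worried about are harmless here: a $P_3$ is $2$-colourable, and $\omega(G_n)\ge 2$.
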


%%%%%%%%%%%%%%%%%%%%%%%%%%%%%%%%%%%%%%%%%%%%%%%%%%%%%%%%%%%%%%%%%%%%%%%%%%%%%%%%%%%%%%%%%%

\subsection{The weighted integral $\xi$ and explicit limits}
\label{subsec.explicit}
\m{or say `scaling value' and `scaled integral'?}
\m{or `weighting factor'?}

  For $x>0$ let $H(x)= \int_1^{x} \ln y \ dy =  x\ln x-x+1$ as in Theorem~\ref{thm.tobiasomega}. 
Observe that $H(1)=0$ and that
  the function $H(x)$ is strictly increasing for $x>1$.
  
  Now let $\varphi$ be a fixed non-negative, bounded, measurable function with
  $0<\int_{\eR^d}\varphi(x){\dd}x<\infty$.  For $s \geq 0$ let

\[ f(s) := \int_{\eR^d} H(e^{s\varphi(x)}){\dd}x .\]

\noindent
  It is routine to check that $f(0)=0$, that $f(s)$ is continuous
  and strictly increasing in $s$, that $f(s) < \infty$ for all $s \geq 0$ and that $f(s) \to \infty$ as
  $s \to \infty$.
  For $0<t<\infty$ the {\em weighting value} $s(\varphi,t)$ is defined
  to be the unique nonnegative solution $s$ to $f(s)=1/t$.
  Observe that the function $s(\varphi,t)$ is strictly decreasing in $t$, $s(\varphi,t) \to \infty$ as $t \to 0$ and
  $s(\varphi,t) \to 0$ as $t \to \infty$.
  
  We define $\xi(\varphi, t)$ for $0<t<\infty$ by
\begin{equation}\label{eq:seq}
  \xi(\varphi, t) := \int_{\eR^d}\varphi(x) e^{s\varphi(x)} dx
\end{equation}
  where $s$ is the weighting value $s(\varphi,t)$.
  So far, $\xi(\varphi, t)$ is strictly decreasing in $t$.
It is convenient also to set

\begin{equation}\label{eq:seqinf}
\xi(\varphi,\infty):=\int \varphi.
\end{equation}

\noindent
  Further, if $\int\varphi = 0$ (in which case $\varphi=0$ almost everywhere)
  then set $\xi(\varphi, t) = 0$ for each $t \in [0,\infty]$, and if $\int \varphi = \infty$
  then set $s(\varphi,t)=0$ and $\xi(\varphi,t) = \infty$ for each $t \in [0,\infty]$.
  We call $\xi(\varphi, t)$ the {\em weighted integral} :
  %\m{find better name?}
  note that the function $\xi$ depends only on the dimension $d$. % (and not on the norm $\norm{.} $).

  We may identify $\xi(\varphi, t)$ when $\varphi = 1_W$ for a measurable set $W\subseteq\eR^d$ with
  $0<\vol(W)<\infty$.
  For each $w>0$ define $c(w,t)$ for $t \in (0,\infty]$ as follows:
  set $c(w,\infty)=w$, and for $0<t<\infty$ let $c(w,t)$ be the unique solution $x \geq w$ to
  $H(\frac{x}{w})= \frac{1}{wt}$.
  Observe that $c(w,t)$ is continuous and strictly decreasing in $t$ for $t \in (0,\infty)$; and 
that 

\begin{equation}\label{eq:cwtinf}
c(w,t) \to \infty \;\; \text{ as } t \to 0,
\end{equation}

\noindent
  and 

\begin{equation}\label{eq:cwtnul}
c(w,t) \to w \;\; \text{ as } t \to \infty.
\end{equation}  

\noindent
  For $0<t<\infty$ we have $\xi(1_W,t) = e^s \vol(W)$ where $s$ is such that $H(e^s) \vol(W) =\frac{1}{t}$; and so
\begin{equation}\label{eq:xiindicatorrewrite}
  \xi(1_W,t) = c(\vol(W),t)
\end{equation}

\noindent
  and this holds also for $t=\infty$ since then both sides equal $\vol(W)$.
%%  Also note that, by~\ref{itm:phipsi} in Lemma~\ref{lem:xibasic}, if $0<w\leq w'$ then $c(w,t) \leq c(w',t)$ for 
%% all $t \in (0,\infty]$.

  Let $B(x;\rho)$ denote the ball $\{ y : \norm{x-y}<\rho \}$, so that $B=B(0;1)$. 
  Let us set 

\begin{equation}\label{eq:phinuldef}
\varphi_0 := 1_{B(0;\frac12)}.
\end{equation}

\noindent
Observe that the function $\fcli$ in Theorem~\ref{thm.tobiasomega} satisfies
  
\begin{equation} \label{eqn.omega-def}
  \fcli(t) = c(\vol(B(0;1/2)),t) = \xi(\varphi_0, t).
\end{equation}

So in particular, by the properties of $c(w,t)$ listed above, $\fcli$ satisfies the properties 
listed in part~\ref{itm:tobiasomega.intermediate} of Theorem~\ref{thm.tobiasomega}.

  Call a set $S \subseteq \eR^d$ {\em well-spread} if $\norm{v-w} > 1$
  for all $v\neq w\in S$; and let $\cal S$ denote the collection of
  all such sets.
  %well-spread sets $S \subseteq \eR^d$.
  %that satisfy $\norm{v-w} > 1$ %\geq 1$ for all $v\neq w\in S$.
  Finally here we call a nonnegative, measurable function $\varphi:\eR^d\to\eR$
  {\em (dual) feasible} if it satisfies the condition that
  $\sum_{v\in S} \varphi(v) \leq 1$ for each set $S\in{\cal S}$.
  For example, $\varphi_0$ 
  is feasible.  Denote the set of all feasible functions by~$\cal F$.
  We may now define the real-valued function $\fcol$ on $(0,\infty)$ in Theorem~\ref{thm.tobiaschi},
  by setting

\begin{equation} \label{eqn.chi-defn}
  \fcol(t) := \sup_{\varphi \in {\cal F}} \xi(\varphi, t) \;\;\;\; \mbox{ for }  \; 0<t<\infty.
\end{equation}

\noindent
  Finally note that the real-valued function $\frat$ on $(0,\infty)$ defined by~\eqref{eq:fratdef} 
 satisfies
\begin{equation} \label{eqn.ratio-defn}
  \frat(t) = \frac{ \sup_{\varphi \in {\cal F}} \ \xi(\varphi, t) }{ \xi(\varphi_0, t) }
  \;\;\;\; \mbox{ for }  \; 0<t < \infty.
\end{equation}

%%%%%%%%%%%%%%%%%%%%%%%%%%%%%%%%%%%%%%%%%%%%%%%%%%%%%%%%%%%%%%%%%%%%%%%%%%%%%%%%%%%%%%%%

\subsection{Fractional chromatic number}
\label{subsec.chif}

  We shall see that, in Theorem~\ref{thm.tobiaschi}, the same conclusion holds if we replace $\chi(G)$
  by the fractional chromatic number $\chi_f(G)$ and indeed this is the key to the proofs. 
  
  Recall that a {\em stable} or {\em independent} set in a graph $G$ is a set of vertices which are
  pairwise non-adjacent,  and the chromatic number $\chi(G)$ of $G$ corresponds to a natural integer 
linear program (ILP),
  expressing the fact that the chromatic number is the least number of stable sets needed to cover the vertices,
  as follows.  Let $A$ be the vertex-stable set incidence matrix of $G$, that is,
  the rows of $A$ are indexed by the vertices $v$, the columns are indexed by the stable sets $S$,
  and $(A)_{v,S} = 1$ if $v \in S$ and $(A)_{v,S} = 0$ otherwise.  Then $\chi(G)$ equals
 \begin{equation}\label{eq:ILP}
  \begin{array}{rl}
  \min & 1^T x \\
  \textrm{ subject to } &
   A x \geq 1, \\
  & x \geq 0, x \text{ integral}.
  \end{array}
  \end{equation}
  The {\em fractional chromatic number} $\chi_f(G)$ of a graph $G$ is the objective value of the LP-relaxation
  of~\eqref{eq:ILP} (that is, we drop the constraint that $x$ be integral).
 %% [The feasible functions $\varphi$ introduced in the last subsection correspond to feasible solutions to the dual
 %% \m{added comment in [ ]}
  
  It is easy to see that always $\omega(G) \leq \chi_f(G) \leq \chi(G)$.
  In general both the ratios $\chi_f(G)/\omega(G)$ and $\chi(G)/ \chi_f(G)$
  \m{added: for example}
  can be arbitrarily large (see for example chapter 3 of Scheinerman and Ullman~\cite{fractionalgraphtheory}),
  but that is not the case for geometric graphs (for a given norm on $\eR^d$).
  For $\chi(G) \leq \Delta(G)+1$ for any graph $G$, as shown by a natural greedy colouring algorithm;
  and, since we may cover the unit ball $B$ with a finite number $k$
  of sets of diameter $<1$, for any geometric graph $G$ we have  $\Delta(G)+1 \leq k \omega(G)$,
  and so $\omega(G) \leq \chi_f(G) \leq \chi(G) \leq k \omega(G)$.
  (The {\em diameter} of a set $A \subseteq \eR^d$ is $\sup\{ \norm{x-y}: x,y\in A \}$.)
  
  For the random geometric graph $G_n$ the two quantities $\chi(G_n)$ and $\chi_f(G_n)$ are even closer.
  Our approach to proving Theorem~\ref{thm.tobiaschi} and the other results will naturally yield:
  
\begin{theorem} \label{thm.chif}
   For the random geometric graph $G_n$ as in Section~\ref{subsec.rgg}
  (with any distance function $r=r(n)=o(1)$),
  we have $\chi(G_n)/\chi_f(G_n) \to 1$ a.s. 
\end{theorem}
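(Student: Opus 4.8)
The plan is to show that the gap between $\chi(G_n)$ and $\chi_f(G_n)$ is small by a local analysis: cover the relevant region of $\eR^d$ by boxes of side length $\Theta(r)$, argue that on each such box the chromatic and fractional chromatic numbers of the induced subgraph are close, and then glue these estimates together. The key structural fact we exploit is that geometric graphs are locally ``almost complete'' in a quantitative sense: since the unit ball $B$ can be covered by a bounded number $k=k(d,\norm{.})$ of sets of diameter $<1$, any geometric graph $G$ satisfies $\omega(G)\le\chi_f(G)\le\chi(G)\le(\Delta(G)+1)\le k\,\omega(G)$, as noted just before the statement.

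First I would split into the dense/intermediate regime ($nr^d=\Omega(\ln n)$) and the sparse regime ($nr^d=o(\ln n)$), since the two behave quite differently. In the dense and intermediate regimes, the idea is to chop $\eR^d$ (or the essential support of $\nu$) into a grid of cubes of side length, say, $4r$, grouped into a bounded number of colour classes of cubes so that cubes in the same class are at pairwise $\norm{.}$-distance $>r$ (a standard $2^d$-type colouring of the grid, or rather a bounded colouring adapted to the norm). Within a single cube $Q$, the induced subgraph $G_n[Q]$ has chromatic number and clique number differing by at most the constant factor $k$, and more importantly its vertex count is, with high probability, of order $nr^d$, which is $\omega(\ln n)$ in the dense case and $\Theta(\ln n)$ in the intermediate case. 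A colouring of $G_n$ can be obtained by taking, for each colour class of cubes, an optimal fractional colouring restricted to that class and rounding; the rounding loss on each cube is $O(\log |V(G_n[Q])|) = O(\log n)$ additively (the classical bound $\chi(H)\le\chi_f(H)(1+\ln\alpha(H))$ where $\alpha$ is the independence number, or the simpler $\chi(H)\le\chi_f(H)\ln|V(H)| + O(1)$-type statement), while $\chi_f(G_n)$ itself is of order $nr^d\cdot\log n$ in this regime — so the additive $O(\log n)$ per-cube loss, summed over the $O(n/(nr^d))$ relevant cubes, is $o(nr^d\log n)=o(\chi_f(G_n))$. Hence $\chi(G_n)/\chi_f(G_n)\to1$. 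One must be slightly careful that the number of non-empty cubes intersecting the support is controlled; this follows from $\nu$ having bounded density together with concentration of the empirical measure, exactly as in the companion results, and can be cited from the machinery developed elsewhere in the paper.

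In the sparse regime the argument is more delicate because $\chi_f(G_n)$ is only of order $\ln n/\ln(\ln n/nr^d)$, so we cannot afford an additive $O(\log n)$ loss per component, let alone globally. Here the point is that $G_n$ decomposes into connected components (or small clusters), each of bounded size with high probability — indeed in the very sparse case of Proposition~\ref{thm.verysparse} the components are essentially cliques — and for a graph $H$ of bounded size one trivially has $\chi(H)=\chi_f(H)$ unless $H$ contains an odd hole or similar obstruction, but even then $\chi(H)/\chi_f(H)$ is bounded by an absolute constant depending only on $|V(H)|$, which is bounded. More carefully: with high probability every component of $G_n$ has at most $m=m(n)$ vertices with $m=o(\chi_f(G_n))$ in a suitable sense, and then $\chi(G_n)=\max_C\chi(G_n[C])$ while $\chi_f(G_n)=\max_C\chi_f(G_n[C])$, and on the (finitely many, bounded-size) component achieving the max one checks directly that $\chi/\chi_f$ is within a $1+o(1)$ factor — in fact, the dominant contribution to both quantities comes from the largest clique, whose fractional and integer chromatic numbers coincide, so the ratio is $1+o(1)$. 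This uses the same tail estimates on component sizes and clique sizes that underlie parts~\ref{itm:tobiaschi.vsparse} and~\ref{itm:tobiaschi.sparse} of Theorem~\ref{thm.tobiaschi}.

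The main obstacle I anticipate is making the gluing in the dense/intermediate regime fully rigorous while keeping the rounding loss genuinely negligible: one needs the per-cube rounding error to be $o(nr^d\log n)$ \emph{uniformly}, and one needs an almost-sure (not just in-probability) statement, which forces a Borel–Cantelli argument and hence explicit exponential tail bounds on (i) the number and occupancy of cubes and (ii) the deviation of $\chi_f(G_n[Q])$ from its typical value on each cube. The latter is where the ``generalised scan statistics'' and the concentration results flagged in the introduction do the real work; assuming those, the colouring-theoretic part (the bounded-degree greedy bound and the $\chi\le\chi_f\cdot(1+\ln\alpha)$ inequality) is routine. A secondary technical point is handling the thin ``boundary'' region near $\partial(\supp\nu)$ or the overlaps between cube colour classes, but since each such region contributes only a lower-order number of vertices this only costs an additive $o(\chi_f(G_n))$ term.
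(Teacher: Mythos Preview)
Your proposal has a genuine gap in the dense/intermediate regime, and the sparse regime is handled by the wrong mechanism.

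\medskip

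\textbf{Dense/intermediate regime.} The box--covering scheme you describe can only yield $\chi(G_n)\le C\cdot\max_Q\chi(G_n[Q])$ where $C$ is the number of colour classes of cubes, a fixed constant strictly greater than $1$ depending on $d$ and $\norm{.}$. Even if each $\chi(G_n[Q])$ were exactly $\chi_f(G_n[Q])$, you would end with $\chi(G_n)\le C\,\chi_f(G_n)$, not $(1+o(1))\chi_f(G_n)$. Your ``summing the per-cube rounding loss over $O(r^{-d})$ cubes'' step is structurally wrong: the chromatic number is not additive over the cubes, so there is nothing to sum; and the side calculation that $\chi_f(G_n)$ is of order $nr^d\cdot\ln n$ is off by a factor $\ln n$ (it is $\Theta(nr^d)$, see parts~\ref{itm:tobiaschi.intermediate}--\ref{itm:tobiaschi.dense}). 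The paper removes the constant $C$ by a \emph{multiple}-covering argument (Lemma~\ref{lem.detchi2}): one runs the shifted grid over all $p$ in a period so that every small hypercube is covered exactly $(2K)^d$ times, which lets one divide the right-hand sides of the local ILPs by $(2K)^d$ before rounding; the resulting overhead factor is $(1+L/(2K))^d$, which can be made $1+\eps$. Combined with the LP-duality identity $\chi_f(G)=\sup_{\varphi\in\Fcal}M(V,\varphi)$ (Lemma~\ref{lem.chifg}) and Theorem~\ref{thm.Mphi}, this gives matching asymptotics for $\chi$ and $\chi_f$; see~\eqref{target.chi-bdd}.

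\medskip

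\textbf{Sparse regime.} Your component-size argument does not hold: once $nr^d$ exceeds the percolation threshold (still $nr^d\ll\ln n$), $G_n$ typically has a giant component, so ``every component has $o(\chi_f(G_n))$ vertices'' fails badly. The correct, and much simpler, route is the sandwich $\omega(G_n)\le\chi_f(G_n)\le\chi(G_n)$ together with the already-proved fact that $\chi(G_n)/\omega(G_n)\to1$ a.s.\ throughout the sparse range (and indeed up to $t_0$ when $\delta<1$); this is exactly how the paper finishes the proof.
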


%%%%%%%%%%%%%%%%%%%%%%%%%%%%%%%%%%%%%%%%%%%%%%%%%%%%%%%%%%%%%%%%%

\subsection{Generalised scan statistics}
\label{subsec:genscan}

  For a set $V$ of points in $\eR^d$ and  a nonnegative function
  $\varphi:\eR^d\to\eR$ we define $M(V,\varphi)$ by:
\[
  M(V,\varphi) := \sup_{x\in\eR^d} \sum_{v\in V} \varphi(v-x).
\]
  This quantity plays a central role in our analysis, as do
  the random variables
\[
  M_\varphi = M_{\varphi}(n,r) %:= M( r^{-1}\{X_1,\dots,X_n\}, \varphi )
  := \sup_{x\in\eR^d} \sum_{i=1}^n %\varphi\left(\frac{X_i-x}{r}\right) \varphi\left( X_i/r -x \right)
  \varphi\left(r^{-1} X_i -x \right) 
\]
  where we have scaled the $X_i$ by $r^{-1}$.  Thus if the points $X_1,\ldots,X_n$ are distinct,
  and $V=\{ r^{-1}X_1,\ldots,r^{-1}X_n\}$ then $M_{\varphi}(n,r)= M(V,\varphi)$.

  Feasible functions $\varphi$ correspond to feasible solutions
  to the dual of the LP for the fractional chromatic number.
  In the special case when $\varphi$ is the indicator function $1_W$
  of some set $W \subseteq \eR^d$, we denote $M_\varphi$ by $M_W$.
  We denote the number of indices $i \in \{1,\ldots,n\}$ such that $X_i \in W$
  by ${\cal N}(W) = {\cal N}_n(W)$; that is, ${\cal N}(W) = \sum_{i=1}^n 1_W(X_i)$.
  (We will often omit the argument or subscript $n$ for the sake of readability.)
  Notice that $M_W$ is the maximum number of points in any translate of $r W$; that is
  $M_W = \max_x {\cal N}( x+rW )$.
%\[  M_W = \max_{x \in \eR^d} \tel{ \{X_1,\dots,X_n\} \cap (x+r W)}.\]
  The variable $M_W$ is a {\em scan statistic} (with respect to the
  scanning set $W$), see for example the book~\cite{glaznaus01} by Glaz, Naus and Wallenstein:
  %\m{J.Glaz, J. Naus and S. Wallenstein, Scan Statistics, Springer-Verlag, New York, 2001}
  we call $M_\varphi$ a {\em generalised scan statistic}.
  \smallskip

  We say that a set $W\subseteq\eR^d$ has a {\em small neighbourhood} if it has finite volume and
  $\lim_{\eps \to 0} \vol( W_\eps ) = \vol( W )$, %where $W_\eps = W + B(0; \eps )$.
  where $W_\eps = W + \eps B$.
  \m{insist that $\vol(W)<\infty$?}
  Then for sets $W$ with finite volume, $W$ has a small neighbourhood if and only if
  $W$ is bounded and $\vol(\clo(W))=\vol(W)$, where $\clo(.)$ denotes closure.
  In particular all compact sets and all bounded convex sets have small neighbourhoods
(and the choice of the norm $\norm{.}$ is not relevant).
  We say that a function $\varphi:\eR^d\to\eR$ is {\em tidy}
  if it is measurable, bounded, nonnegative, has bounded support and
  the sets $\{ x : \varphi(x) > a \}$ have small neighbourhoods for all $a>0$.
  %We will call a function {\em simple} if it takes only finitely many values.

  The proofs of the above theorems rely heavily on the following limiting result
  concerning the generalised scan statistic $M_\varphi$ for a tidy function $\varphi$.
  %%%%%For example, often $\omega(G_n)$ is close to $M_{\frac12 B}$ (and $\Delta(G_n)$ is close to $M_{B}$).
  %\m{We postpone the proof until section~\ref{sec:scanstatistic}}
  %Recall that we say that a function $\varphi$ is tidy if it is measurable, nonnegative, bounded,
  %has bounded support and the sets $\{\varphi>a\}$ have small neighbourhoods for all $a>0$.

\begin{theorem}\label{thm.Mphi}
Let $\nu$ be a probability distribution on $\eR^d$ with finite maximum density $\sigma$;
  let $X_1,X_2,\ldots$ be independent random variables each with distribution $\nu$;
 and let $r=r(n)>0$ satisfy $r(n) \to 0$ as $n \to \infty$.
  If $\varphi$ is a tidy function and if $t(n):= \frac{\sigma n r^d}{\ln n}$ satisfies $\ \liminf_n t(n) >0$ then
\[
  \frac{M_{\varphi}}{\sigma nr^d}  \sim  \xi(\varphi, t(n)) \;\; \mbox{ a.s.}
\] 
\end{theorem}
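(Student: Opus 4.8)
The plan is to prove matching almost-sure upper and lower bounds on $M_\varphi/(\sigma n r^d)$, via a combination of a first-moment/union-bound argument for the upper bound and a second-moment or direct Poisson-approximation argument for the lower bound, followed by a Borel--Cantelli step to upgrade convergence in probability to almost sure convergence. First I would reduce to the case where $\varphi$ is a simple function (a finite nonnegative combination of indicators of sets with small neighbourhoods), since a tidy $\varphi$ can be sandwiched between simple functions from below and above with integrals as close as we like, and both $\xi(\cdot,t)$ and $M_\cdot$ are monotone; the continuity and monotonicity properties of $\xi$ in $\varphi$ recorded in Section~\ref{subsec.explicit} let us pass to the limit. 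It also helps to localise: because $\nu$ has bounded density and $r\to 0$, with high probability every translate of $r\cdot\supp(\varphi)$ that contributes near the supremum lies in a region where the local density is close to some value $f(x)\le\sigma$, and the extremal behaviour is governed by the points where $f$ is close to $\sigma$; this is why the answer scales with $\sigma$ and with the shape of $\varphi$ only through $\xi(\varphi,t)$.

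The heart of the matter is the following large-deviation estimate. Fix $x\in\eR^d$ and consider $Y_x:=\sum_{i=1}^n \varphi(r^{-1}X_i-x)$. Conditioning appropriately, $Y_x$ is (approximately) a sum of independent bounded random variables with small mean, so its moment generating function is controlled and, writing the scaled mean as $\sigma n r^d$ times a local density factor, a Chernoff bound gives $\Pee(Y_x\ge a\,\sigma n r^d)\le \exp(-(\sigma n r^d)\,I_\varphi(a)+o(\sigma n r^d))$ where the rate function $I_\varphi$ is a Legendre-type transform built from $\int H(e^{s\varphi(y)})\,\dd y$. One checks that $I_\varphi(\xi(\varphi,t)) = 1/t\cdot(\text{the normalising constant})$ in exactly the way that makes $a=\xi(\varphi,t(n))$ the threshold at which the expected number of ``bad'' translates, after discretising $x$ over a fine enough net (mesh polynomial in $1/n$, using that $\varphi$ has bounded support and bounded variation in the relevant sense so a net suffices), transitions from $n^{o(1)}$ to $n^{-\Omega(1)}$. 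Taking $a=(1+\eps)\xi(\varphi,t(n))$ makes the expected count summable in $n$, giving the a.s.\ upper bound by Borel--Cantelli; for the lower bound, one exhibits $\Omega(n^{\eps'})$ \emph{disjoint} candidate translates centred near a point of density $\approx\sigma$, shows each independently has probability $n^{-1+\eps''}$ of carrying at least $(1-\eps)\xi(\varphi,t(n))\cdot\sigma n r^d$ scaled mass, and concludes that at least one succeeds with probability $1-o(1/n)$, again summable.

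I expect the main obstacle to be the uniformity in $x$ — turning the pointwise large-deviation bound into a bound on the supremum over \emph{all} $x\in\eR^d$ — combined with handling the non-compact, merely-bounded-density distribution $\nu$. The discretisation of $x$ needs the ``small neighbourhood'' hypothesis on the level sets of $\varphi$ precisely to guarantee that rounding $x$ to the net changes $\sum_v\varphi(v-x)$ by a negligible amount with high probability (one controls the number of points near the boundaries of the scaled level sets, using that $\vol(W_\eps)\to\vol(W)$). For the tails of $\nu$, a separate crude argument shows that no translate of $r\cdot\supp(\varphi)$ outside a large but fixed compact set can contain enough points to matter, because the total $\nu$-mass outside that set is small and $nr^d$ is only logarithmically large; this lets us restrict the supremum to a compact region and then use a net of polynomial size there. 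Assembling these pieces — the reduction to simple $\varphi$, the Chernoff rate computation identifying $\xi$, the net argument with the small-neighbourhood input, the tail truncation, and the two Borel--Cantelli applications — yields the claimed almost-sure asymptotic equivalence.
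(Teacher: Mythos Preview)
Your high-level architecture is right and matches the paper closely: reduce to simple $\varphi$, identify $\xi(\varphi,t)$ as the threshold via a Chernoff/Cram\'er computation on $\int H(e^{s\varphi})$, and use Borel--Cantelli in both directions. There is, however, a genuine gap in the upper bound.

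You assert that taking $a=(1+\eps)\xi(\varphi,t(n))$ makes the expected count of bad translates summable in $n$. It does not. With $\sigma n r^d\sim t\ln n$ the pointwise tail is $\exp(-\sigma n r^d\, I_\varphi(a))=n^{-(1+c)}$ for some $c=c(\eps,t)>0$, but the net (or any discretisation at scale $\Theta(r)$) has size $\Theta(r^{-d})=n^{1+o(1)}$, so the union bound yields only $n^{-c+o(1)}$. For small $\eps$ (which is what you need) $c$ is small and this is \emph{not} summable. The paper runs into exactly this obstruction and resolves it by a subsequence argument: one proves the bound along $n=m^L$ with $L$ chosen so that $Lc>1$, and then bridges between consecutive $m^L$ by replacing $\varphi$ with a slightly fattened $\varphi_{\eta'}$ (using the small-neighbourhood hypothesis) so that $M_\varphi(n,r(n))\le M_{\varphi_{\eta'}}(m^L,r(m^L))$ for $(m-1)^L<n\le m^L$. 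Without some such interpolation device your Borel--Cantelli step fails.

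Two further points. First, you never Poissonise, but you invoke independence of counts over disjoint translates; under the original binomial sample these counts are only negatively associated, which suffices for the lower bound (via Mallows' inequality, as in Lemma~\ref{lem:negassmult}) but is awkward for the upper bound. The paper Poissonises on both sides, and for the upper bound replaces your net by a uniformly random lattice shift $\Gamma(U)$ together with a greedy partition of its sites into $O(r^{-d})$ groups whose total intensities are controlled---this handles the supremum over all $x$ without any truncation of the support of $\nu$. Second, your tail-truncation argument (``no translate outside a large compact set can matter because the $\nu$-mass there is small'') is not correct as stated: the density could be close to $\sigma$ arbitrarily far out, so distant translates are just as dangerous as nearby ones. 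The paper's grouping argument sidesteps this entirely by working with the global intensity bound $f\le\sigma$ rather than any compactness of $\supp(\nu)$.
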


%%%%%%%%%%%%%%%%%%%%%%%%%%%%%%%%%%%%%%%%%%%%%%%%%%%%%%%%%%%%%%%%%%%%%%%%%%%%%%%%%%%

\subsection{Plan of proofs}
\label{subsec:plan-proofs}

  In the next section, %Section~\ref{sec.scaling-function},
  Lemma~\ref{lem:xibasic} gives basic results on
  the weighted integral $\xi(\varphi,t)$ which we use throughout the paper.
  The following section, Section~\ref{sec:scan}, contains the proof of Theorem~\ref{thm.Mphi} on generalised scan statistics,
  and includes some more detailed lemmas on this topic which will be needed later.
  These two sections are quite technical: they could be skipped on a first reading, and referred back to as needed.
  
  %We then come to the heart of the proofs on $\omega(G_n)$ and $\chi(G_n)$.
  In the short Section~\ref{sec.clique-proof} we give a quick proof of parts~\ref{itm:tobiasomega.intermediate}
and~\ref{itm:tobiasomega.dense} of Theorem~\ref{thm.tobiasomega} on $\omega(G_n)$,
  using Theorem~\ref{thm.Mphi} (together with Lemma~\ref{lem:xibasic}). 
  In Section~\ref{sec.col-proof}, we come to the heart of the proofs on $\chi(G_n)$:
  we first prove some deterministic results on $\chi$ and $\chi_f$ for geometric graphs,
  and on feasible functions $\varphi$ and the weighted integral $\xi$; and then we deduce
  parts~\ref{itm:tobiaschi.intermediate} and~\ref{itm:tobiaschi.dense} of Theorem~\ref{thm.tobiaschi} on $\chi(G_n)$
and Theorem~\ref{thm.frat} on $\chi(G_n)/\omega(G_n)$, using
  Theorem~\ref{thm.Mphi} (together with Lemma~\ref{lem:xibasic}).
  In the next section, Section~\ref{sec.x(t)}, we show that $\fcol, \fcli, \frat$ and $t_0$ have the properties claimed in
  Theorems~\ref{thm.tobiaschi},~\ref{thm.tobiasomega} and~\ref{thm.frat}.
  Here, as well as using Lemma~\ref{lem:xibasic} and Theorem~\ref{thm.Mphi},
  we bring in some detailed lemmas on generalised scan statistics from Section~\ref{sec:scan}.
  In Section~\ref{sec.rest-col-proofs} we complete our proofs, and finally we make some concluding remarks.

%%%%%%%%%%%%%%%%%%%%%%%%%%%%%%%%%%%%%%%%%%%%%%%%%%%%%%%%%%%%%%%%%%%%%%%%%%%%%%
%%%%%%%%%%%%%%%%%%%%%%%%%%%%%%%%%%%%%%%%%%%%%%%%%%%%%%%%%%%%%%%%%%%%%%%%%%%%%%

\section{The weighted integral - basic results}
\label{sec.scaling-function}

Here we collect some useful observations about the weighted integral $\xi(\varphi,t)$ defined
in Section~\ref{subsec.explicit}.
Throughout the paper,  we will usually omit the domain we are integrating over and 
simply write $\int\varphi$
  instead of $\int_{\eR^d}\varphi(x){\dd}x$. %for convenience.
  All integrals in this paper are over $\eR^d$ (and wrt~the
  $d$-dimensional Lebesgue measure) unless explicitly stated
  otherwise. The following lemma lists a number of basic properties
  of $\xi(\varphi,t)$. We will make frequent use of these properties
  in the rest of the paper.

\begin{lemma}\label{lem:xibasic}
  Let $\varphi$ and $\psi$ be non-negative, bounded, integrable functions on $\eR^d$, % \in {\cal G}$
  and let $t \in (0,\infty]$.
\begin{enumerate}

\item\label{itm:phipsi}
  If $\varphi \leq \psi$ then $\xi(\varphi, t) \leq \xi(\psi, t)$.

\item\label{itm:xiscalar}
  $\xi(\lambda\varphi, t) = \lambda \ \xi(\varphi,t)$ for any $\lambda> 0$.

\item\label{itm:xisum}
  $\xi( \varphi+\psi, t) \leq \xi(\varphi,t)+\xi(\psi,t)$.

\item\label{itm:philambda}
  For $0<\lambda<1$ let $\varphi_\lambda$ be given by
  $\varphi_\lambda(x) = \varphi(\lambda x)$.  Then
  $\xi(\varphi,t)\leq \xi(\varphi_\lambda, t) \leq \lambda^{-d} \xi(\varphi, t )$.

\item\label{itm:xitplush}
  $ \frac{t}{t+h} \ \xi(\varphi,t)\leq\xi(\varphi,t+h)\leq\xi(\varphi,t)$ for $0<t<\infty$ and $h>0$.

\item\label{itm:phiindicatorbiggest}
  If $\int \varphi 1_{\{\varphi\geq a\}} \leq \int\psi 1_{\{\psi\geq a\}}$
  for all $a$ then $\xi(\varphi,t) \leq \xi(\psi,t)$.

\item \label{itm:toinf}
  $\xi(\varphi, t) \to \int\varphi = \xi(\varphi, \infty)$ as $t \to \infty$.

\item \label{itm:phiseq}
  Let $\varphi_1, \varphi_2, \dots$ be non-negative, bounded, integrable functions on $\eR^d$,
  and suppose that $\varphi_n \to \varphi$ pointwise as $n \to \infty$,
  and $\varphi_n \leq \psi$ for all $n$.
  \m{stronger conditions than before}
  Then 
  $\xi(\varphi_n,t) \to \xi(\varphi,t)$ as $n \to \infty$.

\end{enumerate}
\end{lemma}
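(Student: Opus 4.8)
The plan is to deduce all eight assertions from one \emph{dual (variational) description} of the weighted integral, invoking the explicit formula $\xi(\varphi,t)=\int\varphi\,e^{s\varphi}$ (with $s=s(\varphi,t)$) only where that is more convenient. The key step would be to prove that, for $0<t<\infty$ and $\varphi$ nonnegative, bounded and integrable,
\[
 \xi(\varphi,t)=\sup\Bigl\{\,\textstyle\int\varphi\rho\ :\ \rho:\eR^d\to[0,\infty)\ \text{measurable},\ \textstyle\int H(\rho)\le \tfrac1t\,\Bigr\},
\]
the supremum being attained at $\rho=e^{s\varphi}$ (with the obvious reading at $t=\infty$, where only $\rho\equiv1$ is admissible, giving $\int\varphi$). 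The inequality ``$\ge$'' is immediate, since $\rho=e^{s\varphi}$ is admissible ($\int H(e^{s\varphi})=1/t$) and attains $\int\varphi\rho=\xi(\varphi,t)$. For ``$\le$'' I would use the Fenchel--Young inequality for $H$: $H$ is convex with conjugate $H^{\ast}(y)=e^{y}-1$, so $uv\le H(u)+e^{v}-1$ for all $u>0$, $v\in\eR$; taking $u=\rho(x)$, $v=s\varphi(x)$ and integrating gives $s\int\varphi\rho\le\int H(\rho)+\int(e^{s\varphi}-1)\le\int H(e^{s\varphi})+\int(e^{s\varphi}-1)$, and the pointwise identity $z\ln z=H(z)+z-1$ (at $z=e^{s\varphi}$) collapses the right-hand side to $\int s\varphi\,e^{s\varphi}=s\,\xi(\varphi,t)$; dividing by $s>0$ finishes it. (All integrals here are finite since $\varphi$ is bounded and integrable; the degenerate case $\int\varphi=0$ is covered directly by the definition of $\xi$.)

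Given this formula, parts~\ref{itm:phipsi},~\ref{itm:xiscalar} and~\ref{itm:xisum} are immediate: the constraint does not involve $\varphi$, while $\rho\mapsto\int\varphi\rho$ (for $\rho\ge0$) is monotone, positively homogeneous and additive in $\varphi$, and a supremum of a sum is at most the sum of suprema. For part~\ref{itm:toinf}, $\rho\equiv1$ is always admissible, so $\xi(\varphi,t)\ge\int\varphi$, and since $s(\varphi,t)\to0$ as $t\to\infty$, dominated convergence in $\int\varphi\,e^{s\varphi}$ gives $\xi(\varphi,t)\downarrow\int\varphi$. The right-hand bound in part~\ref{itm:xitplush} just says the admissible set shrinks with $t$; for the left-hand one, given an admissible $\rho$ for $\xi(\varphi,t)$ and $\mu:=t/(t+h)\in(0,1)$, the interpolant $\rho':=\mu\rho+(1-\mu)$ is admissible for $\xi(\varphi,t+h)$ since $\int H(\rho')\le\mu\int H(\rho)\le\mu/t=1/(t+h)$ (convexity of $H$, $H(1)=0$) and $\int\varphi\rho'\ge\mu\int\varphi\rho$; then take the supremum. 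Finally part~\ref{itm:philambda}: the change of variables $y=\lambda x$ in the dual formula gives the scaling identity $\xi(\varphi_{\lambda},t)=\lambda^{-d}\xi(\varphi,t/\lambda^{d})$, and feeding this into part~\ref{itm:xitplush} (with $t+h=t/\lambda^{d}>t$) yields $\xi(\varphi,t)\le\xi(\varphi_{\lambda},t)\le\lambda^{-d}\xi(\varphi,t)$.

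The real work is part~\ref{itm:phiindicatorbiggest}, which I expect to be the main obstacle; note that part~\ref{itm:phipsi} is its special case $\varphi\le\psi$ (pointwise), so it need not be proved separately. The point is that $\xi(\varphi,t)$ and the function $s\mapsto\int H(e^{s\varphi})$ depend on $\varphi$ only through its distribution: writing $G_{\varphi}(a):=\int\varphi\,1_{\{\varphi\ge a\}}$, each is an integral $\int(\text{increasing, nonnegative weight})\,d\nu_{\varphi}$, where $\nu_{\varphi}$ is the measure on $(0,\infty)$ with $\nu_{\varphi}((a,\infty))=G_{\varphi}(a)$; and the hypothesis of~\ref{itm:phiindicatorbiggest} says precisely that $\nu_{\psi}$ dominates $\nu_{\varphi}$ in survival-function order. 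One cannot compare term by term, because this domination only yields $s(\varphi,t)\ge s(\psi,t)$, which acts against the desired inequality in $\int\varphi\,e^{s\varphi}$. I would get around this in one of two ways: (a) by rearrangement---by the Hardy--Littlewood inequality and the distribution-invariance of $\int H(\rho)$, the supremum in the dual formula reduces to a one-dimensional problem in the decreasing rearrangement $\varphi^{\downarrow}$, and $G_{\varphi}\le G_{\psi}$ implies the majorization $\int_{0}^{v}\varphi^{\downarrow}\le\int_{0}^{v}\psi^{\downarrow}$ for all $v$ (which one checks via Legendre duality, using that the conjugate of $v\mapsto\int_{0}^{v}\varphi^{\downarrow}$ is $a\mapsto a\int_{a}^{\infty}G_{\varphi}(b)\,b^{-2}\,db$), whence the one-dimensional comparison follows by summation by parts; or (b) directly, exploiting the identity $\tfrac{d}{ds}\!\int H(e^{s\varphi})=s\,\tfrac{d}{ds}\!\int\varphi\,e^{s\varphi}$, which rewrites the defining equation $\int H(e^{s\varphi})=1/t$ as $\int_{0}^{\xi(\varphi,t)}\bigl(\text{inverse of }s\mapsto\int\varphi\,e^{s\varphi}\bigr)=1/t$, a form that is visibly monotone in $\nu_{\varphi}$.

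For part~\ref{itm:phiseq}: the ``$\liminf$'' half is soft---for each fixed admissible $\rho$, dominated convergence ($\varphi_{n}\rho\le\psi\rho$, $\int\psi\rho\le\xi(\psi,t)<\infty$) gives $\int\varphi_{n}\rho\to\int\varphi\rho$, so $\liminf_{n}\xi(\varphi_{n},t)\ge\xi(\varphi,t)$. For the reverse I would use the defining formula: $f_{\varphi_{n}}(s):=\int H(e^{s\varphi_{n}})\to\int H(e^{s\varphi})$ for each $s$ by dominated convergence (dominant $\int H(e^{s\psi})<\infty$), and since these functions are continuous and increasing in $s$ this convergence is locally uniform, so $s(\varphi_{n},t)\to s(\varphi,t)$; then $\varphi_{n}e^{s(\varphi_{n},t)\varphi_{n}}\to\varphi e^{s(\varphi,t)\varphi}$ pointwise and is eventually dominated by $\psi e^{(s(\varphi,t)+1)\psi}$, giving $\xi(\varphi_{n},t)\to\xi(\varphi,t)$. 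The only case this misses is the degenerate one $\varphi\equiv0$, where $s(\varphi_{n},t)\to\infty$: there, for any admissible $\rho$ and any $K>e$, splitting according to $\{\rho\le K\}$ and $\{\rho>K\}$ (and using $\rho<H(\rho)/(\ln K-1)$ on the latter) gives $\int\varphi_{n}\rho\le K\int\varphi_{n}+\|\psi\|_{\infty}/((\ln K-1)t)$, so $\limsup_{n}\xi(\varphi_{n},t)\le\|\psi\|_{\infty}/((\ln K-1)t)$ for every $K$, forcing $\xi(\varphi_{n},t)\to0=\xi(0,t)$. The only genuinely delicate point in the whole argument is the monotonicity statement~\ref{itm:phiindicatorbiggest}; everything else is either the one-line consequence of the dual formula or a routine convergence argument.
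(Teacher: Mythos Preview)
Your approach is correct and takes a genuinely different route from the paper's. The paper never introduces the variational formula $\xi(\varphi,t)=\sup\{\int\varphi\rho:\int H(\rho)\le 1/t\}$; it works throughout with the explicit expression $\int\varphi\,e^{s\varphi}$ and proves each item by hand. For~\ref{itm:phipsi} the paper differentiates in $t$ to obtain $\tfrac{d}{dt}\xi(\varphi,t)=-1/(t^{2}s(\varphi,t))$, notes that $\varphi\le\psi$ forces $s(\varphi,t)\ge s(\psi,t)$, and concludes that $\xi(\psi,\cdot)-\xi(\varphi,\cdot)$ is nonincreasing with nonnegative limit at $\infty$; for~\ref{itm:xisum} it reduces to simple functions via~\ref{itm:phiseq} and decomposes cell by cell. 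Your dual formula makes~\ref{itm:phipsi}--\ref{itm:xisum} one-liners, and your convexity interpolation $\rho'=\mu\rho+(1-\mu)$ for~\ref{itm:xitplush} is cleaner than the paper's derivation of~\ref{itm:xitplush} from~\ref{itm:philambda}. For~\ref{itm:phiindicatorbiggest} the two arguments converge in spirit: the paper writes $H(e^{sy})=\int_{0}^{\infty}F(z)\,y\,1_{\{y\ge z\}}\,dz$ with $F\ge 0$ to get $\int H(e^{s\varphi})\le\int H(e^{s\psi})$ for every fixed $s$, then re-invokes the ODE argument from~\ref{itm:phipsi}; your route~(b) instead uses $g_{\varphi}(s):=\int\varphi\,e^{s\varphi}=\int_{(0,\infty)}e^{sy}\,d\nu_{\varphi}(y)$ (an increasing weight against $\nu_{\varphi}$) to get $g_{\varphi}\le g_{\psi}$, and then the identity $\int_{0}^{\xi}g_{\varphi}^{-1}=1/t$. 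That step ``$g_{\varphi}\le g_{\psi}$'' is what your phrase ``visibly monotone'' is hiding---spell it out. Your route~(a) via rearrangement is plausible but the conjugate formula you quote is nonstandard and would need proof; route~(b) is cleaner and suffices on its own. Your treatment of~\ref{itm:phiseq}, including the degenerate case, is essentially the paper's argument and is fine.
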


  The case $t=\infty$ is always trivial, so in the proofs we will only consider the case when $t<\infty$.
  On several occasions,
  \m{rephrase?} in the proof below and later, we will differentiate an integral over $x\in\eR^d$ with respect
  to a parameter $u$ and swap the order of integration. In all cases this can be justified by means of the
  fundamental theorem of calculus and Fubini's theorem\footnote{
%%%%%%%%%%%%%%%%%%%%%%%%%%%%%%%%%%%%%%%%%%%%%%%%%%%%%%%%%%%%%%%%
Here we mean the following. If $g(x,u)$ denotes one of $\varphi(x)e^{u\varphi(x)}$ or $H(e^{u\varphi(x)})$ 
then
$\int_{\eR^d} g(x,u)-g(x,0){\dd}x =
\int_{\eR^d}\int_0^u g_2(x,w){\dd}w{\dd}x =
\int_0^u\int_{\eR^d}g_2(x,w){\dd}x{\dd}w$, where $g_2$ denotes the derivative of $g$
wrt.~the second argument, and we have used Fubini's theorem to switch the order of
integration. Now the fundamental theorem of calculus
shows that $\frac{\dd}{{\dd}u}\int_{\eR^d} g(x,u){\dd}x
= \frac{\dd}{{\dd}u}\int_{\eR^d} g(x,u)-g(x,0){\dd}x
= \frac{\dd}{{\dd}u}\int_0^u\int_{\eR^d}g_2(x,w){\dd}x{\dd}w =
\int_{\eR^d}g_2(x,u){\dd}x$.
%%%%%%%%%%%%%%%%%%%%%%%%%%%%%%%%%%%%%%%%%%%%%%%%%%%%%%%%%%%%%%%%
}.
  A function is {\em simple} if it takes only finitely many values.
  We prove the parts of the lemma in a convenient order.
  % as in the proof of Lemma~\ref{prop:xincr}. \\
  %by showing the integral converges uniformly for $s$ in some suitably
  %chosen interval together with the dominated convergence theorem. \\
\medskip

\begin{pf}{of \ref{itm:toinf}}
  If $0< t \leq t'< \infty$ then $s(\varphi,t) \geq s(\varphi,t')$ so
  $\xi(\varphi,t) \geq \xi(\varphi,t')$.  Thus
  \ref{itm:toinf} follows from the monotone convergence theorem.
\end{pf}

%%%%%%%%%%%%%%%%%%%%%%%%%%%%%%%%%%%%%%%%%%%%%%%%%%%%%%%%
%%%%%%%%%%%%%%%%%%%%%%%%%%%%%%%%%%%%%%%%%%%%%%%%%%%%%%%%
%%%% Next item
%%%%%%%%%%%%%%%%%%%%%%%%%%%%%%%%%%%%%%%%%%%%%%%%%%%%%%%%
%%%%%%%%%%%%%%%%%%%%%%%%%%%%%%%%%%%%%%%%%%%%%%%%%%%%%%%%
\medskip

\begin{pf}{of \ref{itm:phipsi}}
If we differentiate the equation $t \int H( e^{s\varphi}) = 1$ wrt $t$ we find:
\[
  0 = \int H( e^{s\varphi}) + t\int s's\varphi^2 e^{s\varphi} =
  \frac{1}{t} + s's t\int \varphi^2e^{s\varphi},
\]
  which gives
\[
  s' = - \frac{1}{t^2 s \int \varphi^2 e^{s\varphi}}.
\]

\noindent
(That $s$ is differentiable can for instance be seen from the implicit function theorem\footnote{%
%%%%%%%%%%%%%%%%%%%
Set $F(t,s) := \int_{\eR^d}H(e^{s\varphi(x)}){\dd}x-\frac{1}{t}$. %%%, so that $F(t,s(\varphi,t)) = 0$ for all $t>0$.
Observe that $\frac{\partial}{{\partial}s}F \neq 0$ for all $s>0$.
The implicit function theorem now gives that for every $t>0$ there 
are neighbourhoods $U$ of $t$ and $V$ of $s(\varphi,t)$ and a unique function
$g:U\to V$ such that $F(t',g(t'))= 0$ for all $t'\in U$; and moreover this 
$g$ is continuously differentiable.
Thus $s(\varphi,t')=g(t')$ for all $t'\in U$ and in particular $s(\varphi,t')$ is differentiable
at $t'=t$.
%%%%%%%%%%%%%%%%%%%%%%%%%%%%%%%%%%
}.)
  Thus
\[
  \frac{\dd}{{\dd}t}\xi(\varphi, t)
  = \int s' \varphi^2 e^{s\varphi} = -\frac{1}{t^2 s}.
\]
  Now notice that $\varphi \leq \psi$ implies $s(\varphi, t) \geq s(\psi, t)$, so that
  for all $0<t<\infty$:
\[
\frac{\dd}{{\dd}t} \xi(\varphi, t) \geq \frac{\dd}{{\dd}t} \xi(\psi, t),
\]
  which implies that $\xi(\psi, t) - \xi(\varphi, t)$ is non-increasing.
  Finally, by \ref{itm:toinf} we have
\[
  \lim_{t\to\infty} \left( \xi(\psi, t)-\xi(\varphi,t) \right) = \int \psi - \int \varphi \geq 0,
\]
  so that $\xi(\psi,t) \geq \xi(\varphi, t)$ for all $t>0$.
\end{pf}

%%%%%%%%%%%%%%%%%%%%%%%%%%%%%%%%%%%%%%%%%%%%%%%%%%%%%%%%
%%%%%%%%%%%%%%%%%%%%%%%%%%%%%%%%%%%%%%%%%%%%%%%%%%%%%%%%
%%%% Next item
%%%%%%%%%%%%%%%%%%%%%%%%%%%%%%%%%%%%%%%%%%%%%%%%%%%%%%%%
%%%%%%%%%%%%%%%%%%%%%%%%%%%%%%%%%%%%%%%%%%%%%%%%%%%%%%%%
\medskip

\begin{pf}{of \ref{itm:xiscalar}}
  We must have $s(\lambda\varphi, t) = s(\varphi, t) / \lambda$
  as $\int H(e^{s(\varphi,t)\varphi}) = \int H(e^{s(\lambda\varphi,t)\lambda\varphi}) = \frac{1}{t}$.
  So indeed
  $\xi(\lambda\varphi,t) = \int \lambda \varphi e^{s(\lambda\varphi,t)\lambda\varphi}
  = \lambda \int \varphi e^{s(\varphi,t)\varphi} = \lambda\xi(\varphi,t)$.
\end{pf}

%%%%%%%%%%%%%%%%%%%%%%%%%%%%%%%%%%%%%%%%%%%%%%%%%%%%%%%%
%%%%%%%%%%%%%%%%%%%%%%%%%%%%%%%%%%%%%%%%%%%%%%%%%%%%%%%%
%%%% Next item
%%%%%%%%%%%%%%%%%%%%%%%%%%%%%%%%%%%%%%%%%%%%%%%%%%%%%%%%
%%%%%%%%%%%%%%%%%%%%%%%%%%%%%%%%%%%%%%%%%%%%%%%%%%%%%%%%
\medskip

\begin{pf}{of \ref{itm:phiseq}}
First, let $s\geq 0$ be fixed but otherwise arbitrary.
Observe that $H(e^{s\varphi_n})\leq H(e^{s\psi})$.
Since we also have $\int H(e^{s\psi}) < \infty$ (as observed in section~\ref{subsec.explicit}), the 
dominated convergence theorem gives

\[
  \lim_{n\to\infty} \int H(e^{s\varphi_n})
  = \int H(e^{s\varphi}).
\]

\noindent
  This shows that $\lim_{n\to\infty} s(\varphi_n,t) = s(\varphi, t)$.
  Thus, for all $\eps > 0$ and $n$ sufficiently large:
\[
  \int \varphi_n e^{(s(\varphi,t)-\eps)\varphi_n}
  \leq \xi(\varphi_n, t) % \int \varphi_n e^{s(\varphi_n,t)\varphi_n}
  \leq \int \varphi_n e^{(s(\varphi,t)+\eps)\varphi_n}
  \leq \int \psi e^{(s(\varphi,t)+\eps)\psi}.
\]
  As $\int \psi e^{(s(\varphi,t)+\eps)\psi} < \infty$ the
  dominated convergence theorem also gives that
\[
\int \varphi e^{(s(\varphi,t)-\eps)\varphi}
\leq \liminf \xi(\varphi_n,t) \leq \limsup \xi( \varphi_n,t)
\leq \int \varphi e^{(s(\varphi,t)+\eps)\varphi}.
\]
  Two more applications of the the dominated convergence theorem now yield
\[
  \lim_{\eps\to 0}\int \varphi e^{(s(\varphi,t)-\eps)\varphi}
  = \lim_{\eps\to 0}\int \varphi e^{(s(\varphi,t)+\eps)\varphi}
  = \xi(\varphi,t),
\]
  giving the result.
\end{pf}

%%%%%%%%%%%%%%%%%%%%%%%%%%%%%%%%%%%%%%%%%%%%%%%%%%%%%%%%
%%%%%%%%%%%%%%%%%%%%%%%%%%%%%%%%%%%%%%%%%%%%%%%%%%%%%%%%
%%%% Next item
%%%%%%%%%%%%%%%%%%%%%%%%%%%%%%%%%%%%%%%%%%%%%%%%%%%%%%%%
%%%%%%%%%%%%%%%%%%%%%%%%%%%%%%%%%%%%%%%%%%%%%%%%%%%%%%%%
\medskip

\begin{pf}{of~\ref{itm:xisum}}
  By~\ref{itm:phiseq} it suffices to take $\varphi$ and  $\psi$ as simple functions.
  What is more, we can assume without loss of generality that $\supp(\varphi) = \supp(\psi)$.
  Hence we can assume that there are disjoint sets $A_i$ $i=1,\ldots,n$ such that
  $\varphi= \sum_i a_i 1_{A_i}$ and $\psi= \sum_i b_i 1_{A_i}$ where each $a_i>0$ and $b_i>0$.
  Let $s_0=s(\varphi+\psi,t)$ and define $\alpha_i$, $\beta_i$ and $\gamma_i$ by setting
  $1/\alpha_i = \int_{A_i} H(e^{s_0 a_i})$,
  $1/\beta_i = \int_{A_i} H(e^{s_0 b_i})$ and
  $1/\gamma_i = \int_{A_i} H(e^{s_0 (a_i+b_i})$.
  Clearly $0<\gamma_i < \alpha_i, \beta_i < \infty$.  Denote
  $(a_i+b_i) 1_{A_i}$ by $g_i$, and note that
  $s(g_i,\gamma_i) = s_0$.  Hence
\[
  \xi(\varphi+\psi,t) =
  \int (\varphi+\psi)e^{s_0 (\varphi+\psi)} =
  \sum_i \int g_i e^{s_0 g_i} =  \sum_i \xi(g_i,\gamma_i).
\]
  But now by~\ref{itm:xiscalar}
\begin{eqnarray*}
  \xi(\varphi+\psi,t) &=&
   \sum_i a_i \xi(1_{A_i}, \gamma_i) + \sum_i b_i \xi(1_{A_i}, \gamma_i)\\
  &\leq &
   \sum_i a_i \xi(1_{A_i}, \alpha_i) + \sum_i b_i \xi(1_{A_i}, \beta_i)\\
  &=& \xi(\varphi,t) + \xi(\psi,t),
\end{eqnarray*}
  completing the proof.
\end{pf}

%%%%%%%%%%%%%%%%%%%%%%%%%%%%%%%%%%%%%%%%%%%%%%%%%%%%%%%%
%%%%%%%%%%%%%%%%%%%%%%%%%%%%%%%%%%%%%%%%%%%%%%%%%%%%%%%%
%%%% Next item
%%%%%%%%%%%%%%%%%%%%%%%%%%%%%%%%%%%%%%%%%%%%%%%%%%%%%%%%
%%%%%%%%%%%%%%%%%%%%%%%%%%%%%%%%%%%%%%%%%%%%%%%%%%%%%%%%
\medskip

\begin{pf}{of~\ref{itm:phiindicatorbiggest}}
  It suffices to show that $s(\varphi, t) \geq s(\psi,t)$
  for all $t$, because then the argument given in the proof
  of part~\ref{itm:phipsi} will give the result.
  Therefore it also suffices to show that
\begin{equation}\label{eq:xiHeq0}
  \int_{\eR^d} H(e^{s\varphi(x)}) {\dd}x \leq \int_{\eR^d} H( e^{s\psi(x)})  {\dd}x
\end{equation}
  for all $s > 0$. 
  It is straightforward to check that $F(y):= \frac{\dd}{\dd y} \left( H(e^{sy})/y\right)\geq 0$
  for all $y \geq 0$.  But
\[
  H(e^{sy}) = y \int_0^y F(z) {\dd}z = \int_0^{\infty} F(z) y 1_{y \geq z} {\dd}z,
\]
  and so
\[
  \int_0^{\infty} H(e^{s \varphi(x)}){\dd}x  = \int_0^{\infty} \int_0^{\infty} F(z) \varphi(x) 1_{\varphi(x) \geq z} {\dd}z {\dd}x.
\]
  We may swap the order of integration since all the quantities involved are non-negative.
  Hence, using the fact that $F(z) \geq 0$ 
\begin{eqnarray*}
  \int_0^{\infty} H(e^{s \varphi(x)}){\dd}x  &=&
  \int_0^{\infty} F(z) \left(\int_0^{\infty}  \varphi(x) 1_{\varphi(x) \geq z}  {\dd}x \right) {\dd}z\\
  & \leq &
   \int_0^{\infty} F(z) \left(\int_0^{\infty}  \psi(x) 1_{\psi(x) \geq z}  {\dd}x \right) {\dd}z\\
   &=&
    \int_0^{\infty} H(e^{s \psi(x)}){\dd}x,
\end{eqnarray*}
  so that~(\ref{eq:xiHeq0}) holds, as desired.
\end{pf}

%%%%%%%%%%%%%%%%%%%%%%%%%%%%%%%%%%%%%%%%%%%%%%%%%%%%%%%%
%%%%%%%%%%%%%%%%%%%%%%%%%%%%%%%%%%%%%%%%%%%%%%%%%%%%%%%%
%%%% Next item
%%%%%%%%%%%%%%%%%%%%%%%%%%%%%%%%%%%%%%%%%%%%%%%%%%%%%%%%
%%%%%%%%%%%%%%%%%%%%%%%%%%%%%%%%%%%%%%%%%%%%%%%%%%%%%%%%
\medskip

\begin{pf}{of \ref{itm:philambda}}
  Note that the substitution $y = \lambda x$ gives that:
\[
  \frac{1}{t} = \int_{\eR^d} H(e^{s\varphi(\lambda x)}) dx =
  \lambda^{-d} \int_{\eR^d} H(e^{s\varphi(y)}) dy,
\]
  so that $s(\varphi_\lambda, t ) = s(\varphi, \lambda^{-d}t)$.
  Using the same substitution we get
\[
  \xi(\varphi_\lambda, t) = \lambda^{-d} \int_{\eR^d} \varphi(y) e^{s(\varphi, \lambda^{-d}t)\varphi(y)} dy
  = \lambda^{-d} \xi( \varphi, \lambda^{-d} t ).
\]
  The upper bound now follows from the fact that $\lambda^{-d} > 1$ and that
  $s(\varphi, t)$ is decreasing in~$t$.
  The lower bound follows from part~\ref{itm:phiindicatorbiggest},
  %to be proved independently below,
  because $\int \varphi_\lambda 1_{\{\varphi_\lambda\geq a\}}
  = \lambda^{-d} \int \varphi 1_{\{\varphi\geq a\}}$ for all~$a$
  (again by the substitution $y=\lambda x$).
\end{pf}

%%%%%%%%%%%%%%%%%%%%%%%%%%%%%%%%%%%%%%%%%%%%%%%%%%%%%%%%
%%%%%%%%%%%%%%%%%%%%%%%%%%%%%%%%%%%%%%%%%%%%%%%%%%%%%%%%
%%%% Next item
%%%%%%%%%%%%%%%%%%%%%%%%%%%%%%%%%%%%%%%%%%%%%%%%%%%%%%%%
%%%%%%%%%%%%%%%%%%%%%%%%%%%%%%%%%%%%%%%%%%%%%%%%%%%%%%%%
\medskip

\begin{pf}{of~\ref{itm:xitplush}}
  Let $\lambda = \left(\frac{t}{t+h}\right)^\frac1d$.
  Then $0<\lambda<1$ and so by~\ref{itm:philambda} and its proof
 \[
  \xi(\varphi,t) \leq \xi(\varphi_{\lambda},t) = \lambda^{-d} \xi(\varphi, \lambda^{-d}t) = \frac{t+h}{t} \xi(\varphi,t+h).
 \]
   Also, we have already seen that $\xi(\varphi,t+h) \leq  \xi(\varphi,t)$.
\end{pf}

%\m{no longer wanted?}
%\begin{pf}{of~~\ref{itm:xitplush}}
%  Let $s=s(\varphi,t)$. Then
%\begin{eqnarray*}
%  1/t &=&
%  \int H(e^{s \varphi(x)}dx \;\;
%  = \int \left( s \varphi(x) e^{s \varphi(x)} - e^{s \varphi(x)} +1 \right) dx\\
%  &=&
%  s \xi(\varphi,t) - \int ( e^{s \varphi(x)} -1)dx \;\; \leq s \xi(\varphi,t).
%\end{eqnarray*}
%  Thus $\xi(\varphi,t) \geq \frac{1}{ts}$, and so from the proof of~\ref{itm:phipsi}
%\[
%\frac{\dd}{{\dd}t}\xi(\varphi, t) = -\frac{1}{t^2 s} \geq - \frac{1}{t}\ \xi(\varphi, t).
%\]
%  Hence for some $t_1$ with $t_0<t_1<t_0+h$
%\begin{eqnarray*}
%  \xi(\varphi, t_0+h) - \xi(\varphi, t_0) &=&
%  h \cdot \frac{\dd}{{\dd}t}\xi(\varphi, t) |_{t=t_1}\\
%  &\geq &
%  - h \cdot \frac{1}{t_1} \xi(\varphi, t_1)\\
%  &\geq &
%  - h \cdot \frac{1}{t_0} \xi(\varphi, t_0).
%\end{eqnarray*}
%  Thus
%\[
%  \xi(\varphi, t_0+h) \geq \xi(\varphi, t_0) (1- \frac{h}{t_0}).
%\]
%\end{pf}

  We have now completed the proof of Lemma~\ref{lem:xibasic}.

%%%%%%%%%%%%%%%%%%%%%%%%%%%%%%%%%%%%%%%%%%%%%%%%%%%%%%%%%%%%%%%%%%%%%%%%%%%%%%%%%%%%%%%%%%%%%%
%%%%%%%%%%%%%%%%%%%%%%%%%%%%%%%%%%%%%%%%%%%%%%%%%%%%%%%%%%%%%%%%%%%%%%%%%%%%%%%%%%%%%%%%%%%%%%

\section{Proofs for generalised scan statistics}
\label{sec:scanstatistic} \label{sec:scan}

  In this section, after some preliminary results, we consider generalised scan statistics first in the 
sparse case, then the dense case,
  and finally prove Theorem~\ref{thm.Mphi} on the limiting behaviour of $M_{\varphi}$.

  Here is a rough sketch of the main idea of the proof of Theorem~\ref{thm.Mphi},
  when $\sigma n r^d \sim t \ln n$.  Consider first the special case $\varphi=1_W$.
  Since there are about $r^{-d}$ disjoint scaled translates $rW$ of $W$ where the probability density is close to $\sigma$,
  we see that $M_W$ behaves like the maximum of about $r^{-d}= n^{1+o(1)}$ independent copies of the number $Z$
  of points $X_1,\ldots,X_n$ in a fixed scaled translate $rW$ where the probability density is close to $\sigma$;
  %When $\sigma n r^d \sim t \ln n$,
  %The number $r^{-d}$ of copies is $n^{1+o(1)}$ %roughly $n$ $n^{1+o(1)}$ 
  and $Z$ is roughly $\Po(\lambda)$ where $\lambda= \vol(W)\sigma nr^d \sim \vol(W) t \ln n$.
  Large deviation estimates show that $M_W$ will be about $c \lambda$ where $c>1$ satisfies
  $\Pee(\Po(\lambda) \geq c \lambda) \sim 1/n$; and this happens when 
  $H(c) \lambda \sim \ln n$, that is $H(c)\vol(W) t \sim 1$.
  
  For the general case concerning $M_{\varphi}$ it suffices to consider a step function
  $\varphi= \sum_i a_i 1_{A_i}$ where the sets $A_i$ are disjoint.
  If $Z_i$ corresponds to $rA_i$ just as $Z$ corresponded to $rW$ above,
  then $M_{\varphi}$ behaves like the maximum of about $r^{-d}$ independent copies of $\sum_i a_i Z_i$,
  and we may proceed as above.

%%%%%%%%%%%%%%%%%%%%%%%%%%%%%%%%%%%%%%%%%%%%%%%%%%%%%%%%%%%%%%%%%%%

\subsection{Preliminaries}
\label{subsec.prelims}

  %The proofs in this section make use of
  We need results on the maximum density $\sigma$ and disjoint `dense' sets.
  The first lemma is from M\"uller~\cite{twopoint}, and is a straightforward consequence of the fact
  that the set of points where $\nu$ has density at least $(1-\eps/2) \sigma$ has positive measure.
  %(where a proof can be found in appendix B).

%\begin{proposition}
\begin{lemma}\label{prop:numax}
  Let $W \subseteq \eR^d$ be bounded with positive Lebesgue measure and fix $\eps >0$.
  Then there exist $\Omega(r^{-d})$ disjoint translates $x_1+rW, \dots, x_N+rW$
  of $rW$ with $\nu(x_i+rW)/\vol(rW) \geq (1-\eps)\sigma$ for all $i=1,\dots,N$.
\end{lemma}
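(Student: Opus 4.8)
The plan is to exploit the definition of $\sigma$ as the essential supremum of the density $f$. First I would observe that, by definition of $\sigma$, the set
\[
D := \{ x \in \eR^d : f(x) > (1-\eps/2)\sigma \}
\]
has positive Lebesgue measure. By a standard Lebesgue density argument, almost every point of $D$ is a point of density~$1$ for $D$; pick such a point $x_0$. Then for every sufficiently small $\rho>0$ the ball $B(x_0;\rho)$ satisfies $\vol(B(x_0;\rho)\cap D) \geq (1-\eta)\vol(B(x_0;\rho))$, where $\eta>0$ can be taken as small as we like (depending on $\eps$ and on the geometry of $W$ relative to a ball). From this one gets a small cube or ball $Q$ on which $f$ exceeds $(1-\eps/2)\sigma$ except on a subset of relative volume $<\eta$, and hence $\nu(Q)/\vol(Q) \geq (1-\eps/2)(1-\eta)\sigma \geq (1-\eps)\sigma$ for $\eta$ small enough.

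Next I would transfer this from a ball/cube to translates of $rW$. Since $W$ is bounded with positive measure, $rW$ fits inside a cube of side $O(r)$; by the argument above there is a fixed cube $Q_0$ (independent of $n$) of positive side length on which the ``average density is close to $\sigma$'' property holds. Inside $Q_0$ we can pack $\Omega(r^{-d})$ disjoint translates of the bounding cube of $rW$, each of which contains a translate $x_i + rW$. The only remaining point is that each such translate inherits the density bound: here I would either take $\eta$ small enough at the outset so that \emph{every} sub-translate of $rW$ inside $Q_0$ has $\nu(x_i+rW) \geq (1-\eps)\sigma\,\vol(rW)$ on average, or — more cleanly — first pass to the subset $D\cap Q_0$, note it has relative volume $\geq 1-\eta$ in $Q_0$, and argue that only an $O(\eta)$ fraction of the disjoint translates can fail the bound, discarding those; since we started with $\Omega(r^{-d})$ translates, $\Omega(r^{-d})$ survive. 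A cleaner route still, which I would probably follow, is to work directly with a point $x_0$ of density~$1$ for $D$ and take $r$ small: the ball $B(x_0; \diam(W)\, r)$ eventually has $D$-density arbitrarily close to~$1$, and then a volume/averaging argument over the $\Omega(r^{-d})$ disjoint translates packed into this ball shows all but a vanishing fraction satisfy the required inequality.

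The main obstacle is purely the bookkeeping of the two small parameters: the ``$\eps/2$'' slack in the pointwise density bound on $D$ versus the ``$\eta$'' slack coming from $D$ not being exactly a ball/cube, and making sure these combine to give $(1-\eps)\sigma$ uniformly over $\Omega(r^{-d})$ translates rather than just on average. Since the lemma is quoted from M\"uller~\cite{twopoint}, I would keep this short: state that it is ``a straightforward consequence of the fact that $\vol(\{x : f(x) \geq (1-\eps/2)\sigma\}) > 0$'' together with the Lebesgue density theorem and a packing argument, and refer to~\cite{twopoint} for the routine details.
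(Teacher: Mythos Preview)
Your proposal matches the paper exactly in its bottom line: the paper does not give a proof but simply cites M\"uller~\cite{twopoint} and remarks that it ``is a straightforward consequence of the fact that the set of points where $\nu$ has density at least $(1-\eps/2)\sigma$ has positive measure,'' which is precisely your suggested treatment.

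One quantitative slip in your sketch is worth flagging. In your ``cleaner route still'' you propose to pack the translates into $B(x_0;\diam(W)\,r)$; but that ball has volume $\Theta(r^d)$ and so contains only $O(1)$ disjoint translates of $rW$, not $\Omega(r^{-d})$. The correct version is your earlier one: fix a region $Q_0$ of \emph{constant} positive volume (independent of $r$) on which the $D$-density is at least $1-\eta$, pack $\Theta(\vol(Q_0)/r^d)=\Omega(r^{-d})$ disjoint translates of $rW$ into it, and then use the averaging argument to discard the at most $(\eta/c)$-fraction of translates whose intersection with $D^c$ exceeds $c\,\vol(rW)$. With $\eta$ chosen small relative to $\eps$ this leaves $\Omega(r^{-d})$ translates satisfying the bound, as required.
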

%\end{proposition}
  %
  %A proof can be found in appendix B of \cite{twopoint}.
  This last result extends to:

%\begin{corollary}
\begin{lemma}
  Fix $\eps > 0$ and let $W \subseteq \eR^d$ be bounded and let
  $W_1, \dots, W_k$ be a partition of $W$ with $\vol(W_i) > 0$ for
  all $i$. Then there exist $\Omega(r^{-d})$ points
  $x_1,\dots, x_N$ such that the sets $x_i + rW_j$ are pairwise disjoint
  and $\nu(x_i+rW_j)/\vol(rW_j) > (1-\eps)\sigma$ for all
  $i=1,\dots,N$ and $j=1,\dots,k$. \label{cor:numax}
%\end{corollary}
\end{lemma}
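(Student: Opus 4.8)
The plan is to deduce this from Lemma~\ref{prop:numax} applied to the whole set $W$, with a suitably small error parameter, taking the \emph{same} centres $x_i$ for every piece $W_j$. First I would record the elementary fact that, since $\sigma$ is the essential supremum of the density $f$ of $\nu$, we have $\nu(A)\leq\sigma\vol(A)$ for every measurable $A\subseteq\eR^d$; in particular $\nu(x+rW_j)\leq\sigma\vol(rW_j)$ for all $x$ and $j$.

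Now set $\eps' := \eps\cdot\frac{\min_{1\leq j\leq k}\vol(W_j)}{\vol(W)}$, which is strictly positive because $k$ is finite and each $\vol(W_j)>0$. Apply Lemma~\ref{prop:numax} to $W$ with error parameter $\eps'$: this yields $N=\Omega(r^{-d})$ points $x_1,\dots,x_N$ such that the translates $x_i+rW$ are pairwise disjoint and $\nu(x_i+rW)\geq(1-\eps')\sigma\vol(rW)$ for each $i$. Since $W_1,\dots,W_k$ partition $W$, we have $\nu(x_i+rW)=\sum_{j=1}^k\nu(x_i+rW_j)$ and $\vol(rW)=\sum_{j=1}^k\vol(rW_j)$, so
\[
\sum_{j=1}^k\Bigl(\sigma\vol(rW_j)-\nu(x_i+rW_j)\Bigr)\;\leq\;\eps'\,\sigma\vol(rW).
\]
Every summand on the left is nonnegative by the fact recorded above, hence each one is at most $\eps'\sigma\vol(rW)$, giving
\[
\nu(x_i+rW_j)\;\geq\;\sigma\vol(rW_j)-\eps'\sigma\vol(rW)\;=\;\sigma\vol(rW_j)\Bigl(1-\eps'\tfrac{\vol(W)}{\vol(W_j)}\Bigr)\;\geq\;(1-\eps)\,\sigma\vol(rW_j),
\]
by the choice of $\eps'$ (the factors $r^d$ cancel). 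Finally, disjointness: for fixed $i$ the sets $x_i+rW_1,\dots,x_i+rW_k$ are disjoint since the $W_j$ are, and for $i\neq i'$ we have $x_i+rW_j\subseteq x_i+rW$ and $x_{i'}+rW_{j'}\subseteq x_{i'}+rW$, which are disjoint. So all the sets $x_i+rW_j$ are pairwise disjoint, and these $N=\Omega(r^{-d})$ points are as required.

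There is no real obstacle here: the only point needing mild care is the averaging step that converts the single lower bound on $\nu(x_i+rW)$ into simultaneous lower bounds on each $\nu(x_i+rW_j)$, together with the correct calibration of $\eps'$ in terms of $\min_j\vol(W_j)/\vol(W)$ so that the loss is absorbed within $\eps$.
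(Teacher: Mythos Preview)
Your proof is correct and takes essentially the same approach as the paper: apply Lemma~\ref{prop:numax} to $W$ with the smaller error parameter $\eps\cdot\min_j\vol(W_j)/\vol(W)$, then use $\nu(A)\leq\sigma\vol(A)$ together with additivity over the partition to transfer the single lower bound on $\nu(x_i+rW)$ to each piece. The paper phrases the last step as a contradiction rather than a direct inequality, but the content is identical.
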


\begin{proof}
  Set $p_i := \frac{\vol(W_i)}{\vol(W)}$ and  $p := \min_i p_i$.
  By Lemma~\ref{prop:numax} there exist points $x_1, \dots, x_N$ with $N = \Omega( r^{-d} )$
  such that the sets $x_i + rW$ are disjoint and
  satisfy $\nu(x_i+rW) \geq (1-p \eps) \sigma\vol(rW)$.
  By construction the sets $x_i + rW_j$ are disjoint. We now observe that 
  $\nu(x_i+rW_j)$ must be $\geq (1-\eps) \sigma\vol(r W_j)$, because otherwise
\begin{eqnarray*}
  \nu(x_i+rW) &<&
  (1-p_j) \sigma\vol(r W) + (1-\eps) p_j  \sigma \vol(r W)\\
  &=&
  (1-p_j\eps) \sigma \vol(x_i+rW) \leq \nu(x_i+rW),
\end{eqnarray*}
  a contradiction.
\end{proof}

  For the proofs in this section we will also need some bounds on
  the binomial, Poisson and multinomial distributions. The following
  lemma is one of the so-called Chernoff-Hoeffding bounds.
  A proof can be found for example in Penrose~\cite{penroseboek}.

\begin{lemma}\label{lem:chernoffbinomial}\label{lem:chernoffpoisson}\label{lem:chernoff}\label{lem:chernoffH}
  Let $Z$ be either binomial or Poisson with $\mu := \Ee Z >0$. %Then it holds that
\begin{enumerate}
  \item If $k \geq \mu$ then $\Pee( Z \geq k ) \leq e^{ -\mu H(\frac{k}{\mu})
  }$.
  \item If $k \leq \mu$ then $\Pee( Z \leq k ) \leq e^{ -\mu H(\frac{k}{\mu}) }$.
\end{enumerate}
\end{lemma}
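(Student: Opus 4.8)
The plan is to use the standard Cram\'er--Chernoff exponential moment method. For part~(i) (the upper tail), fix $\theta>0$ and apply Markov's inequality to $e^{\theta Z}$: this gives $\Pee(Z\geq k)=\Pee(e^{\theta Z}\geq e^{\theta k})\leq e^{-\theta k}\,\Ee e^{\theta Z}$. The one substantive point is that in both the Poisson and the binomial cases the moment generating function is controlled by that of the Poisson with the same mean. Indeed, if $Z\sim\Po(\mu)$ then $\Ee e^{\theta Z}=e^{\mu(e^\theta-1)}$ exactly; and if $Z\sim\Bi(n,p)$ with $\mu=np$ then $\Ee e^{\theta Z}=(1+p(e^\theta-1))^n\leq e^{np(e^\theta-1)}=e^{\mu(e^\theta-1)}$, using $1+x\leq e^x$. (The same argument in fact covers any sum of independent Bernoulli variables with $\Ee Z=\mu$, since the mgf factorises.) Hence, for every $\theta>0$,
\[
\Pee(Z\geq k)\leq \exp\bigl(-\theta k+\mu(e^\theta-1)\bigr).
\]

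Next I would optimise the exponent over $\theta$. Since $k\geq\mu$, the choice $\theta=\ln(k/\mu)\geq 0$ is admissible, and substituting it in gives exponent $-k\ln(k/\mu)+\mu\bigl(\tfrac{k}{\mu}-1\bigr)=-\mu\bigl[\tfrac{k}{\mu}\ln\tfrac{k}{\mu}-\tfrac{k}{\mu}+1\bigr]=-\mu H(k/\mu)$, which is exactly the bound claimed in part~(i).

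For part~(ii) (the lower tail) the argument is symmetric: for $\theta<0$, Markov's inequality applied to $e^{\theta Z}$ yields $\Pee(Z\leq k)\leq e^{-\theta k}\,\Ee e^{\theta Z}\leq \exp(-\theta k+\mu(e^\theta-1))$, the mgf bound above being valid for all real $\theta$. Since now $k\leq\mu$, the optimising choice $\theta=\ln(k/\mu)\leq 0$ is admissible, and the identical computation gives $\Pee(Z\leq k)\leq e^{-\mu H(k/\mu)}$; the boundary case $k=0$ is handled by continuity, reading $H(0)=1$ so that the estimate becomes $\Pee(Z=0)\leq e^{-\mu}$ (an equality in the Poisson case). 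There is no genuine obstacle in this proof — the only care needed is to check that the optimising $\theta$ has the correct sign, which is precisely where the hypotheses $k\geq\mu$ and $k\leq\mu$ enter; failing that, one may simply cite Penrose~\cite{penroseboek}, where full details are given.
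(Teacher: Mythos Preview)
Your proof is correct and is the standard Cram\'er--Chernoff argument. The paper itself does not give a proof of this lemma at all: it simply cites Penrose~\cite{penroseboek}, so there is nothing to compare against beyond noting that what you have written is precisely the argument one would find there.
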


\noindent
  Often the upper bound given by Lemma~\ref{lem:chernoffpoisson} is quite close to the truth.
  The following lemma gives a lower bound on $\Pee( \Po(\mu) \geq k )$ which is sufficiently sharp
  for our purposes (see Penrose~\cite{penroseboek} for a proof).

\begin{lemma}\label{lem:PoAss}
  For $k, \mu > 0$ it holds that
  $\Pee( \Po(\mu) = k ) \geq \frac{e^{-\frac{1}{12k}}}{\sqrt{2\pi k}} e^{-\mu
  H(\frac{k}{\mu})}$.
\end{lemma}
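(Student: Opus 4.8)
The plan is to expand the Poisson point probability and recognise the claimed bound as a restatement of the classical Stirling estimate. Throughout we take $k$ to be a positive integer, which is forced anyway since otherwise $\Pee(\Po(\mu)=k)=0$ while the right-hand side is strictly positive. We have $\Pee(\Po(\mu)=k)=e^{-\mu}\mu^k/k!$. Using the explicit formula $H(x)=x\ln x-x+1$ recorded in Section~\ref{subsec.explicit}, one computes
\[
\mu H\!\left(\tfrac{k}{\mu}\right) = k\ln k - k\ln\mu - k + \mu,
\]
and hence
\[
e^{-\mu H(k/\mu)} = \frac{\mu^k e^{-\mu}}{k^k e^{-k}}.
\]

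Substituting this into the inequality to be proved and cancelling the common factor $\mu^k e^{-\mu}$, the claim $\Pee(\Po(\mu)=k)\geq \frac{e^{-1/(12k)}}{\sqrt{2\pi k}}\,e^{-\mu H(k/\mu)}$ is seen to be equivalent to
\[
k! \leq \sqrt{2\pi k}\,\Big(\tfrac{k}{e}\Big)^k e^{1/(12k)}.
\]
This is precisely the upper bound in Robbins' sharpening of Stirling's formula, namely $k! = \sqrt{2\pi k}\,(k/e)^k e^{r_k}$ with $\tfrac{1}{12k+1}<r_k<\tfrac{1}{12k}$; the same estimate is stated in Penrose~\cite{penroseboek}. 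So the lemma follows immediately.

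The entire content of the argument is thus the algebraic identity relating $\mu H(k/\mu)$ to $\mu^k e^{-\mu}/(k^k e^{-k})$, together with a quantitative form of Stirling's approximation which we may quote as known. There is no genuine obstacle; the only point requiring a word of care is the (implicit) hypothesis $k\in\eN$, without which the left-hand side vanishes and the statement would have to be read vacuously.
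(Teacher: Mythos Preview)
Your argument is correct: the reduction to Robbins' form of Stirling's inequality is exactly the standard derivation, and your algebra checking that $e^{-\mu H(k/\mu)}=\mu^k e^{-\mu}/(k^k e^{-k})$ is clean. The paper itself does not prove this lemma but simply refers the reader to Penrose~\cite{penroseboek}, where the same Stirling-based computation appears; so your proof is essentially the one being cited.
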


  \noindent
  A direct corollary of lemmas~\ref{lem:chernoffH} and~\ref{lem:PoAss} is the following result:

\begin{lemma}\label{lem:chernoffH2}\label{cor:chernoffH2}
  For $\alpha > 1$ it holds that
  $\Pee( \Po(\mu) > \alpha\mu ) = e^{-\mu H(\alpha) + o(\mu)}$ as $\mu \to \infty$.
\end{lemma}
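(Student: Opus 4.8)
The strategy is to sandwich the tail probability between the Chernoff upper bound from Lemma~\ref{lem:chernoffH} and a matching lower bound obtained from the single-atom estimate in Lemma~\ref{lem:PoAss}. For the upper bound, since $\alpha > 1$ we may take $k = \lceil \alpha\mu \rceil \geq \alpha\mu = \mu \cdot \frac{k}{\mu} \geq \mu$ (for $\mu$ large), so Lemma~\ref{lem:chernoffH}(i) gives $\Pee(\Po(\mu) \geq k) \leq e^{-\mu H(k/\mu)}$. Because $H$ is continuous and $k/\mu \to \alpha$ as $\mu \to \infty$, we get $\mu H(k/\mu) = \mu H(\alpha) + o(\mu)$, hence $\Pee(\Po(\mu) > \alpha\mu) \leq \Pee(\Po(\mu) \geq \lceil\alpha\mu\rceil) \leq e^{-\mu H(\alpha) + o(\mu)}$.

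For the lower bound, it suffices to observe that $\Pee(\Po(\mu) > \alpha\mu) \geq \Pee(\Po(\mu) = k)$ for any integer $k > \alpha\mu$; take $k = \lfloor\alpha\mu\rfloor + 1$. Lemma~\ref{lem:PoAss} then gives
\[
\Pee(\Po(\mu) > \alpha\mu) \geq \frac{e^{-1/(12k)}}{\sqrt{2\pi k}} e^{-\mu H(k/\mu)}.
\]
The prefactor $\frac{e^{-1/(12k)}}{\sqrt{2\pi k}}$ is $e^{-O(\ln \mu)} = e^{o(\mu)}$, and again $\mu H(k/\mu) = \mu H(\alpha) + o(\mu)$ by continuity of $H$ and $k/\mu \to \alpha$. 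Combining, $\Pee(\Po(\mu) > \alpha\mu) \geq e^{-\mu H(\alpha) + o(\mu)}$. Together with the upper bound this yields $\Pee(\Po(\mu) > \alpha\mu) = e^{-\mu H(\alpha) + o(\mu)}$ as $\mu \to \infty$.

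There is really no serious obstacle here; the only point requiring a word of care is the passage $\mu H(k/\mu) = \mu H(\alpha) + o(\mu)$, which needs the continuity of $H$ at $\alpha$ together with $|k/\mu - \alpha| = O(1/\mu) \to 0$ — but since $H$ is differentiable (indeed smooth) on $(0,\infty)$, one even has $\mu|H(k/\mu) - H(\alpha)| = O(1)$, which is more than enough. The argument is a standard matching of Chernoff upper and lower large-deviation bounds for the Poisson distribution.
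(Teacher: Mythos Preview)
Your proof is correct and is exactly the argument the paper has in mind: it states the lemma as ``a direct corollary of lemmas~\ref{lem:chernoffH} and~\ref{lem:PoAss}'', i.e.\ the Chernoff upper bound combined with the single-atom lower bound, which is precisely what you do. (One cosmetic slip: the chain ``$k = \lceil \alpha\mu\rceil \geq \alpha\mu = \mu\cdot \frac{k}{\mu} \geq \mu$'' contains a spurious equality; you simply mean $k \geq \alpha\mu > \mu$.)
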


  \noindent
  Another bound on the binomial and Poisson that will be useful in the sequel is the following
  standard elementary result (see for example McDiarmid~\cite{cmcdplane}).

\begin{lemma} \label{lem:binineqe}\label{lem:poineqe}
  Let $Z$ be either binomial or Poisson and $k \geq \mu := \Ee Z$.  Then
\[ (\frac{\mu}{e k})^k \leq \Pee( Z \geq k ) \leq (\frac{e \mu}{k})^k . \]
\end{lemma}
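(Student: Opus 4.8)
The plan is to obtain the upper bound immediately from the Chernoff bound already recorded in Lemma~\ref{lem:chernoffH}, and to obtain the lower bound from the crude estimate $\Pee(Z\ge k)\ge\Pee(Z=k)$ together with elementary bounds on $k!$ and on $\binom{n}{k}$. Throughout I treat $k$ as a positive integer (the only case used in the paper), and in the binomial case I also assume $k\le n$ (otherwise the claimed lower bound cannot hold, as then $\Pee(Z\ge k)=0$); I may assume $\mu>0$.

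For the \emph{upper bound}: since $k\ge\mu$, the first part of Lemma~\ref{lem:chernoffH} gives $\Pee(Z\ge k)\le e^{-\mu H(k/\mu)}$. Expanding $H(x)=x\ln x-x+1$ yields $\mu H(k/\mu)=k\ln(k/\mu)-k+\mu$, so $e^{-\mu H(k/\mu)}=(\mu/k)^k e^{k-\mu}$. Since $\mu>0$ we have $e^{-\mu}<1$, hence $\Pee(Z\ge k)\le(\mu/k)^k e^{k}=(e\mu/k)^k$ as desired (and this works verbatim for any real $k\ge\mu$).

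For the \emph{lower bound} I would argue separately for the two distributions, in both cases starting from $\Pee(Z\ge k)\ge\Pee(Z=k)$. If $Z$ is Poisson with mean $\mu$, then $\Pee(Z=k)=e^{-\mu}\mu^k/k!\ge e^{-\mu}(\mu/k)^k$, using $k!\le k^k$; and since $k\ge\mu$ we have $e^{-\mu}\ge e^{-k}$, so $\Pee(Z\ge k)\ge(\mu/(ek))^k$. If $Z\sim\Bi(n,p)$ with $\mu=np$ and $0<p<1$ (the case $p=1$ is trivial), then $\Pee(Z=k)=\binom{n}{k}p^k(1-p)^{n-k}\ge(n/k)^k p^k(1-p)^{n-k}=(\mu/k)^k(1-p)^{n-k}$, using the standard bound $\binom{n}{k}\ge(n/k)^k$, valid for $1\le k\le n$. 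It then remains to check $(1-p)^{n-k}\ge e^{-k}$: from $\ln\frac{1}{1-p}\le\frac{p}{1-p}$ we get $(n-k)\bigl(-\ln(1-p)\bigr)\le(n-k)\frac{p}{1-p}$, and $(n-k)\frac{p}{1-p}\le k$ is equivalent to $np\le k$, which holds by hypothesis; hence $(1-p)^{n-k}\ge e^{-k}$ and so $\Pee(Z\ge k)\ge(\mu/(ek))^k$.

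The only mildly delicate point — and the one I expect to be the main obstacle in writing it cleanly — is the binomial lower bound: one must notice that the leftover factor $(1-p)^{n-k}$ is controlled \emph{precisely} by the hypothesis $np\le k$ via the inequality $\ln\frac{1}{1-p}\le\frac{p}{1-p}$, and one must restrict attention to integer $k$ with $k\le n$ for the inequality to make sense at all. Everything else is a one-line manipulation of the Chernoff bound or of the explicit point probabilities.
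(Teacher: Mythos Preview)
Your proof is correct. The paper itself does not prove this lemma: it simply states it as ``the following standard elementary result'' and refers to McDiarmid~\cite{cmcdplane} for a proof, so there is no in-paper argument to compare against. Your derivation---the upper bound from Lemma~\ref{lem:chernoffH} and the lower bound from $\Pee(Z=k)$ using $k!\le k^k$, $\binom{n}{k}\ge (n/k)^k$, and the inequality $-\ln(1-p)\le p/(1-p)$---is exactly the standard elementary route.

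One minor remark: you restrict to integer $k$, which is the natural setting for this argument. Note, however, that in the proof of Lemma~\ref{lem:scanstatisticsparse} the paper applies the bound with thresholds of the form $(1-\eps)k$ that are not in general integers. This is harmless for the purposes there (everything is absorbed into the $o(1)$ and $+o(1)$ exponents), but strictly speaking one should either round and absorb the discrepancy or note that the inequalities hold up to a $(1+o(1))$ factor for real $k$; you might add a sentence to that effect if you want to be fully rigorous about how the lemma is used downstream.
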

  We will also need the following result from Mallows~\cite{mallows68} on the multinomial distribution:
\begin{lemma}  \label{lem:negassmult}
  Let $(Z_1, \dots, Z_m) \sim \mult( n; p_1,\dots, p_m )$. Then
\[  \Pee( Z_1 \leq k_1, \dots, Z_m \leq k_m ) \leq \Pi_{i=1}^m \Pee( Z_i \leq k_i ). \]
\end{lemma}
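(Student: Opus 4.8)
\medskip
\noindent\textbf{Proof plan.}
The plan is to argue by induction on the number of trials $n$, realising $(Z_1,\dots,Z_m)$ by adjoining one extra independent trial to a $\mult(n-1;p_1,\dots,p_m)$ vector, applying the induction hypothesis, and then reducing the inductive step to the elementary Weierstrass product inequality $\prod_{i=1}^m(1-d_i)\geq 1-\sum_{i=1}^m d_i$, valid whenever $d_i\in[0,1]$. Before that I would dispose of degenerate cases: since the $Z_i$ are integer valued we may take each $k_i$ to be a nonnegative integer (if some $k_i<0$ both sides vanish), and if $\Pee(Z_i\leq k_i)=0$ for some $i$ then the right-hand side is $0$ while the left-hand side is at most $\Pee(Z_i\leq k_i)=0$, so we may assume $\Pee(Z_i\leq k_i)>0$ for all $i$. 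The base case $n=0$ is trivial, since then every $Z_i$ is $0$ and both sides equal $1$.

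For the inductive step, write $Z_i=Z_i'+1_{\{\eta=i\}}$ where $(Z_1',\dots,Z_m')\sim\mult(n-1;p_1,\dots,p_m)$ and, independently, $\eta\in\{1,\dots,m\}$ has $\Pee(\eta=j)=p_j$. Conditioning on $\eta$ gives
\[
\Pee\Big(\bigcap_{i=1}^m\{Z_i\leq k_i\}\Big)
=\sum_{j=1}^m p_j\,\Pee\Big(Z_j'\leq k_j-1,\ \bigcap_{i\neq j}\{Z_i'\leq k_i\}\Big),
\]
and the induction hypothesis (applied to $(Z_1',\dots,Z_m')$, with $k_j$ lowered to $k_j-1$) bounds the $j$-th summand by $a_j\prod_{i\neq j}b_i$, where $a_i:=\Pee(Z_i'\leq k_i-1)$ and $b_i:=\Pee(Z_i'\leq k_i)$. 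Independence of $\eta$ and $(Z_1',\dots,Z_m')$ also yields $\Pee(Z_i\leq k_i)=p_ia_i+(1-p_i)b_i$, so it remains to check the purely numerical inequality
\[
\sum_{j=1}^m p_j a_j\prod_{i\neq j}b_i\ \leq\ \prod_{i=1}^m\big(p_ia_i+(1-p_i)b_i\big).
\]
Here $b_i\geq\Pee(Z_i\leq k_i)>0$ and $0\leq a_i\leq b_i$, so I would divide through by $\prod_i b_i$ and set $d_i:=p_i(1-a_i/b_i)\in[0,1]$: the left-hand side becomes $\sum_j(p_j-d_j)=1-\sum_j d_j$ (using $\sum_j p_j=1$) and the right-hand side becomes $\prod_i(1-d_i)$, so the claim is precisely the Weierstrass product bound.

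The only genuine content is this final algebraic reduction; the rest is bookkeeping. The steps that most warrant care are the conditioning identity and making sure the induction hypothesis is applied to an honest $\mult(n-1;p_1,\dots,p_m)$ vector with threshold $k_j-1$ in coordinate $j$ (this may be $-1$, which is harmless). I do not anticipate any serious obstacle. One could alternatively derive the lemma from the negative association of the multinomial law, but the direct induction above seems shorter and entirely self-contained.
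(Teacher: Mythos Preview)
Your argument is correct. The induction on $n$, the conditioning on the last trial, and the algebraic reduction to the Weierstrass product inequality all go through as you describe; the checks that $b_i>0$ (from $Z_i'\leq Z_i$ and the assumption $\Pee(Z_i\leq k_i)>0$) and that $d_i\in[0,1]$ are sound.

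As for comparison: the paper does not prove this lemma at all. It simply quotes the inequality as a result of Mallows~\cite{mallows68} and moves on. Your self-contained proof is therefore an addition rather than an alternative; it is short enough that nothing is lost by including it, and it avoids any appeal to the general theory of negative association.
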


%
%%\subsection{Proofs for the sparse case}
%
%
%\noindent
%Some of the proofs in this section will make use of the following lemma. For the
%proof see for instance \cite{cmcdplane}. %(FIND OTHER REFERENCE?).
%
%\begin{lemma}
%Let $X \sim Bi(n,p)$ and $k \geq \mu := n p$. Then
%
%\[ (\frac{\mu}{e k})^k \leq \Pee( X \geq k ) \leq (\frac{e \mu}{k})^k \]
%
%\label{lem:binineqe}
%\end{lemma}
%
%\noindent
%We will also use the following result, which is due
%to \cite{mallows68}:
%
%\begin{lemma} Let $(X_1, \dots, X_m) \sim
%mult( n; p_1,\dots, p_m )$. Then
%
%\[
%\Pee( X_1 \leq k_1, \dots, X_m \leq k_m ) \leq
%\Pi_{i=1}^m \Pee( X_i \leq k_i ).
% \]
%
%\label{lem:negassmult}
%\end{lemma}
%
%
%%\begin{lemma}[Borel-Cantelli lemma]
%%If $A_1, A_2, \dots$ is a sequence of events satisfying
%%$\sum_{n=1}^\infty \Pee( A_n^c ) < \infty$,
%%then $A_n$ a.a.a.s. holds.
%%\end{lemma%
%
%\noindent
%Lemma~\ref{lem:scanstatisticverysparse} is an easy consequence of the following proposition.
%
%
%\begin{proposition}
%Let $W \subseteq \eR^d$ be bounded with nonempty interior. Then
%\begin{enumerate}
%\item If $n r^d \leq n^{-\alpha}$ with $\frac{1}{k} < \alpha$ then
%$M_W \leq k$ a.a.a.s.;
%\label{itm:scanverysparseub}
%\item If $n r^d \geq n^{-\beta}$ with $\frac{1}{k-1} > \beta$ then
%$M_W \geq k$ a.a.a.s.
%\label{itm:scanverysparselb}
%\end{enumerate}
%\label{prop:scanverysparse}
%\end{proposition}

%%%%%%%%%%%%%%%%%%%%%%%%%%%%%%%%%%%%%%%%%%%%%%%%%%%%%%%%%%%%%%%%%%%%%%%%%%%%%%%%%%%%

\subsection{Sparse case}
\label{subsec.sparse}

  Here we consider the behaviour of $M_W$ in the `very sparse' case, and then the `quite sparse' case.
  For suitable sets $W$ we find results on $M_W$ that do not depend on $W$.
  
  First we introduce a convenient piece of notation.
  If $A$ is an event then we say that $A$ holds almost surely (a.s.) if $\Pee( A ) = 1$,
  and if $A_1, A_2, \dots$ is a sequence of events then $\{ A_n \textrm{ almost always } \}$
  denotes the event that ``all but finitely many $A_n$ hold''.
  %In the rest of this paper
  We will frequently deal with the situation in which
  $\Pee( A_n \textrm{ almost always } ) = 1$, which we shall denote by
  $A_n$ a.a.a.s. ($A_n$ almost always almost surely). We hope this is a convenient shorthand,
  which avoids clashes with the many different existing notations for
  $\Pee(A_n) = 1+o(1)$ (a.a., a.a.s., whp.) that are in use in the random graphs literature.
  The reader should observe that $A_n$ a.a.a.s. is a much stronger statement than $A_n$ a.a.s.
  Observe that the conclusion of Proposition~\ref{thm.verysparse} is that in the case considered there
  we have $\chi(G_n)=\omega(G_n)$ a.a.a.s.
  We will need the following lemma in order to prove Proposition~\ref{thm.verysparse}.

\begin{lemma} \label{lem:scanstatisticverysparse}
  Let $W \subseteq \eR^d$ be a measurable, bounded set with nonempty interior, and fix $k\in\eN$. %Then
 \begin{enumerate}
  \item If $n r^d \leq n^{-\alpha}$ with $\alpha > \frac{1}{k}$ then $M_W \leq k$ a.a.a.s.;
    \label{itm:scanverysparseub}
  \item If $n r^d \geq n^{-\beta}$ with $\beta < \frac{1}{k-1}$ then $M_W \geq k$ a.a.a.s.
    \label{itm:scanverysparselb}
 \end{enumerate}
\end{lemma}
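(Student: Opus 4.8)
The plan is to reduce both parts to counting the points $X_1,\dots,X_n$ inside balls, and then invoke Borel--Cantelli. For $\rho>0$ write $M'_m(\rho):=\max_z {\cal N}_m(B(z;\rho))$ for the largest number of $X_1,\dots,X_m$ lying in a common ball of radius $\rho$; note $M'_m(\rho)$ is non-decreasing in $m$ and in $\rho$. Since $W$ is bounded we may fix $R$ with $W\subseteq B(p;R)$, so every translate of $rW$ sits in a ball of radius $rR$ and hence $M_W\leq M'_n(rR)$; since $W$ has nonempty interior we may fix a ball $V=B(0;\rho_0)$ a translate of which lies in $W$, so every ball of radius $r\rho_0$ lies in a translate of $rW$ and hence $M_W\geq M'_n(r\rho_0)$. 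A routine union bound — condition on one of the $k+1$ points of a ``bad'' ball; the other $k$ must each then fall in a set of $\nu$-measure at most $\sigma(2\rho)^d\vol(B)$ around it — gives, for any $m$ and $\rho$,
\[ \Pee\big(M'_m(\rho)\geq k+1\big)\ \leq\ C\, m^{k+1}\rho^{dk}. \]

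\emph{Part (i).} Feeding $\rho=rR$ and $m=n$ directly into this bound gives $\Pee(M_W\geq k+1)=O(n^{k+1}r^{dk})=O(n\,(nr^d)^k)=O(n^{1-\alpha k})$, which is summable only when $\alpha>2/k$; recovering the sharp threshold $\alpha>1/k$ is the one real obstacle. I would overcome it by passing to the dyadic subsequence $n_j:=2^j$: for $n\in[n_j,n_{j+1})$ the hypothesis $nr^d\leq n^{-\alpha}$ forces $r(n)\leq\rho_j:=2^{-j(1+\alpha)/d}$, and $X_1,\dots,X_n$ are among $X_1,\dots,X_{n_{j+1}}$, so monotonicity of $M'$ in both arguments yields
\[ \{\,M_W\geq k+1\ \text{for some }n\in[n_j,n_{j+1})\,\}\ \subseteq\ \{\,M'_{n_{j+1}}(\rho_j R)\geq k+1\,\}. \]
Applying the union bound now with $m=n_{j+1}=2^{j+1}$ and $\rho=\rho_j R$ gives $\Pee\big(M'_{n_{j+1}}(\rho_j R)\geq k+1\big)\leq C\,(2^d R^d)^k\,2^{(j+1)(k+1)-jk(1+\alpha)}=O\big(2^{j(1-k\alpha)}\big)$, which is summable in $j$ precisely because $\alpha>1/k$. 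Borel--Cantelli then shows that a.s.\ only finitely many dyadic blocks are bad, i.e.\ $M_W\leq k$ a.a.a.s.

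\emph{Part (ii).} Here it suffices to show $M'_n(r\rho_0)\geq k$ a.a.a.s. By Lemma~\ref{prop:numax} applied to $V$ there are $N=\Omega(r^{-d})$ pairwise disjoint balls $B(z_1;r\rho_0),\dots,B(z_N;r\rho_0)$ with $\nu(B(z_i;r\rho_0))\geq c_2 r^d$ for a constant $c_2>0$. The counts ${\cal N}(B(z_i;r\rho_0))$ are marginally $\Bi(n,\nu(B(z_i;r\rho_0)))$ and, the balls being disjoint, are coordinates of a single multinomial vector; so Mallows' Lemma~\ref{lem:negassmult}, together with stochastic monotonicity of the binomial in its success probability, gives
\[ \Pee\big(M'_n(r\rho_0)<k\big)\ \leq\ \prod_{i=1}^N\Pee\big({\cal N}(B(z_i;r\rho_0))\leq k-1\big)\ \leq\ p(n)^N,\qquad p(n):=\Pee\big(\Bi(n,c_2 r^d)\leq k-1\big). \]
I would then bound $p(n)^N$ by splitting on the size of the mean $\mu:=c_2 nr^d$. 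If $\mu\leq 2k$ then a routine lower bound on a single binomial weight gives $p(n)\leq 1-\Pee(\Bi(n,c_2 r^d)=k)\leq 1-c_3(nr^d)^k$, hence $p(n)^N\leq e^{-c_3 N(nr^d)^k}\leq e^{-c_4 n(nr^d)^{k-1}}\leq e^{-c_4 n^{1-\beta(k-1)}}$, which is summable because $\beta(k-1)<1$. If $\mu>2k$ then the Chernoff bound of Lemma~\ref{lem:chernoffH} gives $p(n)\leq e^{-\mu H(1/2)}$, hence $p(n)^N\leq e^{-N\mu H(1/2)}\leq e^{-c_5 n}$ since $N\mu=\Omega(n)$. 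In either case $\sum_n\Pee(M'_n(r\rho_0)<k)<\infty$, and Borel--Cantelli gives $M_W\geq k$ a.a.a.s. The only point needing a little care in part (ii) is precisely this split, to cover the full range of $nr^d$ allowed by the hypothesis.
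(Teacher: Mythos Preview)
Your proof is correct and follows essentially the same strategy as the paper: reduce to balls via $W\subseteq B(p;R)$ and $B(0;\rho_0)\subseteq W$, then in part~(i) use a union bound plus a subsequence trick for summability, and in part~(ii) use Lemma~\ref{prop:numax} together with Mallows' negative-association inequality and a binomial tail bound.

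The execution differs in two minor but pleasant ways. In part~(i) the paper first reduces to the extremal case $nr^d=n^{-\alpha}$ (valid by monotonicity of $M_W$ in $r$ for a ball) and then takes the polynomial subsequence $m^K$ with $K>1/(k\alpha-1)$, afterwards patching in the intermediate $n$ via $r((m-1)^K)/r(m^K)\to 1$. Your dyadic-block argument is a bit cleaner: by using monotonicity of $M'_m(\rho)$ simultaneously in $m$ and in $\rho$, you avoid the extremal reduction and the separate patching step, and the summability $\sum_j 2^{j(1-k\alpha)}<\infty$ falls out directly. In part~(ii) the paper again reduces to the extremal case $nr^d=n^{-\beta}$, which lands in your ``small mean'' regime and makes the case split unnecessary; your split on $\mu=c_2 nr^d\lessgtr 2k$ is a valid alternative that handles the full range of $r$ without that reduction. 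Either way the content is the same.
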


\begin{pf}{ of part~\ref{itm:scanverysparseub}}
%\begin{pf}{of Lemma~\ref{lem:scanstatisticverysparse}, part~\ref{itm:scanverysparseub}}
  We first remark that it suffices to prove the result for $W$ a ball.
This is because $W$ is bounded and hence we must have $W \subseteq B(0;R)$
for some $R > 0$ (and hence also $M_W \leq M_{B(0;R)}$, so that
$M_{B(0;R)} \leq k$ implies $M_W \leq k$).
Furthermore, since $W$ is a ball it is clear that $M_W$ is non-decreasing in $r$
and we may assume without loss of generality that $r$ is chosen such that
$n r^d = n^{-\alpha}$.
If some translate of $rW$ contains $k+1$ points, then some $X_i$ has at least
$k$ other points at distance $\leq 2Rr$. Hence
\[
  \Pee( M_W \geq k+1 ) \leq
  \Pee( \exists i: {\cal N}(B(X_i;2Rr) \geq k+1 ) \leq
  n \Pee( {\cal N}( B(X_1;2Rr ) ) \geq k+1 ).
\]
  Note that
\[ \begin{array}{rcl}
\Pee( {\cal N}( B(X_1;2Rr) ) \geq k+1 ) & \leq &
\Pee( \Bi( n, \sigma \volB 2^d R^d r^d ) \geq k )
\leq (\frac{e \sigma \volB 2^d R^d n r^d}{k})^{k} \\
& = & O( n^{-k\alpha}),
\end{array} \]
  where we have used Lemma~\ref{lem:binineqe}.
  As $\alpha > \frac{1}{k}$, we have $\alpha':=k\alpha-1>0$. We find
\[ \Pee( M_W \geq k+1 ) = O( n^{-\alpha'} ). \]
  Unfortunately this expression is not necessarily summable in $n$ so we cannot apply the
  Borel-Cantelli lemma directly. However, setting $K := \lceil \frac{1}{\alpha'}\rceil + 1$,
  we may conclude that
\[ \Pee( M_W(m^K,r(m^K)) \leq k \textrm{ for all but finitely many } m ) = 1, \]
  because $\sum_m (m^K)^{-\alpha'} < \infty$. We now claim that from this it can be deduced
  that $M_W \leq k$ a.a.a.s.  Note that
\[ \lim_{m\to\infty}\frac{r((m-1)^K)}{r(m^K)} = 1, \]
  because $n r^d = n^{-\alpha}$. Consequently
  $\gamma := \sup_m \frac{r((m-1)^K)}{r(m^K)} < \infty$. By the previous we may also conclude that
\[ \Pee( M_{\gamma W}(m^K,r(m^K)) \leq k \textrm{ for all but finitely many } m) = 1. \]
  Let $n,m$ be such that $(m-1)^K < n \leq m^K$.
  Note that for any $x \in \eR^d$ it holds that
  $x+r(n)W \subseteq x + \gamma r(m^K)W$ as $\gamma r(m^K) \geq r((m-1)^K) > r(n)$.
  In other words if $M_W(n) \geq k+1$ then also $M_{\gamma W}(m^K)\geq k+1$.
  Thus it follows that
\[ \Pee( M_W(n,r(n)) \leq k \textrm{ for all but finitely many } n ) = 1, \]
  as required.
\end{pf}

\begin{pf}{ of part~\ref{itm:scanverysparselb}}
  We may assume that $\beta>0$. Also, we may again assume that $W$ is a ball.
  This is because $W$ has non-empty interior and it must therefore
  contain some ball $B$, so that it suffices to show $M_B \geq k$ a.a.a.s.
  Again, by the fact that $M_W$ is non-decreasing in $r$ (when $W$ is a ball)
  we may assume wlog that $r$ is chosen such that $n r^d = n^{-\beta}$.
  By Lemma~\ref{prop:numax} we can find
  disjoint translates $W_1, \dots, W_N$ of $rW$ satisfying
  $\nu(W_i) \geq (1-\eps) \sigma \vol(W) r^d$ where $N = \Omega( r^{-d} )$.
  Now notice that the joint distribution of
  $({\cal N}(W_1), \dots, {\cal N}(W_N), {\cal N}(\eR^d\setminus\cup_iW_i) )$ is
  multinomial, so that we can apply Lemma~\ref{lem:negassmult} to see that
\[
  \Pee( M_W \leq k-1 ) \leq \Pee( {\cal N}(W_1) \leq k-1, \dots, {\cal N}(W_N) \leq k-1 ) \leq
  \Pi_{i=1}^N \Pee( {\cal N}(W_i) \leq k-1 ).
\]
  The (marginal) distribution of ${\cal N}(W_i)$ is $\Bi( n, \nu(W_i) )$, so that
  Lemma~\ref{lem:binineqe} tells us that
\[
  \Pee( {\cal N}(W_i) \geq k ) \geq (\frac{n (1-\eps) \sigma \vol(W) r^d}{e k})^k =c n^{-k\beta}.
\]
  Thus
\[ 
  \Pee( M_W \leq k-1 ) \leq (1-cn^{-k\beta})^N \leq \exp[ - c n^{-k\beta} N ].
\]
  As $n r^d = n^{-\beta}$ we have $r^{-d} = n^{1+\beta}$.
  As $\beta < \frac{1}{k-1}$ we also have $\beta' := 1+\beta - k\beta > 0$,
  so that $n^{-k\beta} N = \Omega( n^{\beta'} )$.  Thus
\[ \Pee( M_W \leq k-1 ) \leq \exp[-\Omega( n^{\beta'} ) ], \]
  which is summable in $n$. It follows from the
  Borel-Cantelli lemma that $M_W \geq k$ a.a.a.s. as required.
\end{pf}

  We now move on to consider the `quite sparse' case.
\begin{lemma}  \label{lem:scanstatisticsparse}
  Let $W \subseteq \eR^d$ be a measurable, bounded set with non-empty interior and fix $0< \eps < 1$.
  Then there exists a $\beta = \beta(W,\sigma,\eps) > 0$ such that if
  $n^{-\beta} \leq n r^d \leq \beta\ln n$ then
  
\[ (1-\eps) k(n) \leq M_W \leq (1+\eps)k(n) \quad \text{ a.a.a.s., } 
 \]

\noindent
with  $k(n) = \ln n / \ln(\frac{\ln n}{n r^d})$.
\end{lemma}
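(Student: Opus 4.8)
The plan is to establish the upper and lower bounds on $M_W$ separately, in each case reducing to a suitable scan over $\Omega(r^{-d})$ disjoint (nearly-)maximum-density translates of $rW$ and then applying the Chernoff-type bounds on binomials from Lemma~\ref{lem:chernoffbinomial} and Lemma~\ref{lem:binineqe}, together with the multinomial negative-association bound of Lemma~\ref{lem:negassmult}. Write $k = k(n) = \ln n/\ln(\frac{\ln n}{nr^d})$. The key heuristic is that $M_W$ behaves like the maximum of about $r^{-d}=n^{1+o(1)}$ roughly-independent copies of a $\Bi(n,\Theta(\sigma\vol(W)r^d))$ variable, i.e.\ of a variable with mean $\mu=\Theta(\sigma\vol(W)nr^d)=n^{o(1)}$; and for such a variable $\Pee(\Bi \geq \ell)$ is of order $(e\mu/\ell)^\ell = n^{-\ell\ln(\ell/\mu)(1+o(1))/\ln n}$, which crosses $r^d = n^{-1+o(1)}$ precisely when $\ell\ln(\ell/\mu)\sim \ln n$, i.e.\ when $\ell\sim k(n)$ (using that $\ln(\ell/\mu)\sim \ln(\ln n / nr^d)$ in this regime, since $\mu = n^{o(1)}$ and $\ell = n^{o(1)}$ so their logarithms are $o(\ln n)$ while $\ln k = \ln\ln n - \ln\ln(\tfrac{\ln n}{nr^d})$ is controlled; one has to be a little careful here, and choosing $\beta$ small makes $\ln(\tfrac{\ln n}{nr^d})$ comparable to $\ln\ln n$ up to constants, so $\ln(\ell/\mu) \sim \ln(\tfrac{\ln n}{nr^d})$).

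For the \textbf{upper bound} $M_W \leq (1+\eps)k$ a.a.a.s., I would argue as in the proof of Lemma~\ref{lem:scanstatisticverysparse}(i): reduce to $W$ a ball $B(0;R)$ (so $M_W$ is monotone in $r$), observe that if some translate of $rW$ contains $\ell := \lceil(1+\eps)k\rceil$ points then some $X_i$ has $\ell-1$ other points within distance $2Rr$, whence
\[
\Pee(M_W \geq \ell) \leq n\,\Pee(\Bi(n,\sigma\vol(B)2^dR^dr^d)\geq \ell-1)\leq n\left(\frac{e\sigma\vol(B)2^dR^d\,nr^d}{\ell-1}\right)^{\ell-1}
\]
by Lemma~\ref{lem:binineqe}. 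Taking logarithms and using $nr^d \leq \beta\ln n$, $nr^d\geq n^{-\beta}$, one checks that for $\beta$ small enough this is $n^{-\delta}$ for some fixed $\delta=\delta(\eps)>0$. As in Lemma~\ref{lem:scanstatisticverysparse}(i), this is not summable, so I would pass to the subsequence $n=m^K$ with $K=\lceil 1/\delta\rceil+1$, apply Borel--Cantelli there, and then interpolate back to all $n$ using monotonicity of $M_W$ in $r$ and the fact that $r((m-1)^K)/r(m^K)\to1$ together with $k(m^K)/k((m-1)^K)\to1$ (this last limit holds because $k(n)=n^{o(1)}$ varies slowly); one must verify that $\lceil(1+\eps)k(m^K)\rceil \geq \lceil(1+\tfrac{\eps}{2})k(n)\rceil$ for $(m-1)^K<n\leq m^K$ and $n$ large, which follows from this slow variation, and then rename $\eps$.

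For the \textbf{lower bound} $M_W \geq (1-\eps)k$ a.a.a.s., I would reduce to $W$ a ball contained in $W$, again assume $nr^d=n^{-\beta}$ or $=\beta\ln n$ at the endpoints by monotonicity (or handle the whole range directly), and use Lemma~\ref{prop:numax} to get $N=\Omega(r^{-d})$ disjoint translates $W_1,\dots,W_N$ of $rW$ with $\nu(W_i)\geq(1-\eps')\sigma\vol(W)r^d$. Since $(\mathcal N(W_1),\dots,\mathcal N(W_N),\mathcal N(\text{rest}))$ is multinomial, Lemma~\ref{lem:negassmult} gives $\Pee(M_W\leq \ell-1)\leq \prod_i\Pee(\mathcal N(W_i)\leq\ell-1)$, and Lemma~\ref{lem:binineqe} gives $\Pee(\mathcal N(W_i)\geq\ell)\geq(n(1-\eps')\sigma\vol(W)r^d/(e\ell))^\ell$ for $\ell=\lfloor(1-\eps)k\rfloor$. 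Hence $\Pee(M_W\leq\ell-1)\leq\exp[-N\cdot c^\ell]$ where $c^\ell = (c'nr^d/\ell)^\ell$, and one checks via logarithms that $N\cdot c^\ell = \exp[\Omega(n^{\delta'})]$ for a fixed $\delta'>0$ when $\beta$ is small enough, so this is summable and Borel--Cantelli applies directly; if I first shrank $r$ to an endpoint I would interpolate back exactly as above, but here a direct argument over the whole range $n^{-\beta}\leq nr^d\leq\beta\ln n$ is cleaner since the bound is already summable.

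The \textbf{main obstacle} is the bookkeeping of the logarithmic asymptotics: verifying that, uniformly over the whole range $n^{-\beta}\leq nr^d\leq\beta\ln n$ and for a single fixed small $\beta$ depending on $W,\sigma,\eps$, one has $\Pee(M_W\geq(1+\eps)k)\leq n^{-\delta}$ and $\Pee(M_W\leq(1-\eps)k)$ summable. This amounts to controlling $\ell\ln(e\mu/\ell)$ where $\ell$ is a constant multiple of $k(n)$ and $\mu=\Theta(nr^d)$, i.e.\ showing $\ln(\ell/\mu) = (1+o(1))\ln(\tfrac{\ln n}{nr^d})$ in this regime and that the resulting error terms do not eat the fixed $\eps$-gap; choosing $\beta$ small keeps $\ln(\tfrac{\ln n}{nr^d})$ bounded below by a constant times $\ln\ln n$ on the upper part of the range and above by $(1+\beta)\ln n$ on the lower part, which is what makes the $o(1)$'s genuinely small. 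Beyond that, the only other care needed is the subsequence-and-interpolation device for the upper bound (already used in Lemma~\ref{lem:scanstatisticverysparse}), together with the observation that $k(n)$ is slowly varying so the interpolation loses only a $(1+o(1))$ factor that can be absorbed by relabelling $\eps$.
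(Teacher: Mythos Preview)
Your lower-bound argument is essentially the paper's: disjoint translates via Lemma~\ref{prop:numax}, negative association via Lemma~\ref{lem:negassmult}, and the lower tail from Lemma~\ref{lem:binineqe} yield a bound of the form $\exp[-n^{\delta'}]$, which is summable, so Borel--Cantelli applies directly. Good.

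The upper-bound probability estimate is also fine, but your interpolation step has a genuine gap. You assert that $r((m-1)^K)/r(m^K)\to 1$, and this is what made the trick work in Lemma~\ref{lem:scanstatisticverysparse}(i). There, one could assume without loss of generality that $nr^d=n^{-\alpha}$ exactly (by monotonicity of $M_W$ in $r$ when $W$ is a ball), so $r(n)$ had a prescribed slowly-varying form. Here the hypothesis only pins $nr^d$ to the interval $[n^{-\beta},\beta\ln n]$, and $r$ may oscillate wildly within it: for instance $nr^d$ could alternate between $n^{-\beta}$ and $\beta\ln n$, so $r((m-1)^K)/r(m^K)$ need not tend to $1$, and there is no single ``endpoint'' value of $r$ to which one can reduce (replacing $r$ by the upper endpoint inflates $k(n)$ in the wrong direction; replacing by the lower endpoint deflates $M_W$ in the wrong direction). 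The paper flags this explicitly: ``Unfortunately the trick we applied there does not seem to work here and we are forced to use a more elaborate method.''

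What the paper does instead is to discretise the scan radius. One introduces, for each integer $i$, the value $s_i(n)$ defined by $k(n,s_i(n))=i$, and the bad event $A(n)=\{M(n,s_i(n))>(1+\tfrac{\eps}{2})i\text{ for some }1\le i<I(n)\}$ with $I(n)=O(\ln n)$. A union bound over the $O(\ln n)$ values of $i$ gives $\Pee(A(n))\le n^{-\eps/3+o(1)}$; Borel--Cantelli along $n=m^K$ then shows $A(m^K)$ fails a.a.a.s. For general $n$ with $(m-1)^K<n\le m^K$, one picks $i$ with $s_i(m^K)\le r(n)<s_{i+1}(m^K)$ and compares $M_W(n,r(n))$ to $M(m^K,s_{i+1}(m^K))$, using that $k(n,r(n))\ge k((m-1)^K,s_i(m^K))\sim i$ so that $(1+\eps)k(n,r(n))\ge(1+\tfrac{\eps}{2})(i+1)$ for large $n$. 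The point is that the discretisation over $i$ replaces the missing control on $r(n)/r(m^K)$: whatever $r(n)$ is, it falls into one of $O(\ln m)$ buckets, and all buckets are handled simultaneously on the subsequence. Your plan is correct once you swap in this discretisation device for the unjustified ratio claim.
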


\begin{proof}
%\begin{pf}{of Lemma~\ref{lem:scanstatisticsparse}}
  As in the proof of the previous lemma we may again assume that $W$ is a ball.
  Set $k(n) := \ln n / \ln(\frac{\ln n}{nr^d})$.  Let us first consider the lower bound.
  Completely analogously to the proof of Lemma~\ref{lem:scanstatisticverysparse},
  part~\ref{itm:scanverysparselb}, we have
  \m{was $\Pee( M_W \leq $}
\begin{eqnarray}
  \Pee( M_W < (1-\eps)k )
 & \leq &
  (1-\Pee( \Bi(n, Cr^d )\geq (1-\eps)k))^{\Omega(r^{-d})} \nonumber \\
 & \leq &
  \exp[ - \Omega( r^{-d} (\frac{C n r^d}{e(1-\eps)k})^{(1-\eps)k} )] \nonumber \\
 & = &
  \exp[ - \Omega( r^{-d} \exp[ -(1-\eps)k(\ln(\frac{k}{nr^d})+D) ] ) ], \label{eq:Mkineq}
\end{eqnarray}
  with $C := (1-\eps)\sigma\vol(W), D := \ln(\frac{e(1-\eps)}{C})$.  By choice of $k$:
\begin{eqnarray}
  k ( \ln(\frac{k}{nr^d}) + D )
 & = &
  \frac{\ln n}{\ln(\frac{\ln n}{nr^d})}
  \left[ \ln(\frac{\ln n}{nr^d}) -
  \ln(\ln(\frac{\ln n}{nr^d}))+D\right] \nonumber \\
& = &
  \ln n \left[ 1 -\ln( \ln(\frac{\ln n}{nr^d}) )/\ln(\frac{\ln n}{nr^d})
  + D / \ln(\frac{\ln n}{nr^d}) \right].  \label{eq:klnn0}
\end{eqnarray}

\noindent
If $n^{-\beta}\leq nr^d\leq \beta\ln n$ then $\frac{\ln n}{nr^d} \geq \frac1\beta$.
Thus, if $\beta>0$ is chosen small enough then:

\begin{equation}\label{eq:klnn}
k ( \ln(\frac{k}{nr^d}) + D ) =
\left( 1 + \frac{D -\ln( \ln(\frac{\ln n}{nr^d}))}{
\ln(\frac{\ln n}{nr^d})} \right) \ln n
\leq \ln n.
\end{equation}

\noindent
Also note that
$r^{-d} \geq \frac{n}{\beta\ln n} = n^{1+o(1)}$.
Combining this with~\eqref{eq:Mkineq} and~\eqref{eq:klnn}, we get

\[ \begin{array}{rcl}
\Pee( M_W \leq (1-\eps)k )
& \leq &
\exp[ - \Omega( r^{-d} e^{-(1-\eps)\ln n}  ) ]
=
\exp[ - \Omega( r^{-d} n^{-1+\eps+o(1)} ) ]  \\
& \leq &
\exp[ - n^{\eps+o(1)} ].
\end{array} \]

\noindent
This last expression sums in $n$, so we may conclude that
$M_W \geq (1-\eps)k$ a.a.a.s. if $n^{-\beta} \leq nr^d \leq \beta\ln n$
for $\beta>0$ sufficiently small.

%%%%%%%%%%%%%%%%%%%%%%%%%%%%%%%%%%%%%%%%%%%%%%%%
%%%%%%%%%%%%%%%%%%%%%%%%%%%%%%%%%%%%%%%%%%%%%%%%
%%% begin UB
%%%%%%%%%%%%%%%%%%%%%%%%%%%%%%%%%%%%%%%%%%%%%%%%
%%%%%%%%%%%%%%%%%%%%%%%%%%%%%%%%%%%%%%%%%%%%%%%%

  Let us now shift attention to the upper bound. As in the proof of
  part~\ref{itm:scanverysparseub} of Lemma~\ref{lem:scanstatisticverysparse} the obvious upper bound on
  $\Pee( M_W \geq (1+\eps)k )$ does not sum in $n$. Unfortunately
  the trick we applied there does not seem to work here and we are
  forced to use a more elaborate method. For $s > 0$ let us set

\[ M(n,s) := \max_{x\in\eR^d}{\cal N}(x+sW), \quad
k(n,s) := \ln n/ \ln(\frac{\ln n}{n s^d}). \]

\noindent
  Note that $k(n,s)$ is non-decreasing in $n$ and $s$ and so is $M(n,s)$ (because $W$ is a ball).
  \m{was increasing}
The rough idea for the rest of the proof is as follows.
We fix a (large) constant $K$.
Given $n$ we appoximate $n$ by $m^K$, chosen to satisfy $(m-1)^K<n\leq m^K$, and
we approximate $r$ by $\tilde{s} \geq r$, which is one of
$O(\ln m)$ candidate values $s_1,\dots,s_{N(m)}$, in such a way that

\begin{enumerate}
\item[(a)] $M(m^K,\tilde{s}) \leq (1+\frac{\eps}{2})k(m^K,\tilde{s})$ a.a.a.s.
\item[(b)] $(1+\frac{\eps}{2})k(m^K,\tilde{s}) \leq (1+\eps)k(n,r)$;
\end{enumerate}

\noindent
Note that $M(m^K, \tilde{s}) \geq M(n,r(n))$ because
$m^K \geq n, \tilde{s}\geq r$ and $W$ is a ball, and that combining this with
items (a) and (b) will indeed show that $M(n,r) \leq (1+\eps)k$ a.a.a.s.
The reason we have chosen this setup is that if the constant $K$ is chosen sufficiently large we will
be able to use the Borel-Cantelli lemma to establish (a), making use of the fact that we are only
considering a subsequence of $\eN$ and $\tilde{s}$ is one of $O(\ln m)$ candidate values.

Let us pick $s_1(n) < s_2(n) < \dots$ such that
$k(n, s_i(n) ) = i$.
Let us denote by $A(n)$ the event

\[
A(n) :=
\{
M(n,s_i(n)) > (1+\frac{\eps}{2})i \textrm{ for some } 1 \leq i < I(n)
\},
\]

\noindent
with $I(n) := \ln n / \ln(\frac{1}{2\beta})$, the value of $k(n,s)$ corresponding to $ns^d = 2\beta\ln n$, where $\beta=\beta(\eps)$ is to be determined later
(note that $k(n,s) = i$ implies that $\ln(\frac{\ln n}{ns^d}) = \frac{\ln n}{i}$).
By computations done in the proof of \ref{itm:scanverysparseub} of
Lemma~\ref{lem:scanstatisticverysparse} we know that

\begin{equation}\label{eq:Mns}
\Pee( M(n,s) > (1+\frac{\eps}{2})k(n,s) ) \leq
n \left( \frac{C ns^d}{k(n,s)} \right)^{(1+\frac{\eps}{2})k(n,s)}
= n %\exp[
e^{
- (1+\frac{\eps}{2})k(n,s) (\ln(\frac{k(n,s)}{ns^d})+D)
}, %],
\end{equation}

\noindent
for appropriately chosen constants $C, D$.
We may assume wlog that $D\leq 0$.
  By~\eqref{eq:klnn0} we have that
\[
  k(n,s) (\ln\left(\frac{k(n,s)}{ns^d}\right)+D) 
= 
\ln n (1 - 
\frac{\ln\ln(\frac{\ln n}{ns^d})}{\ln(\frac{\ln n}{ns^d})}
+\frac{D}{\ln(\frac{\ln n}{ns^d})}).
\]
  If $s_1\leq s\leq s_{\lfloor I\rfloor}$ then $\ln n / (ns^d) \geq \frac{1}{2\beta}$.
  Hence, by taking $\beta$ sufficiently small we can guarantee
  that for $s_1\leq s\leq s_{\lfloor I\rfloor}$:
  
\[
  (1+\frac{\eps}{2})(1 - 
\frac{\ln\ln(\frac{\ln n}{ns^d})}{\ln(\frac{\ln n}{ns^d})}
+\frac{D}{\ln(\frac{\ln n}{ns^d})}) 
\geq
  (1+\frac{\eps}{2})(1-\frac{\ln\ln(\frac{1}{2\beta})}{\ln(\frac{1}{2\beta})}+\frac{D}{\ln(\frac{1}{2\beta})}) 
\geq 
1+ \eps/3,
\]

\noindent
since $D\leq 0$.
  By~\eqref{eq:Mns} we have that for $s_1(n) \leq s \leq s_{\lfloor I(n)\rfloor}(n)$:

\[
  \Pee( M(n,s) \geq (1+\frac{\eps}{2})k(n,s) ) \leq n 
  e^{- (1+\frac{\eps}{2})k(n,s) (\ln(\frac{k(n,s)}{ns^d})+D)} \leq n^{-\eps/3+o(1)}.
\]
  It also follows that
\[
  \Pee( A(n) ) \leq I(n) n^{-\eps/3 +o(1)} = n^{-\eps/3 +o(1)}.
\]
  This last expression does not necessarily sum in $n$, but if we take $K$ such that
  $K \eps/3 > 1$ then we can apply Borel-Cantelli to deduce that (a) holds; that is
\[
  \Pee( A(m^K) \textrm{ holds for at most finitely many } m ) = 1.
\]

\noindent
Now let $n \in \eN$ be arbitrary and let the integer $m=m(n)$ be such that
$(m-1)^K < n \leq m^K$.
Let $i=i(n)$ be such that $s_i(m^K) \leq r(n) < s_{i+1}(m^K)$.
We first remark that if $n^{-\beta} \leq n r^d \leq \beta \ln n$ then
$(m^K)^{-\beta} \leq m^K r^d \leq (\frac{m}{m-1})^K\beta \ln(m^K)$, giving:

\[
\frac{1+o(1)}{\beta}
%=
%\ln(m^K) / \ln(\frac{\ln(m^K)}{(m^K)^{-\delta}})
\leq
k(m^K, r )
\leq
-(1+o(1))\frac{\ln(m^K)}{\ln(\beta)}.
\]

\noindent
So for $n$ sufficiently large, we must have $\frac{1}{2\beta} < i < I(m^K)$.

To complete the proof we now aim to show that (for $n$ large enough),
If $ M(n,r(n)) \geq (1+\eps)k(n,r(n))$  then

\[
M(m^K,s_{i+1}(m^K))\geq (1+\frac{\eps}{2})k(m^K,s_{i+1}(m^K)) = (1+\frac{\eps}{2})(i+1) . 
\]

\noindent
It suffices to show that
$(1+\eps)k(n,r(n)) \geq (1+\frac{\eps}{2})(i+1) $, because
$M(n,r(n)) \leq M(m^K,s_{i+1}(m^K))$  (since $W$ is a ball).
Routine calculations show that
\[ 
k(n,r(n)) \geq  k((m-1)^K,s_i(m^K)) \sim k(m^K,s_i(m^K)) = i ;
\]
\noindent
and, assuming that $\beta \leq \eps/6$, for $n$ sufficiently large
\[  
(1+\eps)k(n,r(n)) \geq (1+\frac{\eps}{2})(1+2\beta)i \geq (1+\frac{\eps}{2})(i+1) 
\]
\noindent
 as required.
\end{proof}

   Lemma~\ref{lem:scanstatisticsparse} also allows us to deduce immediately the
  following corollary, which extends lemma 5.3 of the first author~\cite{cmcdplane}
and may be of independent interest.

%\begin{corollary}
\begin{lemma}  \label{lem:scanstatisticsparse-cor}
  Let $W \subseteq \eR^d$ be be a measurable, bounded set with non-empty interior.
  If for each fixed $\eps>0$ we have 
  $n^{-\eps} < n r^d < \eps \ln n$ for all sufficiently large $n$, then
\[
M_W \sim  \ln n / \ln(\frac{\ln n}{n r^d}) \;\; \text{ a.s.,}
\]

\end{lemma}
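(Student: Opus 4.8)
The plan is to deduce the corollary directly from Lemma~\ref{lem:scanstatisticsparse}, together with the standard device of upgrading a family of ``almost always almost surely'' statements (one for each $\eps$) to a single almost sure asymptotic equivalence. Throughout, write $k(n) := \ln n / \ln(\frac{\ln n}{nr^d})$; note that under the hypothesis we have $\frac{\ln n}{nr^d} > 1/\eps$ for every fixed $\eps>0$, so $\ln(\frac{\ln n}{nr^d}) \to \infty$ and hence $k(n)$ is well defined and positive for all large $n$ (and $r(n)\to 0$ automatically).

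First I would fix a sequence $\eps_j \downto 0$, say $\eps_j = 1/j$ for $j \in \eN$. For each $j$, Lemma~\ref{lem:scanstatisticsparse} applied with $\eps = \eps_j$ produces a constant $\beta_j = \beta(W,\sigma,\eps_j) > 0$ with the property that whenever $n^{-\beta_j} \leq nr^d \leq \beta_j \ln n$ we have $(1-\eps_j)k(n) \leq M_W \leq (1+\eps_j)k(n)$ a.a.a.s. Now the key point: I invoke the hypothesis of the corollary at the \emph{fixed} positive value $\eps = \beta_j$, which gives $n^{-\beta_j} < nr^d < \beta_j \ln n$ for all sufficiently large $n$. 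Hence the hypothesis of Lemma~\ref{lem:scanstatisticsparse} holds for all large $n$, and so the event $E_j := \{\, (1-\eps_j)k(n) \leq M_W \leq (1+\eps_j)k(n) \text{ for all but finitely many } n \,\}$ satisfies $\Pee(E_j) = 1$.

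Since $\Pee(E_j) = 1$ for every $j$, the countable intersection $E := \bigcap_{j \in \eN} E_j$ also has $\Pee(E) = 1$. On the event $E$, for each $j$ we have $(1-1/j)k(n) \leq M_W \leq (1+1/j)k(n)$ for all sufficiently large $n$; letting $j \to \infty$ forces $M_W/k(n) \to 1$. Thus $M_W \sim k(n)$ a.s., which is exactly the assertion.

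I do not expect any genuine obstacle. The only two points needing care are: (i) the constant $\beta$ in Lemma~\ref{lem:scanstatisticsparse} depends on $\eps$, so one must feed the value $\eps = \beta_j$ back into the hypothesis of the corollary (which is uniform over all choices of $\eps$) in order to land inside the admissible range $n^{-\beta_j} \le nr^d \le \beta_j\ln n$; and (ii) the passage from a countable family of a.a.a.s.\ bounds to a single a.s.\ limit, which is just the remark that a countable intersection of probability-one events has probability one. Both are routine, which is why this is stated as a corollary rather than a theorem.
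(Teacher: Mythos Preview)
Your proof is correct and is precisely the routine deduction the paper has in mind: the paper gives no separate argument for this lemma, merely noting that it ``allows us to deduce immediately'' the corollary from Lemma~\ref{lem:scanstatisticsparse}. Your write-up fills in exactly those details --- applying Lemma~\ref{lem:scanstatisticsparse} along a countable sequence $\eps_j\downto 0$, using the hypothesis of the corollary at $\eps=\beta_j$ to land in the admissible range, and then intersecting the resulting probability-one events --- and there is nothing to add.
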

%\end{corollary}

%\noindent
%We will need the following two standard results for the proof of Theorem~\ref{thm:Mphi}:
%
% \begin{lemma}\label{lem:chernoffepsilon2}
% Let $X \sim Poi(\mu)$ then for $\eps > 0$ fixed
% \[ \Pee( \tel{X-\mu} > \eps \mu ) = 2 \exp[ - \eps^2 \mu ],
% \]
% \end{lemma}
%
% \begin{lemma}\label{lem:chernoffH}
% Let $X \sim Poi(\mu)$ then for $\alpha > 1$ fixed
% \[ \Pee( X > \alpha \mu ) = \exp[ - \mu H(\alpha) + o(\mu) ],
% \]
% where $H(x) := x\ln(x) - x + 1$.
% \end{lemma}

% \[ \begin{array}{rcl}
% \Pee( Po(\mu) \geq k ) & \geq &
% \Pee( Po(\mu) = k )
% = \frac{\mu^k}{k!} e^{-\mu}
% = (1+o(1)) \frac{1}{\sqrt{2\pi k}} (\frac{e}{k})^k \mu^k e^{-\mu} \\
% & = &
% (1+o(1)) \frac{1}{\sqrt{2\pi k}} e^{ -k\ln k + k \ln \mu - \mu + k}
% %= (1+o(1)) \frac{1}{\sqrt{2\pi k}} e^{ - \mu( \frac{k}{\mu}\ln(\frac{k}{\mu})-\frac{k}{\mu}+ 1) }
% = (1+o(1)) \frac{1}{\sqrt{2\pi k}} e^{ - \mu H(\frac{k}{\mu})  }.
% \end{array} \]

%%%%%%%%%%%%%%%%%%%%%%%%%%%%%%%%%%%%%%%%%%%%%
%%%%%%%%%%%%%%%%%%%%%%%%%%%%%%%%%%%%%%%%%%%%%
%%%%%%%%%%%%%%%%%%%%%%%%%%%%%%%%%%%%%%%%%%%%%
%%%%%%%%%  lemma for large t
%%%%%%%%%%%%%%%%%%%%%%%%%%%%%%%%%%%%%%%%%%%%%
%%%%%%%%%%%%%%%%%%%%%%%%%%%%%%%%%%%%%%%%%%%%%
%%%%%%%%%%%%%%%%%%%%%%%%%%%%%%%%%%%%%%%%%%%%%

%%%%%%%%%%%%%%%%%%%%%%%%%%%%%%%%%%%%%%%%%%%%%%%%%%%%%%%%%%%%%%%%%%%%%%%%%%%%%%%%%%

\subsection{Dense case}
\label{subsec.dense}

  The `dense' case is when $\sigma nr^d/\ln n$ is large.
  Let us sketch our approach to proving this result.  
We first show that it suffices to consider a simple function $\varphi$.  
Also we ``Poissonise''; that is, we consider random points $X_1,\ldots,X_N$ where $N$ has an appropriate Poisson distribution.
To prove the lower bound, we use Lemma~\ref{cor:numax} to see that there are points $x_i$ so that we can translate to 
regions with density near $\sigma$; then we lower bound $M_{\varphi}$ by a linear combination of independent 
Poisson random variables (indeed by independent copies of such linear combinations), and use 
Lemma~\ref{lem:chernoffH} to complete the proof.
The proof of the upper bound is more involved.  
We first find a suitable simple function $\varphi_{\eta} \geq \varphi$ and close to $\varphi$.  Consider a cell $C = [-R/2,R/2)^d$  containing the support of $\varphi_{\eta} $, and partition $\eR^d$ into the sets $\Gamma(x) = x+ rR \Zed^d$ of translates of the points $x \in rC$.  Let the random variable $U$ be uniformly distributed over $rC$ and let $M(U)$ be

\[ 
\max_{y \in \Gamma(U)}  \sum_{j=1}^N \varphi_{\eta} (\frac{X_j-y}{r}). 
\]

\noindent
Because of the way $\varphi_{\eta}$ was chosen, it suffices to upper bound $\Pee(M(U) \geq (1+\eps)k)$.  
To do this we show that for each possible $u$

\[
\Pee(M(u) \geq (1+\eps)k) = O(r^{-d}) \cdot  \Pee (Z \geq   (1+\eps)k)
\]

\noindent
where $Z$ is a linear combination of independent Poisson random variables, and finally we use Lemma~\ref{lem:chernoffH} 
to complete the proof.

\begin{lemma}\label{lem:MphilargeT}
  Let $\varphi$ be a tidy function.
  For every $\eps > 0$ there exists a $T = T(\varphi, \eps)$ such that if $\sigma nr^d \geq T\ln n$ then
\[
  (1-\eps) k \leq M_\varphi \leq (1+\eps) k \textrm{ a.a.a.s., }
\]
  where $k = \sigma nr^d\int \varphi$.
\end{lemma}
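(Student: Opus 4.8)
The plan is to follow the sketch outlined just before the statement, carrying out the reduction to simple functions, a Poissonisation step, and then the two bounds separately via the Chernoff--Hoeffding estimate of Lemma~\ref{lem:chernoffH} together with the lower bound of Lemma~\ref{lem:PoAss} (equivalently Lemma~\ref{cor:chernoffH2}).

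\emph{Reduction and Poissonisation.} First I would observe that it suffices to prove the result when $\varphi = \sum_{i=1}^m a_i 1_{A_i}$ is a simple function with the $A_i$ disjoint bounded sets of positive volume having small neighbourhoods: given a general tidy $\varphi$ and $\eta>0$ one can sandwich $\varphi$ between simple tidy functions $\varphi^- \leq \varphi \leq \varphi^+$ with $\int \varphi^+ - \int\varphi^- < \eta$ (partitioning the range of $\varphi$ into level sets and using that $\{\varphi>a\}$ has a small neighbourhood to choose the sets cleanly); since $M_{\varphi^-} \leq M_\varphi \leq M_{\varphi^+}$ and $k^\pm = \sigma n r^d \int\varphi^\pm$ differ from $k$ by at most $\sigma n r^d \eta$, the simple-function case implies the general case after letting $\eta\to0$. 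Next I would Poissonise: replace the fixed sample size $n$ by an independent $N\sim\Po(n)$ (or $\Po((1\pm\delta)n)$ for a coupling argument), using that $\Pee(|N-n|>n^{2/3})$ is summable and that adding or deleting $n^{2/3}$ points changes $M_\varphi$ by at most $O(n^{2/3}\|\varphi\|_\infty)=o(k)$ when $\sigma nr^d\gg\ln n$. Under the Poisson model the numbers of points falling in disjoint regions are independent Poisson variables, which is what makes the linear combinations below tractable.

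\emph{Lower bound.} Apply Lemma~\ref{cor:numax} with $W=\bigcup_i A_i$ and its partition $\{A_i\}$ to get $N'=\Omega(r^{-d})$ points $x_1,\dots,x_{N'}$ with the translates $x_\ell + rA_i$ pairwise disjoint (over all $\ell,i$) and $\nu(x_\ell+rA_i)\geq (1-\eps')\sigma\vol(rA_i)$. Then $M_\varphi \geq \max_\ell \sum_i a_i \mathcal{N}(x_\ell + rA_i)$, and by Poisson independence these $N'$ sums are independent copies of $Z=\sum_i a_i Z_i$ with $Z_i\sim\Po(\mu_i)$, $\mu_i \geq (1-\eps')\sigma nr^d\vol(A_i)$, so $\Ee Z \geq (1-\eps')k$. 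A Cramér-type large deviation lower bound for the linear combination $Z$ (obtained by a change of measure that tilts each $Z_i$, or more crudely by restricting to the event that each $Z_i$ individually is large, using Lemma~\ref{lem:PoAss} on each coordinate) gives $\Pee(Z \geq (1-\eps)k) \geq e^{-(1-\eps/2)\ln n}=n^{-1+\eps/2}$ once $T$ is large enough that the rate constant is small; then $\Pee(M_\varphi < (1-\eps)k) \leq (1-n^{-1+\eps/2})^{\Omega(r^{-d})} \leq \exp(-n^{\eps/2+o(1)})$, which is summable, so Borel--Cantelli gives the a.a.a.s.\ lower bound (with the usual passage to a polynomial subsequence $m^K$ and monotonicity-in-$r$ interpolation if needed, exactly as in Lemma~\ref{lem:scanstatisticsparse}).

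\emph{Upper bound.} Here I would take a simple $\varphi_\eta \geq \varphi$ close to $\varphi$ with support in a cell $C=[-R/2,R/2)^d$, and partition $\eR^d$ along the lattice $rR\Zed^d$: for $u\in rC$ let $\Gamma(u)=u+rR\Zed^d$ and $M(u)=\max_{y\in\Gamma(u)}\sum_j \varphi_\eta((X_j-y)/r)$. Since the supports $y+rC$ for $y\in\Gamma(u)$ are disjoint, each inner sum is a linear combination of \emph{independent} Poisson variables, and there are $O(r^{-d})$ relevant lattice translates meeting the bounded region where $\nu$ lives up to negligible tails, so a union bound gives $\Pee(M(u)\geq (1+\eps)k) = O(r^{-d})\Pee(Z'\geq(1+\eps)k)$ with $Z'=\sum_i a_i Z_i'$, $Z_i'\sim\Po(\mu_i')$, $\Ee Z' \leq (1+\eta')k$. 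Averaging over $u$ uniform in $rC$ controls $\Pee(\sup_x\sum_j\varphi((X_j-x)/r)\geq(1+\eps)k)$ because every point $x$ lies in some $\Gamma(u)$. The tail $\Pee(Z'\geq(1+\eps)k)$ is bounded by an exponential-tilting Chernoff bound for the linear combination (minimise $e^{-\lambda(1+\eps)k}\prod_i \exp(\mu_i'(e^{\lambda a_i}-1))$ over $\lambda>0$); this is $\leq e^{-c(\eps,T)\ln n}$ with $c(\eps,T)\to\infty$ as $T\to\infty$, beating the $O(r^{-d})=n^{O(1)}$ factor and summing in $n$.

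\emph{Main obstacle.} The genuinely delicate part is the upper bound: one must carefully handle the fact that $M_\varphi$ is a supremum over a continuum of translates $x\in\eR^d$, not a maximum over a finite independent family. The lattice/averaging device (decompose $\eR^d$ into countably many cosets of $rR\Zed^d$, randomise the coset offset $U$, and exploit that within one coset the relevant regions are disjoint hence the Poisson counts independent) is what resolves this, but making the bookkeeping precise — controlling the number of lattice translates that actually matter, the contribution of regions where $\nu$ is small, and the loss incurred in passing from $\varphi$ to $\varphi_\eta$ — is where the real work lies. The large-deviation estimate for a fixed linear combination of independent Poissons is comparatively routine given Lemmas~\ref{lem:chernoffH}, \ref{lem:PoAss} and~\ref{cor:chernoffH2}, and the interpolation over $n$ to upgrade a subsequential Borel--Cantelli conclusion to a.a.a.s.\ is by now standard (cf.\ the proof of Lemma~\ref{lem:scanstatisticsparse}).
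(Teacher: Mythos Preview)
Your overall architecture---reduce to simple functions, Poissonise, handle lower and upper bounds separately, and use the random-lattice device for the upper bound---matches the paper's. But the lower-bound argument has a genuine gap.

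In the regime of this lemma the target $(1-\eps)k$ lies \emph{below} the mean of $Z$: with $\eps'$ small in Lemma~\ref{cor:numax} you have $\Ee Z \geq (1-\eps')k > (1-\eps)k$. So $\{Z \geq (1-\eps)k\}$ is the \emph{typical} event, not a rare one requiring a Cram\'er lower bound. The correct step is to bound the lower tail directly: by Lemma~\ref{lem:chernoffH} applied coordinatewise, $\Pee(Z < (1-\eps)k) \leq m\exp(-c(\eps)\min_i\mu_i) \leq n^{-c'(\eps)T}$, which is summable once $T$ is large---and a \emph{single} translate $x_1$ already suffices. This is exactly what the paper does. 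Your route instead asserts $\Pee(Z \geq (1-\eps)k) \geq n^{-1+\eps/2}$ and takes a product over $\Omega(r^{-d})$ independent copies, giving $(1-n^{-1+\eps/2})^{\Omega(r^{-d})} \leq \exp(-\Omega(r^{-d})n^{-1+\eps/2})$. This is not summable in general: the hypothesis $\sigma nr^d \geq T\ln n$ allows $r^{-d}$ to be as small as, say, $(\ln n)^2$ (take $r=(\ln n)^{-2/d}$, so $nr^d = n/(\ln n)^2 \gg T\ln n$), and then $r^{-d}n^{-1+\eps/2}\to 0$. You have imported the lower-bound mechanism from Theorem~\ref{thm.Mphi}, where the target $\xi(\varphi,t)\sigma nr^d$ genuinely exceeds $\Ee Z$ and the many-copies argument is essential; in the present dense lemma that mechanism is both unnecessary and insufficient.

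Two smaller points on the upper bound. First, your union bound over ``$O(r^{-d})$ relevant lattice translates'' implicitly assumes $\nu$ has essentially bounded support; the paper avoids this by a clean grouping trick, partitioning $\Gamma(u)$ into $K=O(r^{-d})$ subsets $\Gamma_j$ with $\sum_{z\in\Gamma_j}\Ee M_{z,i}\leq\mu_i$ for every $i$ (possible because the total Poisson mass is $(1+\eps/100)n$), so that each $M_{\Gamma_j}:=\sum_{z\in\Gamma_j}M_z$ is stochastically dominated by one fixed $Z$. Second, ``averaging over $u$ because every $x$ lies in some $\Gamma(u)$'' does not by itself control the continuum supremum; the actual link is the $\varphi_\eta$ buffer: if $M_\varphi'\geq(1+\eps)k$ is witnessed at $x^\star$, then $M(U)\geq(1+\eps)k$ whenever $\Gamma(U)$ meets $B(x^\star;\eta r)$, which happens with probability $\vol(B)\eta^d/R^d$ over $U$, yielding $\Pee(M_\varphi'\geq(1+\eps)k)\leq (R^d/\vol(B)\eta^d)\,\Pee(M(U)\geq(1+\eps)k)$. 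Finally, no subsequence interpolation is needed here: choosing $T$ large makes both tail bounds directly summable.
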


\begin{proof} %{of Lemma~\ref{lem:MphilargeT}}
  Let us first observe that it suffices to prove the result
  for $\varphi$ a simple function, because the functions $\varphi$ we are considering can be well approximated
  by the functions $\varphi_m^{\text{lower}}, \varphi_m^{\text{upper}}$ defined by:
\[
  \varphi_m^{\text{lower}} := \sum_{k=1}^{\lceil m\cdot\max\varphi\rceil}
  (\frac{k-1}{m}) 1_{\{\frac{k-1}{m} < \varphi \leq \frac{k}{m}\}},
  \quad
  \varphi_m^{\text{upper}} := \sum_{k=1}^{\lceil m\cdot\max\varphi\rceil}
  (\frac{k}{m}) 1_{\{\frac{k-1}{m} < \varphi \leq \frac{k}{m}\}},
\]
  Here we mean by ``well approximated'' that
  $\varphi_m^{\text{lower}} \leq \varphi \leq
  \varphi_m^{\text{upper}}$ for all $m$ and
\begin{equation}\label{eq:limmslargeT}
  \lim_{m\to\infty} \int\varphi_m^{\text{lower}} = \lim_{m\to\infty} \int\varphi_m^{\text{upper}}
  = \int \varphi.
\end{equation}
  \m{was $= \xi(\varphi,t)$}
  Observe that~\eqref{eq:limmslargeT} follows from the dominated
  convergence theorem (since $\varphi$ is bounded and has bounded
  support). Also observe that the sets
  $\{ \varphi_m^{\text{upper}} > a \} = \{\varphi>\frac{\lfloor am\rfloor}{m}\}$
  and
  $\{\varphi_m^{\text{lower}}>a\} = \{\varphi>\frac{\lceil am\rceil}{m}\}$
  have a small neighbourhood for all $a$.

  Clearly $M_{\varphi_m^{\text{lower}}} \leq M_{\varphi} \leq M_{\varphi_m^{\text{upper}}}$.
  Thus the result for non-simple functions will follow from the result for simple functions
  by taking $m$ such that
  $\int\varphi_m^{\text{upper}}-\int\varphi_m^{\text{lower}}<\frac{\eps}{3}\int\varphi$
  and setting $T := \max( T_1,T_2 )$, where $T_1 := T(\varphi_m^{\text{upper}}, \frac{\eps}{3})$
  is the value we get from the result for simple functions applied to $\varphi_m^{\text{upper}}$ with $\frac{\eps}{3}$ 
and
  $T_2 := T(\varphi_m^{\text{lower}},\frac{\eps}{3})$ is the value we get from the result for simple functions
  applied to $\varphi_m^{\text{lower}}$ with $\frac{\eps}{3}$.

  In the remainder of the proof we will always assume that
  $\varphi = \sum_{i=1}^m a_i 1_{A_i}$ is a simple function with the sets $A_i$ disjoint and bounded
  and that $\{\varphi>a\}$ has a small neighbourhood for all $a$.
  Let us set
\begin{equation}\label{eq:MphilargeTkdef}
  k =k(n) := \sigma nr^d\int\varphi.
\end{equation}
  It remains to show that
  $(1-\eps)k \leq M_\varphi \leq (1+\eps)k$ a.a.a.s., whenever
  $\sigma nr^d \geq T\ln n$ for some sufficiently large $T$.

%%%%%%%%%%%%%%%%%%%%%%%%%%%%%%%%%
%%%%%%%%%%%%%%%%%%%%%%%%%%%%%%%%%
%%%% UB begins here
%%%%%%%%%%%%%%%%%%%%%%%%%%%%%%%%%
%%%%%%%%%%%%%%%%%%%%%%%%%%%%%%%%%
\smallskip

\noindent
{\bf Proof of lower bound:}
  Let $N \sim \Po( (1-\frac{\eps}{100})n )$ be independent from $X_1, X_2, \dots$.
  It will be useful to consider $X_1, \dots, X_N$ (rather than $X_1, \dots, X_n$), because
  they constitute the points of a Poisson process with intensity function
  $(1-\frac{\eps}{100})n f$ (where $f$ is the probability density function of
  $\nu$), see for example Kingman~\cite{kingmanboek}.

  By Lemma~\ref{cor:numax} there are $\Omega( r^{-d} )$ points $x_1, \dots, x_K$ such that
  $\nu( x_i +rA_j ) \geq (1-\frac{\eps}{100}) \sigma \vol(A_j) r^d$
  for all $1\leq i\leq K, 1\leq j\leq m$ and the sets $x_i+rA_j$ are disjoint.
  For the current proof we will only need that $K\geq 1$, but the fact that
  $K = \Omega( r^{-d} )$ will be needed for the proof of Theorem~\ref{thm.Mphi},
  which proceeds along similar lines to the current proof.
  Let us set $M_{i} := \sum_{j=1}^{N} \varphi(\frac{X_j-x_i}{r})$, so that
\[
  M_i = a_1 {\cal N}_N(x_i+rA_1) + \dots + a_m {\cal N}_N(x_i+rA_m),
\]
  %where ${\cal N}_N(B) := \tel{\{X_1,\dots,X_N\}\cap B}$
  where ${\cal N}_N(B) =  \sum_{i=1}^{N} 1_{X_i \in B}$
  denotes the number of points of the Poisson process in $B$.
  Note that ${\cal N}_N(x_i+rA_j)$ is a Poisson random variable with mean
  at least
\[
  \mu_j := (1-\frac{\eps}{100})^2 \sigma \vol(A_j) nr^d.
\]
 Setting
\[
  M_\varphi' := \sup_{x\in \eR^d}\sum_{i=1}^N \varphi(\frac{X_i-x}{r}),
\]
 we have
\[
  \Pee( M_\varphi' \leq (1-\eps)k )  \leq
  \Pee( M_{1} \leq (1-\eps) k, \dots, M_{K} \leq (1-\eps) k )
  = \Pi_{i=1}^K \Pee( M_i \leq (1-\eps) k ),
\]
  where in the last equality we have used that distinct $M_i$
  depend on the points of a Poisson process in disjoint areas of
  $\eR^d$ and hence the $M_i$ are independent.
  If $Z = a_1 Z_1 + \dots + a_m Z_m$ with the $Z_j$ independent Poisson
  random variables satisfying $\Ee Z_j = \mu_j$ then $M_i$ stochastically
  dominates $Z$, so that:
%\begin{equation}\label{eq:MphidashZK}
\[
\Pee( M_\varphi \leq (1-\eps)k ) \leq
\Pee( M_\varphi' \leq (1-\eps)k ) + \Pee( N > n ) \leq
\Pee( Z \leq (1-\eps)k )^K + \Pee( N > n ),
\]
%\end{equation}
 and consequently, by Lemma~\ref{lem:chernoffpoisson}
\begin{equation} \label{eqn.probineq1}
  \Pee( M_\varphi \leq (1-\eps)k ) \leq
  \Pee( Z \leq (1-\eps)k )^K + e^{-\alpha n},
\end{equation}
%\[ \Pee( M_\varphi < (1-\eps)k ) \leq  \Pee( Z < (1-\eps)k ) + \Pee( N > n ) \leq
%  \Pee( Z < (1-\eps)k ) %\left(\frac{1-\eps}{1-\frac{\eps}{100}}\right)k ) + e^{-\alpha n},\]
  where
  $\alpha := (1-\frac{\eps}{100}) H(\frac{1}{1-\frac{\eps}{100}})$.
  But  $K = \Omega(r^{-d})$ is $\geq 1$ for $n$ sufficiently large, and then
  $ \Pee( M_\varphi \leq (1-\eps)k ) \leq \Pee( Z \leq (1-\eps)k ) + e^{-\alpha n}$.
 %(and we've used that $K = \Omega(r^{-d})$ is $\geq 1$ for $n$ sufficiently large).
  Further, using Lemma~\ref{lem:chernoffH},
\[
\begin{array}{rcl}
  \Pee( Z \leq (1-\eps)k ) %\frac{1-\eps}{1-\frac{\eps}{100}} k )
 & \leq &
  \sum_{i=1}^m \Pee( Z_i \leq \frac{1-\eps}{(1-\frac{\eps}{100})^2} \mu_i ) \\
 & \leq &
  m \cdot \max_i \Pee( \Po(\mu_i) \leq \frac{1-\eps}{(1-\frac{\eps}{100})^2} \mu_i ) \\
 & \leq &
  m \cdot \exp[ - \min_i \mu_i H\left(\frac{1-\eps}{(1-\frac{\eps}{100})^2}\right)].
\end{array}
\]
  Now suppose that $T$ has been chosen in such a way that (and we
  may suppose this)
\[
  T \cdot (1-\frac{\eps}{100})^2 \cdot %\sigma \cdot
  \min_i \vol(A_i) \cdot H\left( \frac{1-\eps}{(1-\frac{\eps}{100})^2} \right)  \geq 2.
\]
  It follows that
  $\sum_n \Pee( M_\varphi < (1-\eps) k ) \leq m \sum_n n^{-2} + \sum_n e^{-\alpha n} < \infty$,
  which concludes the proof of the lower bound.

%%%%%%%%%%%%%%%%%%%%%%%%%%%%%%%%%
%%%%%%%%%%%%%%%%%%%%%%%%%%%%%%%%%
%%%% UB begins here
%%%%%%%%%%%%%%%%%%%%%%%%%%%%%%%%%
%%%%%%%%%%%%%%%%%%%%%%%%%%%%%%%%%
\smallskip

\noindent
{\bf Proof of upper bound:}
  We may assume wlog~that $a_1>a_2>\dots>a_m>0$.
  %and that the sets $A_i$ are disjoint (note the $A_i$ are bounded by assumption).
  Recall that $A_\eta$ denotes $A+B(0;\eta) = \cup_{a\in A} B(a;\eta)$.
  For $\eta>0$ let $\varphi_\eta$ be defined by
\[
  \varphi_\eta(x) := \left\{ \begin{array}{rl}
  a_i & \textrm{ if } x \in (A_i)_\eta \setminus \bigcup_{j<i} (A_j)_\eta, \\
  0 & \textrm{ if } x \not\in (A_i)_\eta \textrm{ for all } 1 \leq i \leq m.
  \end{array} \right.,
\]
  and let $\eta$ be chosen such that
  $\int \varphi_\eta \leq (1+\frac{\eps}{100})\int\varphi$.
  This can be done, since $\varphi$ is tidy and so the sets $A_i$ have small neighbourhoods.
  %because:  \m{shorten - ref}
%\[
%\begin{array}{rcl}
%\vol( (A_i)_\eta \setminus \bigcup_{j<i} (A_j)_\eta )
%- \vol(A)
%& = &
%\vol( (\bigcup_{j\leq i} A_j)_\eta \setminus (\bigcup_{j<i} A_j)_\eta )
%- \vol( (\bigcup_{j\leq i} A_j)\setminus(\bigcup_{j<i}A_j) ) \\
%& = &
%( \vol( (\bigcup_{j\leq i} A_j)_\eta ) - \vol( \bigcup_{j\leq i} A_j) ) \\
%& &
%- ( \vol( (\bigcup_{j<i} A_j)_\eta ) - \vol(\bigcup_{j<i}A_j) ),
%\end{array}
%\]
%and this can be made arbitrarily small by taking $\eta>0$ small,
%since the sets $\bigcup_{j\leq i} A_j=\{\varphi>a_{i+1}\}$ and
%$\bigcup_{j<i} A_j=\{\varphi>a_i\}$ have small neighbourhoods.
  Thus we can choose $\eta$ so that
\begin{equation} \label{eqn.smallvol}
  \vol( (A_i)_\eta \setminus \bigcup_{j<i} (A_j)_\eta )
  \leq (1+\frac{\eps}{100}) \vol(A_i)
\end{equation}
  for all $i$, and then we also have $\int \varphi_\eta = \sum_i
a_i\vol( (A_i)_\eta \setminus \bigcup_{j<i} (A_j)_\eta ) \leq
(1+\frac{\eps}{100}) \sum_i a_i\vol(A_i) =
(1+\frac{\eps}{100})\int\varphi$. Clearly $\varphi_\eta(x) \geq
\varphi(x)$ for all $x$ giving $M_{\varphi_\eta} \geq M_\varphi$.

Similarly to what we did for the lower bound, let
$N \sim \Po((1+\frac{\eps}{100})n)$
be independent of the $X_i$ and set
\[
  M_{\varphi}' := \max_{x\in\eR^d} \sum_{j=1}^{N} \varphi(\frac{X_j-x}{r}).
\]
We have
\begin{equation}\label{eq:MphiMphidashUB}
\Pee( M_\varphi > (1+\eps)k ) \leq
\Pee( M_\varphi' > (1+\eps)k) + \Pee( N < n ) \leq
\Pee( M_\varphi' > (1+\eps)k) + e^{-\alpha n},
\end{equation}
for some $\alpha > 0$ (where we have used the Lemma~\ref{lem:chernoffpoisson}%Chernoff bound
).
Again the points $X_1,\dots, X_N$ are the points of a Poisson process, this time with
intensity function $(1+\frac{\eps}{100})n f$.

Let $R > 0$ be a fixed constant such that the support of $\varphi_\eta$ is contained in
$[\frac{-R}{2}, \frac{R}{2})^d$ ($R$ exists because we assumed the $A_i$
are bounded).
Let $U$ be uniform on $[0, r R)^d$ and
let $\Gamma(U)$ be the random set of points
$U + rR\Zed^d$ ($= \{ U+rRz:z\in\Zed^d\}$).
For $x \in \eR^d$ let $M_x$ be the random variable given by
$\sum_{j=1}^{N} \varphi_\eta( \frac{X_j-x}{r} )$.
Let us define
\[
  M(U) := \max_{z \in \Gamma(U)} M_z.
\]
If $\norm{p-q}\leq \eta r$ then $\varphi_\eta(\frac{x-p}{r})\geq\varphi(\frac{x-q}{r})$ for all
$x$ by definition of $\varphi_\eta$.
For any $q \in \eR^d$, the probability that some point of $\Gamma(U)$ lies in
$B(q;\eta r)$ equals
\[
  \Pee( \Gamma(U) \cap B(q;\eta r) \neq \emptyset ) =
  \frac{\volB \eta^d}{R^d}.
\]
(We may assume wlog~that $R$ is much larger than $\eta$.)
Because $\sum_{j=1}^N \varphi(\frac{X_j-x}{r}) \leq \sum_{j=1}^N \varphi_\eta(\frac{X_j-y}{r})$
whenever $\norm{x-y}<\eta r$, this gives the following inequality:
\[
\Pee( M(U) \geq (1+\eps)k | M_{\varphi}' \geq (1+\eps)k ) \geq
\frac{\volB \eta^d}{R^d}.
\]
We find:
\begin{equation}\label{eq:MphiMU}
\Pee( M_{\varphi}' \geq (1+\eps)k) \leq
\frac{R^d}{\volB \eta^d} \Pee( M(U) \geq (1+\eps)k ).
\end{equation}
Let us now bound $\Pee( M(U) \geq (1+\eps)k )$.
To do this we will condition on $U=u$ and give a uniform bound on $\Pee( M(u) \geq (1+\eps)k)$.
The random variables $M_z$, $z \in \Gamma(u)$ can be written as
$a_1 M_{z,1} + \dots + a_m M_{z,m}$ with the
$M_{z,i}$ independent Poisson variables
with means
\[
  \Ee M_{z,i} \leq (1+\frac{\eps}{100})^2 \vol(A_i) \sigma n r^d =: \mu_i.
\]
Let us partition $\Gamma(u)$ into subsets $\Gamma_1, \dots, \Gamma_K$ with
$K = O( r^{-d} )$ such that
\begin{equation}\label{eq:muiconstraints}
\sum_{z\in \Gamma_j} \Ee M_{z,i} \leq \mu_i \text{ for all } i\in\{1,\dots,m\}.
\end{equation}
To see that this can be done, notice we can inductively choose
maximal subsets $\Gamma_j \subseteq \Gamma(u) \setminus \bigcup_{j'<j} \Gamma_{j'}$
with the property $\sum_{z\in \Gamma_j} \Ee M_{z,i} \leq \mu_i$ for all $i\in\{1,\dots,m\}$
(where by maximal we mean that the addition to $\Gamma_j$ of any $z \not\in \bigcup_{j'\leq j}\Gamma_{j'}$ would violate this last property).
With the $\Gamma_j$ chosen in this way, we must have
that $\Gamma_j\cup\{z\}$ violates one of the constraints~\eqref{eq:muiconstraints}
for any $z\in\Gamma_{j+1}$.
Thus, in particular
$\sum_{i=1}^m\sum_{z \in \Gamma_j\cup\Gamma_{j+1}} \Ee M_{z,i} >
\min_i \mu_i$ if $\Gamma_{j+1}\neq\emptyset$.
% With the $\Gamma_j$ chosen in this way, we must have
% $\sum_{i=1}^m\sum_{z \in \Gamma_j\cup\Gamma_{j+1}} \Ee M_{z,i} >
% \min_i \mu_i$ if $\Gamma_{j+1}\neq\emptyset$, for otherwise $\Gamma_j$ would not have been maximal.
Consequently, if we were able to select $K$ subsets $\Gamma_j$ we must have
\[
  \lfloor\frac{K-1}{2}\rfloor \min_i \mu_i \leq
  \sum_{j=1}^K \sum_{i=1}^m \sum_{z\in\Gamma_j} \Ee M_{z,i} \leq (1+\frac{\eps}{100})n,
\]
where the second inequality follows because the $M_{z,i}$ correspond to the number of points of a Poisson process of
total intensity $(1+\frac{\eps}{100})n$ in disjoint regions of $\eR^d$.
So we must indeed have $K = O(r^{-d})$, and that the process of selecting $\Gamma_j$ must have
stopped after $O(r^{-d})$ many $\Gamma_j$ were selected.

Set $M_{\Gamma_j} := \sum_{z \in \Gamma_j} M_z$.
As $\Gamma(u)=\bigcup_j \Gamma_j$ we have
\[
  M(u) = \max_{z\in\Gamma(u)} M_z \leq \max_j M_{\Gamma_j}.
\]
Note the $M_{\Gamma_j}$ are stochastically dominated by
$Z = a_1 Z_1 + \dots + a_m Z_m$, where the $Z_i$ are independent with $Z_i \sim \Po( \mu_i )$.
Thus
\[
  \Pee( M(u) \geq (1+\eps) k ) \leq K \Pee( Z \geq (1+\eps) k ).
\]
Because this bound does not depend on the choice of $u$ we can also conclude
%\begin{eqnarray} %\label{eqn.probineq2}
%  \Pee( M(U) \geq (1+\eps)k )
%  & = &
%  \int_{[0,rR)^d} \Pee( M(u) \geq (1+\eps)k ) f_U(u){\dd}u \nonumber \\
%  & \leq  &
%  K \Pee( Z \geq (1+\eps) k ), \label{eq:MUZKub}
%\end{eqnarray}
%where $f_U$ is the probability density function of $U$.
%
\begin{equation} \label{eqn.probineq2}
  \Pee( M(U) \geq (1+\eps)k )
  %\int_{[0,rR)^d} \Pee( M(u) \geq (1+\eps)k ) f_U(u){\dd}u \nonumber \\
  \leq K \Pee( Z \geq (1+\eps) k ).
\end{equation}
We then have:
\[ \begin{array}{rcl}
\Pee( Z \geq (1+\eps)k )
& = &
\Pee( \sum a_i Z_i \geq \frac{1+\eps}{(1+\frac{\eps}{100})^2} \sum_i a_i \mu_i )
\leq
\sum_{i=1}^m \Pee( Z_i \geq \frac{1+\eps}{(1+\frac{\eps}{100})^2} \mu_i ) \\
& \leq &
m \cdot \exp[ - \min_i \mu_i H\left(\frac{1+\eps}{(1+\frac{\eps}{100})^2}\right) ],
\end{array} \]
using Lemma~\ref{lem:chernoffH}.
Now suppose that $T$ has been chosen in such a way that (and we may suppose this):
\[
  T \cdot (1+\frac{\eps}{100})^2 \cdot %\sigma \cdot
  \min_i \vol(A_i)\cdot H\left(\frac{1+\eps}{(1+\frac{\eps}{100})^2}\right) \geq 3,
\]
  so that
\[
  \exp[ - \min_i \mu_i H\left(\frac{1+\eps}{(1+\frac{\eps}{100})^2}\right) ] \leq n^{-3},
\]
  whenever $\sigma nr^d \geq T\ln n$.
  Because $K = O(r^{-d})$ and $\sigma nr^d \geq T\ln n$, we have that $K = O(n)$.
  By~(\ref{eqn.probineq2}) %~\eqref{eq:MUZKub}
  we then also have $\Pee( M(U) \geq (1+\eps)k ) = O(n^{-2})$.
  Combining this with~\eqref{eq:MphiMphidashUB} and~\eqref{eq:MphiMU} we find
\[ \Pee( M_\varphi \geq (1+\eps) k ) = O( n^{-2} ). \]
  The Borel-Cantelli lemma now gives the result.
\end{proof}

%%%%%%%%%%%%%%%%%%%%%%%%%%%%%%%%%%%%%%%%%%%%%%%%%%%%
%%%% end proof
%%%%%%%%%%%%%%%%%%%%%%%%%%%%%%%%%%%%%%%%%%%%%%%%%%%%

\subsection{Proof of Theorem~\ref{thm.Mphi} on  $M_\varphi$}
\label{subsec.Mphi-proof}

  Our next target will be to prove Theorem~\ref{thm.Mphi}
  on the generalised scan statistic $M_\varphi$. We will
  do this along the lines of the proof of Lemma~\ref{lem:MphilargeT}.
  We will however need a straightforward generalisation of a Chernoff bound to weighted sums
  of Poisson variables, which is given by the following lemma.

\begin{lemma}\label{lem:chernoffsum}
  Let $X_1, \dots, X_m$ be independent Poisson variables with
  $X_i\sim \Po(\lambda_i\mu)$ where $\lambda_i > 0$ is fixed, and set
  $Z := a_1 X_1 + \dots + a_m X_m$ with $a_1,\dots,a_m>0$ fixed.
  Then for each fixed $s > 0$, as $\mu \to \infty$
\[
  \Pee( Z \geq \mu \sum_i \lambda_i a_i e^{a_i s} ) =
  \exp\left( -\mu \sum_i \lambda_i H( e^{a_i s} ) + o(\mu)
  \right).
\]
\end{lemma}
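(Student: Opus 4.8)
The plan is to obtain the upper bound via an exponential Markov (Chernoff) argument and the matching lower bound by combining Lemma~\ref{lem:PoAss} with a careful choice of the target event for each $X_i$ separately. For the upper bound, fix the parameter $s>0$ and apply Markov's inequality to $e^{\theta Z}$ with the natural choice $\theta = s$: since the $X_i$ are independent Poisson variables, $\Ee e^{sZ} = \prod_i \Ee e^{s a_i X_i} = \prod_i \exp(\lambda_i\mu(e^{s a_i}-1))$. Writing $k := \mu\sum_i\lambda_i a_i e^{a_i s}$, Markov gives $\Pee(Z\geq k)\leq e^{-sk}\prod_i e^{\lambda_i\mu(e^{s a_i}-1)} = \exp(-\mu\sum_i\lambda_i(s a_i e^{a_i s} - e^{a_i s} + 1)) = \exp(-\mu\sum_i\lambda_i H(e^{a_i s}))$, using $H(x) = x\ln x - x + 1$ with $x = e^{a_i s}$ so that $\ln x = a_i s$. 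This already yields the upper bound with error term $0$, which is certainly $o(\mu)$.

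For the lower bound, the idea is that the dominant contribution to the event $\{Z\geq k\}$ comes from each $X_i$ being near its own ``tilted'' mean $\lambda_i\mu e^{a_i s}$ simultaneously. Concretely, set $k_i := \lfloor \lambda_i\mu e^{a_i s}\rfloor$ (or the nearest integer) for each $i$, and note $\sum_i a_i k_i \geq k - \sum_i a_i \geq k - O(1)$, so that $\{X_i = k_i \text{ for all } i\}\subseteq\{Z\geq k - O(1)\}$; a minor adjustment (inflating $s$ infinitesimally, or replacing $k$ by $k-O(1)$ which changes the exponent by $o(\mu)$) handles the $O(1)$ slack. By independence, $\Pee(X_i = k_i\text{ for all }i) = \prod_i\Pee(\Po(\lambda_i\mu) = k_i)$. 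Now apply Lemma~\ref{lem:PoAss} to each factor: $\Pee(\Po(\lambda_i\mu)=k_i)\geq \frac{e^{-1/(12k_i)}}{\sqrt{2\pi k_i}}e^{-\lambda_i\mu H(k_i/(\lambda_i\mu))}$. Since $k_i/(\lambda_i\mu)\to e^{a_i s}$ and $H$ is continuous, $\lambda_i\mu H(k_i/(\lambda_i\mu)) = \lambda_i\mu H(e^{a_i s}) + o(\mu)$ (the error being $\lambda_i\mu\cdot o(1)$); and the polynomial prefactor $\prod_i (2\pi k_i)^{-1/2}e^{-1/(12k_i)}$ is $e^{-O(\ln\mu)} = e^{o(\mu)}$. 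Multiplying the $m$ (a fixed number of) factors gives $\Pee(Z\geq k - O(1))\geq \exp(-\mu\sum_i\lambda_i H(e^{a_i s}) + o(\mu))$, and absorbing the $O(1)$ shift into the $o(\mu)$ term completes the lower bound.

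The main technical point to be careful about — though it is not really an obstacle — is the passage from $\{Z\geq k\}$ to the product event and back: one must check that shifting $k$ by the bounded amount $\sum_i a_i$ (and the rounding of $k_i$ to integers) only perturbs the exponent $\mu\sum_i\lambda_i H(e^{a_i s})$ by $o(\mu)$, which follows because $H$ and its derivatives are locally Lipschitz near the fixed points $e^{a_i s}$ and $k_i/(\lambda_i\mu) = e^{a_i s} + O(1/\mu)$. Since $m$, the $\lambda_i$, the $a_i$ and $s$ are all fixed while only $\mu\to\infty$, all these errors are uniform, and combining the two bounds gives the stated asymptotic equality.
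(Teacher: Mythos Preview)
Your proof is correct and follows essentially the same route as the paper: the upper bound is the identical Chernoff/Markov computation, and the lower bound decomposes into the product event over the individual $X_i$. The only cosmetic difference is that the paper uses the tail events $\{X_i \geq \mu\lambda_i e^{a_i s}\}$ and invokes Lemma~\ref{lem:chernoffH2}, which sidesteps your rounding slack entirely since then $\sum_i a_i X_i \geq k$ holds exactly; your use of $\{X_i = k_i\}$ with Lemma~\ref{lem:PoAss} is equivalent once you take $k_i = \lceil \lambda_i\mu e^{a_i s}\rceil$ rather than the floor.
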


\begin{proof}
  The moment generating function of $Z$ (evaluated at $s$) is
\[
  \Ee e^{s Z} = \Pi_i \Ee e^{a_i s X_i} = \exp[ \sum_i \lambda_i \mu (e^{a_i s} - 1) ].
\]
  Hence Markov's inequality gives
\begin{eqnarray*}
  \Pee( Z \geq \mu \sum_i \lambda_i a_i e^{a_i s} )
 & = &
  \Pee( e^{s Z} \geq e^{\mu s \sum_i \lambda a_i e^{a_i s} }  ) \\
 & \leq &
  \exp[ \mu \sum_i \lambda_i (e^{a_i s} - 1) - \mu s \sum_i \lambda_i a_i e^{a_i s}  ] \\
 & = &
  \exp[ - \mu \sum_i \lambda_i (a_i s e^{a_i s} - e^{a_i s} + 1) ]\\
 & = &
  \exp[ - \mu \sum_i \lambda_i H(e^{a_i s}). ]
\end{eqnarray*}
  %
%\[
%\begin{array}{cl}
%  \Pee( Z \geq \mu \sum_i \lambda_i a_i e^{a_i s} )
% & =
%  \Pee( e^{s Z} \geq e^{\mu s \sum_i \lambda a_i e^{a_i s} }  )\leq
%  \exp[ \mu \sum_i \lambda_i (e^{a_i s} - 1) - \mu s \sum_i \lambda_i a_i e^{a_i s}  ] \\
% & =
%  \exp[ - \mu \sum_i \lambda_i (a_i s e^{a_i s} - e^{a_i s} + 1) ]
%  = \exp[ - \mu \sum_i \lambda_i H(e^{a_i s}). ]
%\end{array}
%\]
  On the other hand,
%\[ \Pee( Z \geq \mu \sum_i \lambda_i a_i e^{a_i s} ) \geq
%  \Pee( X_1 \geq \mu \lambda_1 e^{a_1 s}, \dots, X_m \geq \mu \lambda_m e^{a_m s} )
%  = \exp[ -\mu \sum_i \lambda_i H(e^{a_i s}) + o(\mu) ],\]
\begin{eqnarray*}
  \Pee( Z \geq \mu \sum_i \lambda_i a_i e^{a_i s} ) & \geq &
  \Pee( X_1 \geq \mu \lambda_1 e^{a_1 s}, \dots, X_m \geq \mu \lambda_m e^{a_m s} )\\
  &=& \exp[ -\mu \sum_i \lambda_i H(e^{a_i s}) + o(\mu) ],
\end{eqnarray*}
  using Lemma~\ref{lem:chernoffH2}.
\end{proof}

%%%%%%%%%%%%%%%%%%%%%%%%%%%%%%%%%%%%%%%%%%%%%%%%%%%%%%%%%%%%%%%%%%%%%%
%%%%%%%%%%%%%%%%%%%%%%%%%%%%%%%%%%%%%%%%%%%%%%%%%%%%%%%%%%%%%%%%%%%%%%
%%%%%%%%%%%%%%%%%%%%%%%%%%%%%%%%%%%%%%%%%%%%%%%%%%%%%%%%%%%%%%%%%%%%%%
%%%%% Proof of thm Mphi
%%%%%%%%%%%%%%%%%%%%%%%%%%%%%%%%%%%%%%%%%%%%%%%%%%%%%%%%%%%%%%%%%%%%%%
%%%%%%%%%%%%%%%%%%%%%%%%%%%%%%%%%%%%%%%%%%%%%%%%%%%%%%%%%%%%%%%%%%%%%%
%%%%%%%%%%%%%%%%%%%%%%%%%%%%%%%%%%%%%%%%%%%%%%%%%%%%%%%%%%%%%%%%%%%%%%

\begin{pf}{of Theorem~\ref{thm.Mphi}}
  Suppose first that
\begin{equation} \label{eqn.Mphi-ass}
  \frac{\sigma n r^d}{\ln n} \to t \in (0,\infty) \mbox{ as } n \to \infty.
\end{equation}
Let us observe that in this case the statement to be proven amounts to
\begin{equation}\label{eqn.Mphi-lim}
\frac{M_{\varphi}}{\sigma nr^d} \to \xi(\varphi,t) \;\; \text{ a.s. }
\end{equation}
  %%and let us prove~(\ref{eqn.Mphi-lim}). 
  %The case when $t=\infty$ follows from Lemma~\ref{lem:MphilargeT},
  %so we only need to consider $t<\infty$ here.
  We proceed as in the proof of Lemma~\ref{lem:MphilargeT}.
  Again it suffices to prove Theorem~\ref{thm.Mphi} for $\varphi$ a simple function,
  because the functions $\varphi$ considered can be well approximated by the
  functions $\varphi_m^{\text{lower}}, \varphi_m^{\text{upper}}$
  defined in the proof of Lemma~\ref{lem:MphilargeT}, where this
  time we mean by ``well approximated'' that
\begin{equation}\label{eq:xilimms}
  \lim_{m\to\infty} \xi(\varphi_m^{\text{lower}}, t) = \lim_{m\to\infty} \xi(\varphi_m^{\text{upper}}, t) = \xi(\varphi,t).
\end{equation}
  Observe that~\eqref{eq:xilimms} follows from  %~\ref{itm:dominatedconvergence}
  part~\ref{itm:phiseq} of Lemma~\ref{lem:xibasic}
  ($\varphi$ is bounded and has bounded support). So the result for
  non-simple functions will follow from the result for simple
  functions by noticing that
  $M_{\varphi_m^{\text{lower}}} \leq M_{\varphi} \leq M_{\varphi_m^{\text{upper}}}$
  for all $m$ and taking $m \to \infty$. 
  
  Now let $\varphi = \sum_{i=1}^m a_i 1_{A_i}$ be a tidy simple function with the sets $A_i$ disjoint,
  and with $0<\int \varphi<\infty$.
  \m{need tidy, $0<\int \varphi<\infty$?}
  Then $s=s(\varphi,t)>0$ solves $\int_{\eR^d} H( e^{s\varphi(x)} ){\dd}x = 1/t$.
  Note that
\[ \begin{array}{l}
  \int_{\eR^d} \varphi(x) e^{s\varphi(x)}{\dd}x =
  \sum_{i=1}^m a_i e^{s a_i} \vol(A_i), \\
  \int_{\eR^d} H( e^{s\varphi(x)} ){\dd}x
  = \sum_{i=1}^m H(e^{s a_i}) \vol(A_i).
\end{array} \]
  Let us set
\begin{equation}\label{eq:Mphikdef}
  k := \xi(\varphi,t)\sigma nr^d.
\end{equation}
  Again it suffices to prove that
  $(1-\eps)k \leq M_\varphi \leq (1+\eps)k$ a.a.a.s., for any $\eps > 0$.

\smallskip

\noindent
{\bf Proof of lower bound in~(\ref{eqn.Mphi-lim}):}
  We proceed as in the proof of the lower bound in Lemma~\ref{lem:MphilargeT}.
  %Recall~(\ref{eqn.probineq1}) there, and the quantities $Z$, $a_i$ and $\mu_i$.
  We restate~(\ref{eqn.probineq1}) from there as:
%  We again have that:
\begin{equation}\label{eq:MphiLBint}
\Pee( M_\varphi \leq (1-\eps)k ) \leq \Pee( Z \leq (1-\eps)k )^K + e^{-\alpha n},
\end{equation}
  where $\alpha > 0$ is a fixed constant, $K = \Omega(r^{-d})$, and $Z = a_1Z_1+\dots+a_mZ_m$
  with the $Z_i$ independent $\Po(\mu_i)$-random variables,
  where $\mu_i := (1-\frac{\eps}{100})^2\sigma n r^d \vol(A_i)$.  We can write
\[
  \Pee( Z \leq (1-\eps)k ) =
  \Pee( Z \leq \frac{(1-\eps)}{(1-\frac{\eps}{100})^2} \sum_{i=1}^m a_i e^{s a_i} \mu_i )
  = \Pee( Z \leq \sum_{i=1}^m a_i e^{s' a_i} \mu_i ),
\]
  where $s' = s'(t,\eps)$ solves
  $\sum_{i=1}^m a_i e^{s' a_i} \vol(A_i) = 
  \frac{(1-\eps)}{(1-\frac{\eps}{100})^2} \sum_{i=1}^m a_i e^{s a_i} \vol(A_i)$.
  Note that $s' < s$, and (provided $\eps$ is small enough) also $s' > 0$.
  Lemma~\ref{lem:chernoffsum} now gives:
\[
   1-\Pee( Z \leq(1-\eps)k ) = \Pee( Z > (1-\eps)k )
  = \exp[ - (1-\frac{\eps}{100})^2\sigma n r^d ( \sum_{i=1}^m H(e^{a_i s'}) \vol(A_i)+ o(1) ) ]
\]
  As $0 < s' < s$ we have that
  $\sum_{i=1}^m H(e^{a_i s'}) \vol(A_i)
  < \sum_{i=1}^m H(e^{a_i s}) \vol(A_i) = \frac{1}{t}$.
  Consequently there is a constant $c = c(t,\eps) >0$ such that
\[
  \Pee( Z > (1-\eps)k )= \exp[ -(1-c+o(1))\ln n ] = n^{-1+c+o(1)}.
\]
  It follows that
\[
  \Pee( Z \leq (1-\eps)k )^K \leq (1-n^{-1+c+o(1)})^{K} \leq
  \exp[ - K n^{-1+c+o(1)} ] \leq \exp[ - n^{c+o(1)}],
\]
  using that $K$ is at least $n^{1+o(1)}$ (as $K = \Omega(r^{-d})$ and $r^{-d} \sim \frac{n}{t\ln n}$),
  we see that the right hand side of~\eqref{eq:MphiLBint} sums in $n$, so that we may conclude
  that $M_\varphi \geq (1-\eps)k$ a.a.a.s. by Borel-Cantelli.

%%%%%%%%%%%%%%%%%%%%%%%%%%%%%%%%%
%%%%%%%%%%%%%%%%%%%%%%%%%%%%%%%%%
%%%% UB begins here
%%%%%%%%%%%%%%%%%%%%%%%%%%%%%%%%%
%%%%%%%%%%%%%%%%%%%%%%%%%%%%%%%%%
\smallskip

  \noindent
{\bf Proof of upper bound in~(\ref{eqn.Mphi-lim}):}
Let $N, M_\varphi', \eta, \varphi_\eta, M(U)$ be as in the proof of the upper bound in Lemma~\ref{lem:MphilargeT},
where now $\eta>0$ satisfies $\xi(\varphi_{\eta},t) < (1+\eps/100)\xi(\varphi,t)$;
and recall from~(\ref{eqn.probineq2}) that:
\[ \Pee( M(U) \geq (1+\eps)k ) \leq K\Pee( Z \geq (1+\eps)k ), \]
  where $K = O(r^{-d} )$ and $Z = a_1 Z_1 + \dots + a_m Z_m$
  with the $Z_i$ independent $\Po( \mu_i )$ random variables, where
  $\mu_i := (1+\frac{\eps}{100})^2 \vol(A_i) \sigma n r^d$. We now have
\[
  \Pee( Z \geq (1+\eps)k )
  = \Pee( Z \geq \frac{1+\eps}{(1+\frac{\eps}{100})^2} \sum_i a_i e^{s a_i} \mu_i )
  = \Pee( Z \geq \sum_i a_i e^{s' a_i} \mu_i ),
\]
  where $s' = s'(\eps,t)$ is such that
  $\sum_{i=1}^m a_i e^{s' a_i} \vol(A_i) =
  \frac{1+\eps}{(1+\frac{\eps}{100})^2} \sum_{i=1}^m a_i e^{s a_i} \vol(A_i)$.
  Note that $s' > s$, giving
  $\sum_i H(e^{s'a_i})\vol(A_i) > \sum_i H(e^{s a_i})\vol(A_i) =\frac{1}{t}$,
  and consequently
\[
\sum_i H(e^{s' a_i})\mu_i =
(1+\frac{\eps}{100})^2\sigma nr^d \sum_i H(e^{s' a_i})\vol(A_i)
= (1+c+o(1))\ln n,
\]
  for some $c = c(\eps, t) > 0$. Since $K = O( r^{-d} ) \leq n$ for $n$ large enough we find that:
\begin{equation}\label{eq:MU}
  \Pee( M(U) \geq (1+\eps)k ) \leq n \Pee( Z > (1+\eps) k )
  = n \exp[ - (1+c+o(1))\ln n ] = n^{-c+o(1)}.
\end{equation}
  Unfortunately this does not necessarily sum in $n$, so we will have to use a more elaborate method
  than the one used in Lemma~\ref{lem:MphilargeT}.
  Note that for any $0 < \eta' < \eta$ we have, completely analogously to \eqref{eq:MphiMU}:

\begin{equation}\label{eq:MpsiMU}
\Pee( M_{\varphi_{\eta'}}' \geq (1+\eps)k) \leq \frac{R^d}{\volB (\eta-\eta')^d} \Pee( M(U) \geq (1+\eps)k ),
\end{equation}

\noindent
By \eqref{eq:MpsiMU} and \eqref{eq:MphiMphidashUB} we also have that for
all $0\leq \eta' < \eta$:

\[
\Pee( M_{\varphi_{\eta'}} \geq (1+\eps)k ) \leq n^{-c+o(1)} + e^{-\alpha n} = n^{-c+o(1)}.
\]

Although the right hand side does not necessarily sum in $n$, it does hold that if $L > 0$ is such that
$c L > 1$ then we can apply the Borel-Cantelli lemma to show that

\begin{equation}\label{eq:Mpsisubsequence}
\Pee( M_{\varphi_{\eta'}}(m^L,r(m^L)) < (1+\eps) k(m^L) \textrm{ for all but finitely many } m ) = 1.
\end{equation}

\noindent
We now claim that from this we can conclude that in fact
$M_\varphi \leq (1+2\eps)k$ a.a.a.s.
To this end, let $n \in \eN$ be arbitrary and let $m=m(n)$ be such that $(m-1)^L < n \leq m^L$.
%Because $n r^d \sim t \ln n$ we know that $k(m^l) \leq (1+o(1))k(n)$ and
%$r(n) = (1+o(1))r(m^l)$.
The claim follows if we can show that (for $n$ sufficiently large)

\begin{equation}\label{eq:eqny}
\{M_{\varphi_{\eta'}}(m^L,r(m^L)) \leq (1+\eps)k(m^L) \} \Rightarrow
\{ M_\varphi(n) \leq (1+2\eps)k(n)\}.
\end{equation}

\noindent
To this end we will first establish that (for n sufficiently large and) for
any $x,y$:

\begin{equation}\label{eq:eqnx}
  \varphi( \frac{y-x}{r(n)} ) \leq \varphi_{\eta'}(\frac{y-x}{r(m^L)}).
\end{equation}
  Since the support of $\varphi$ is contained in $[\frac{-R}{2},\frac{R}{2}]^d$ we are done if
$\norm{\frac{y-x}{r(n)}} > \diam([0,\frac{R}{2}]^d) =: \gamma$.
If on the other hand $\norm{\frac{y-x}{r(n)}} \leq \gamma$ then
$\norm{\frac{y-x}{r(n)} - \frac{y-x}{r(m^L)}}
= \tel{1-\frac{r(n)}{r(m^L)}}\norm{\frac{y-x}{r(n)}}
\leq \tel{1-\frac{r(n)}{r(m^L)}}\gamma = o(1)$ (because $n r^d \sim t\ln n$ giving $r(n)= (1+o(1))r(m^L)$), so that
for $n$ sufficiently large this is $< \eta'$ and thus~\eqref{eq:eqnx} holds uniformly for all $x,y$ (for such sufficiently large $n$), as required.
  Since we also have $k(n) = (1+o(1))k(m^L)$, equation~\eqref{eq:eqny} does indeed hold for $n$ sufficiently large,
  which concludes the proof of~(\ref{eqn.Mphi-lim}) under the assumption~(\ref{eqn.Mphi-ass}) where $t>0$.
  \medskip
  
  \noindent
{\bf Completing the proof:}  
  Now let us drop the assumption~(\ref{eqn.Mphi-ass}) and prove the remaining parts of the theorem.
  Let $t(n) := \sigma n r^d / \ln n$ be as in the statement of Theorem~\ref{thm.Mphi}. 
  Let $\tau=\liminf_n \frac{\sigma n r^d}{\ln n}= \liminf_n t(n)$ so $\tau>0$ by assumption.
  Let $0<\eps<\frac12$.  We want to show
\begin{equation} \label{target.Mphibdd}
  1-\eps < \liminf_n \frac{M_{\varphi}}{\sigma n r^d \xi(\varphi,t(n))}
  \leq \limsup_n \frac{M_{\varphi}}{\sigma n r^d \xi(\varphi,t(n))} < 1+\eps \;\; \mbox{ a.s.}
\end{equation}
  By Lemma~\ref{lem:MphilargeT} and \ref{itm:toinf} in Lemma~\ref{lem:xibasic},  
  there is a constant $T<\infty$ such that if $\liminf_n \frac{\sigma n r^d}{\ln n} \geq T$
  then~(\ref{target.Mphibdd}) holds.  To cover the range between $\tau$ and $T$ we will use the following claim.
  %$1-\eps < \frac{M_{\varphi}}{\sigma n r^d \xi(\varphi,t(n))} < 1+\eps$ a.s.
  \medskip
  
  \noindent
{\bf Claim}
Let $0 < t_a < t_b < \infty$ be such that $\frac{t_b}{t_a} < 1+\eps$.
  If $r(n)$ is such that $t_a \leq t(n) \leq t_b$ for $n$ sufficiently large then
\begin{equation} \label{claim.Mphibdd}
  1-\eps < \liminf_n \frac{M_{\varphi}}{\sigma n r^d \xi(\varphi,t(n))}
  \leq \limsup_n \frac{M_{\varphi}}{\sigma n r^d \xi(\varphi,t(n))} < 1+\eps \;\; \mbox{ a.s.}
\end{equation}

\begin{pf}{of Claim}
%%%%  By obvious monotonicities
%%%%\[
%%%%  M_{\varphi}(n,r(n,t_a)) \leq  M_{\varphi}(n,r(n)) \leq   M_{\varphi}(n,r(n,t_b))
%%%%\]
%%%%%%%%%%%%%%%%%%%%%%%%%%%%%%%%%%%%%
%%%%% This is not at all obvious!
%%%%%%%%%%%%%%%%%%%%%%%%%%%%%%%%%%%%
%%%  and
%%%\[
%%%  r(n,t_a) \leq  r(n) \leq  r(n,t_b) \leq (1+\eps)^{1/d} r(n,t_a);
%%%\]
Let us set 

\[ 
n_a = n_a(n) := \lfloor(t_a/t(n))\cdot n\rfloor, \quad n_b = n_b(n) := \lceil(t_b/t(n))\cdot n\rceil.
\]

\noindent
By obvious monotonicities

\[ 
M_{\varphi}(n_a(n),r(n)) \leq  M_{\varphi}(n,r(n)) \leq   M_{\varphi}(n_b(n),r(n))
\] 

\noindent  
and by part~\ref{itm:xitplush} of Lemma~\ref{lem:xibasic}

\[
  \xi(\varphi,t_b) \leq \xi(\varphi,t(n)) \leq \xi(\varphi,t_a) \leq (t_b/t_a) \xi(\varphi,t_b).
\]

\noindent
Hence

\begin{equation}\label{eq:A} 
\frac{M_{\varphi}(n,r(n))}{\sigma n (r(n))^d \xi(\varphi,t(n))} 
\geq 
\frac{M_{\varphi}(n_a(n),r(n))}{\sigma n (r(n))^d \xi(\varphi,t_a)}, 
\end{equation}

\noindent
and, since $n_a(n) \sim (t_a/t(n))\cdot n$, we also have

\begin{equation}\label{eq:B} 
\frac{M_{\varphi}(n_a(n),r(n))}{\sigma n (r(n))^d \xi(\varphi,t_a)}
\sim 
\frac{M_{\varphi}(n_a(n),r(n))}{\sigma n_a(n) (r(n))^d \xi(\varphi,t_a)} \cdot \frac{t_a}{t(n)}.
%> 
%\frac{M_{\varphi}(n_a(n),r(n))}{\sigma n_a (r(n))^d \xi(\varphi,t_a)} \cdot (1-\eps).
\end{equation}

\noindent
Observe that $\sigma n_a(n) (r(n))^d \sim \sigma (t_a/t(n)) n (r(n))^d = t_a \ln n \sim 
t_a \ln n_a(n)$.
By the already proved special case of the result under the assumption~\eqref{eqn.Mphi-ass} applied to
$M( n_a(n), r(n) )$, we therefore have

\begin{equation}\label{eq:C} 
\frac{M_{\varphi}(n_a(n),r(n))}{\sigma n_a(n) (r(n))^d \xi(\varphi,t_a)} \sim 1 \;\; \text{ a.s.}
\end{equation}

\noindent
To be more explicit, we use the special case twice.
Since $\eps < \frac12$ and so $\frac{t_a}{t(n)} > \frac12$, for each positive integer $m$
the set $\{n : n_a(n)=m\}$ has size 1 or 2.
We apply the special case once to $M_{\varphi}(m,r_L(m))$ and once to $M_{\varphi}(m,r_U(m))$ 
where $r_L(m) := \min\{ r(n) : n_a(n) = m \}$ and  $r_U(m) := \max\{ r(n) : n_a(n) = m \}$; 
and finally we note that $M_{\varphi}(n_a(n),r(n))$ must be one of
$M_{\varphi}(m, r_L(m))$ or $M_{\varphi}(m,r_U(m))$ for each $n$ where $m=n_a$.
% (To be completely precise, we should remark that
%  we can have $n_a(n)=n_a(n+1)$ for some $n$, but never 
% $n_a(n)=n_a(n+1)=n_a(n+1)$ -- provided $\eps < \frac12$ so that $\frac{t_a}{t(n)} > \frac12$.
% So in fact we use two applications of the result under assumption~\eqref{eqn.Mphi-ass}, namely
% once applied to $M(m,r_L(m))$ and once to $M(m,r_U(m))$ 
% where $r_L(m) := \min\{ r(n) : n_a(n) = m \}$ and 
% $r_U(m) := \max\{ r(n) : n_a(n) = m \}$. 
% And finally we use that the intersection of two events of probability one has
% again probability one and that 
% $M_{\varphi}(n_a(n),r(n))$ is one of
% $M_{\varphi}(n_a, r_L(n_a))$ or 
% $M_{\varphi}(n_a,r_U(n_a))$ for each $n$.) 

Hence, combining~\eqref{eq:A},~\eqref{eq:B},~\eqref{eq:C}, we also get

\[ \liminf_n \frac{M_{\varphi}}{\sigma n r^d \xi(\varphi,t(n))} 
\geq \liminf_n \frac{t_a}{t(n)} \geq \frac{t_a}{t_b} > 1-\eps \;\; \text{ a.s. } \]

\noindent
Completely analogously, $\limsup_n \frac{M_{\varphi}}{\sigma n r^d \xi(\varphi,t(n))} \leq \frac{t_b}{t_a}$
a.s. This completes the proof of the claim~(\ref{claim.Mphibdd}).
\end{pf} 
 
  Now we put the pieces together.
  Let $m$ and $t_1<t_2< \cdots<t_m$ be such that $t_1 < \tau$, $t_m \geq T$ and
$\frac{t_{k+1}}{t_k} < 1+\eps$ for $k=1,\dots,m-1$.
For convenience let us also set $t_{m+1}:=\infty$.  For each $k=1,\ldots,m$ let   
\[ 
  r_k(n) :=
  \left\{ \begin{array}{cl} r(n) & \textrm{if } \left(\frac{t_k\ln n}{\sigma n}\right)^{\frac{1}{d}} 
\leq r(n) <  \left(\frac{t_{k+1}\ln n}{\sigma n}\right)^{\frac{1}{d}} \\
                       \left(\frac{t_k\ln n}{\sigma n}\right)^{\frac{1}{d}} & \textrm{otherwise} \end{array} \right.
\]

Observe that for each $n$, $M_{\varphi}(n,r(n))$
equals $M_{\varphi}(n,r_k(n))$ for some $k$ (that may vary with $n$).
Since each $M_{\varphi}(n,r_k(n))$ separately satisfies~\eqref{target.Mphibdd}
and the intersection of finitely many events of probability one has again probability one, 
$M_{\varphi}(n,r)$ itself also satisfies~\eqref{target.Mphibdd}.
The theorem follows.
\end{pf}

%%%%%%%%%%%%%%%%%%%%%%%%%%%%%%%%%%%%%%%%%%%%%%%%%%%%%%%%%%%%%%%%%%%%%%%%%%%%%%%%%%%%%%%%%%%%%%%%%%%%%%%%%%
%%%%%%%%%%%%%%%%%%%%%%%%%%%%%%%%%%%%%%%%%%%%%%%%%%%%%%%%%%%%%%%%%%%%%%%%%%%%%%%%%%%%%%%%%%%%%%%%%%%%%%%%%%

\section{Proof of parts~\ref{itm:tobiasomega.intermediate}
and~\ref{itm:tobiasomega.dense} of Theorem~\ref{thm.tobiasomega} on $\omega(G_n)$}
\label{sec.clique-proof}

  In this section we use Theorem~\ref{thm.Mphi} on generalised scan statistics
  (together with Lemma~\ref{lem:xibasic}) to give a quick proof of
  the following theorem, which is in a general form that is convenient for the proof of 
Theorem~\ref{thm.tobiasfrat} later on.
  
\begin{theorem}\label{thm.tobiasomegareal}
If $t(n) := \frac{\sigma n r^d}{\ln n}$ satisfies $\liminf_n t(n) > 0$ then
  
\[ \frac{\omega(G_n)}{\sigma n r^d} \sim \xi(\varphi_0, t(n)) \;\; \text{ a.s. } \]
  
\end{theorem}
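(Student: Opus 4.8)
The plan is to sandwich $\omega(G_n)$ between two generalised scan statistics covered by Theorem~\ref{thm.Mphi}. For the lower bound, note that if the points $X_{i_1},\dots,X_{i_\ell}$ all lie in one translate $x+rB(0;\tfrac12)$ of $r$ times the radius-$\tfrac12$ ball, then any two are at distance $<\tfrac r2+\tfrac r2=r$ and hence form a clique; so $\omega(G_n)\ge M_{\varphi_0}$ with $\varphi_0=1_{B(0;1/2)}$. Since $B(0;\tfrac12)$ is bounded and convex it has a small neighbourhood, so $\varphi_0$ is tidy, and as $\liminf_n t(n)>0$ Theorem~\ref{thm.Mphi} yields $M_{\varphi_0}\sim\xi(\varphi_0,t(n))\sigma nr^d$ a.s.; consequently $\liminf_n \omega(G_n)\big/\big(\xi(\varphi_0,t(n))\sigma nr^d\big)\ge 1$ a.s.

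For the matching upper bound the obstacle is that a clique need \emph{not} lie in a translate of a radius-$\tfrac r2$ ball (consider the vertices of an equilateral triangle of side $r$), so $\omega(G_n)$ cannot be bounded directly by $M_{\varphi_0}$. The remedy is the isodiametric inequality for $\norm{.}$: if $S\subseteq\eR^d$ has diameter at most $1$ then $\tfrac12(S-S)\subseteq B(0;\tfrac12)$, and Brunn--Minkowski gives $\vol(S)\le\vol\big(\tfrac12(S-S)\big)\le\vol(B(0;\tfrac12))=\volhalfB$. I would then fix $\eps>0$, choose $\delta=\delta(\eps)>0$ small, and overlay a cubic grid of side $\delta$ on $\eR^d$. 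Any clique $C$ of $G_n$ has $\diam(C)\le r$, hence (translating a point of $C$ to the origin) $r^{-1}\conv(C)\subseteq B(0;1)$ and $\vol\big(r^{-1}\conv(C)\big)\le\volhalfB$; the union $W$ of the grid cubes meeting $r^{-1}\conv(C)$ then contains $r^{-1}C$, is a union of $O(\delta^{-d})$ cubes drawn from the fixed $O(\delta^{-d})$-cube pool that meets $B(0;1)$, and has $\vol(W)\le\volhalfB+O(\delta)\le(1+\eps)\volhalfB$ (only $O(\delta^{-(d-1)})$ ``boundary'' cubes are wasted, since a convex subset of $B(0;1)$ has bounded surface area). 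Thus there is a finite family $W_1,\dots,W_N$ (with $N=2^{O(\delta^{-d})}$) of bounded sets, each a union of grid cubes of total volume $\le(1+\eps)\volhalfB$, such that every clique of $G_n$ lies in a translate of some $rW_j$; hence $\omega(G_n)\le\max_{1\le j\le N}M_{W_j}$.

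Each $1_{W_j}$ is tidy (a finite union of cubes), so Theorem~\ref{thm.Mphi} gives $M_{W_j}\sim\xi(1_{W_j},t(n))\sigma nr^d$ a.s. for every $j$, and since $N$ is finite, $\max_j M_{W_j}\sim\big(\max_j\xi(1_{W_j},t(n))\big)\sigma nr^d$ a.s. By~\eqref{eq:xiindicatorrewrite} and Lemma~\ref{lem:xibasic} --- part~\ref{itm:phipsi} shows $\xi(1_W,t)=c(\vol W,t)$ is non-decreasing in $\vol W$, and part~\ref{itm:philambda} applied to the isotropic dilation of $\varphi_0$ whose support has volume $(1+\eps)\volhalfB$ bounds that dilation's weighted integral by $(1+\eps)\xi(\varphi_0,t)$ --- one gets $\xi(1_{W_j},t)\le c\big((1+\eps)\volhalfB,t\big)\le(1+\eps)\xi(\varphi_0,t)$ for every $t\in(0,\infty)$ and every $j$. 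Hence $\limsup_n \omega(G_n)\big/\big(\xi(\varphi_0,t(n))\sigma nr^d\big)\le 1+\eps$ a.s.; intersecting this almost-sure event over $\eps=1/m$, $m\to\infty$, gives $\limsup\le1$ a.s., which together with the lower bound is exactly $\omega(G_n)\sim\xi(\varphi_0,t(n))\sigma nr^d$ a.s. The main point requiring care is that $t(n)$ need not converge, so the estimate $\xi(1_{W_j},t)\le(1+\eps)\xi(\varphi_0,t)$ is needed uniformly in $t$; this is precisely what Lemma~\ref{lem:xibasic}\ref{itm:philambda} provides, its factor $\lambda^{-d}$ being independent of $t$.
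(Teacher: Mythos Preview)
Your argument is correct and follows essentially the paper's route: lower-bound $\omega(G_n)$ by $M_{\varphi_0}$, upper-bound it by the maximum of finitely many scan statistics $M_{W_j}$, invoke Theorem~\ref{thm.Mphi} on each, and compare $\xi(1_{W_j},t)$ to $\xi(\varphi_0,t)$ uniformly in $t$ via the isodiametric inequality together with Lemma~\ref{lem:xibasic}\ref{itm:philambda}. The only difference is the discretisation: the paper takes each $W_i$ to be the convex hull of a subset of $\eps\Zed^d$ containing $0$ with $\diam(W_i)\le 1+2\eps\rho$, so the Bieberbach inequality applies to $W_i$ itself and no boundary-cube/surface-area estimate is needed; your union-of-cubes sets are not convex, which is why you had to go through $\vol(\conv C')\le\volhalfB$ first and then add $O(\delta)$ for the boundary layer. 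One small correction: the monotonicity of $\xi(1_W,t)=c(\vol W,t)$ in $\vol W$ comes from part~\ref{itm:phiindicatorbiggest} of Lemma~\ref{lem:xibasic} (the hypothesis there reduces to $\vol W\le\vol W'$ for indicators), not from part~\ref{itm:phipsi}, which requires a pointwise inequality that need not hold between $1_{W_j}$ and the indicator of a ball of equal volume.
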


\noindent 
Since we have already established in Section~\ref{sec.scaling-function} that 
$\fcli$ satisfies the properties claimed by part~\ref{itm:tobiasomega.intermediate}, 
this implies parts~\ref{itm:tobiasomega.intermediate}
and~\ref{itm:tobiasomega.dense} of Theorem~\ref{thm.tobiasomega}.

\vspace{1ex}

\begin{proof}
  Assume that $\liminf_n t(n)>0$. First set $W := B(0; \frac{1}{2})$.
  Any set of points contained in a translate of $r W$ is a clique of $G_n$, so that by Theorem~\ref{thm.Mphi}:
\[
  \liminf_{n\to\infty} \frac{\omega(G_n)}{\sigma n r^d \xi(\varphi_0,t(n))} \geq
  \lim_{n\to\infty} \frac{M_W}{\sigma n r^d \xi(\varphi_0,t(n))} = 1 \textrm{ a.s. }
\]
  Let us now fix $\eps > 0$ and let $A_1, \dots, A_m \subseteq \eps\Zed^d$ be all the subsets of
  $\eps\Zed^d$ that satisfy $0 \in A_i$ and
  $\diam(A_i) \leq 1+2\eps\rho$, where $\rho := \diam([0,1]^d)$.
  Let $W_i := \conv(A_i)$. We now claim that $\omega(G_n) \leq \max_i M_{W_i}$.
  To see that this holds, suppose $X_{i_1}, \dots, X_{i_k}$ form a
  clique in $G_n$. Let us set $y_j := (X_{i_j}-X_{i_1}) / r$ and
  $A:= \{ p \in \eps\Zed^d : \norm{p-y_i} \leq \eps\rho \textrm{ for some } 1\leq i\leq k\}$.
  Observe that $0 = y_1 \in A$ and $\diam(A)\leq 1+2\eps\rho$,
  so that $A = A_i$ for some $1\leq i\leq m$.  What is more
  $\{y_1,\dots,y_k\} \subseteq W := \conv(A)$. But this gives
  $\{X_{i_1},\dots, X_{i_k}\}\subseteq X_{i_1}+rW$ by choice of $y_i$, and the claim follows. 

We will need the Bieberbach inequality, which is sometimes also called the 
isodiametric inequality. (For a proof of this classical result, see for instance Gruber and Wills~\cite{gruberwills93}.)

\begin{lemma}[Bieberbach inequality]\label{thm:bieberbach}
Let $A \subseteq \eR^d$ be measurable and bounded. If $A'$ is a ball 
with $\diam(A)=\diam(A')$ then $\vol(A) \leq \vol(A')$.\noproof
\end{lemma}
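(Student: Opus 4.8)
The plan is to deduce this inequality (the isodiametric, or Bieberbach, inequality, here stated for an arbitrary norm) from the Brunn--Minkowski inequality applied to the \emph{difference set} $A-A:=\{a-a':a,a'\in A\}$; this avoids any symmetrisation argument and works for every norm. Write $D:=\diam(A)=\diam(A')$. We may assume $A\neq\emptyset$ and $D>0$, the case $D=0$ being trivial. By inner regularity of Lebesgue measure $\vol(A)=\sup\{\vol(C):C\subseteq A\text{ compact}\}$, and every compact $C\subseteq A$ has $\diam(C)\le D$; since the right-hand side $\vol(A')$ will turn out to depend only on $D$, it suffices to prove the bound for compact sets of diameter at most $D$, so I would assume $A$ compact from now on.

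Then there are two ingredients. First, since $\norm{a-a'}\le D$ for all $a,a'\in A$, we have $A-A\subseteq\{x:\norm{x}\le D\}$, which is the closed ball of radius $D$ and hence has volume $D^d\vol(B)$ (here $\vol(\clo(B))=\vol(B)$ because the unit sphere is null); thus $\vol(A-A)\le D^d\vol(B)$. Second, since $A$ is compact so is $A-A=A+(-A)$, and the Brunn--Minkowski inequality gives
\[
  \vol(A-A)^{1/d}\;\ge\;\vol(A)^{1/d}+\vol(-A)^{1/d}\;=\;2\,\vol(A)^{1/d},
\]
using $\vol(-A)=\vol(A)$. Combining the two, $2^d\vol(A)\le\vol(A-A)\le D^d\vol(B)$, so $\vol(A)\le(D/2)^d\vol(B)$.

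Finally I would identify $\vol(A')$: the ball $A'$ is a translate of $\rho B$ for some $\rho>0$, and $\diam(\rho B)=2\rho$ (at most $2\rho$ by the triangle inequality, and equal since one may take points arbitrarily close to $\pm\rho u$ with $\norm{u}=1$); as $\diam(A')=D$ this forces $\rho=D/2$, whence $\vol(A')=\rho^d\vol(B)=(D/2)^d\vol(B)$. This yields $\vol(A)\le\vol(A')$, as claimed. There is no serious obstacle here: the only point requiring a moment's care is the reduction to compact $A$, which is precisely what lets us invoke Brunn--Minkowski without worrying about measurability of the Minkowski sum (alternatively one could use the version of Brunn--Minkowski for general sets via outer measure, but the compact reduction is cleaner).
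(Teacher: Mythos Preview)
Your proof is correct. The paper itself does not prove this lemma: it is stated with a \verb|\noproof| marker and the reader is referred to Gruber and Wills~\cite{gruberwills93} for a proof. So there is nothing to compare against in the paper's own text.

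That said, your argument via Brunn--Minkowski on the difference set $A-A$ is a clean and standard route, and it is worth noting that it fits well with the paper's toolkit: Brunn--Minkowski is in fact invoked later (Appendix~\ref{sec:karcsi}, proof of Lemma~\ref{prop:karcsi}), so you are not importing any machinery the paper does not already use. Your proof also has the virtue of working uniformly for an arbitrary norm on $\eR^d$, which is exactly what the paper needs (the lemma is applied with the given norm $\norm{\cdot}$, not the Euclidean one). The reduction to compact $A$ via inner regularity is the right way to handle the measurability issue for the Minkowski sum, and the identification $\diam(\rho B)=2\rho$ is valid for any norm by the argument you sketch.
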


  By the Bieberbach inequality 
  $\vol(W_i) \leq \vol( B(0;\frac{1+2\eps\rho}{2}) )$, so that also
  $\int \varphi_i 1_{\{\varphi_i\geq a\}} \leq \int \psi 1_{\{\psi\geq a\}}$
  for all $a$ where  $\varphi_i = 1_{W_i}$ and $\psi$ denotes $1_{B(0;\frac{1+2\eps\rho}{2})}$.
  Also observe that $\psi(x) = \varphi_0( \frac{x}{1+2\eps\rho})$.
  By parts~\ref{itm:phiindicatorbiggest} and~\ref{itm:philambda} of
  Lemma~\ref{lem:xibasic} we therefore have
  $\max_i \xi(\varphi_i,t) \leq \xi(\psi,t) \leq (1+2\eps\rho)^d \xi(\varphi_0,t)$ for any $t \in(0,\infty)$.
  Hence for each $i$ we have
\[
  \frac{M_{W_i}}{\xi(\varphi_0,t(n))} \leq \frac{M_{\varphi_i}}{\xi(\varphi_i,t(n))} \cdot (1+2\eps\rho)^d,
\]
  and so by Theorem~\ref{thm.Mphi}
\[
  \limsup_n \frac{M_{W_i}}{\xi(\varphi_0,t(n))} \leq (1+2\eps\rho)^d \;\; \mbox{ a.s.}
\]
  Now by the claim established above we have $\limsup_n \frac{\omega(G_n)}{\xi(\varphi_0,t(n))} \leq (1+2\eps\rho)^d$ a.s.
  It follows that
\[
  \frac{\omega(G_n)}{\sigma n r^d \xi(\varphi_0,t(n))} \to 1 \mbox{ a.s.}
\]
 which completes the proof.
\end{proof}

%%%%%%%%%%%%%%%%%%%%%%%%%%%%%%%%%%%%%%%%%%%%%%%%%%%%%%%%%%%%%%%%%%%%%%%%%%%%%%%%%%%%%%%%%%%%
%%%%%%%%%%%%%%%%%%%%%%%%%%%%%%%%%%%%%%%%%%%%%%%%%%%%%%%%%%%%%%%%%%%%%%%%%%%%%%%%%%%%%%%%%%%%

\section{Proof of parts~\ref{itm:tobiaschi.intermediate} and~\ref{itm:tobiaschi.dense} 
of Theorem~\ref{thm.tobiaschi}}
\label{sec.col-proof}

  %We will prove Theorem~\ref{thm.colinchi} in a number of intermediate steps.
  As we mentioned earlier, in Theorem~\ref{thm.tobiaschi} the same conclusions will hold if we replace $\chi(G)$
  by the fractional chromatic number $\chi_f(G)$ ,and this is the key to the proof. 
  We first give two deterministic results on $\chi_f$ and $\chi$ for a geometric graph.
  Given a finite set $V \subset R^d$, with say $|V|=n$, let us list $V$ arbitrarily as $v_1,\ldots,v_n$;
  and for $r>0$ set $G(V,r)$ as the geometric graph $G(v_1,\ldots,v_n;r)$.
  We are not interested here in the vertex labelling.
  We show that for each such set $V$ %finite set $V \subset R^n$
  we have $\chi_f(G(V,1)) = \sup_{\varphi \in {\cal F}} M(V, \varphi)$;
  %and by rescaling we obtain
  %\[ \chi(G_n) \geq \chi_f(G_n) = \sup_{\varphi \in {\cal F}} M_{\varphi} \; \mbox{  a.s.}\]
  and then we give an upper bound on $\chi(G(V,1))$ (from rounding up a solution for $\chi_f$)
  of the form $(1+\eps) \max_{i=1,\ldots,m} M(V, \varphi_i) +c$, where the functions $\varphi_i$ are
  nearly feasible tidy functions.
  After that, we give three technical lemmas, and use Theorem~\ref{thm.Mphi} to complete the proof.

%%%%%%%%%%%%%%%%%%%%%%%%%%%%%%%%%%%%%%%%%%%%%%%%%%%%%%%%%%%%%%%%%%%%%%%%%%%%%%%%%%%%%

\subsection{Deterministic results on fractional chromatic number}
\label{subsec.chif-ud} 

  In this subsection
  %we define the fractional chromatic number $\chi_f(G)$ and
  we give two deterministic results on $\chi_f(G)$ for geometric graphs $G$.
  %for unit disk graphs. A {\em unit disk graph} in $\eR^d$ is just a graph $G=G(V,1)$ where
  %$V\subseteq\eR^d$ is a finite set of points. 
  %
  %We start with a deterministic lemma on the fractional chromatic number of a %geometric graph
  %unit disk graph $G(V,r)$.
  Recall that if $\varphi:\eR^d\to\eR$ is a function and $V\subseteq \eR^d$ is a set of points
  then $M(V,\varphi) := \sup_{x\in\eR^d}\sum_{v\in V}\varphi(v-x)$.
 %Recall that ${\cal F}$ denotes the class of feasible functions on $\eR^d$.

\begin{lemma} \label{lem.chifg}
  Let $V\subseteq\eR^d$ be a finite set of points and consider the graph $G=G(V,1)$. Then
\[
  \chi_f(G)= \sup_{\varphi \in {\cal F}} M(V,\varphi).
\]
\end{lemma}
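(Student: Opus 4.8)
The plan is to use linear-programming duality and then re-read the dual program in the language of feasible functions. First I would write down the dual of the LP relaxation of~\eqref{eq:ILP}. Its constraints $Ax\ge 1$ are indexed by the vertices $v\in V$ and its variables by the stable sets $S$ of $G$, so strong duality (both programs are feasible and bounded, and the dual feasible region is bounded since each singleton is a stable set) gives
\[
  \chi_f(G) \;=\; \max\Bigl\{\, \sum_{v\in V} y_v \;:\; y\ge 0,\ \sum_{v\in S} y_v \le 1 \text{ for every stable set } S \text{ of } G \,\Bigr\},
\]
with the maximum attained. I would then record the elementary observation that, since $G=G(V,1)$, a subset $S\subseteq V$ is stable in $G$ if and only if $\norm{u-w}>1$ for all distinct $u,w\in S$, that is, if and only if $S$ is well-spread; so the stable sets of $G$ are exactly the members of ${\cal S}$ contained in $V$.

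Next I would prove $\sup_{\varphi\in{\cal F}} M(V,\varphi)\le\chi_f(G)$. Fix $\varphi\in{\cal F}$ and $x\in\eR^d$ and put $y_v:=\varphi(v-x)$ for $v\in V$. Then $y\ge 0$ because $\varphi\ge 0$, and for any stable set $S$ the translate $S-x$ is again well-spread, so $\sum_{v\in S} y_v=\sum_{w\in S-x}\varphi(w)\le 1$ by feasibility of $\varphi$. Hence $y$ is dual-feasible, so $\sum_{v\in V}\varphi(v-x)=\sum_{v} y_v\le\chi_f(G)$; taking the supremum over $x$ and then over $\varphi$ gives the inequality.

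For the reverse inequality, let $y^*$ be an optimal dual solution and set $\varphi^*:=\sum_{v\in V} y^*_v\,1_{\{v\}}$, a nonnegative measurable function. For any well-spread $S\subseteq\eR^d$ the finite set $S\cap V$ is well-spread, hence a stable set of $G$, so $\sum_{s\in S}\varphi^*(s)=\sum_{v\in S\cap V} y^*_v\le 1$; thus $\varphi^*\in{\cal F}$. Evaluating at the translation $x=0$ gives $M(V,\varphi^*)\ge\sum_{v\in V}\varphi^*(v)=\sum_{v\in V} y^*_v=\chi_f(G)$, so $\sup_{\varphi\in{\cal F}} M(V,\varphi)\ge\chi_f(G)$. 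Combining the two inequalities proves the lemma.

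Everything here is routine; the only point that needs a moment's thought is the second inequality, namely the realisation that the \emph{minimal} candidate — the function that places mass $y^*_v$ exactly at each $v\in V$ and vanishes elsewhere — is already feasible on all of $\eR^d$, precisely because every well-spread set meets $V$ in a stable set of $G$. No regularity of $\varphi^*$ (such as tidiness) is required for this lemma; that will only matter later, when feasible functions are fed into Theorem~\ref{thm.Mphi}.
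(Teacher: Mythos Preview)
Your proof is correct and follows essentially the same route as the paper: LP-duality for the fractional chromatic number, then the identification of dual-feasible vectors with feasible functions via $y_v=\varphi(v-x)$ in one direction and $\varphi=\sum_v y_v 1_{\{v\}}$ in the other. If anything, you are a little more explicit than the paper in checking that the point-mass function lies in ${\cal F}$ (using that $S\cap V$ is stable for every well-spread $S$), whereas the paper simply declares this ``clearly'' true.
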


\begin{proof}
  Recall that $\chi_f(G)$ is the objective value of the LP relaxation of the integer LP~(\ref{eq:ILP}).
  By LP-duality $\chi_f(G)$ also equals the objective value of the dual LP:
\[
  \begin{array}{rl}
  \max & 1^T y \\
  \textrm{ subject to } &
  A^T y \leq 1, \\
  & y \geq 0.
  \end{array}
\]
  For convenience let us write $V = \{v_1,\dots,v_n\}$ where $v_i$ is the vertex corresponding to the $i$-th row of $A$
  (and thus the $i$-th column of $A^T$). Notice that a vector $y=(y_1,\dots,y_n)^T$ is
  feasible for the dual LP if and only if it attaches nonnegative weights to the vertices of $G$
  in such a way that each stable set has total weight at most one.
  There is a natural correspondence between such vectors $y$ and certain feasible functions
  (hence our choice of the name `feasible function').

  Let $\varphi$ be any feasible function and $x\in\eR^d$ an arbitrary point.  We claim that the vector
\[
y = (\varphi(v_1-x), \dots, \varphi(v_n-x))^T \
\]
  is a feasible point of the dual LP given above. To see this note that each row of $A^T$
  is the incidence vector of some stable set $S$ of $G$;
  and $\norm{(z-x)-(z'-x)}=\norm{z-z'} > 1$ for each $z \neq z' \in S$ since $S$ is stable in $G$. Hence
 \[
  (A^T y)_S  = \sum_{z \in S} \varphi(z-x) \leq 1,
  \]
  by feasibility of $\varphi$.
  This holds for all rows of $A^T$, so that $y$ is indeed feasible for the dual LP as claimed.
  Also, notice that the objective function value $1^T y$ equals $\sum_{j=1}^n \varphi(v_j-x)$.
  This shows that
\[
  \chi_f(G) \geq
  \sup_{\varphi\in{\cal F}}\sup_{x\in\eR^d} \sum_{j=1}^n \varphi(v_j-x)
  = \sup_{\varphi\in{\cal F}} M(V,\varphi).
\]
  Conversely let the vector $y=(y_1,\dots,y_n)^T$ be feasible for the dual LP.
  Define $\varphi(z)= \sum_{i=1}^n y_i 1_{z=v_i}$.
  Then $\varphi$ is clearly a feasible function, and
\[
  1^T y = \sum_{j=1}^n\varphi(v_j) \leq \sup_{x\in\eR^d} \sum_{j=1}^n\varphi(v_j-x) = M(V,\varphi),
\]
  so that $\chi_f(G) \leq \sup_{\varphi\in{\cal F}}M(V,\varphi)$.
\end{proof}
  \smallskip

%%%%%%%%%%%%%%%%%%%%%%%%%%%%%%%%%%%%%%%%%%%%%%%%%
%%%%%%%%%%%%%%%%%%%%%%%%%%%%%%%%%%%%%%%%%%%%%%%%%
%%% begin of UB
%%%%%%%%%%%%%%%%%%%%%%%%%%%%%%%%%%%%%%%%%%%%%%%%%
%%%%%%%%%%%%%%%%%%%%%%%%%%%%%%%%%%%%%%%%%%%%%%%%%

  We now turn our attention towards deriving an upper bound on the chromatic number, and give another 
deterministic lemma.
  Given $\alpha>0$ we say that the function $\varphi$ on $\eR^d$
  is $\alpha$-feasible if the function $\varphi_\alpha(x)=\varphi(\alpha x)$
  is feasible (that is, if $S\subseteq\eR^d$ satisfies $\norm{s-s'}>\alpha$ for all
  $s\neq s'\in S$ then $\sum_{s\in S}\varphi(s)\leq 1$).
  Thus $1$-feasible means feasible; and if $\alpha<\beta$ and $\varphi$ is $\alpha$-feasible
  then $\varphi$ is $\beta$-feasible.

\begin{lemma} \label{lem.detchi2}
  For each $\eps>0$ there exists a positive integer $m$, simple $(1+\eps)$-feasible, tidy functions
  $\varphi_1,\ldots,\varphi_m$, and a constant $c$ such that:
\[
\chi( G(V,1) ) \leq (1+\eps) \max_{i=1,\dots,m} M(V,\varphi_i) + c,
\]
  for each finite set $V \subseteq \eR^d$
\end{lemma}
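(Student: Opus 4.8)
The plan is to produce a proper colouring of $G = G(V,1)$ by first colouring greedily locally and then patching together local colourings across a coarse grid. First I would fix $\eps > 0$ and choose a large constant $R = R(\eps,d)$ (a multiple of $1$, say $R$ an integer much larger than $1/\eps$). Partition $\eR^d$ into the half-open cubes $Q_z := Rz + [0,R)^d$ for $z \in \Zed^d$, and colour $\Zed^d$ itself properly with a bounded number $p = p(d)$ of colours so that cubes $Q_z$, $Q_{z'}$ receiving the same grid-colour are at distance $> 1$ from each other (e.g. the colour of $z$ is $z \bmod 2$ coordinatewise, giving $p = 2^d$; then same-colour cubes are separated by at least $R - 1 > 1$). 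The vertices of $G$ inside cubes of a common grid-colour class form a disjoint union of cliques-of-cubes with no edges between different cubes, so it suffices to colour each $G[V \cap Q_z]$ with a palette of size at most $k := (1+\eps)\max_i M(V,\varphi_i) + c'$ and then take the product with the $p$ grid-colours, giving $\chi(G) \le p \cdot k$; since $p$ is an absolute constant and the $\varphi_i$ will be $(1+\eps)$-feasible, a constant-factor adjustment is absorbed — but to get the stated bound with leading constant exactly $(1+\eps)$ I would instead colour \emph{all} of $V$ with one palette, using the grid only to control how the greedy colouring is organised (see below).

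More precisely, the cleaner route is the standard rounding argument: by Lemma~\ref{lem.chifg}, $\chi_f(G(V,1)) = \sup_{\varphi\in\Fcal} M(V,\varphi)$, so there is a feasible $\varphi^*$ (an optimal dual solution, supported on $V$) with $M(V,\varphi^*) = \chi_f(G(V,1)) \le \chi(G(V,1))$, but we want an inequality in the other direction. So instead: colour $V$ greedily in an order that processes the cubes $Q_z$ one grid-colour class at a time. When we colour $G[V\cap Q_z]$, each vertex $v$ has at most $\Delta_{Q_z}(v) := |\{w \in V \cap Q_z : \norm{v-w}\le 1\}| - 1 \le \max_x {\cal N}(B(x;1)\cap Q_z) - 1$ neighbours already coloured \emph{within its own grid-colour class}, hence a greedy colouring of the whole graph, done class by class, uses at most $1 + \max_z \max_{v\in Q_z} (\text{number of earlier neighbours of }v)$ colours. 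The key point is that neighbours of $v$ in earlier grid-colour classes lie in cubes $Q_{z'}$ adjacent to $Q_z$, all within a bounded box around $v$ of side $2R+2$; covering that box by finitely many translates $x_i + W_i$ of bounded sets $W_i$ each of diameter $<1$, the number of earlier-coloured neighbours of $v$ is at most $\sum_i {\cal N}(x_i + W_i)$ minus overlaps, which is at most $(\text{const}) \cdot \max_i M(V, 1_{W_i})$. This gives a bound of the crude shape $C(\eps,d) \max_i M(V,1_{W_i}) + C'$, which is \emph{not} yet leading-constant $(1+\eps)$.

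To squeeze the leading constant down to $1+\eps$ one must be smarter: inside each cube $Q_z$ use not a naive greedy colouring but an \emph{optimal} (integral) colouring of $G[V\cap Q_z]$, whose number of colours $\chi(G[V\cap Q_z])$ is at most $\chi_f(G[V\cap Q_z]) \cdot (1 + o_R(1))$ — this is exactly where one uses that for geometric graphs in a bounded region the integrality gap of the fractional chromatic number tends to $1$ as the region grows, relative to its diameter; more precisely $\chi \le \chi_f + (\text{lower-order boundary term})$, and then $\chi_f(G[V\cap Q_z]) = \sup_{\varphi\in\Fcal} M(V\cap Q_z,\varphi) \le \sup_{\varphi\in\Fcal} M(V,\varphi)$. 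Then, to glue: scale the feasible $\varphi$ witnessing the local fractional chromatic numbers by $(1+\eps)$ so it becomes a \emph{single} family of $(1+\eps)$-feasible tidy functions $\varphi_1,\dots,\varphi_m$ (the finitely many extreme feasible functions supported on an $\eps'$-net of a box of bounded diameter, mollified to be tidy — bounded, nonnegative, bounded support, level sets with small neighbourhoods); the $(1+\eps)$-feasibility absorbs both the discretisation of the dual LP to finitely many functions and the error from replacing $M(V\cap Q_z,\cdot)$ by $M(V,\cdot)$, and the additive constant $c$ absorbs the boundary/grid-interface terms (which are $O(R^{d-1})$ per cube but there is no global count — rather $c$ bounds the \emph{per-vertex} interface degree, which is bounded because each vertex sees only $O(1)$ neighbouring cubes).

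\textbf{Main obstacle.} The hard part is getting the leading constant to be exactly $1+\eps$ rather than some large $C(\eps,d)$: this forces the use of a genuine $\chi$-vs-$\chi_f$ comparison inside each cube (not just greedy colouring), and careful bookkeeping so that (i) the finitely many functions $\varphi_i$ obtained by discretising the dual LP and by mollifying indicator functions of convex polytopes are simultaneously tidy and $(1+\eps)$-feasible, and (ii) the errors from the cube boundaries, the grid interfaces, and the passage from $M(V\cap Q_z,\varphi_i)$ to $M(V,\varphi_i)$ are all either multiplicative-$(1+\eps)$ (absorbed into feasibility scaling) or additive-$O(1)$ (absorbed into $c$). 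Getting a \emph{uniform} constant $c$ — independent of $V$ — requires noting that the relevant additive terms depend only on $R$, $d$, $\eps$ and the geometry of the norm, not on $|V|$; this is where one uses that each vertex of $G(V,1)$ has neighbours only in a bounded neighbourhood, so the number of "difficult" vertices near cube interfaces contributes to the palette size only through a bounded per-vertex count. I would also need a small lemma (a deterministic fact about geometric graphs) that $\chi(G(W,1)) \le \chi_f(G(W,1)) + c_0$ whenever $W$ lies in a box of bounded diameter, with $c_0$ depending only on that diameter, $d$ and the norm — proved again by the greedy-plus-grid argument but now the crude constant is harmless because it multiplies a bounded quantity, or more carefully by induction on the box dimension.
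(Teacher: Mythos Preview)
Your high-level structure is right: discretise to a fine $\eps$-grid, blow up each grid cell into a clique of the appropriate size, work with the LP for the resulting finite graph whose dual feasible polytope is fixed (independent of $V$), take the finitely many vertices of that polytope as the $\varphi_i$, and observe that the $\eps$-discretisation makes them $(1+O(\eps))$-feasible rather than feasible. You also correctly identify the additive rounding bound $\chi \le \chi_f + c_0$ for a geometric graph in a box of bounded side, via rounding up a basic feasible solution of the primal LP.

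But there is a genuine gap at the gluing step, and it is exactly the ``main obstacle'' you flag without resolving. Both concrete gluings you propose fail to achieve leading constant $(1+\eps)$: colouring each large cube $Q_z$ optimally and then multiplying by a $2^d$-colouring of the coarse grid gives $\chi(G) \le 2^d \max_z \chi(G[V\cap Q_z])$, a factor $2^d$ that does not shrink with $R$; and the greedy approach, as you concede, gives only a crude constant. The sentence ``the additive constant $c$ absorbs the boundary/grid-interface terms'' is precisely where the argument breaks --- the interface cost in every scheme you describe is \emph{multiplicative}, not additive. Also, the claim $\chi(G[V\cap Q_z]) \le \chi_f(G[V\cap Q_z])(1+o_R(1))$ is not a deterministic fact about geometric graphs (taking $R$ larger than $\diam(V)$ it would force $\chi=\chi_f$ for every geometric graph); only the additive version holds, and that additive $c_0$ grows with $R$.

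The paper's fix is an averaging trick you are missing. Rather than a partition into big cubes with a $2^d$-colouring, one uses a \emph{redundant} family of periodic sets $W^p = p + [-K\eps,K\eps)^d + (2K+L)\eps\Zed^d$, where $L\eps$ is just over $1$ and $p$ ranges over a box of $(2K+L)^d$ lattice points. Each $W^p$ is a disjoint union of well-separated cubes of side $2K\eps$ (hence colourable with one palette), and as $p$ varies every small $\eps$-cube of $\eR^d$ is covered exactly $(2K)^d$ times. This redundancy lets you divide the right-hand side of the local LP by $(2K)^d$ before rounding, so the total palette size is at most
\[
(2K+L)^d\Big( (2K)^{-d}\max_i M(V,\varphi_i) + (2K)^d\Big),
\]
with leading factor $(1+L/(2K))^d \to 1$ as $K\to\infty$. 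This overlap-then-divide step is the missing idea that converts the multiplicative interface cost into the desired $(1+\eps)$.
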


\begin{proof}
  Let $\eps > 0$, and let $K \in \eN$ be a (large) integer.
Let us again set $\rho := \diam([0,1]^d)$ and set $L := \lceil (1+\eps\rho)/\eps\rceil$.
Observe that $\norm{y-z} \geq 1+\eps \rho$ whenever
  $\tel{y_i-z_i}\geq L\eps$ for some coordinate $1\leq i\leq d$.

  We shall show that there exist $(1+2\eps\rho)$-feasible tidy functions
  $\varphi_1,\ldots,\varphi_N$ %each with support contained in $[-K,K)^d$
  such that the following holds for any $V\subseteq\eR^d$:
\begin{equation} \label{eqn.detchi}
  \chi(G(V,1)) \leq \left(1+ \frac{L}{2K}\right)^d \max_i M(V,\varphi) +
  (2K)^{2d} \left(1+ \frac{L}{2K}\right)^{d}.
\end{equation}
  This of course yields the lemma, by adjusting $\eps$ and taking $K$ sufficiently large.

  We partition $\eR^d$ into hypercubes of side $\eps$.
  Let $\Gamma$ be the (infinite) graph with vertex set $\eps \Zed^d$ and an edge $pq$
  when $\norm{p-q} < 1+\eps \rho$.
  For each $q \in \eps \Zed^d$ let $C^q$ denote the hypercube $q + [0,\eps)^d$.
  Observe that the hypercubes $C^q$ for $q \in \eps \Zed^d$ partition $\eR^d$.
  Thus for each $z \in \eR^d$ we may define $p(z)$ to be the unique $q\in\eps\Zed^d$ such that $z \in C^q$.

  Now let $V_0 = [-K\eps,K\eps)^d \cap \eps \Zed^d$, and note that $|V_0| = (2K)^d$.
  For each $p \in \eps \Zed^d$ let $\Gamma^p$ be the
  subgraph of $\Gamma$ induced on the vertex set $p+V_0$,
  that is by the vertices of $\Gamma$ in $p + [-K\eps,K\eps)^d$.
  Observe that the graphs $\Gamma^p$ are simply translated copies of $\Gamma^0$.
  Let $B$ be the vertex-stable set incidence matrix of $\Gamma^0$.

  Now let $V$ be an arbitrary finite subset of $\eR^d$
  Given a subset $S$ of $\eR^d$, let us use the notation
  ${\cal N}(S)$ here to denote $\tel{S\cap V}$.
  Let $\Gamma_{V}$ be the graph we get by replacing each node $q$ of
  $\Gamma$ by a clique of size ${\cal N}( C^q )$ and adding all the edges between the cliques
  corresponding to $q,q'\in V_0$ if $qq'\in E(\Gamma^0)$.
  It is easy to see from the definition of the threshold distance in $\Gamma$
  that $G(V,1)$ is isomorphic to a subgraph of $\Gamma_{V}$.
  For each $p \in \eps\Zed^d$ let $\Gamma_{V}^p$ be the subgraph of
  $\Gamma_{V}$ corresponding to the vertices of $\Gamma^p$.

  Consider some $p \in \eps\Zed^d$.
  Then $\chi(\Gamma_{V}^p)$ is the objective value of the integer LP:
\begin{equation}\label{eq:primalILP}
\begin{array}{rl}
\min & 1^T x \\
\textrm{ subject to } &
  B x \geq b^p \\
  & x \geq 0, \ x \textrm{ integral }
  \end{array}
\end{equation}
  where $b^p = ({\cal N}(C^{p+q}))_{q\in V_0}$ and the vector
  $x$ is indexed by the stable sets in $\Gamma^0$.
  Here we are using the fact that $\Gamma^p$ is a copy of $\Gamma^0$
  and that the vertex corresponding to $q$ has been replaced
  by a clique of size ${\cal N}(C^{p+q})$.
  By again considering the LP-relaxation and switching to the dual we find that
  $\chi_f(\Gamma_{V}^p)$ equals the objective value of the LP:
\begin{equation}\label{eq:dualLP}
\begin{array}{rl}
  \max & (b^p)^T y \\
  \textrm{ subject to } &
  B^T y \leq 1 \\
  & y \geq 0.
  \end{array}
\end{equation}
  Notice that the vectors $y = (y_q)_{q\in V_0}$ %(y_1,\dots,y_N)^T$
  attach nonnegative weights to the points $q$ of $V_0$
  is such a way that if $S\subseteq V_0$ corresponds to a stable
  set in $\Gamma_0$ then the sum of the weights $\sum_{q\in S} y_q$
  is at most one.
  Note the important fact that the feasible region (that is, the set of all
  $y$ that satisfy $B^T y\leq 1, y\geq 0$) here does not depend on $p$ or $V$.

  The vectors $y$ that satisfy $B^T y \leq 1, y \geq 0$ correspond to `nearly feasible'
  functions $\varphi$ in a natural way, as follows.
  Observe that $x \in [-K\eps,K\eps)^d$ if and only if $p(x) \in V_0$.
  Let $\varphi:\eR^d \to \eR$ be defined by setting
\[
  \varphi(x) := \left\{\begin{array}{cl} y_{p(x)} & \text{ if } x \in [-K\eps,K\eps)^d, \\
  0 & \text{ otherwise. }
\end{array}\right.
\]
  Note that $\varphi(x) = \sum_{q \in V_0} 1_{C^q}(x) y_q$.
  Then for each $p \in \eps\Zed^d$
\[
\begin{array}{rcl}
  (b^p)^T y
 & = &
  \sum_{q\in V_0} {\cal N}(C^{p+q})y_{q}  =
  \sum_{q\in V_0} \sum_{v\in V} 1_{C^{p+q}}(v)y_q \\
 & = &
  \sum_{v\in V} \sum_{q\in V_0} 1_{C^{q}}(v-p)y_q
  = \sum_{v\in V} \varphi(v-p) \\
 & \leq &
  M(V,\varphi).
\end{array}
\]
  We claim next that the functions $\varphi$ thus defined are
  $(1+ 2\eps \rho)$-feasible; that is they satisfy $\sum_{j=1}^k \varphi(z_j) \leq 1$
  for any $z_1,\dots,z_k$ such that $\norm{z_j-z_l} > 1+2\eps\rho$ for all $j\neq l$.
  To see this, pick such $z_1,\dots, z_k$.
  Since $\varphi$ is 0 outside of $[-K\eps,K\eps)^d$ we may as well suppose that all the $z_j$ lie inside $[-K\eps,K\eps)^d$.
For $i=1,\dots,k$ let $p_i \in V_0$ be the unique point of $V_0$ such that $z_i \in p_i + [0,\eps)^d$.
  For all pairs $i \neq j$ we have
  $\norm{p_i-p_j} \geq \norm{z_i-z_j}-\eps\rho > 1 + \eps\rho$.
  Thus $p_1, \dots, p_k$ are distinct and form a stable set $S$ in $\Gamma^0$,
  and therefore correspond to one of the rows of $B^T$.
  The condition $B^T y \leq 1$ now yields
\[
  \varphi(z_1)+\dots+\varphi(z_k)= y_{p_1}+\dots+y_{p_k}= (B^Ty)_S \leq 1.
\]
  This shows that $\varphi$ is $(1+2\eps\rho)$-feasible as claimed,
  and it can be readily seen from the definition of $\varphi$ that it is simple and tidy.

  Recall that a basic feasible solution of an LP with $k$ constraints
  has at most $k$ nonzero elements and that, provided the optimum value is bounded,
  the optimum value of the LP is always attained at a basic feasible solution
  (see for example Chv\'atal~\cite{chvatal}).
  Thus, noting that by rounding up all the variables in an optimum basic feasible solution $x$
  to the LP-relaxation of~\eqref{eq:primalILP} we get a feasible
  solution of the ILP~\eqref{eq:primalILP} itself, we see that:
\[
  \chi(\Gamma^p_V) \leq \chi_f(\Gamma^p_V)+(2K)^d.
\]
  Now let $y^1, \dots, y^m$ be the vertices of the polytope $B^T y \leq 1, y \geq 0$
  and let $\varphi_1,\ldots,\varphi_m$ be the corresponding
  $(1+2\eps\rho)$-feasible, tidy functions.
  As the optimum of the LP~\eqref{eq:dualLP} corresponding to $\chi_f(\Gamma^p_V)$ is
 attained at one of these vertices we see that:
\begin{equation} \label{eqn.bounds}
%%  \max_{j=1,\dots,m} (b^p)^T y^j =\chi_f(\Gamma^p_V)
%%  \leq 
\chi(\Gamma^p_V) \leq \max_{j=1,\dots,m} (b^p)^T y^j + (2K)^d
  \leq \max_{j=1,\dots,m} M(V,\varphi_j) + (2K)^d.
\end{equation}
  What is more, for each $p \in \eps\Zed^d$ we can colour any
  subgraph of $G(V,1)$ induced by the points in the set
  
 \[ W^p := p+[-K\eps,K\eps)^d+(2K+L)\eps\Zed^d, \]

\noindent
 with this many colours,
  since by the definition of $L = \lceil (1+\eps\rho)/\eps\rceil$, the set $W^p$ is the union of hypercubes of
  side $2K\eps$ which are far enough apart for any two points of
  $\Gamma$ in different hypercubes not to be joined by an edge.

\begin{figure}[h!]\label{fig:Wp}
 \begin{center}
  \input{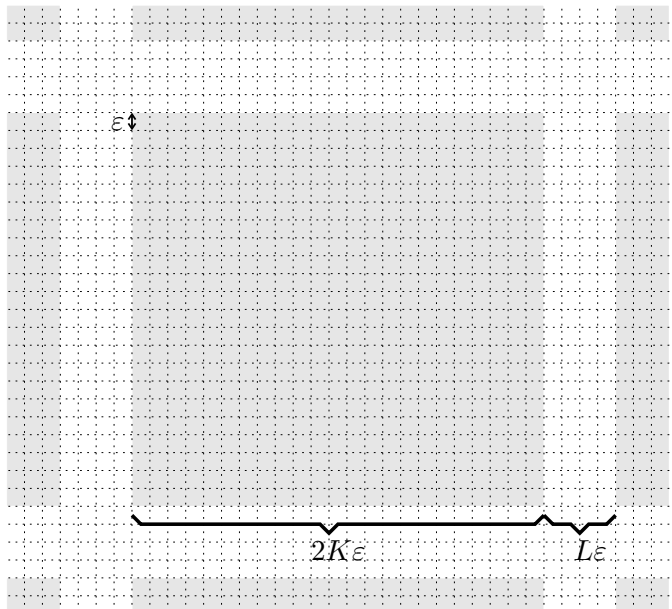}
 \end{center}
\caption{Depiction of a set $W^p$.}
\end{figure}

\noindent
Now let the set $P$ be defined by
\[
  P = \eps\Zed^d \cap [-K\eps,(K+L)\eps)^d =\{ (\eps i_1, \dots, \eps i_d) : -K \leq i_j <
  K+L \}.
\]
  Note that if $p$ runs through the set $P$ then each $q \in \eps\Zed^d$ is covered by exactly
 $(2K)^d$ of the sets $W^p$.  If $H^p$ is the graph we get by replacing every vertex $q$ of $\Gamma$ that
 lies in $W^p$ by a clique of size $\left\lceil\frac{{\cal N}(C^q)}{(2K)^d}\right\rceil$ rather than
 one of size ${\cal N}(C^q)$ and removing any vertex that does not lie in $W^p$, then
\[
  \chi(H^p) \leq \frac{1}{(2K)^d} \max_j M(V,\varphi_j) + (2K)^d.
\]
  This is because we can consider the hypercubes of side $2K\eps$ that
  make up $W^p$ separately (and each of these corresponds to
  some $\Gamma_V^q$) and for each such constituent hypercube
  $q+[-K\eps,K\eps)^d$ all we need to do is replace the `right hand side' vector $b^q$ by
  $\frac{1}{(2K)^d} b^q$ in the LP-relaxation of the ILP~\eqref{eq:primalILP}
%for $\chi(H^p)$
  (the rounding up of the variables will then take care of the
  rounding up in the constraints, as the entries of $B$ are integers).
  Because each $q \in \eps\Zed^d$ is covered by exactly
  $N$ of the sets $W^p$ we can combine the $(2K+L)^d$ colourings of
  the graphs $H^p$ for $p \in P$ to get a proper colouring of $\Gamma_V$ with at most
\[
  (2K+L)^d \left(\frac{1}{(2K)^d} \max_j M(V,\varphi_j)+(2K)^d \right),
\]
  colours and the inequality~(\ref{eqn.detchi}) follows.
\end{proof}

%%%%%%%%%%%%%%%%%%%%%%%%%%%%%%%%%%%%%%%%%%%%%%%%%%%%%%%%%%%%%%%%%%%%%%%%%%%%%%%

\subsection{Some lemmas on $\varphi \in \Fcal$ and $\fcol(t)$}
\label{subsec.techlemphi}

We now give some lemmas on feasible functions $\varphi\in\Fcal$ 
and on the function $\fcol(t) = \sup_{\varphi\in{\cal F}} \xi(\varphi,t)$, 
needed for the proofs in the following subsections.
  % we need three technical lemmas on functions $\varphi$ in the class $\cal F$.

\begin{lemma}\label{prop:densesup}
%\begin{equation}\label{eq:densesup}
  $\sup_{\varphi\in{\cal F}} \int_{\eR^d} \varphi(x) dx = \volhalfBdelta$. 
%\end{equation}
\end{lemma}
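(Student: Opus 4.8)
The plan is to prove the two inequalities separately; both rest on a single packing fact. For $R>0$ let $m(R)$ denote the largest size of a well-spread subset of $B(0;R)$; this is finite, since if $T\subseteq B(0;R)$ is well-spread the balls $B(v;\tfrac12)$, $v\in T$, are pairwise disjoint and contained in $B(0;R+\tfrac12)$, so $m(R)\le\vol(B(0;R+\tfrac12))/\vol(B(0;\tfrac12))$. The key fact I would establish first is
\[
  \lim_{R\to\infty}\frac{m(R)}{\vol(B(0;R))}=\frac{2^d\delta}{\vol(B)}.
\]
The point is that a well-spread subset of $B(0;R)$ is essentially a packing of translates of $B(0;\tfrac12)$ with centres in $B(0;R)$ — ``essentially'' because well-spreadness asks for pairwise distances $>1$ rather than $\ge 1$, a discrepancy removed by scaling any such configuration up by a factor $1+\eps$ and then letting $\eps\downto 0$. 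Since packing density is scale-invariant the packing density of $B(0;\tfrac12)$ equals $\delta$, and this value is unchanged if the growing boxes in the definition of $\delta$ are replaced by growing balls; rearranging gives the limit above.

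For the inequality ``$\ge$'' I would exhibit explicit feasible functions: put $\varphi_R:=m(R)^{-1}\,1_{B(0;R)}$. For any $S\in\eS$, the set $S\cap B(0;R)$ is a well-spread subset of $B(0;R)$, so $\sum_{v\in S}\varphi_R(v)=|S\cap B(0;R)|/m(R)\le 1$; hence $\varphi_R\in\Fcal$. Then $\int\varphi_R=\vol(B(0;R))/m(R)\to\vol(B)/(2^d\delta)$ by the key fact, so $\sup_{\varphi\in\Fcal}\int\varphi\ge\vol(B)/(2^d\delta)$.

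For ``$\le$'' take an arbitrary $\varphi\in\Fcal$. Since $\varphi\cdot 1_{B(0;M)}\in\Fcal$ and $\int_{B(0;M)}\varphi\uparrow\int\varphi$ as $M\to\infty$, I may assume $\supp\varphi\subseteq B(0;\rho_0)$ for some $\rho_0$. Fix $\eps>0$ and $R>\rho_0$. Using the key fact — concretely, by taking a near-optimal packing of translates of $B(0;\tfrac12)$ inside $B\bigl(0;(R-\rho_0)/(1+\eps)\bigr)$ and scaling the centres by $1+\eps$ — I would choose a well-spread set $T\subseteq B(0;R-\rho_0)$ with $\vol(B(0;R))/|T|\to(1+\eps)^d\vol(B)/(2^d\delta)$ as $R\to\infty$. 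For each $x\in B(0;R)$ the set $T-x$ is well-spread, so $\sum_{v\in T}\varphi(v-x)\le 1$; integrating over $x\in B(0;R)$,
\[
  \vol(B(0;R))\ \ge\ \int_{B(0;R)}\sum_{v\in T}\varphi(v-x)\,dx\ =\ \sum_{v\in T}\int_{B(v;R)}\varphi\ =\ |T|\int\varphi,
\]
where the last equality uses $B(v;R)=v-B(0;R)\supseteq B(0;\rho_0)\supseteq\supp\varphi$ for $v\in B(0;R-\rho_0)$. Thus $\int\varphi\le\vol(B(0;R))/|T|$; letting $R\to\infty$ and then $\eps\downto 0$ gives $\int\varphi\le\vol(B)/(2^d\delta)$, which together with the previous paragraph proves the lemma.

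The substantive part is the packing fact, and within it two points need care: matching the strict inequality in the definition of ``well-spread'' with near-optimal packing density (handled by the harmless $(1+\eps)$-scaling), and, in the upper bound, producing a near-optimal well-spread set that is genuinely spread throughout $B(0;R-\rho_0)$ — a \emph{maximum} well-spread subset of a ball could in principle be clumped near its boundary, so the correct move is to truncate a near-optimal packing of all of $\eR^d$ rather than to maximise inside the ball. The remaining steps are routine bookkeeping with the limits in $M$, $R$ and $\eps$.
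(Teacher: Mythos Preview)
Your proof is correct and follows the same core strategy as the paper: for the lower bound, take the indicator of a large region divided by the maximum well-spread count in it; for the upper bound, average the feasibility inequality $\sum_{v\in T}\varphi(v-x)\le 1$ over $x$ in a large region.

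The paper's execution is slightly more direct in two places. First, it works with hypercubes $(0,K)^d$ rather than balls $B(0;R)$, so the packing count is $N(2K)$ straight from the definition of $\delta$ and your ``key fact'' about $m(R)$ becomes unnecessary. Second, for the upper bound the paper periodises: it extends a near-optimal well-spread set $A\subseteq(0,K)^d$ to $A+(K+\eta)\Zed^d$ (still well-spread for suitable $\eta$), so that integrating $\sum_b\varphi(b+x)$ over a single period already captures $\int\varphi$ exactly, without first truncating $\varphi$ to bounded support. Both shortcuts are cosmetic rather than conceptual --- your truncation and ball-vs-cube translation are perfectly sound --- but they make the paper's version a little cleaner. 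Your care about the strict-vs-nonstrict distance inequality is well placed; the paper glosses over this, relying implicitly on the same $(1+\eps)$-scaling trick you describe.
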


\begin{proof}
  First note that the function $\varphi_K$ which has the value $\frac{1}{N(2K)}$ on the
  hypercube $(0,K)^d$ and 0 elsewhere is feasible, giving that
\[
  \sup_\varphi \int_{\eR^d} \varphi(x) dx \geq \lim_{K\to\infty} \frac{K^d}{N(2K)}
  = \frac{\vol(B)}{2^d\delta},  
\]
  by definition of the packing constant $\delta$.
  On the other hand, let $\varphi$ be an arbitrary feasible function.
  Let $A \subseteq (0,K)^d$ with $\tel{A} = N(2K)$ be a set of points satisfying
  $\norm{a - b} > 1$ for all $a\neq b\in A$.
  If $\eta$ is a constant such that $\norm{a-b} > 1$ whenever $\tel{(a)_i-(b)_i} > \eta$
  for some $1\leq i\leq d$, then the set
\[
  B := A + (K+\eta)\Zed^d \;\; (=\{a+(K+\eta)z:a\in A, z\in\Zed^d\})
\]
  also satisfies the condition that $\norm{a - b} > 1$ for all $a\neq b\in B$.
  Set $\psi(x) := \sum_{b \in B} \varphi( b+x )$.
  Since $\varphi$ is feasible we must have $\psi(x) \leq 1$ for all $x$.
  For $a \in A$ let us denote by $B_a$ the ``coset''
  $a + (K+\eta)\Zed^d \subseteq B$, and let us set
  $\psi_a(x) := \sum_{b \in B_a} \varphi(b+x)$.  We have that
\[
  (K+\eta)^d \geq \int_{[0,K+\eta)^d} \psi(x) dx = \sum_{a\in A} \int_{[0,K+\eta)^d} \psi_a(x) dx =
  N(2K) \int_{\eR^d} \varphi(x) dx,
\]
  where the last equality follows because
\[
  \int_{[0,K+\eta)^d}\psi_a(x)dx = \sum_{b\in B_a} \int_{[0,K+\eta)^d} \varphi(b+x) dx
  = \sum_{b\in B_a} \int_{b+[0,K+\eta)^d} \varphi(x)dx
\] 
  and the sets $b+[0,K+\eta)^d$ with $b \in B_a$ form a dissection of $\eR^d$.
  Thus we see that indeed for any feasible $\varphi$
\[ 
  \int_{\eR^d} \varphi(x) dx \leq \lim_{K\to\infty}\frac{(K+\eta)^d}{N(2K)}
  = \frac{\vol(B)}{2^d\delta}, 
\]
  as required.
\end{proof}

\noindent
  From Lemma~\ref{prop:densesup} we may conclude:
 % \m{new function $c(t)$?}

\begin{lemma} \label{cor:chilb}
  Let $w= \volhalfBdelta$.  Then %for all $t\in(0,\infty]$:
  $w \leq \sup_{\varphi\in\Fcal} \xi(\varphi,t) \leq c(w,t)$ for all $t \in (0,\infty]$. 
\end{lemma}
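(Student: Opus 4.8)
The plan is to derive both inequalities from Lemma~\ref{prop:densesup} (which gives $\sup_{\varphi\in\Fcal}\int_{\eR^d}\varphi = w$) together with the monotonicity and comparison properties of the weighted integral collected in Lemma~\ref{lem:xibasic}. Note first two basic facts about any $\varphi\in\Fcal$. It is integrable, since Lemma~\ref{prop:densesup} forces $\int\varphi\le w<\infty$; and it is bounded above by $1$, because applying the feasibility condition $\sum_{v\in S}\varphi(v)\le 1$ to the (vacuously well-spread) singleton $S=\{v\}$ gives $\varphi(v)\le 1$ for all $v$. Hence for each $\varphi\in\Fcal$ the quantity $\xi(\varphi,t)$ is well defined and the results of Section~\ref{sec.scaling-function} apply to it.

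For the lower bound, recall that $\xi(\varphi,t)$ is non-increasing in $t$ and that $\xi(\varphi,t)\to\int\varphi=\xi(\varphi,\infty)$ as $t\to\infty$ (part~\ref{itm:toinf} of Lemma~\ref{lem:xibasic} together with~\eqref{eq:seqinf}); hence $\xi(\varphi,t)\ge\int\varphi$ for every $t\in(0,\infty]$. Taking the supremum over $\varphi\in\Fcal$ and invoking Lemma~\ref{prop:densesup} yields $\sup_{\varphi\in\Fcal}\xi(\varphi,t)\ge\sup_{\varphi\in\Fcal}\int\varphi=w$.

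For the upper bound, fix $\varphi\in\Fcal$. If $\int\varphi=0$ then $\xi(\varphi,t)=0\le c(w,t)$, so assume $\int\varphi>0$. Let $W$ be a ball with $\vol(W)=w$, so that $1_W$ is non-negative, bounded and integrable. Since $0\le\varphi\le 1$ and $\int\varphi\le w=\vol(W)$, we have for every $a>0$ that $\int\varphi\,1_{\{\varphi\ge a\}}\le\int\varphi\le\vol(W)=\int 1_W 1_{\{1_W\ge a\}}$ when $a\le 1$, while both sides vanish when $a>1$. Hence part~\ref{itm:phiindicatorbiggest} of Lemma~\ref{lem:xibasic} gives $\xi(\varphi,t)\le\xi(1_W,t)$, and $\xi(1_W,t)=c(\vol(W),t)=c(w,t)$ by~\eqref{eq:xiindicatorrewrite}. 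Taking the supremum over $\varphi\in\Fcal$ completes the proof.

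The whole argument is bookkeeping with the properties of $\xi$ from Lemma~\ref{lem:xibasic}; the one step needing a moment's thought is the upper bound, where the crucial (but elementary) input is that feasibility forces $\varphi\le 1$ pointwise. This is exactly what lets the level-set comparison in part~\ref{itm:phiindicatorbiggest} go through against an indicator function of mass $w$, and beyond it I do not anticipate any real obstacle.
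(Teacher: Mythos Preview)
Your proof is correct and follows essentially the same route as the paper's: both use Lemma~\ref{prop:densesup} for the lower bound via $\xi(\varphi,t)\ge\int\varphi$, and both obtain the upper bound by comparing $\varphi\in\Fcal$ to an indicator $1_W$ with $\vol(W)=w$ through part~\ref{itm:phiindicatorbiggest} of Lemma~\ref{lem:xibasic}. The paper's version is terser and leaves the pointwise bound $\varphi\le 1$ implicit; your explicit remark that feasibility applied to singletons yields $\varphi\le 1$ is exactly the ingredient that makes the level-set comparison go through, so there is no substantive difference.
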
%%\endofproof
  \begin{proof}
  The lower bound follows from Lemma~\ref{prop:densesup} and the fact that
  $\xi(\varphi,t) \geq \int\varphi$ (as $s\geq 0$) for all $\varphi$.
  The upper bound follows from Lemma~\ref{prop:densesup} together
  with~\eqref{eq:xiindicatorrewrite} and part~\ref{itm:phiindicatorbiggest} of Lemma~\ref{lem:xibasic}
  (if $\varphi\in{\cal F}$ and $W \subseteq \eR^d$ has $\vol(W)= \volhalfBdelta$ 
  then $\int\varphi 1_{\{\varphi\geq a\}} \leq \int 1_W1_{\{1_W\geq a\}}$
  for all $a\in\eR$ so that $\xi(\varphi,t) \leq \xi(1_W,t)$).
  \end{proof}

\noindent
Together with observation~\eqref{eq:cwtinf} from section~\ref{subsec.explicit}, 
Lemma~\ref{cor:chilb} implies:

\begin{lemma}\label{lem:fcolliminf}
$\lim_{t\to\infty}\fcol(t) = \volhalfBdelta$. \noproof
\end{lemma}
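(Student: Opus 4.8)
The plan is to read the limit straight off the two–sided bound already established in Lemma~\ref{cor:chilb}. Write $w := \volhalfBdelta$. Since $\fcol(t) = \sup_{\varphi\in\Fcal}\xi(\varphi,t)$ by the definition~\eqref{eqn.chi-defn}, Lemma~\ref{cor:chilb} says exactly that
\[
  w \leq \fcol(t) \leq c(w,t) \qquad \text{ for all } t \in (0,\infty].
\]
So it remains only to control the upper bound $c(w,t)$ as $t \to \infty$.

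For that I would invoke the limit~\eqref{eq:cwtnul}, namely $c(w,t) \to w$ as $t \to \infty$, which was recorded in Section~\ref{subsec.explicit} directly from the defining relation of $c(w,t)$: it is the unique $x \geq w$ with $H(x/w) = \frac{1}{wt}$, and as $t \to \infty$ the right–hand side tends to $0$, so since $H$ is continuous and strictly increasing on $[1,\infty)$ with $H(1) = 0$, the ratio $x/w$ must tend to $1$.

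Combining the displayed sandwich with~\eqref{eq:cwtnul} and letting $t \to \infty$ gives $\lim_{t\to\infty}\fcol(t) = w = \volhalfBdelta$ by the squeeze theorem. There is essentially no obstacle here: all the content sits in the earlier results --- Lemma~\ref{prop:densesup}, which identifies $\sup_{\varphi\in\Fcal}\int\varphi$ with $w$, and Lemma~\ref{cor:chilb}, which upgrades that identification to the sandwich above --- and the present statement is just the $t \to \infty$ specialisation of those.
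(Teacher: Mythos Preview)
Your proposal is correct and follows exactly the paper's approach: the paper itself records no separate proof, merely the one-line remark that Lemma~\ref{cor:chilb} together with the limiting behaviour of $c(w,t)$ gives the result, and your write-up is a faithful expansion of that. (Incidentally, the paper's cross-reference to~\eqref{eq:cwtinf} just before this lemma appears to be a typo for~\eqref{eq:cwtnul}; you have invoked the correct one.)
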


\noindent
Moreover, since $\varphi_0 \in \Fcal$ so that $\fcol(t) = \sup_{\varphi\in\Fcal}\xi(\varphi,t)
\geq \xi(\varphi_0,t) = \fcli(t) = c(w,t)$ with $w=\volhalfB$, observation~\eqref{eq:cwtnul} gives:

\begin{lemma}\label{lem:fcollimnul}
$\lim_{t\downto 0} \fcol(t) = \infty$. \noproof
\end{lemma}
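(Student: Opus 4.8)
The plan is to bound $\fcol$ below by $\fcli$ and then invoke the already-established behaviour of the function $c(w,t)$ as $t \downto 0$. Recall that $\varphi_0 = 1_{B(0;1/2)}$ is feasible, i.e.\ $\varphi_0 \in \Fcal$ (as noted just after the definition~\eqref{eq:phinuldef}). Hence, directly from the definition~\eqref{eqn.chi-defn} of $\fcol$ as the supremum of $\xi(\varphi,t)$ over $\varphi \in \Fcal$, we have
\[
  \fcol(t) \;\geq\; \xi(\varphi_0, t) \qquad \text{for every } t \in (0,\infty).
\]

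By~\eqref{eqn.omega-def}, the right-hand side equals $\fcli(t) = c(\vol(B(0;1/2)),t) = c(\volhalfB,t)$, using $\vol(B(0;1/2)) = \volB/2^d$. Now~\eqref{eq:cwtinf} records that $c(w,t) \to \infty$ as $t \to 0$, for each fixed $w>0$; applying this with $w = \volhalfB > 0$ yields $c(\volhalfB,t) \to \infty$ as $t \downto 0$, and therefore $\fcol(t) \to \infty$ as $t \downto 0$, which is the claim.

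There is essentially no obstacle here: the substantive work has already been carried out, namely verifying that $\varphi_0$ is feasible and establishing the limiting properties of $c(w,t)$ in equation~\eqref{eq:cwtinf}, which themselves rest on the elementary analysis of $H(x) = x\ln x - x + 1$. The only points to be careful about are that we invoke the correct one-sided limit of $c(w,t)$ — equation~\eqref{eq:cwtinf} (the $t \to 0$ statement) rather than~\eqref{eq:cwtnul} — and that for this particular assertion we need neither monotonicity nor continuity of $\fcol$, only the pointwise lower bound $\fcol \geq \fcli$.
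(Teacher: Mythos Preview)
Your proof is correct and follows exactly the same route as the paper: bound $\fcol(t)$ below by $\xi(\varphi_0,t)=c(\volhalfB,t)$ using $\varphi_0\in\Fcal$, then invoke the limit $c(w,t)\to\infty$ as $t\downto 0$. You are in fact more careful than the paper here, since the paper's text cites~\eqref{eq:cwtnul} where~\eqref{eq:cwtinf} is clearly intended --- precisely the pitfall you flag.
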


\noindent
Since each $\xi(\varphi,t)$ is non-increasing in $t$ for each $\varphi$ separately, we also have:

\begin{lemma}\label{lem:fcolnonincr}
$\fcol(t)$ is non-increasing.\noproof
\end{lemma}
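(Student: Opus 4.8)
The plan is to exploit that $\fcol(t) = \sup_{\varphi \in \Fcal} \xi(\varphi, t)$ is a pointwise supremum of a family of functions of $t$, each of which is non-increasing, together with the elementary fact that a supremum of non-increasing functions is non-increasing.

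First I would fix $0 < t \le t' < \infty$ and an arbitrary feasible $\varphi \in \Fcal$. By part~\ref{itm:xitplush} of Lemma~\ref{lem:xibasic} we have $\xi(\varphi, t') \le \xi(\varphi, t)$, and trivially $\xi(\varphi, t) \le \sup_{\psi \in \Fcal} \xi(\psi, t) = \fcol(t)$; hence $\xi(\varphi, t') \le \fcol(t)$ for every $\varphi \in \Fcal$. Taking the supremum over $\varphi \in \Fcal$ on the left-hand side then yields $\fcol(t') \le \fcol(t)$, which is precisely the assertion.

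I do not expect any real obstacle: the only non-trivial ingredient is the monotonicity of $t \mapsto \xi(\varphi, t)$ for fixed $\varphi$, which is already established as part~\ref{itm:xitplush} of Lemma~\ref{lem:xibasic}. The sole point to keep in mind is that $\fcol$ is finite (and hence a genuine real-valued function) on $(0,\infty)$, as recorded in Lemma~\ref{cor:chilb}, so that all the inequalities above are between real numbers; if one wished to extend the statement to $t' = \infty$ one could instead invoke part~\ref{itm:toinf} of Lemma~\ref{lem:xibasic}, but that is not needed for the claim as stated.
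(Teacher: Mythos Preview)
Your argument is correct and is precisely the approach the paper takes: the paper also justifies this lemma in one line by noting that each $\xi(\varphi,t)$ is non-increasing in $t$, so the supremum $\fcol(t)=\sup_{\varphi\in\Fcal}\xi(\varphi,t)$ is non-increasing as well.
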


\noindent
Observe that, by part~\ref{itm:xitplush} of Lemma~\ref{lem:xibasic}, for any $h>0$:
\[ 
  (\frac{t}{t+h}) \sup_{\varphi\in{\cal F}}\xi(\varphi,t) \leq 
  \sup_{\varphi\in{\cal F}}\xi(\varphi,t+h) \leq
  \sup_{\varphi\in{\cal F}}\xi(\varphi,t).
\]
\noindent
Thus: 

\begin{lemma}\label{lem:fcolcont}
$\fcol(t)$ is continuous in $t$. \noproof
\end{lemma}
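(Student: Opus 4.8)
The plan is to deduce continuity of $\fcol$ directly from the two-sided estimate displayed just before the lemma, namely
\[
\left(\tfrac{t}{t+h}\right)\fcol(t)\;\le\;\fcol(t+h)\;\le\;\fcol(t)\qquad(t>0,\ h>0),
\]
which is what one obtains by taking the supremum over $\varphi\in\Fcal$ in part~\ref{itm:xitplush} of Lemma~\ref{lem:xibasic}. The single preliminary fact to record is that $\fcol(t)<\infty$ for every $t\in(0,\infty)$: by Lemma~\ref{cor:chilb} we have $\fcol(t)\le c(w,t)$ with $w=\volhalfBdelta$, and $c(w,t)$ is finite by its definition. This finiteness is what makes passing to the limit in the estimate legitimate, since the left-hand factor $\tfrac{t}{t+h}$ tends to $1$.

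First I would prove right-continuity at an arbitrary $t_0>0$: letting $h\downarrow 0$ in the estimate with $t=t_0$, both outer bounds $\tfrac{t_0}{t_0+h}\fcol(t_0)$ and $\fcol(t_0)$ converge to $\fcol(t_0)$, so $\fcol(t_0+h)\to\fcol(t_0)$ by squeezing. For left-continuity at $t_0$ I would apply the same estimate with base point $t=t_0-h$ and step $h$ (for $0<h<t_0$), obtaining
\[
\left(\tfrac{t_0-h}{t_0}\right)\fcol(t_0-h)\;\le\;\fcol(t_0)\;\le\;\fcol(t_0-h).
\]
By Lemma~\ref{lem:fcolnonincr}, $\fcol(t_0-h)$ is non-increasing as $h\downarrow 0$ and is bounded above (by, say, $\fcol(t_0/2)$) and below (by $\fcol(t_0)$), so the limit $L:=\lim_{h\downarrow 0}\fcol(t_0-h)$ exists and is finite; taking $h\downarrow 0$ in the last display and using $\tfrac{t_0-h}{t_0}\to1$ gives $L\le\fcol(t_0)\le L$, whence $L=\fcol(t_0)$. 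Combining the two one-sided statements yields continuity of $\fcol$ on $(0,\infty)$.

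There is essentially no obstacle: the whole argument is a two-sided squeeze, and the only points requiring any care are to have noted the finiteness of $\fcol(t)$ before multiplying by factors tending to $1$, and to invoke monotonicity to guarantee that the one-sided limits exist. Equivalently, one may simply observe that the displayed estimate gives $|\fcol(t+h)-\fcol(t)|\le\tfrac{h}{t+h}\fcol(t)$ for all $h>0$, which together with local boundedness of $\fcol$ furnishes a local modulus of continuity outright.
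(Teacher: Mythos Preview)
Your proposal is correct and takes exactly the same approach as the paper: the paper simply records the two-sided estimate $(t/(t+h))\fcol(t)\le\fcol(t+h)\le\fcol(t)$ obtained from part~\ref{itm:xitplush} of Lemma~\ref{lem:xibasic} and declares the lemma proved. You have merely spelled out the squeeze argument in more detail, including the finiteness check and the one-sided limits, which is fine.
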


\noindent
We shall also need the following two technical lemmas.

\begin{lemma} \label{lem.f*f}
   Let ${\cal F}^*$ be the collection of all $\varphi \in{\cal F}$ that are tidy.
   For each $0<t \leq \infty$
\[
  \sup_{\varphi\in{\cal F}^*} \xi(\varphi, t ) \; = \; \sup_{\varphi\in{\cal F}} \xi(\varphi, t )
\]
\end{lemma}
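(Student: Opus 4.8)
**Lemma \ref{lem.f*f}: proof plan.**

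The plan is to show that an arbitrary feasible function $\varphi\in\Fcal$ can be approximated from below by a tidy feasible function $\varphi^*\in\Fcal^*$ whose weighted integral $\xi(\varphi^*,t)$ is as close as we like to $\xi(\varphi,t)$. Since $\Fcal^*\subseteq\Fcal$, the inequality $\sup_{\varphi\in\Fcal^*}\xi(\varphi,t)\leq\sup_{\varphi\in\Fcal}\xi(\varphi,t)$ is immediate, and it suffices to prove the reverse. So fix $\varphi\in\Fcal$ and $\eps>0$; we want $\varphi^*\in\Fcal^*$ with $\xi(\varphi^*,t)\geq\xi(\varphi,t)-\eps$ (or $\geq(1-\eps)\xi(\varphi,t)$, which is equally good by the limit properties of $\xi$). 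The case $\xi(\varphi,t)=0$ is trivial, and the case $t=\infty$ follows from the other cases by part~\ref{itm:toinf} of Lemma~\ref{lem:xibasic}, so assume $0<t<\infty$ and $\int\varphi>0$.

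First I would handle boundedness and bounded support. A feasible $\varphi$ need not be bounded (though it is bounded on any set bounded away from points of small pairwise distance — in fact any feasible function is automatically bounded by $1$ since a single point is a well-spread set, so $\varphi(v)\le 1$ for all $v$; that already gives boundedness). For bounded support: let $\varphi^{(R)}:=\varphi\cdot 1_{B(0;R)}$; this is still feasible (restricting to a subset of the domain only decreases the left side of each well-spread constraint), and $\varphi^{(R)}\uparrow\varphi$ pointwise as $R\to\infty$, so by part~\ref{itm:phiseq} of Lemma~\ref{lem:xibasic} (with dominating function $\psi\equiv 1_{B(0;R_0)}\cdot$(large constant) — actually $\psi=1$ works on each fixed ball, or use monotone convergence via part~\ref{itm:phipsi} and part~\ref{itm:toinf}), $\xi(\varphi^{(R)},t)\to\xi(\varphi,t)$. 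So I may assume $\varphi$ is bounded with bounded support. The remaining issue is the small-neighbourhood condition on the superlevel sets $\{\varphi>a\}$. Here I would mollify: replace $\varphi$ by a suitable continuous (or even smooth) function $\tilde\varphi$ with $\tilde\varphi\le\varphi$, or rather approximate in a way that preserves feasibility. A clean route: since the constraint is $\sum_{v\in S}\varphi(v)\le 1$ for every well-spread $S$, and a continuous under-approximation that is slightly scaled can be made feasible — concretely, take $\psi_\delta:=\varphi * \rho_\delta$ a convolution with a smooth bump of small radius $\delta$, note $\psi_\delta$ is smooth hence its superlevel sets are nice (open with negligible boundary, generically), but convolution may destroy feasibility; to repair, observe $\psi_\delta$ is $(1+o(1))$-feasible as $\delta\to0$ in the sense that $\sum_{v\in S}\psi_\delta(v)\le 1+\eta(\delta)$ for well-spread $S$, hence $\psi_\delta/(1+\eta(\delta))$ is feasible, and $\xi(\psi_\delta/(1+\eta(\delta)),t)\to\xi(\varphi,t)$ using parts~\ref{itm:xiscalar} and~\ref{itm:phiseq} of Lemma~\ref{lem:xibasic}. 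One must check the feasibility loss $\eta(\delta)\to0$: this uses uniform continuity / boundedness and the fact that well-spread sets have bounded local density (at most one point per unit ball, finitely many points in any bounded region), so convolving spreads each point's mass over a ball of radius $\delta$ and the interaction is controlled.

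The main obstacle is precisely this mollification step: making the approximating function tidy (smooth superlevel sets) while keeping it feasible and keeping $\xi$ close. One has to be careful that the superlevel sets $\{\psi_\delta>a\}$ of a smooth compactly supported function genuinely have small neighbourhoods — this is true for all but countably many $a$ (those $a$ that are not critical values, by Sard's theorem, giving smooth boundary hypersurfaces of measure zero), and for the weighted integral $\xi$ only the values $a$ in the range matter, so one can perturb $a$ slightly or note that $\xi$ is insensitive to modifying $\varphi$ on a null set and to countably many level-set issues. Alternatively, and perhaps more simply, approximate $\varphi$ from below by simple functions $\varphi_m^{\text{lower}}=\sum_i a_i 1_{A_i}$ as in the proof of Lemma~\ref{lem:MphilargeT}, with $\xi(\varphi_m^{\text{lower}},t)\to\xi(\varphi,t)$ by part~\ref{itm:phiseq} of Lemma~\ref{lem:xibasic}, and then replace each set $A_i$ by a slightly shrunk compact subset $A_i'$ (so the $1_{A_i'}$ sum is still feasible and tidy since compact sets have small neighbourhoods), using inner regularity of Lebesgue measure to keep $\vol(A_i'\triangle A_i)$ small and hence (again by part~\ref{itm:phiseq}) keep $\xi$ close. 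This second route avoids Sard entirely and is the one I would write up: simple-function approximation from below, then inner-regular compact shrinking of each level set, then assemble, noting feasibility is preserved under pointwise decrease. This yields $\varphi^*\in\Fcal^*$ with $\xi(\varphi^*,t)\geq\xi(\varphi,t)-\eps$, completing the proof.
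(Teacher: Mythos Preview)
Your approach is correct and takes a genuinely different route from the paper's. The paper approximates from \emph{above}: it discretises $\eR^d$ into cubes of side $\eps$ and sets $\hat\varphi(x):=\sup_{y\in C_x}\varphi(y)$ where $C_x$ is the cube containing $x$; this $\hat\varphi\geq\varphi$ is automatically tidy (its superlevel sets are finite unions of half-open cubes) but need not be feasible, and feasibility is recovered by the domain rescaling $\varphi'(x):=\hat\varphi((1+\eps\rho)x)$ with $\rho:=\diam([0,1]^d)$, since for well-spread $S$ any selection of near-suprema in the cubes containing $(1+\eps\rho)S$ is again well-spread. Parts~\ref{itm:phipsi} and~\ref{itm:philambda} of Lemma~\ref{lem:xibasic} then give $\xi(\varphi,t)\leq(1+\eps\rho)^d\xi(\varphi',t)$, and $\eps\to 0$ finishes. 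Your route instead approximates from \emph{below} (simple functions, then inner-regular compact subsets of the level sets), so feasibility is automatic throughout and tidiness comes from compact sets having small neighbourhoods. The paper's construction is slicker in that one step handles tidiness and feasibility together via the scaling lemma; yours is more elementary, using only monotonicity and inner regularity. Two minor remarks on your writeup: your mollification route actually preserves feasibility \emph{exactly}, since for well-spread $S$ one has $\sum_{v\in S}(\varphi*\rho_\delta)(v)=\int\rho_\delta(y)\sum_{v\in S}\varphi(v-y)\,{\dd}y\leq 1$ as each translate $S-y$ is again well-spread, so no $(1+\eta(\delta))^{-1}$ correction is needed; and in your preferred route, to invoke part~\ref{itm:phiseq} at the compact-subset step you should take a nested increasing sequence $A_i'^{(n)}$ exhausting $A_i$ up to a null set, so that the approximants converge pointwise a.e., which suffices for the dominated-convergence arguments underlying~\ref{itm:phiseq}.
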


\begin{proof}
  If $t=\infty$ then by Lemma~\ref{prop:densesup} and the proof of the lower bound in it,
  both sides of the equation above equal $\volhalfBdelta$. 
  Thus we may suppose that $0<t<\infty$.
  Let $\varphi \in {\cal F}$.  It suffices to show that
\begin{equation} \label{eqn.fstar}
  \xi(\varphi,t) \leq \sup_{\psi \in {\cal F}^*} \xi(\psi,t).
\end{equation}
  We may assume that $\varphi$ has bounded support, because (by Lemma~\ref{lem:xibasic}, part~\ref{itm:phiseq})
  the sequence of functions $(\varphi_n)_n$ given by $\varphi_n = \varphi 1_{[-n,n]^d}$ satisfies
  $\lim_{n\to\infty} \xi(\varphi_n,t) = \xi(\varphi,t)$.
  Let $\eps>0$ and for each $q \in \eps \Zed^d$ let $C^q := q + [0,\eps)^d$ as before.
  Define the function $\hat{\varphi}$ on $\eR^d$ by setting
  $\hat{\varphi}(x) = \sup_{y \in C^{p(x)}} \varphi(y)$ (where again $p(x)$ is the unique
  $q\in\eps\Zed^d$ such that $x\in q+[0,\eps)^d$).
  Clearly $\hat{\varphi} \geq \varphi$.
  Although $\hat{\varphi}$ is not necessarily feasible, the function $\varphi'$ given by
  $\varphi'(x) = \hat{\varphi}((1+\eps\rho)x)$ is.
  Also, $\varphi'$ is tidy: clearly it is measurable, bounded, nonnnegative
  and has bounded support.
  That the set $\{\varphi'>a\}$ has a small neighbourhood
  for all $a>0$, follows from the fact that it is the union of finitely many hypercubes $(1+\eps\rho)^{-1} C^q$.
  So $\varphi' \in {\cal F}^*$. We find that
\[
  \xi(\varphi,t) \leq \xi( \hat{\varphi}, t) \leq (1+\eps\rho)^d \xi(\varphi', t)
  \leq (1+\eps\rho)^d \sup_{\psi \in {\cal F}^*} \xi(\psi, t),
\]
  using Lemma~\ref{lem:xibasic}, parts~\ref{itm:phipsi} and~\ref{itm:philambda},
  for the first and second inequalities respectively. Now we may send $\eps \to 0$ to conclude the proof
  of inequality~(\ref{eqn.fstar}), and thus of the lemma.
\end{proof}

\begin{lemma} \label{lem.psi-lb}
  Let $0<\tau<\infty$ and let $\eps>0$.
  Then there exist $m$ and tidy functions $\psi_1,\ldots,\psi_m$ in $\cal F$ such that
\begin{equation} \label{eqn.psi-lb}
  \max_i \xi(\psi_i,t) \geq (1-\eps) \sup_{\varphi \in {\cal F}} \xi(\varphi,t) \;\; \mbox{ for all } t \in [\tau,\infty].
\end{equation}
\end{lemma}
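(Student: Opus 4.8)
The plan is to combine Lemma~\ref{lem.f*f} --- which says that the tidy feasible functions ${\cal F}^*$ already realise the supremum defining $\fcol(t)=\sup_{\varphi\in{\cal F}}\xi(\varphi,t)$ --- with a compactness argument over the range of $t$. Recall from Lemmas~\ref{lem:fcolliminf},~\ref{lem:fcolnonincr} and~\ref{lem:fcolcont} that $\fcol$ is continuous and non-increasing on $(0,\infty)$ and satisfies $w\leq\fcol(t)\to w$ as $t\to\infty$, where $w:=\volhalfBdelta$ (note $w>0$). For a single fixed $t^*$, Lemma~\ref{lem.f*f} produces a tidy $\psi\in{\cal F}$ with $\xi(\psi,t^*)\geq(1-\tfrac{\eps}{2})\fcol(t^*)$; since both $t\mapsto\xi(\psi,t)$ (continuous on $(0,\infty)$ by part~\ref{itm:xitplush} of Lemma~\ref{lem:xibasic}) and $\fcol$ are continuous, and $\fcol(t^*)\geq w>0$, the inequality $\xi(\psi,t)\geq(1-\eps)\fcol(t)$ then holds on a whole open neighbourhood of $t^*$. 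If the parameter range were a compact subinterval of $(0,\infty)$ we could simply extract a finite subcover and be done; the only real issue is the non-compactness at $t=\infty$, which I would handle by a separate, concrete tail argument.

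For the tail, I would first use Lemma~\ref{lem.f*f} at $t=\infty$ together with Lemma~\ref{prop:densesup} to fix one tidy $\psi_0\in{\cal F}$ with $\int\psi_0\geq(1-\tfrac{\eps}{2})w$. Since $\xi(\psi_0,\cdot)$ is non-increasing with limit $\xi(\psi_0,\infty)=\int\psi_0$ (part~\ref{itm:toinf} of Lemma~\ref{lem:xibasic}), this gives $\xi(\psi_0,t)\geq(1-\tfrac{\eps}{2})w$ for \emph{all} $t\in[\tau,\infty]$. Because $\fcol(T)\to w$ as $T\to\infty$ and $\tfrac{1-\eps/2}{1-\eps}>1$, I can then pick $T\geq\tau$ with $\fcol(T)\leq\tfrac{1-\eps/2}{1-\eps}\,w$, and monotonicity of $\fcol$ yields $(1-\eps)\fcol(t)\leq(1-\eps)\fcol(T)\leq(1-\tfrac{\eps}{2})w\leq\xi(\psi_0,t)$ for every $t\in[T,\infty]$. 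Thus the single function $\psi_0$ takes care of the entire tail $[T,\infty]$.

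It then remains to cover the compact interval $[\tau,T]\subset(0,\infty)$. For each $t^*\in[\tau,T]$ I would choose, as above, a tidy $\psi_{t^*}\in{\cal F}$ with $\xi(\psi_{t^*},t^*)\geq(1-\tfrac{\eps}{2})\fcol(t^*)$ and an open interval $U_{t^*}\ni t^*$ on which the continuous function $t\mapsto\xi(\psi_{t^*},t)-(1-\eps)\fcol(t)$ --- which at $t^*$ equals at least $\tfrac{\eps}{2}\fcol(t^*)\geq\tfrac{\eps}{2}w>0$ --- stays positive. Compactness of $[\tau,T]$ gives finitely many $U_{t_1},\dots,U_{t_{m-1}}$ covering it; setting $\psi_i:=\psi_{t_i}$ for $i<m$ and $\psi_m:=\psi_0$, any $t\in[\tau,\infty]$ either lies in $[T,\infty]$, where $\xi(\psi_m,t)\geq(1-\eps)\fcol(t)$, or in some $U_{t_i}$, where $\xi(\psi_i,t)\geq(1-\eps)\fcol(t)$; in both cases $\max_i\xi(\psi_i,t)\geq(1-\eps)\fcol(t)=(1-\eps)\sup_{\varphi\in{\cal F}}\xi(\varphi,t)$, which is exactly~\eqref{eqn.psi-lb}. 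I do not anticipate a serious obstacle here: the content is really just that $\fcol$ is a pointwise supremum of a well-behaved family, it is continuous, and the effective domain is essentially compact; the one place needing care is making the estimate uniform up to $t=\infty$, and the tail trick with $\psi_0$ settles that cleanly.
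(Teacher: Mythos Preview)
Your argument is correct, but the paper takes a more direct route that avoids the open-cover compactness step altogether. Instead of continuity, the paper exploits only the \emph{monotonicity} of both $\fcol$ and each $\xi(\psi,\cdot)$: since $\fcol$ is non-increasing on $[\tau,\infty]$ with $\fcol(\tau)<\infty$ and $\fcol(\infty)=w>0$, one can choose finitely many points $\tau=\tau_0<\tau_1<\cdots<\tau_m=\infty$ with $\fcol(\tau_i)/\fcol(\tau_{i+1})<1+\eps$, then pick a tidy $\psi_{i}\in{\cal F}$ with $\xi(\psi_i,\tau_i)\geq(1-\eps)\fcol(\tau_i)$ for each $i$. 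On the interval $[\tau_i,\tau_{i+1}]$ one then has $\xi(\psi_{i+1},t)\geq\xi(\psi_{i+1},\tau_{i+1})\geq(1-\eps)\fcol(\tau_{i+1})\geq(1-\eps)^2\fcol(\tau_i)\geq(1-\eps)^2\fcol(t)$, and the result follows. Your approach has the virtue of being the ``generic'' argument---it would work with continuity alone even if monotonicity failed---but it needs a separate tail treatment at $t=\infty$ and an explicit compactness extraction; the paper's version is shorter because the monotone structure lets one write down the partition and the estimate in one line, with the point $\tau_m=\infty$ handled uniformly rather than as a special case.
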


\begin{proof}
  Recall that each $\xi(\varphi,t)$ is non-increasing as a function of $t$,
  and thus so is $\fcol(t)= \sup_{\varphi \in {\cal F}} \xi(\varphi,t)$.
  %are non-increasing as functions of $t$, and
  Also $\fcol(\infty)>0$.
  Let $m$ and $\tau=\tau_0<\tau_1< \cdots < \tau_{m-1}< \tau_{m}=\infty$ be such that
  $\fcol(\tau_i)/\fcol(\tau_{i+1})< 1+\eps$ for each $i=0,1,\ldots,m-1$. 
  Let $\psi_1,\ldots,\psi_{m}$ be tidy functions in $\cal F$ such that $\xi(\psi_i,\tau_i) \geq (1-\eps) \fcol(\tau_i)$
  for each $i=1,\ldots,m$.
  If $\tau_i \leq t \leq \tau_{i+1}$ then
\[ 
  \xi(\psi_{i+1},t) \geq \xi(\psi_{i+1}, \tau_{i+1}) \geq (1-\eps) \fcol(\tau_{i+1}) \geq (1-\eps)^2 \fcol(\tau_i) \geq (1-\eps)^2 \fcol(t).
\]
  and the lemma follows.
\end{proof}

%%%%%%%%%%%%%%%%%%%%%%%%%%%%%%%%%%%%%%%%%%%%%%%%%%%%%%%%%%%%%%%%%%%%%%%%%%%%%%%%%%%

\subsection{Completing the proofs of parts~\ref{itm:tobiaschi.intermediate}
and~\ref{itm:tobiaschi.dense} of Theorem~\ref{thm.tobiaschi}} % and Theorem~\ref{thm.chif}}
\label{subsec.thmcolinchi-end}

Lemmas~\ref{lem:fcolliminf},~\ref{lem:fcollimnul},~\ref{lem:fcolnonincr} and~\ref{lem:fcolcont}
show that $\fcol$ has the properties claimed in part~\ref{itm:tobiaschi.intermediate}
of Theorem~\ref{thm.tobiaschi}.

We shall now prove the following theorem, which implies
parts~\ref{itm:tobiaschi.intermediate}
and~\ref{itm:tobiaschi.dense} of Theorem~\ref{thm.tobiaschi}
and is also convenient for the proof of Theorems~\ref{thm.tobiasfrat} and~\ref{thm.chif} later on.

\begin{theorem}\label{thm.tobiaschireal}
If $t(n) := \frac{\sigma n r^d}{\ln n}$ satisfies $\liminf_n t(n) > 0$ then
  
\[ \frac{\chi(G_n)}{\sigma n r^d} \sim \fcol(t(n)) \;\; \text{ a.s. } \]
  
\end{theorem}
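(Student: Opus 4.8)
The plan is to squeeze $\chi(G_n)$ between $\chi_f(G_n)$ and an upper bound coming from Lemma~\ref{lem.detchi2}, and to identify both bounds (up to $1+o(1)$ factors) with $\fcol(t(n))\cdot\sigma nr^d$ via Theorem~\ref{thm.Mphi} on generalised scan statistics. Throughout write $V_n := \{r^{-1}X_1,\ldots,r^{-1}X_n\}$, so that $G_n$ is isomorphic to $G(V_n,1)$ and $M_\varphi(n,r) = M(V_n,\varphi)$ for every nonnegative $\varphi$. Fix $\eps\in(0,\tfrac12)$; it suffices to show that a.s.\ $1-\eps < \liminf_n \frac{\chi(G_n)}{\sigma nr^d\,\fcol(t(n))} \leq \limsup_n \frac{\chi(G_n)}{\sigma nr^d\,\fcol(t(n))} < 1+\eps$, and then let $\eps\downto 0$ along a countable sequence.

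\textbf{Lower bound.} Since $\omega(G_n)\le\chi_f(G_n)\le\chi(G_n)$ it is enough to bound $\chi_f$. By Lemma~\ref{lem.chifg}, $\chi_f(G_n) = \sup_{\varphi\in\Fcal} M(V_n,\varphi) \geq \max_i M(V_n,\psi_i)$ where $\psi_1,\ldots,\psi_m$ are the tidy feasible functions supplied by Lemma~\ref{lem.psi-lb} for the parameters $\tau := \liminf_n t(n)/2$ (say) and $\eps$. Each $\psi_i$ is tidy, so Theorem~\ref{thm.Mphi} gives $M(V_n,\psi_i)/(\sigma nr^d) \sim \xi(\psi_i,t(n))$ a.s.; since this holds simultaneously for the finitely many $i$, a.s.\ $\chi_f(G_n)/(\sigma nr^d) \geq (1-o(1))\max_i \xi(\psi_i,t(n)) \geq (1-o(1))(1-\eps)\sup_{\varphi\in\Fcal}\xi(\varphi,t(n)) = (1-o(1))(1-\eps)\fcol(t(n))$, using~\eqref{eqn.psi-lb} for $t(n)\in[\tau,\infty]$ (which holds for $n$ large). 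Hence $\liminf_n \chi(G_n)/(\sigma nr^d\,\fcol(t(n))) \geq 1-\eps$ a.s.

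\textbf{Upper bound.} Apply Lemma~\ref{lem.detchi2} with parameter $\eps$: there are simple $(1+\eps)$-feasible tidy functions $\varphi_1,\ldots,\varphi_m$ and a constant $c$ with $\chi(G(V_n,1)) \leq (1+\eps)\max_i M(V_n,\varphi_i) + c$. Each $\varphi_i$ is tidy, so Theorem~\ref{thm.Mphi} gives $M(V_n,\varphi_i)/(\sigma nr^d)\sim \xi(\varphi_i,t(n))$ a.s.\ for each $i$; also $\sigma nr^d\to\infty$ (since $\liminf t(n)>0$ forces $nr^d\gg\ln n/\sigma \cdot \liminf t(n)$... more simply $\sigma nr^d\geq (\tau+o(1))\ln n\to\infty$), so the additive $c$ is negligible. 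Thus a.s.\ $\limsup_n \chi(G_n)/(\sigma nr^d) \leq (1+\eps)\max_i \xi(\varphi_i,t(n))\cdot(1+o(1))$. Now since each $\varphi_i$ is $(1+\eps)$-feasible, $\varphi_i':=\varphi_i(\cdot/(1+\eps))^{-1}$— more precisely the function $x\mapsto\varphi_i((1+\eps)x)$— is feasible, and by part~\ref{itm:philambda} of Lemma~\ref{lem:xibasic} $\xi(\varphi_i,t)\le (1+\eps)^d\,\xi(x\mapsto\varphi_i((1+\eps)x),t)\le(1+\eps)^d\fcol(t)$. Hence a.s.\ $\limsup_n \chi(G_n)/(\sigma nr^d\,\fcol(t(n))) \leq (1+\eps)^{d+1}$. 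Replacing $\eps$ by a smaller quantity at the outset (so that $(1+\eps)^{d+1}<1+\eps_0$ for the target tolerance $\eps_0$) closes the gap, and letting the tolerance tend to $0$ along a sequence completes the proof.

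\textbf{Main obstacle.} The deterministic inputs---Lemma~\ref{lem.chifg}, Lemma~\ref{lem.detchi2}, Lemma~\ref{lem.psi-lb}---are already established, and Theorem~\ref{thm.Mphi} does the probabilistic heavy lifting, so no single step is hard; the one point requiring care is the bookkeeping of the various $(1+O(\eps))$ and $(1-O(\eps))$ factors and the fact that the finite families $\{\psi_i\}$ and $\{\varphi_i\}$ depend on $\eps$ but not on $n$, so that the a.s.\ statements may be intersected over a countable set of $\eps$'s shrinking to $0$; and one should note that $\fcol(t(n))$ stays bounded away from $0$ and $\infty$ on the relevant range (by Lemmas~\ref{lem:fcolnonincr} and~\ref{lem:fcolliminf} together with $\liminf t(n)>0$), so that dividing by it is harmless.
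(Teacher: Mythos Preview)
Your proposal is correct and follows essentially the same approach as the paper: lower bound $\chi_f(G_n)$ via Lemma~\ref{lem.chifg} using the finite family from Lemma~\ref{lem.psi-lb}, upper bound $\chi(G_n)$ via Lemma~\ref{lem.detchi2}, and invoke Theorem~\ref{thm.Mphi} on each of the finitely many tidy functions involved. Your treatment of the upper bound is in fact slightly more explicit than the paper's: you correctly note that the functions $\varphi_i$ from Lemma~\ref{lem.detchi2} are only $(1+\eps)$-feasible, and use part~\ref{itm:philambda} of Lemma~\ref{lem:xibasic} to obtain $\xi(\varphi_i,t)\le (1+\eps)^d\fcol(t)$, giving the bound $(1+\eps)^{d+1}$ rather than $(1+\eps)$; the paper glosses over this rescaling step.
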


\begin{proof}
It suffices to prove the following.
Assume that $\liminf_n t(n) > 0$ where $t(n) := \frac{\sigma n r^d}{\ln n}$, and let $\eps>0$.  
We shall show that a.s.

\begin{equation} \label{target.chi-bdd}
  1-\eps < \liminf_n \frac{\chi_f(G_n)}{\sigma n r^d \fcol(t(n))}
  \leq \limsup_n \frac{\chi(G_n)}{\sigma n r^d \fcol(t(n))} < 1+\eps . %\;\; \mbox{ a.s.}
\end{equation}

  For the lower bound in~(\ref{target.chi-bdd}), let $\psi_1,\ldots,\psi_m$ be as in Lemma~\ref{lem.psi-lb}.  
Then, using Lemma~\ref{lem.chifg}, Theorem~\ref{thm.Mphi} and Lemma~\ref{lem.psi-lb},
\begin{eqnarray*}
  \liminf_n \frac{\chi_f(G_n)}{\sigma n r^d \fcol(t(n))}
  & \geq & 
  \liminf_n \max_i \frac{M_{\psi_i}}{\sigma n r^d \xi(\psi_i,t(n))} \frac{\xi(\psi_i,t(n))}{\fcol(t(n))}\\
  & \geq &
  \liminf_n \max_i \frac{\xi(\psi_i,t(n))}{\fcol(t(n))} \; \geq \; 1-\eps \; \mbox{ a.s.}
\end{eqnarray*}
  For the upper bound, observe that for $\varphi_1,\ldots,\varphi_m$ and $c$ as in Lemma~\ref{lem.detchi2}, by rescaling we obtain
\[
  \chi(G_n) = \chi(G( r^{-1}X_1,\ldots,r^{-1}X_n ;1)) \leq (1+\eps) \max_i M_{\varphi_i} +c.
\]
  Hence
\[
  \limsup_n \frac{\chi(G_n)}{\sigma n r^d \fcol(t(n))}
  \leq (1+\eps)  \limsup_n \max_i \frac{M_{\varphi_i}}{\sigma n r^d \xi(\varphi_i,t(n))} %\frac{\xi(\varphi_i,t(n))}{\chi(t(n))}
  \leq 1+\eps \; \mbox{ a.s.}
\]
  This completes the proof of~(\ref{target.chi-bdd}), and hence of the Theorem.
 \end{proof}

%%%%%%%%%%%%%%%%%%%%%%%%%%%%%%%%%%%%%%%%%%%%%%%%%%%%%%%%%%%%%%%%%%%%%%%%%%%%%%%%%%%%%%%%%%%%%%%%%%%%
%%%%%%%%%%%%%%%%%%%%%%%%%%%%%%%%%%%%%%%%%%%%%%%%%%%%%%%%%%%%%%%%%%%%%%%%%%%%%%%%%%%%%%%%%%%%%%%%%%%%

\section{Proof of Theorem~\ref{thm.frat}}
%\subsection{Concerning $x(t)$ and $t_0$}
\label{sec.x(t)}

\noindent
Recall that, since $\varphi_0 = 1_{B(0;\frac12)} \in \Fcal$ is feasible, and using~\eqref{eqn.omega-def} we have 

\[ \fcol(t) = \sup_{\varphi\in\Fcal}\xi(\varphi,t) \geq \xi(\varphi_0,t) = c(w, t) = \fcli(t), \] 

\noindent
where $w = \volhalfB$. Thus Lemma~\ref{cor:chilb} implies part~\ref{itm:thm.frat.i} of Theorem~\ref{thm.frat}.

In the remainder of the Section~\ref{sec.x(t)} we shall thus assume that $\delta < 1$.
Let us set 

\begin{equation}\label{eq:t0def}
  t_0 := \inf\{ t>0 : \frat(t)\neq 1 \}.
\end{equation}
  
To achieve our target for this section, %finish the proof of the case $t>0$ of Theorem~\ref{thm.colinx},
we still need to show that $0<t_0<\infty$ and that $\frat(t)$ is strictly increasing for $t\geq t_0$.
We do this in the next subsections.
  
  %we still need to do two things.
  %We need to handle the case when $t=0$, which we shall do in the next subsection as an integral part
  %of the proof of part~\ref{itm:lemsharpsparse} of Theorem~\ref{prop.sharp};

%%%%%%%%%%%%%%%%%%%%%%%%%%%%%%%%%%%%%%%%%%%%%%%%%%%%%%%%%%%%%%%%%%%%%%%%%%%%%%%%%%%%%%%%%%%%%%%%%

\subsection{Proof that $0 < t_0 < \infty$}  \label{sec:t0}

Note first that $\xi(\varphi_0,t) >0$ for all $t>0$, and by part~\ref{itm:xitplush}
  in Lemma~\ref{lem:xibasic} $\xi(\varphi_0,t)$ is continuous in~$t$.
  Together with Lemma~\ref{lem:fcolcont} this shows that $\frat$ is continuous too.
  
Since we have already established that $\lim_{t\to\infty} \fcol(t) = \volhalfBdelta$ 
and $\lim_{t\to\infty}\fcli(t) = \volhalfB$, we have $\lim_{t\to\infty}\frat(t) = \frac{1}{\delta}$.
This implies that $t_0 < \infty$ and it therefore only remains to show that $t_0 >0$.
  
  Let us first give a quick overview of the proof of this section.
  We first show that $\xi( 1_{B(0;3)}, t ) < 2\xi(\varphi_0,t)$
  holds for all sufficiently small $t>0$.  We then prove that if $t$ satisfies 
$\xi( 1_{B(0;3)}, t ) < 2\xi(\varphi_0,t)$
then $\frat(t)=1$.
  To do this we assume that $\frat(t) >1$, so that there is a feasible function $\psi$ with
  $\xi(\psi,t) > \xi(\varphi_0,t)$. 
  We show that there must then be such a function which is simple and has support contained in $B(0,2)$;
  then we use a convexity argument to replace $\psi$ by a function which takes values only in
  $\{0, \frac12, 1\}$; and then that there must be a function $\varphi_{\beta}$ satisfying
  $\xi(\varphi_{\beta},t) > \xi(\varphi_0,t)$ where $\varphi_{\beta}$ takes a particularly easy form,
  so that finally we can prove analytically that this cannot happen.
  
  Now we proceed to fill in the details.
  \smallskip

\begin{lemma}\label{lem:WW}
There is a $T>0$ such that 

\begin{equation}\label{eq:xi100xi0}
\xi( 1_{B(0;3)}, t ) < 2\xi(\varphi_0,t),
\end{equation}

\noindent
for all $0 < t \leq T$.
\end{lemma}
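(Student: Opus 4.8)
The plan is to prove the (stronger) statement that
\[
  \frac{\xi(1_{B(0;3)},t)}{\xi(\varphi_0,t)} \longrightarrow 1 \quad\text{ as } t\downto 0 ;
\]
the lemma then follows immediately, since one may take $T>0$ small enough that this ratio stays below $2$ on $(0,T]$.

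The heart of the matter is the elementary fact that, for \emph{any} bounded measurable $W\subseteq\eR^d$ with $0<\vol(W)<\infty$, one has $\xi(1_W,t)\sim\frac{1}{t\ln(1/t)}$ as $t\downto 0$, so that the leading-order behaviour does not see $W$ at all. To establish this I would argue as follows. Write $w=\vol(W)$. By~\eqref{eq:xiindicatorrewrite} and the definition of $c(w,t)$ we have $\xi(1_W,t)=wg$, where $g=g(t)\geq 1$ is the unique solution of $wH(g)=1/t$. As $t\downto 0$ the right-hand side tends to $\infty$, so $H(g)\to\infty$ and hence $g\to\infty$; since $H(g)=g\ln g-g+1$ this gives $H(g)\sim g\ln g$, whence $g\ln g\sim 1/(wt)$. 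Taking logarithms, $\ln g+\ln\ln g=\ln(1/t)+O(1)$, and because $\ln\ln g=o(\ln g)$ this forces $\ln g\sim\ln(1/t)$. Combining the two relations,
\[
  \xi(1_W,t)=wg=w\cdot\frac{g\ln g}{\ln g}\sim w\cdot\frac{1/(wt)}{\ln(1/t)}=\frac{1}{t\ln(1/t)},
\]
as claimed.

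Finally I would apply this with $W=B(0;3)$ and with $W=B(0;\frac12)$ — note that $1_{B(0;1/2)}=\varphi_0$, and both balls have positive finite volume because $B$ is bounded with $\vol(B)>0$ — obtaining $\xi(1_{B(0;3)},t)\sim\frac{1}{t\ln(1/t)}\sim\xi(\varphi_0,t)$, hence the claimed limit, and then the lemma. I do not expect any serious obstacle here: the only point requiring a little care is the passage from $g\ln g\sim 1/(wt)$ to $\ln g\sim\ln(1/t)$, i.e.\ the (standard) inversion of the map $g\mapsto g\ln g$, for which one just has to observe that the iterated-logarithm term $\ln\ln g$ is asymptotically negligible against $\ln g$.
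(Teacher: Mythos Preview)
Your proposal is correct and follows essentially the same approach as the paper: both prove the stronger statement that $\xi(1_{B(0;3)},t)/\xi(\varphi_0,t)\to 1$ as $t\downto 0$ by showing that for any $W$ with $0<\vol(W)<\infty$ one has $\xi(1_W,t)\sim (1/t)/\ln(1/t)$, via the relation $c(w,t)\ln c(w,t)\sim 1/t$ obtained from $wH(c/w)=1/t$. Your treatment of the inversion step is in fact a bit more explicit than the paper's.
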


\begin{proof}
We shall prove the stronger statement that $\xi( 1_{B(0;3)}, t ) / \xi( \varphi_0, t ) \to 1$ as
$t \downto 0$. (Observe that this implies the lemma.)

Pick $W \in \{ B(0;3), B(0;1/2) \}$, and set 
$w = \vol(W)$.
Recall from section~\ref{subsec.explicit} that 
$\xi(1_W,t) = c(w,t)$ where $c(w,t) \geq w$
solves $w H( c/w ) = 1/t$.
Clearly

\begin{equation}\label{eq:WW1eq} 
c(w,t) \to \infty \;\; \mbox{ as } t\downto 0. 
\end{equation}

\noindent
Writing out the expression for $w H(c/w)$ we find

\begin{equation}\label{eq:WW2eq}
w H(c/w) = c(w,t)\ln c(w,t) - c(w,t)\ln w - c(w,t) + w. 
\end{equation}

\noindent
Thus, combining~\eqref{eq:WW1eq} and~\eqref{eq:WW2eq} we see that 

\[
\frac{1}{t} = w H( c(w,t) / w ) 
= (1+o(1)) c(w,t) \ln c(w,t) \;\;\;\; \mbox{ as } t \downto 0.
\]

\noindent
But this gives

\begin{equation}\label{eq:WWeq}
\xi( 1_W, t ) = (1+o(1)) \left(\frac{1}{t}\right) / \ln\left(\frac{1}{t}\right) \;\;\;\; \mbox{ as } t \downto 0.
\end{equation}

\noindent
Since~\eqref{eq:WWeq} is both true for $W = B(0;3)$ and $W=B(0;1/2)$, 
we see that 
$\xi( 1_{B(0;3)}, t ) / \xi( \varphi_0, t ) \to 1$ as
$t \downto 0$, which concludes the proof.
\end{proof}

\begin{lemma}\label{lem:island}
  If $t > 0$ satisfies~(\ref{eq:xi100xi0}) and $\frac{\sigma nr^d}{\ln n} \to t$ as $n \to \infty$,
  then a.a.a.s.~there exists a subgraph $H_n$ of $G_n$ induced by the points
  in some ball of radius $2r$ such that $\chi(G_n)=\chi(H_n)$. %for all but finitely many $n$.
\end{lemma}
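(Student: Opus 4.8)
The plan is to exhibit a colour‑critical subgraph of $G_n$ and to show that, under hypothesis~(\ref{eq:xi100xi0}), it is forced to occupy only a tiny region. Write $k:=\chi(G_n)$. Every graph of chromatic number $k$ has a subgraph $G'$ that is minimal (under taking subgraphs) with chromatic number $k$; such a $G'$ is connected, and every vertex of $G'$ has degree at least $k-1$ in $G'$ (if a vertex $v$ had degree $\le k-2$, a $(k-1)$-colouring of the proper subgraph $G'-v$, which exists by minimality, would extend to $v$, contradicting $\chi(G')=k$). Fix such a $G'=G'_n$. Then for every point $X_i\in V(G')$ we have $\deg_{G_n}(X_i)\ge\deg_{G'}(X_i)\ge k-1$, so (almost surely, since no two $X_i$ are at distance exactly $r$) the closed neighbourhood $N[X_i]$ of $X_i$ in $G_n$ satisfies $N[X_i]\subseteq B(X_i;r)$ and $|N[X_i]|\ge k$, i.e.\ ${\cal N}(B(X_i;r))\ge k$. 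It therefore suffices to prove that a.a.a.s.\ $G'$ lies inside some ball $B(x_0;2r)$: taking $H_n$ to be the subgraph of $G_n$ induced by the points of $G_n$ in that ball, we then have $G'\subseteq H_n\subseteq G_n$, whence $k=\chi(G')\le\chi(H_n)\le\chi(G_n)=k$.

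The probabilistic ingredient is a single application each of Theorem~\ref{thm.Mphi} and Theorem~\ref{thm.tobiasomegareal}. Since $t(n):=\sigma nr^d/\ln n\to t>0$ we have $\liminf_n t(n)>0$, and $1_{B(0;3)}$ is tidy (its superlevel sets are $\emptyset$ or the bounded convex set $B(0;3)$, which has a small neighbourhood). Hence $M_{B(0;3)}/(\sigma nr^d)\to\xi(1_{B(0;3)},t)$ a.s.\ and $\omega(G_n)/(\sigma nr^d)\to\xi(\varphi_0,t)$ a.s.; as $\xi(\varphi_0,t)>0$, dividing and using~(\ref{eq:xi100xi0}) gives $M_{B(0;3)}/\omega(G_n)\to\xi(1_{B(0;3)},t)/\xi(\varphi_0,t)<2$ a.s., so $M_{B(0;3)}<2\,\omega(G_n)$ a.a.a.s. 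Everything that follows is carried out on the probability‑one event on which $M_{B(0;3)}<2\,\omega(G_n)$ (and the $X_i$ are distinct with no two at distance exactly $r$) for all but finitely many $n$.

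Fix such an $n$ and suppose $G'$ is \emph{not} contained in any ball of radius $2r$. Then for any $v\in V(G')$ there is some $w\in V(G')$ with $\norm{X_w-X_v}>2r$; choosing a path $v=u_0,u_1,\dots,u_\ell=w$ in $G'$ and letting $u:=u_j$ be its first vertex with $\norm{X_{u_j}-X_v}>2r$, the inequalities $\norm{X_{u_{j-1}}-X_v}\le 2r$ and $\norm{X_{u_j}-X_{u_{j-1}}}\le r$ give $2r<\norm{X_u-X_v}\le 3r$. Now $N[v]\subseteq B(X_v;r)$ and $N[u]\subseteq B(X_u;r)$, and these two radius‑$r$ balls are disjoint because their centres are more than $2r$ apart, so $|N[v]\cup N[u]|=|N[v]|+|N[u]|\ge 2k$. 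Moreover, with $x_0:=\tfrac12(X_v+X_u)$, for every $y\in B(X_v;r)$ the triangle inequality gives $\norm{y-x_0}\le\tfrac12\norm{y-X_v}+\tfrac12\norm{y-X_u}\le\tfrac12 r+\tfrac12(r+3r)=\tfrac52 r<3r$ (using $\norm{y-X_v}\le r$ and $\norm{y-X_u}\le\norm{y-X_v}+\norm{X_v-X_u}\le 4r$), and symmetrically for $y\in B(X_u;r)$, so $N[v]\cup N[u]\subseteq B(x_0;3r)$. Hence $M_{B(0;3)}\ge{\cal N}(B(x_0;3r))\ge 2k\ge 2\,\omega(G_n)$, contradicting the choice of event. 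So $G'$ does lie in a ball of radius $2r$, and by the first paragraph we are done.

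The argument is short; the one place needing care is the geometry of the third paragraph. The ``escape vertex'' $u$ must be chosen at distance strictly greater than $2r$ from $X_v$ so that $N[v]$ and $N[u]$ are genuinely disjoint, yet at distance at most $3r$ so that, via the midpoint $x_0$, both full neighbourhoods still fit into a single translate of $3rB$; one also has to check that the triangle‑inequality estimates hold for an arbitrary norm, which they do. Hypothesis~(\ref{eq:xi100xi0}) enters exactly here: it forbids a translate of $3rB$ from containing twice a clique's worth of points, which is precisely the configuration produced by a colour‑critical subgraph that escapes every ball of radius $2r$.
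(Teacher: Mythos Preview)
Your proof is correct and follows essentially the same route as the paper: both derive $M_{B(0;3)}<2\,\omega(G_n)$ a.a.a.s.\ from Theorem~\ref{thm.Mphi} and Theorem~\ref{thm.tobiasomegareal}, and then argue that any connected subgraph of minimum degree $\geq \chi(G_n)-1$ (your $\chi$-critical $G'$; the paper's graph obtained by deleting low-degree vertices) must fit in a ball of radius $2r$, since otherwise two of its vertices at distance in $(2r,3r]$ would force $2\omega(G_n)$ points into a single translate of $3rB$. Your explicit use of a $\chi$-critical subgraph and the path ``escape vertex'' argument is in fact a slightly cleaner realisation of the same idea.
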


\begin{proof}
  Suppose that $t$ satisfies~(\ref{eq:xi100xi0}) and $\frac{\sigma nr^d}{\ln n} \to t$
  as $n \to \infty$.   Let $W = B(0;3)$. 
  By Theorem~\ref{thm.Mphi} and Theorem~\ref{thm.tobiasomegareal} we have
\[ 
  \frac{M_W}{\sigma nr^d} \to \xi(1_W,t) \;\; \mbox{ and } \;\; \frac{\omega(G_n)}{\sigma n r^d} \to \xi(\varphi_0,t) \;\; \textrm{ a.s.}
 \]
  and so $M_W < 2 \omega(G_n)$ a.a.a.s.
  It is convenient to identify vertex $i$ of $G_n$ with $X_i$.
  Note that if two vertices with $2r \leq \norm{X_i-X_j} \leq 3r$ have degrees $\geq \omega(G_n)$ then
  there must exist a translate of $rW$ (centred at $\frac12 (X_i +X_j)$)
  containing at least $2\omega(G_n)+2$ points, and so
  the condition $M_W < 2 \omega(G_n)$ fails.  
  Hence a.a.a.s. any two vertices $X_i$ and $X_j$ of $G_n$ with degrees $\geq \omega(G_n)$ satisfy
  either $\norm{X_i-X_j} < 2r$ or $\norm{X_i-X_j} > 3r$.
  But if we remove from $G_n$ all vertices which have degree 
  at most $\omega(G_n)-1$ then the chromatic number does not change, and by the above we will
  be left a.a.a.s. with a graph in which each component is contained in some ball
  of radius $2r$.  This completes the proof.
\end{proof}

  We will show that $t_0 \geq T$ (with $T$ as in Lemma~\ref{lem:WW}) by proving that
  $\sup_{\varphi \in {\cal F}} \xi(\varphi,t) =  \xi(\varphi_0,t)$  for all $t$ that
  satisfy~\eqref{eq:xi100xi0}.
  The following purely deterministic lemma
  perhaps surprisingly has a convenient probabilistic proof.

\begin{lemma}\label{lem:supsmallsupportiftsmall}
  Let $t>0$ satisfy~\eqref{eq:xi100xi0}. Then
\[
  \sup_{\varphi \in {\cal F}} \xi(\varphi,t) =
  \sup_{\varphi \in {\cal F}, \atop \supp(\varphi)\subseteq B(0;2)}\xi(\varphi,t).
\]
\end{lemma}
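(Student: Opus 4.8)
The inclusion $\sup_{\varphi\in\Fcal,\ \supp(\varphi)\subseteq B(0;2)}\xi(\varphi,t)\leq\sup_{\varphi\in\Fcal}\xi(\varphi,t)=\fcol(t)$ is immediate, so the content is the reverse inequality; write $g(t)$ for the left-hand side. The plan is to prove $\fcol(t)\leq g(t)$ by comparing two things we already know about a suitable random geometric graph. Fix a sequence $r=r(n)$ with $\frac{\sigma n r^d}{\ln n}\to t$ (for instance $r(n)=(t\ln n/(\sigma n))^{1/d}$). By Theorem~\ref{thm.tobiaschireal}, $\chi(G_n)\sim\fcol(t)\,\sigma n r^d$ a.s. On the other hand, since $t$ satisfies~\eqref{eq:xi100xi0}, Lemma~\ref{lem:island} gives that a.a.a.s.\ there is an induced subgraph $H_n$ of $G_n$, on the vertices lying in some ball of radius $2r$, with $\chi(H_n)=\chi(G_n)$. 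So it will suffice to show that, for each fixed small $\eps>0$, a.a.a.s.\ $\chi(H_n)\leq(1+\eps)(1+o(1))(1+2\eps)^d\,g(t)\,\sigma n r^d+c_\eps$ for a constant $c_\eps$ not depending on $n$: combined with the two displayed asymptotics this forces $\fcol(t)\leq(1+\eps)(1+2\eps)^d g(t)$, and letting $\eps\to0$ gives the result.

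To bound $\chi(H_n)$, first translate the ball to the origin so that $V':=r^{-1}V(H_n)\subseteq B(0;2)$; this changes neither $\chi$ nor the quantities $M(\cdot,\cdot)$ that follow. Fix $\eps>0$ and apply Lemma~\ref{lem.detchi2} to get $(1+\eps)$-feasible tidy functions $\varphi_1,\dots,\varphi_m$ and a constant $c$ with $\chi(H_n)=\chi(G(V',1))\leq(1+\eps)\max_i M(V',\varphi_i)+c$. The key point is that, since $V'\subseteq B(0;2)$, we have $\varphi_i(v-x)=\varphi_i(v-x)\mathbf 1_{B(0;2)}(v)$ for every $x$ and every $v\in V'$; hence $M(V',\varphi_i)\leq\sup_u M(V',\psi_{i,u})$, where $\psi_{i,u}(z):=\varphi_i(z+u)\mathbf 1_{B(0;2)}(z)$ is again $(1+\eps)$-feasible (it is pointwise at most the translate $\varphi_i(\cdot+u)$, and translation preserves $(1+\eps)$-feasibility), is supported in $B(0;2)$, and is nonzero only for $u$ in a fixed bounded set.

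The crux is now handling the continuum of translates $u$. Pick a small \emph{generic} $\delta>0$ — generic so that, for each of the finitely many level sets $W$ of the (simple) functions $\varphi_i$, the dilate $W_\delta=W+B(0;\delta)$ still has a small neighbourhood, which holds for all but countably many $\delta$ — and a finite $\delta$-net $\{u_0\}$ of the relevant bounded range of $u$. For each $i$ and net point $u_0$ set $\psi_{i,u_0}^+(z):=\bigl(\sup_{\norm{w}\leq\delta}\varphi_i(z+u_0+w)\bigr)\mathbf 1_{B(0;2)}(z)$. Then $\psi_{i,u}\leq\psi_{i,u_0}^+$ whenever $\norm{u-u_0}\leq\delta$, and a short check shows $\psi_{i,u_0}^+$ is tidy, supported in $B(0;2)$, and $(1+\eps+2\delta)$-feasible (choosing near-optimal shifts $w_p$ for a well-separated set of points $p$ turns a pairwise separation of $1+\eps+2\delta$ into one of $1+\eps$). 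Consequently $\max_i M(V',\varphi_i)\leq\max_{i,u_0}M(V',\psi_{i,u_0}^+)\leq\max_{i,u_0}M_{\psi_{i,u_0}^+}$, the last step enlarging the point set from $V'$ to $r^{-1}\{X_1,\dots,X_n\}$. There are now only finitely many functions $\psi_{i,u_0}^+$ (chosen independently of the random outcome once $\eps,\delta$ are fixed), so Theorem~\ref{thm.Mphi} gives $M_{\psi_{i,u_0}^+}\sim\xi(\psi_{i,u_0}^+,t)\,\sigma n r^d$ a.s.\ for each of them. Finally, $(1+\eps+2\delta)$-feasibility of $\psi_{i,u_0}^+$ means $z\mapsto\psi_{i,u_0}^+\bigl((1+\eps+2\delta)z\bigr)$ is feasible with support inside $B(0;2)$, so by part~\ref{itm:philambda} of Lemma~\ref{lem:xibasic} we get $\xi(\psi_{i,u_0}^+,t)\leq(1+\eps+2\delta)^d\,g(t)$.

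Assembling: a.a.a.s.\ $\chi(G_n)=\chi(H_n)\leq(1+\eps)(1+o(1))(1+\eps+2\delta)^d\,g(t)\,\sigma n r^d+c$, while $\chi(G_n)\sim\fcol(t)\,\sigma n r^d$ a.s.; since the additive constant $c$ is $o(\sigma n r^d)$ this gives $\fcol(t)\leq(1+\eps)(1+\eps+2\delta)^d g(t)$, and letting $\eps\to0$ (with $\delta\to0$ too) yields $\fcol(t)\leq g(t)$, completing the proof. I expect the only real work to be in the third paragraph: verifying that the ``fattened'' functions $\psi_{i,u_0}^+$ stay tidy (intersection with $B(0;2)$, and genericity of $\delta$) and that fattening by $\delta$ only degrades feasibility from $1+\eps$ to $1+\eps+2\delta$; the rest is routine bookkeeping with the scaling identities of Lemma~\ref{lem:xibasic} and the already-established limits for $\chi(G_n)$ and for $M_\varphi$.
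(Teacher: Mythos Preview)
Your argument is correct, and the overall probabilistic strategy---fix $r$ with $\sigma n r^d/\ln n\to t$, compare the a.s.\ limit of $\chi(G_n)/(\sigma n r^d)$ from Theorem~\ref{thm.tobiaschireal} against an upper bound coming from Lemma~\ref{lem:island} and scan statistics---matches the paper exactly. Where you diverge is in how you localise the functions from Lemma~\ref{lem.detchi2}.

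The paper does not use Lemma~\ref{lem.detchi2} as a black box. Instead it reopens the construction from that proof and runs it on the smaller graph $\Lambda^p$, namely the subgraph of the auxiliary lattice graph $\Gamma$ induced by the points of $\eps\Zed^d$ inside a ball $B(p,2+\eps\rho)$. Because the vertex set of the auxiliary LP is now confined to that ball, the resulting $(1+2\eps\rho)$-feasible simple functions $\varphi_i$ automatically have $\supp(\varphi_i)\subseteq B(0,2+2\eps\rho)$, and a single rescale $\varphi_i'(x)=\varphi_i((1+2\eps\rho)x)$ already lands in $\Fcal$ with support in $B(0;2)$. The bound $\chi(G_n)\leq\max_i M_{\varphi_i}+c$ (the analogue of the step~\eqref{eqn.bounds} inside the proof of Lemma~\ref{lem.detchi2}) then feeds directly into Theorem~\ref{thm.Mphi} and part~\ref{itm:philambda} of Lemma~\ref{lem:xibasic}. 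No translates, no $\delta$-net, no fattening, no tidiness verification beyond what was already done in Lemma~\ref{lem.detchi2}.

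Your route treats Lemma~\ref{lem.detchi2} as opaque and compensates by introducing the family $\psi_{i,u}$, a $\delta$-net over the bounded range of $u$, the fattened functions $\psi_{i,u_0}^+$, and the checks that fattening only degrades feasibility by $2\delta$ and that genericity of $\delta$ preserves small neighbourhoods under intersection with $B(0;2)$. All of these checks go through (your reasoning for each is sound, including the translation-invariance of $M(\cdot,\cdot)$ in its first argument that lets you pass from $V'$ back to $r^{-1}\{X_1,\dots,X_n\}$). The trade-off: your argument is more modular and would survive if Lemma~\ref{lem.detchi2} were replaced by some other black-box colouring bound of the same shape, whereas the paper's argument is shorter but depends on the specific LP construction inside that lemma.
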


\begin{proof}
Let $r$ satisfy $\sigma nr^d\sim t\ln n$ and consider $\chi(G_n)$.
By Lemma~\ref{lem:island} a.a.a.s.~$\chi(G_n)$ equals the maximum over all
$x\in\eR^d$ of the chromatic number of the graph induced by the
vertices in $B(x;2r)$. Let us fix an $\eps>0$. Let us denote $V :=
r^{-1}\{X_1,\dots,X_n\}$ and let $\Gamma,\Gamma^p, \Gamma_V,
\Gamma_V^p$ be as in the proof of Lemma~\ref{lem.detchi2}.
For $p\in\eps\Zed^d$ let $\Lambda^p$
denote the subgraph of $\Gamma$ induced by the points of
$\eps\Zed^d$ inside $B(p,(2+\eps\rho))$, where again
$\rho:=\diam([0,1]^d)$, and let $\Lambda^p_V$ be the corresponding
subgraph of $\Gamma^p_V$.
  Since for every $x\in\eR^d$ the subgraph of $G_n$ %corresponding to the points %
  induced by the vertices inside $B(x,2r)$ is a subgraph of some $\Lambda^p_V$, we have:
\begin{equation}\label{eq:chismsuppeq}
  \chi(G_n) \leq \max_p \chi(\Lambda_V^p) \leq \max_{i=1,\dots,m}
  M_{\varphi_i} + c \text{ a.a.a.s., }
\end{equation}
  where $\varphi_1,\dots,\varphi_m$ are obtained from the
  ILP formulation of $\chi(\Lambda_V^p)$ via the same procedure we
  used in the proof of %Theorem~\ref{thm.colinchi}
  Lemma~\ref{lem.detchi2} (that is, the upper
  bound in~\eqref{eq:chismsuppeq} is the analogue of the upper bound
in~\eqref{eqn.bounds}) and $c$ is a constant that depends
only on $\eps, d$ and $\norm{.}$. 
By construction we have that $\supp(\varphi_i)
\subseteq B(0, 2+2\eps\rho)$ and that $\varphi_i'$ given by
$\varphi_i'(x) = \varphi_i( (1+2\eps\rho)x )$ is feasible. Notice
that $\varphi_i'$ also satisfies $\supp(\varphi_i')\subseteq
B(0;2)$. Thus,~\eqref{eq:chismsuppeq} together with
Theorem~\ref{thm.tobiaschireal}, Theorem~\ref{thm.Mphi} and
part~\ref{itm:philambda} of Lemma~\ref{lem:xibasic}
shows that
\[
  \sup_{\varphi\in{\cal F}} \xi(\varphi,t)
  \leq \max_{i=1,\dots,m} \xi(\varphi_i,t)\leq
  \max_{i=1,\dots,m} (1+2\eps\rho)^d\xi(\varphi_i',t)
  \leq  (1+2\eps\rho)^d\sup_{\varphi\in{\cal F},\atop \supp(\varphi)\subseteq B(0;2)} \xi(\varphi,t).
\]
  The statement now follows by
letting $\eps\to 0$.%
\end{proof}

\noindent
Let us now fix a $t>0$ that satisfies~\eqref{eq:xi100xi0}.
If $\sup_{\varphi\in{\cal F}} \xi(\varphi, t) > \xi(\varphi_0,t)$ then there must
also exist a feasible simple function $\psi := \sum_{k=1}^m \frac{k}{m} 1_{A_k}$
with $\supp(\psi) \subseteq B(0;2)$ such that
$\xi(\psi,t) > \xi(\varphi_0,t)$, because (by Lemma~\ref{lem:xibasic},
item~\ref{itm:phiseq}) for any $\varphi$ the increasing sequence of functions $(\varphi_n)_n$
given by
$\varphi_n = \sum_{k=1}^{2^n} \frac{k}{2^n} 1_{\{\frac{k}{2^n}\leq \varphi < \frac{k+1}{2^n}\}}$
satisfies $\lim_{n\to\infty} \xi(\varphi_n,t) = \xi(\varphi,t)$.

So let $\psi = \sum_{i=1}^m \frac{i}{m} 1_{A_i}$ be a feasible simple function with $\xi(\psi,t) > \xi(\varphi_0,t)$ and $\supp(\psi)\subseteq B(0;2)$.
We may suppose wlog that the $A_k$ are disjoint and $m$ is even.
For $1\leq k\leq \frac{m}{2}$ let
$\psi_k$ be the function which is $\frac{1}{2}$ on
$\bigcup_{i=k}^{m-k} A_i$ and 1 on $\bigcup_{i>m-k} A_i$.
  We can write
\[
  \psi = \frac{2}{m}\sum_{k=1}^{m/2} \psi_k,
\]
  because for $x \in A_i$ with $i \leq m/2$ we have
  $\frac{2}{m} \sum_{k=1}^{m/2} \psi_k(x) =
  i\frac{2}{m} \frac{1}{2} = \frac{i}{m}$,
  and if $x \in A_{m-i}$ with $i \leq m/2$ then
  $\frac{2}{m} \sum_{k=1}^{m/2} \varphi_k(x) = 1 - i \frac{2}{m} \frac{1}{2} = \frac{m-i}{m}$.

  Let us now observe that $\xi$ is convex in its first argument; that is, for any two
  nonnegative, bounded, measurable functions $\sigma, \tau$ and any $t>0$ and $\lambda \in [0,1]$ we have
\[
\xi( \lambda\sigma+(1-\lambda)\tau, t ) \leq
\lambda \xi(\sigma, t) + (1-\lambda)\xi(\tau, t).
\]
  This follows from parts~\ref{itm:xiscalar} and \ref{itm:xisum} of Lemma~\ref{lem:xibasic}.
  Because we have written $\psi$ as a convex combination of the
  $\psi_k$, we must therefore have
  $\xi(\psi, t) \leq \xi(\psi_k, t)$ for some $k$.

Let us first assume that $\{\psi_k=1\} = \bigcup_{l>m-k} A_k = \emptyset$.
Since $\supp(\psi) \subseteq B(0;2)$ we must have
that $\psi_k \leq \varphi'$, where $\varphi'$ is the function which is $\frac{1}{2}$ on $B(0;3)$ and 0 elsewhere, and thus also
$\xi(\psi,t) \leq \xi(\psi_k, t) \leq \xi(\varphi',t) = \frac{1}{2} \xi( 1_{B(0;3)}, t )$
(by choice of $k$ and Lemma~\ref{lem:xibasic}, items~\ref{itm:phipsi} and~\ref{itm:xiscalar}).
But then~\eqref{eq:xi100xi0} gives:
\[
\xi( \psi,t) %\leq \frac{1}{2} \xi( 1_{B(0;2)}, t )
\leq \frac12 \xi( 1_{B(0;3)}, t) < \xi(\varphi_0,t),
\]
  a contradiction.

\noindent
  So we must have $\{\psi_k=1\}\neq \emptyset$.
  Let us denote by $C := \clo(B)$ the closed unit ball.
  Notice that %in this case
\[ \begin{array}{c}
\displaystyle
\diam(\{\psi_k=1\}) \leq 1, \\
 \\
\displaystyle
\supp(\psi_k) \subseteq \bigcap_{x : \psi_k(x)=1 } (x+C), %B(x; 1),
\end{array}
\]
  by feasibility of $\psi$
  (if $x\in \supp(\psi_k)$ and $\psi_k(y)=1$ then
$\psi(x)+\psi(y) > \frac{k}{m} + \frac{m-k}{m} = 1$).
  Bieberbach's inequality (Lemma~\ref{thm:bieberbach})
  tells us that $\vol(\{\psi_k=1\})$ cannot exceed $\volhalfB$, % $\frac{\theta}{2^d}$,
  the volume of a ball of diameter 1.
  Hence there is a $0\leq\beta\leq 1$ with
  $\vol(\{\psi_k=1\}) = \vol(B(0;\frac{1-\beta}{2})) = (1-\beta)^d \volhalfB$.
  We will need another inequality, given by the following lemma.

\begin{lemma}[K.~B\"or\"oczky~Jr., 2005] \label{prop:karcsi}
  Let $C\subseteq \eR^d$ be a compact, convex set.
  Let $A \subseteq \eR^d$ be measurable and let $A'$ be a homothet (that is, a scaled copy)
  of $-C$ with $\vol(A) = \vol(A')$. Then
\[ 
  \vol\left( \bigcap_{a \in A} (a+C) \right) \leq \vol\left( \bigcap_{a \in A'} (a+C) \right).
\]
%
%\[ \vol( \bigcap_{a \in A} (a+C) )
%\leq \vol( \bigcap_{a \in A'} (a+C) ). \]
\end{lemma}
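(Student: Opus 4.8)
The plan is to recognize this as a consequence of the Brunn--Minkowski inequality, via the elementary remark that
$\bigl(\bigcap_{a\in A}(a+C)\bigr) - A \subseteq C$: if $x$ lies in the intersection and $a\in A$, then $x\in a+C$, i.e.\ $x-a\in C$. Write $E:=\bigcap_{a\in A}(a+C)$ and $E':=\bigcap_{a\in A'}(a+C)$. We may assume $A\neq\emptyset$ and $\vol(C)>0$, since otherwise the statement is trivial (either every relevant intersection sits inside a translate of $C$, or is lower-dimensional). Set $\lambda:=(\vol(A)/\vol(C))^{1/d}$, so that, up to an irrelevant translation, $A'=v-\lambda C$.

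First I would prove the upper bound $\vol(E)^{1/d}\le\vol(C)^{1/d}-\vol(A)^{1/d}$. Since $A$ is only assumed measurable, the Brunn--Minkowski inequality cannot be applied to $A$ directly, so I would pass to compact sets: by inner regularity, for $\eps>0$ pick a compact $K\subseteq A$ with $\vol(K)\ge\vol(A)-\eps$. Then $E\subseteq E_K:=\bigcap_{a\in K}(a+C)$, the set $E_K$ is compact (a bounded intersection of closed sets, as $C$ is compact), and the inclusion above still gives $E_K-K\subseteq C$. Applying the Brunn--Minkowski inequality (see, e.g.,~\cite{gruberwills93}) to the compact sets $E_K$ and $-K$ yields
$\vol(C)^{1/d}\ge\vol(E_K-K)^{1/d}\ge\vol(E_K)^{1/d}+\vol(K)^{1/d}\ge\vol(E)^{1/d}+(\vol(A)-\eps)^{1/d}$,
and letting $\eps\downto0$ gives the claimed bound.

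Next I would establish a matching lower bound for $\vol(E')$. Assuming $\lambda\le 1$ (equivalently $\vol(A)\le\vol(C)$), I claim $v+(1-\lambda)C\subseteq E'$: for any $c_0,c_1\in C$ one has $\bigl(v+(1-\lambda)c_1\bigr)-\bigl(v-\lambda c_0\bigr)=(1-\lambda)c_1+\lambda c_0\in C$ by convexity, so $v+(1-\lambda)c_1$ lies in $(v-\lambda c_0)+C$ for every generator $v-\lambda c_0$ of $A'$, hence in $E'$. Therefore $\vol(E')\ge(1-\lambda)^d\vol(C)=\bigl(\vol(C)^{1/d}-\vol(A)^{1/d}\bigr)^d$. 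Combining this with the upper bound of the previous paragraph gives $\vol(E)\le\vol(E')$ when $\vol(A)\le\vol(C)$; and if instead $\vol(A)>\vol(C)$, the upper bound already forces $\vol(E)=0\le\vol(E')$, so the inequality holds in all cases. (One can add, though it is not needed, that for $\lambda\le1$ in fact $v+(1-\lambda)C=E'$, via support functions, making the lemma sharp.)

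I expect the only genuinely delicate point to be making the Brunn--Minkowski step legitimate for a merely measurable $A$; passing to the compact exhaustion $K\subseteq A$ takes care of this, and everything else is routine. The conceptual core is simply the observation $E-A\subseteq C$, after which the isoperimetric-type statement of the lemma is exactly what Brunn--Minkowski delivers.
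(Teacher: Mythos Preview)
Your proposal is correct and follows essentially the same route as the paper's proof: the key observation $E-A\subseteq C$, Brunn--Minkowski to obtain $\vol(E)^{1/d}\le\vol(C)^{1/d}-\vol(A)^{1/d}$, and then the convexity computation $(1-\lambda)C\subseteq E'$ to show this bound is attained by the homothet. The only cosmetic difference is in handling measurability: the paper passes to $\clo(-A)$ (which is compact once one notes that $\vol(I)>0$ forces $A$ to be bounded), whereas you exhaust $A$ from within by compact $K$; both are fine.
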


\noindent
%%%%%This is a generalisation of a result proved by the second author.
  With the kind permission of K.~B\"or\"oczky Jr.~we present a proof
  in appendix~\ref{sec:karcsi}, because such a proof is not readily available elsewhere.

  It follows from Lemma~\ref{prop:karcsi} that we must have
\[
  \vol(\supp(\psi_k)) \leq \vol\left( \bigcap_{x\in B(0;\frac{1-\beta}{2})}(x+C)\right)
  = \vol\left( B(0;\frac{1+\beta}{2})\right) = (1+\beta)^d \volhalfB.
\]
  For $0 \leq \beta \leq 1$ let $\varphi_\beta$ be the function which is 1 on $B(0;\frac{1-\beta}{2})$ and
  $\frac{1}{2}$ on  $B(0;\frac{1+\beta}{2})\setminus B(0;\frac{1-\beta}{2})$.
  (This agrees with our earlier definition of $\varphi_0$.)
  We see that $\vol(\{\psi_k=1\}) = \vol(\{\varphi_\beta = 1\})$ and
  $\vol( \{\psi_k = \frac{1}{2}\} ) \leq \vol(\{\varphi_\beta=\frac{1}{2}\})$.
  Thus we have
  $\int \psi_k 1_{\{\psi_k\geq a\}} \leq \int \varphi_\beta 1_{\{\varphi_\beta\geq a\}}$
  for all $a$, which gives $\xi(\psi_k, t) \leq \xi(\varphi_\beta, t)$
  by part~\ref{itm:phiindicatorbiggest} of Lemma~\ref{lem:xibasic}.
  We may conclude that if
  $\sup_{\varphi\in{\cal F}} \xi(\varphi, t) > \xi(\varphi_0,t)$
  and~\eqref{eq:xi100xi0} holds then also
  $\xi(\varphi_\beta, t) > \xi(\varphi_0, t)$ for some $0<\beta \leq 1$.
  We will show that this last statement is false.
  Set $\mu(\beta) := \xi(\varphi_\beta, t)$ for $0\leq \beta \leq 1$.

\begin{lemma}  \label{lem.max-mubeta}% For any $0<t<\infty$ it holds that
\[
  \max_{0\leq\beta\leq 1} \mu(\beta) = \max \{ \mu(0), \mu(1) \}.
\]
\end{lemma}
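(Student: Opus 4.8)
The plan is to reduce this to a one‑variable calculus statement: I will show that $\mu$, as a function on $[0,1]$, is continuous and is first non‑increasing and then non‑decreasing, so its maximum is automatically attained at $\beta=0$ or $\beta=1$.

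First I would make the data explicit. Put $w:=\volhalfB$, $\alpha:=1-\beta$, $\gamma:=1+\beta$ (so $\alpha+\gamma=2$). The function $\varphi_\beta$ equals $1$ on a set of volume $\alpha^{d}w$, equals $\tfrac12$ on a set of volume $(\gamma^{d}-\alpha^{d})w$, and vanishes elsewhere; hence, writing $s=s(\beta):=s(\varphi_\beta,t)$ for the weighting value, we have
\[
\alpha^{d}w\,H(e^{s})+(\gamma^{d}-\alpha^{d})w\,H(e^{s/2})=\tfrac1t ,\qquad
\mu(\beta)=\alpha^{d}w\,e^{s}+\tfrac12(\gamma^{d}-\alpha^{d})w\,e^{s/2}.
\]
Since $\int\varphi_\beta>0$ we have $s(\beta)>0$ for all $\beta\in[0,1]$, and since the $s$‑derivative of the left‑hand side of the first relation is strictly positive, the implicit function theorem shows $s(\cdot)$ is continuous on $[0,1]$ and $C^{1}$ on $(0,1)$; in particular $\mu$ is continuous on $[0,1]$ and $C^{1}$ on $(0,1)$.

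Next I would differentiate $\mu$. Denoting the left‑hand side of the first relation minus $1/t$ by $G(\beta,s)$ and the expression for $\mu$ by $\Phi(\beta,s)$, a short computation using $H(e^{y})=(y-1)e^{y}+1$ reveals the convenient identity $\partial_{s}G=s\,\partial_{s}\Phi$, so implicit differentiation collapses to $\mu'(\beta)=\partial_{\beta}\Phi-\tfrac1s\,\partial_{\beta}G$; one more simplification (again using $H(e^{y})=(y-1)e^{y}+1$) gives
\[
\mu'(\beta)=\frac{d\,w\,(e^{s/2}-1)}{s}\Bigl(\gamma^{\,d-1}-\alpha^{\,d-1}e^{s/2}\Bigr).
\]
The prefactor is strictly positive on $(0,1)$, so $\mu'(\beta)$ has the same sign as
\[
\phi(\beta):=(d-1)\ln\frac{1+\beta}{1-\beta}-\frac{s(\beta)}{2}.
\]
If $d=1$ then $\phi\equiv-s/2<0$, so $\mu$ is strictly decreasing and $\max_{[0,1]}\mu=\mu(0)$; so assume $d\ge2$. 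Then $\phi$ is continuous on $(0,1)$, with $\phi(\beta)\to-s(0)/2<0$ as $\beta\to0^{+}$ and $\phi(\beta)\to+\infty$ as $\beta\to1^{-}$ (the logarithm diverges while $s(\beta)$ remains bounded). Consequently, once we know $\phi$ has at most one zero on $(0,1)$, there is a unique $\beta^{\ast}\in(0,1)$ with $\phi<0$ on $(0,\beta^{\ast})$ and $\phi>0$ on $(\beta^{\ast},1)$, so $\mu$ is strictly decreasing on $[0,\beta^{\ast}]$ and strictly increasing on $[\beta^{\ast},1]$, and $\max_{[0,1]}\mu=\max\{\mu(0),\mu(1)\}$.

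It remains to show $\phi$ has at most one zero on $(0,1)$, and for this it suffices to show $\phi'(\beta_{0})>0$ whenever $\phi(\beta_{0})=0$ (a continuous function strictly increasing at each of its zeros has at most one zero). At such a $\beta_{0}$ the condition reads $e^{s(\beta_{0})/2}=(\gamma/\alpha)^{\,d-1}$; feeding this into the implicit‑function‑theorem formula for $s'(\beta_{0})$ and simplifying, and writing $L(\beta):=(d-1)\ln\frac{1+\beta}{1-\beta}$, one obtains
\[
s'(\beta_{0})=\frac{2d\,(\gamma^{\,d-1}-\alpha^{\,d-1})}{4\alpha\gamma^{\,d-1}+\gamma^{d}-\alpha^{d}},\qquad
L'(\beta_{0})=\frac{2(d-1)}{\alpha\gamma},
\]
so that $\phi'(\beta_{0})=L'(\beta_{0})-\tfrac12 s'(\beta_{0})>0$ becomes the polynomial inequality
\[
P:=2(d-1)\bigl(4\alpha\gamma^{d-1}+\gamma^{d}-\alpha^{d}\bigr)-d\bigl(\alpha\gamma^{d}-\alpha^{d}\gamma\bigr)>0
\]
with $\alpha\in(0,1)$, $\gamma=2-\alpha$. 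Rewriting $P=[\,2(d-1)-d\alpha\,]\gamma^{d}+[\,d\gamma-2(d-1)\,]\alpha^{d}+8(d-1)\alpha\gamma^{d-1}$: the coefficient of $\gamma^{d}$ is at least $d-2\ge0$; if the coefficient of $\alpha^{d}$ is also $\ge0$ then $P>0$ since $8(d-1)\alpha\gamma^{d-1}>0$; and if that coefficient is negative, then using $\alpha^{d}\le\alpha\gamma^{d-1}$ together with $0<2(d-1)-d\gamma<8(d-1)$ one checks the negative middle term is dominated by the last term, so again $P>0$. This last case check, together with obtaining the clean form of $\mu'$ in the first place, are the only genuinely computational points; the rest is bookkeeping.
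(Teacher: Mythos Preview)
Your proof is correct and follows the same overall strategy as the paper: compute $\mu'(\beta)$ explicitly, identify that $\mu'(\beta)=0$ iff $s(\beta)=2(d-1)\ln\frac{1+\beta}{1-\beta}$, and show this equation has at most one solution on $(0,1)$. Your factored form $\mu'(\beta)=\frac{dw(e^{s/2}-1)}{s}\bigl(\gamma^{d-1}-\alpha^{d-1}e^{s/2}\bigr)$ is in fact a tidier rewriting of the expression the paper obtains.

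The only substantive difference is the uniqueness step. You compute $s'(\beta_0)$ exactly at a zero of $\phi$ and reduce $\phi'(\beta_0)>0$ to a polynomial inequality in $\alpha,\gamma$, which you then verify case by case. The paper instead proves the stronger global inequality $s'(\beta)<u'(\beta)$ for all $\beta\in(0,1)$ via the crude estimate
\[
s'(\beta)<\frac{d(1-\beta)^{d-1}H(e^s)}{(1-\beta)^d s e^s}<\frac{d}{1-\beta}<\frac{4(d-1)}{(1-\beta)(1+\beta)}=u'(\beta)\qquad(d\ge2),
\]
so $u(\beta)-s(\beta)$ is strictly increasing and the two curves meet at most once. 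This avoids your polynomial case analysis entirely and is considerably shorter; your route works but is more laborious than necessary.
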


\begin{proof} Notice that for $0\leq\beta\leq 1$
\[
  \mu(\beta) =
   \volhalfB ( \frac{1}{2}((1+\beta)^d-(1-\beta)^d)e^{s/2} + (1-\beta)^d e^s ),
\]
  where $s = s(\beta)$ solves
\begin{equation}\label{eq:Hs}
   \volhalfB \left( ((1+\beta)^d-(1-\beta)^d)H(e^{s/2}) + (1-\beta)^d H(e^s) \right) = \frac{1}{t}.
\end{equation}
  The function $\mu(\beta)$ is continuous on $[0,1]$.
  Differentiating equation~\eqref{eq:Hs} wrt~$\beta$ we see that for $0<\beta<1$
\[ \begin{array}{rcl}
0 & = &
d((1+\beta)^{d-1}+(1-\beta)^{d-1}) H(e^{s/2}) +
((1+\beta)^d-(1-\beta)^d)) \frac{s}{4} e^{s/2} s'
- d(1-\beta)^{d-1} H(e^s ) \\
 & & + (1-\beta)^d s e^s s'.
\end{array} \]
(That $s$ is differentiable wrt~$\beta$ can be justified using the 
implicit function theorem.)  
This gives 

\[
  s'(\beta)=\frac{d(1-\beta)^{d-1}H(e^s) - d((1+\beta)^{d-1}+(1-\beta)^{d-1})H(e^{s/2})}
  {((1+\beta)^d-(1-\beta)^d)\frac{s}{4}e^{s/2}+(1-\beta)^dse^s}.
\]
  Thus,
\[ \begin{array}{rcl}
  \volhalfB \mu'(\beta)
& = &
  d((1+\beta)^{d-1}+(1-\beta)^{d-1})\frac{1}{2}e^{s/2} - d(1-\beta)^{d-1} e^s \\
  & & \quad + \left.
  s' ( ((1+\beta)^d-(1-\beta)^d)\frac{1}{4}e^{s/2} + (1-\beta)^de^s ) \right. \\
& = &
  d((1+\beta)^{d-1}+(1-\beta)^{d-1})\frac{1}{2}e^{s/2} - d(1-\beta)^{d-1} e^s \\
  & & \left. \quad +
  \frac{1}{s} ( d(1-\beta)^{d-1} H(e^s) - d((1+\beta)^{d-1}+(1-\beta)^{d-1})H(e^{s/2})) \right.\\
& = &
  \frac{1}{s} \left[
  d((1+\beta)^{d-1}+(1-\beta)^{d-1})( e^{s/2} - 1 )
  - d(1-\beta)^{d-1}( e^s - 1 ) \right].
\end{array} \]
  Clearly $\mu'(\beta) > 0$ for $\beta$ sufficiently close to 1, so that it
  suffices to show that (for any $t$)
  $\mu'(\beta) = 0$ for no more than one $\beta \in (0,1)$.
  Note that $\mu'(\beta) = 0$ if and only if
\[
  e^s - 1 = ((\frac{1+\beta}{1-\beta})^{d-1} + 1)(e^{s/2}-1).
\]
  Writing $a := (\frac{1+\beta}{1-\beta})^{d-1} + 1$ and $x:=e^{s/2}$ this translates into
  the quadratic $x^2 - a x + (a-1) = 0$, which has roots $1, a-1$.
  Now notice that $e^{s/2} = 1$ would give $s(\beta) = 0$, but this is never a solution of \eqref{eq:Hs}.
  So if $\mu'(\beta) = 0$ for some $0<\beta<1$ then we must have
  $s(\beta) = 2(d-1)\ln(\frac{1+\beta}{1-\beta})$.
  Notice that, as $s$ cannot equal 0, this also shows that we must
  have $d\geq 2$ for $\mu'(\beta) = 0$ to hold.
  The curve $u(\beta) := 2(d-1)\ln(\frac{1+\beta}{1-\beta})$ has derivative
\[
  u'(\beta) = \frac{4(d-1)}{(1-\beta)(1+\beta)}.
\]
  On the other hand, for $0<\beta<1$ %we have seen
\[
  s'(\beta) < \frac{d(1-\beta)^{d-1}H(e^s)}{(1-\beta)^dse^s}<\frac{d}{1-\beta}.
\]
  We find $s'(\beta) < \frac{d}{1-\beta} < 4(d-1) / (1+\beta)(1-\beta) = u'(\beta)$
  for $0<\beta< 1$ (recall $d\geq 2$ by a previous remark).
  We may conclude that the curves $u(\beta)$ and $s(\beta)$ meet in at most one point,
  as $u(\beta)-s(\beta)$ is strictly increasing on $(0,1)$.
  In other words, there is at most one $\beta \in (0,1)$ with $\mu'(\beta) = 0$
  and the lemma follows.
\end{proof}

  Since we had chosen $t$ so that~\eqref{eq:xi100xi0} holds,
  parts~\ref{itm:philambda} and~\ref{itm:phipsi} of Lemma~\ref{lem:xibasic}
  tell us that:
\[
\xi(\varphi_1,t) = \frac12\xi( 1_{B}, t ) \leq %\xi(\varphi',t) =
 \frac{1}{2}\xi(1_{B(0;3)},t) < \xi(\varphi_0,t).
\]
  Thus %$\sup_{\varphi\in{\cal F}}\xi(\varphi,t)\leq 
  
\[ \max_{0\leq\beta\leq 1}\mu(\beta) = \mu(0)=\xi(\varphi_0,t). \]
  
%%so that the claim~(\ref{eqn.claim-mu}) holds.
  %it follows that $\sup_{\varphi\in{\cal F}}\xi(\varphi,t) =\xi(\varphi_0,t)$.  
  Observe that we have now achieved our aim in this subsection, namely to show that $t_0>0$.
  Thus we have proved the following lemma.
  %Lemma~\ref{lem:t0pos}

\begin{lemma}\label{lem:t0pos}
  When the packing constant $\delta<1$, we have $0 < t_0 < \infty$.
\end{lemma}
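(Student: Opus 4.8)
The plan is to prove the two inequalities separately, and in fact most of the work has already been assembled in the preceding lemmas. For $t_0 < \infty$: since $\frat(t) = \fcol(t)/\fcli(t)$ and $\fcol(t) \to \volhalfBdelta$ (Lemma~\ref{lem:fcolliminf}) while $\fcli(t) \to \volhalfB$ as $t \to \infty$, we have $\frat(t) \to 1/\delta$, as recorded in~\eqref{eq:fratlim}. As $\delta < 1$ this limit exceeds $1$, so the set $\{t > 0 : \frat(t) \neq 1\}$ is nonempty, and hence $t_0 < \infty$ by the definition~\eqref{eq:t0def}.

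The real content is the bound $t_0 > 0$, and for this I would show the stronger statement that $t_0 \geq T$, with $T > 0$ the constant supplied by Lemma~\ref{lem:WW}; that is, that $\frat(t) = 1$ — equivalently $\sup_{\varphi \in \Fcal}\xi(\varphi, t) = \xi(\varphi_0, t)$ — for every $t$ satisfying~\eqref{eq:xi100xi0}. Fix such a $t$ and argue by contradiction, assuming $\sup_{\varphi \in \Fcal}\xi(\varphi, t) > \xi(\varphi_0, t)$. By Lemma~\ref{lem:supsmallsupportiftsmall} the supremum is unchanged if we restrict to feasible $\varphi$ supported in $B(0;2)$, and then, approximating from below by simple functions via part~\ref{itm:phiseq} of Lemma~\ref{lem:xibasic}, there is a feasible simple $\psi = \sum_{k=1}^m \frac{k}{m} 1_{A_k}$ with $m$ even, the $A_k$ disjoint, $\supp(\psi) \subseteq B(0;2)$ and $\xi(\psi, t) > \xi(\varphi_0, t)$. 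Writing $\psi = \frac{2}{m}\sum_{k=1}^{m/2}\psi_k$ as a convex combination of the $\{0, \frac12, 1\}$-valued functions $\psi_k$ and using that $\xi(\cdot, t)$ is convex in its first argument (parts~\ref{itm:xiscalar} and~\ref{itm:xisum} of Lemma~\ref{lem:xibasic}), some $\psi_k$ satisfies $\xi(\psi_k, t) \geq \xi(\psi, t) > \xi(\varphi_0, t)$.

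Now I would split on whether $\{\psi_k = 1\}$ is empty. If it is, then $\psi_k \leq \frac12 1_{B(0;3)}$ (since $\supp(\psi_k) \subseteq B(0;2)$), whence $\xi(\psi_k, t) \leq \frac12\xi(1_{B(0;3)}, t) < \xi(\varphi_0, t)$ by~\eqref{eq:xi100xi0} — a contradiction. Otherwise, feasibility of $\psi$ forces $\diam(\{\psi_k = 1\}) \leq 1$ and $\supp(\psi_k) \subseteq \bigcap_{x : \psi_k(x) = 1}(x + \clo(B))$; Bieberbach's inequality (Lemma~\ref{thm:bieberbach}) then gives $\vol(\{\psi_k = 1\}) \leq \volhalfB$, say $\vol(\{\psi_k = 1\}) = (1-\beta)^d\volhalfB$ with $0 < \beta \leq 1$, and B\"or\"oczky's inequality (Lemma~\ref{prop:karcsi}) gives $\vol(\supp(\psi_k)) \leq (1+\beta)^d\volhalfB$. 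Comparing level-set integrals and invoking part~\ref{itm:phiindicatorbiggest} of Lemma~\ref{lem:xibasic} yields $\xi(\psi_k, t) \leq \xi(\varphi_\beta, t) =: \mu(\beta)$, with $\varphi_\beta$ the radial two-level function defined above. Finally, Lemma~\ref{lem.max-mubeta} gives $\max_{0\leq\beta\leq1}\mu(\beta) = \max\{\mu(0), \mu(1)\}$; since $\varphi_1 = \frac12 1_B$ we have $\xi(\varphi_1, t) = \frac12\xi(1_B, t) \leq \frac12\xi(1_{B(0;3)}, t) < \xi(\varphi_0, t)$ by~\eqref{eq:xi100xi0} together with parts~\ref{itm:xiscalar} and~\ref{itm:phipsi} of Lemma~\ref{lem:xibasic}, so $\mu(\beta) \leq \mu(0) = \xi(\varphi_0, t)$ for all $\beta \in [0,1]$, contradicting $\xi(\psi_k, t) > \xi(\varphi_0, t)$. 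Hence $\frat(t) = 1$ for all $t \leq T$, so $t_0 \geq T > 0$.

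The main obstacle is the chain of geometric reductions needed for $t_0 > 0$: localizing the support of a near-optimal feasible function to $B(0;2)$ (Lemma~\ref{lem:supsmallsupportiftsmall}, itself obtained via the probabilistic ``island'' argument of Lemma~\ref{lem:island}), reducing to $\{0, \frac12, 1\}$-valued functions by convexity of $\xi$, and above all invoking B\"or\"oczky's inequality to control the support of such a function in terms of its top level set; even after that one still needs the explicit analytic computation in Lemma~\ref{lem.max-mubeta} showing that $\mu$ attains its maximum at an endpoint of $[0,1]$. All of these ingredients having been established in the preceding lemmas, the proof of the present lemma reduces to the bookkeeping carried out above.
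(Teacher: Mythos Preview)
Your proposal is correct and follows the paper's argument essentially step for step: the same use of \eqref{eq:fratlim} for $t_0<\infty$, and for $t_0>0$ the same chain --- Lemma~\ref{lem:supsmallsupportiftsmall} to localise support, approximation by simple functions, the convex decomposition into the $\{0,\tfrac12,1\}$-valued $\psi_k$, the empty/nonempty split on $\{\psi_k=1\}$, Bieberbach plus B\"or\"oczky to compare with $\varphi_\beta$, and Lemma~\ref{lem.max-mubeta} to finish. The only cosmetic discrepancy is that the paper allows $0\le\beta\le1$ rather than $0<\beta\le1$, which is immaterial since $\beta=0$ already gives $\xi(\psi_k,t)\le\xi(\varphi_0,t)$.
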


%%%%%%%%%%%%%%%%%%%%%%%%%%%%%%%%%%%%%%%%%%%%%%%%%%%%%%%%%%%%%%%%%%%%%%%

\subsection{When $\delta<1$ the function $\frat(t)$ is strictly increasing for $t\geq t_0$}

  In this subsection we shall prove the result just stated in the heading.
  The proof uses the following lemma. % which may also be of independent interest.

\begin{lemma}\label{lem:supismax}
  For each $t>0$, either $\sup_{\varphi \in {\cal F}} \xi(\varphi,t) = \volhalfBdelta$ %  \frac{\theta}{2^d\delta}$
  or the supremum is attained.
\end{lemma}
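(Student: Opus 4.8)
Throughout write $w := \volhalfBdelta$ and $L := \sup_{\varphi\in{\cal F}}\xi(\varphi,t)$, so that by Lemma~\ref{cor:chilb} we have $w\le L\le c(w,t)<\infty$. If $L=w$ there is nothing to prove, so the plan is to assume $L>w$ and to produce a feasible $\varphi^\star$ with $\xi(\varphi^\star,t)=L$; the bulk of the work is a compactness argument applied to a maximizing sequence.

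First I would record the a priori bounds available for any $\varphi\in{\cal F}$. Applying feasibility to a one-point (hence well-spread) set gives $\varphi\le 1$ pointwise, and Lemma~\ref{prop:densesup} gives $\int\varphi\le w$. The crucial additional bound comes from a cube decomposition. Fix a grid of half-open cubes $\{Q_z\}_{z\in\Zed^d}$ of side $\ell$, with $\ell$ so small that (i) $\diam(Q_z)<1$ in the norm $\norm{.}$, so that any well-spread set meets each $Q_z$ in at most one point, and (ii) the cubes can be partitioned into $3^d$ colour classes with any two cubes of the same colour more than distance $1$ apart. Feeding a transversal of a single colour class into the feasibility inequality (and letting the chosen points tend to the suprema) shows $\sum_{z\text{ in one class}}\sup_{Q_z}\varphi\le 1$, hence $\sum_{z\in\Zed^d}\sup_{Q_z}\varphi\le 3^d$ for every $\varphi\in{\cal F}$. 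Combining this with subadditivity (part~\ref{itm:xisum} of Lemma~\ref{lem:xibasic}) and the estimate $\xi(\varphi 1_{Q_z},t)\le(\sup_{Q_z}\varphi)\,\xi(1_{Q_z},t)=(\sup_{Q_z}\varphi)\,c(\ell^d,t)$ (using parts~\ref{itm:phipsi} and~\ref{itm:xiscalar} of Lemma~\ref{lem:xibasic} and~\eqref{eq:xiindicatorrewrite}) yields $\xi(\varphi,t)\le\sum_z\xi(\varphi 1_{Q_z},t)\le 3^d c(\ell^d,t)$, and, more importantly, that for each $\eta>0$ all but an arbitrarily small amount of the ``$\xi$-weight'' of $\varphi$ is carried by the at most $3^d/\eta$ cubes on which $\sup_{Q_z}\varphi>\eta$.

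With these bounds in hand, take a maximizing sequence $(\varphi_n)\subseteq{\cal F}$, which by Lemma~\ref{lem.f*f} we may take tidy. The plan then is: (a) use part~\ref{itm:phiseq} of Lemma~\ref{lem:xibasic}, in the form $\xi(\varphi 1_{B(0;R)},t)\to\xi(\varphi,t)$ as $R\to\infty$ (as in the proof of Lemma~\ref{lem.f*f}), together with the cube estimate, to reduce after translation to a maximizing sequence supported in a fixed ball $\overline{B(0;R)}$; (b) on the finite-measure space $\overline{B(0;R)}$ extract a weak-$*$ convergent subsequence $\varphi_n\rightharpoonup\varphi^\star$ in the unit ball of $L^\infty$, so that $0\le\varphi^\star\le 1$ a.e.\ and $\supp\varphi^\star\subseteq\overline{B(0;R)}$; (c) pass feasibility to the limit by replacing pointwise values with averages over tiny balls (which are weak-$*$ continuous) and then applying a slight dilation, exactly as $\hat\varphi$ is replaced by $\varphi'$ in the proof of Lemma~\ref{lem.f*f}, thereby obtaining that $\varphi^\star$ is $(1+\eps)$-feasible for every $\eps>0$, hence feasible; and (d) show $\xi(\varphi_n,t)\to\xi(\varphi^\star,t)$ using that $\xi$ depends only on the distribution function: Helly selection on the (monotone, uniformly tail-bounded) distribution functions of the $\varphi_n$ controls the region where $\varphi_n$ is bounded below, while the contribution of the set $\{\varphi_n\le\eta\}$ is, by subadditivity, at most its mass plus an error that is small because on a thinly spread set the weighting value is small. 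One then concludes $\xi(\varphi^\star,t)=L$, so the supremum is attained.

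The main obstacle is step (a) together with the small-values estimate in (d): a weak-$*$ limit respects neither the pointwise feasibility constraints nor the nonlinear functional $\xi$, and a maximizing sequence can in principle \emph{dilute}, spreading its mass ever more thinly, in which case the $\xi$-value of the dispersed part collapses to just its integral (the weighting value tending to $0$) and only mass $\le w$ is ``wasted''. That phenomenon is exactly why the supremum can fail to be attained, and the whole content of the lemma is that it can only occur when $L=w$; ruling it out when $L>w$ — i.e.\ confining the excess $\xi(\varphi_n,t)-\int\varphi_n$ to a bounded region after a suitable translation — is where the cube estimate $\sum_z\sup_{Q_z}\varphi\le 3^d$ and the truncation statement of part~\ref{itm:phiseq} of Lemma~\ref{lem:xibasic} do the real work.
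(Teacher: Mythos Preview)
Your overall architecture is right, and you correctly locate the crux: ruling out dispersion of a maximizing sequence when $L>w$. But the tools you propose in steps (b)--(d) do not actually close that gap. The functional $\xi(\cdot,t)$ is \emph{not} upper semicontinuous for weak-$*$ convergence in $L^\infty$, even after you have localised to a fixed ball. Concretely, in $d=1$ take $\varphi_n=1_{A_n}$ with $A_n\subset[0,1]$ of measure $\tfrac12$ (unions of many short intervals) so that $\varphi_n\rightharpoonup \tfrac12\cdot 1_{[0,1]}$ weak-$*$; every $\varphi_n$ is feasible, every $\xi(\varphi_n,t)=c(\tfrac12,t)$, while $\xi(\varphi^\star,t)=\tfrac12\,c(1,t)$, and one checks $c(\tfrac12,t)>\tfrac12\,c(1,t)$ for all finite $t$. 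So your weak-$*$ limit can have strictly smaller $\xi$-value than $\lim\xi(\varphi_n,t)$. Your Helly device in (d) does not repair this: the limit of the distribution functions is in general \emph{not} the distribution function of the weak-$*$ limit $\varphi^\star$, so you end up with two different objects and no way to splice them into a single feasible maximizer. Step (a) is also incomplete as stated: your cube estimate shows there are at most $C/\eta$ ``heavy'' cubes, but they may be mutually far apart, and a single translation will not gather them. (Incidentally, ``$3^d$ colour classes'' is incompatible with ``$\ell$ small'' for a general norm; you need of order $(1/\ell)^d$ classes. This is cosmetic.)

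The paper avoids weak-$*$ limits altogether. It takes a maximizing sequence subject to the \emph{additional} extremal constraint that $\lim_n\int_B\varphi_n$ be as large as possible, and constructs a candidate $\psi$ as a pointwise upper envelope $\psi(x):=\lim_k\sup_{C_{x,k}}\varphi_{n_k}$ over nested dyadic cubes (so $\psi\ge\limsup\varphi_{n_k}$ pointwise and $\psi\in{\cal F}$). It then splits each $\varphi_{n_k}$ into an inner part on $B(0;R_k)$ and an outer part, with feasibility weight $\lambda_k:=\sup_{S\in{\cal S}}\sum_{x\in S}\psi_{k,o}(x)$. If $\lambda_k\not\to 0$, one translates the outer mass towards the origin to strictly increase $\lim\int_B\varphi_n$, contradicting extremality; this forces $\vol(\{\psi_{k,o}\ge\eps\})\to 0$, whence (by the very estimate you sketch for thinly spread functions) $\xi(\psi_{k,o}/\lambda_k,t)\le w+o(1)$ and convex combination gives $\lim\xi(\varphi_{n_k},t)<L$, a contradiction. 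Hence $\lambda_k\to 0$ and then $\xi(\psi,t)=L$. The extremal-choice trick is exactly what replaces your missing localisation argument in (a), and the pointwise-sup construction of $\psi$ is what replaces your weak-$*$ limit and sidesteps the counterexample above.
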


\begin{proof} %{of Lemma~\ref{prop:supismax}}
  Let us assume $\sup_{\varphi \in {\cal F}} \xi(\varphi,t) > \volhalfBdelta$ % \frac{\theta}{2^d\delta}$
  (as otherwise there is nothing to prove).
  Let us consider a sequence $\varphi_1, \varphi_2, \ldots \in {\cal F}$ such that
  
\begin{equation}\label{eq:limeq1}
\lim_{n\to\infty} \xi(\varphi_n, t ) =
\sup_{\varphi \in {\cal F}} \xi(\varphi,t),
\end{equation}

\noindent
  and let us suppose (wlog) that 

\begin{equation}\label{eq:subjto}
\lim_n \int_{B} \varphi_n \text{ exists and is as large as possible
  subject to~\eqref{eq:limeq1}}
\end{equation}

\noindent
(recall that $B = B(0;1)$ is the unit ball).
  We will first exhibit a subsequence $\varphi_{n_1},\varphi_{n_2},\dots$ of $(\varphi_n)_n$
  and a function $\psi \in {\cal F}$ such that
\begin{equation}\label{eq:limvarphinkleqpsi}
\limsup_{k\to\infty} \varphi_{n_k}(x) \leq \psi(x)
\text{ for all } x\in \eR^d.
\end{equation}
  It will need further work to show that the function $\psi$ achieves the supremum as required.
%%%  We will consider the ``inner parts'' $\psi_{k,i} := \varphi_{n_k} 1_{B(0;R_k)}$ 
%%%  (where $(R_k)_k$ is a growing sequence, chosen in such a way that
%%%  $\xi(\psi_{k,i},t)\to\xi(\psi,t)$) and we will see that the rest of $\psi$ 
%%%negligible, and thus complete the proof.

  In order to construct $\psi$ and the subsequence
  $(\varphi_{n_k})_k$, let ${\cal D}_k$ be the dissection
  $\{ i+[0,2^{-k})^d : i=(i_1,\dots,i_d)\in 2^{-k}\Zed^d \}$
  of $\eR^d$ into cubes of side $2^{-k}$ (observe that ${\cal D}_{k+1}$
  refines ${\cal D}_k$).
  For $\sigma \in \cal F$ let us define the functions $\sigma^k$ by setting:
\[ 
  \sigma^k(x) := \sup_{y \in C_{x,k}} \sigma(y),
\]
  where $C_{x,k}$ is the unique cube $C \in {\cal D}_k$ with $x\in C$.
%Consider an enumeration $C_1, C_2, \dots$ of $\bigcup_{k=1}^\infty {\cal D}_k$.
%We will construct the required subsequence as follows.
Let us now construct a nested sequence
${\cal F}_1 \supseteq {\cal F}_2 \supseteq \dots$ of infinite subsets of
$\{\varphi_1,\varphi_2, \dots\}$ with the property that
\begin{equation}\label{eq:Fdefprop}
  \tel{\sigma^k(x)-\tau^k(x)}\leq\frac{1}{k} \text{ for all }
  x \in [-k,k)^d \text{ and all } \sigma, \tau \in {\cal F}_k.
\end{equation}
  To see that this can be done, notice that
  the behaviour of $\sigma^k$ on $[-k,k)^d$ is determined completely
  by $(\sigma^k(p_1), \dots, \sigma^k(p_{K}))$ where
  $p_1,\dots, p_{K}$ is some enumeration of $[-k,k)^d\cap 2^{-k}\Zed^d$.
  Given ${\cal F}_{k-1}$ there must be intervals $I_1, \dots, I_{K} \subseteq [0,1]$
  each of length $\frac{1}{k}$ such that the collection
  $\{ \sigma \in {\cal F}_{k-1} : \sigma^k(p_i) \in I_i \text{ for all } 1 \leq i \leq K \}$
  is infinite.  So we can take ${\cal F}_k$ to be such an infinite collection.
  Let us now pick a subsequence  $\varphi_{n_1}, \varphi_{n_2},\dots$ of
  $(\varphi_n)_n$ with $\varphi_{n_k} \in {\cal F}_k$ and
  let the function $\psi$ be defined by:
\begin{equation}\label{eq:psilimdef}
  \psi(x) := \lim_{k\to\infty}\varphi_{n_k}^k(x).
\end{equation}
  To see that this limit exists for all $x$, notice that
  $\varphi_{n_l}^{l}(x) \leq \varphi_{n_l}^{k}(x) \leq \varphi_{n_k}^{k}(x) + \frac1{k}$
  for all $l\geq k>\norm{x}_{\infty}$. Thus,
\[
\begin{array}{rcl}
\limsup_{k\to\infty} \varphi_{n_k}^k(x)
 & \leq & 
\inf_{k>\norm{x}_{\infty} } \varphi_{n_k}^k(x)+\frac{1}{k} \\
& = & 
\liminf_{k\to\infty} \varphi_{n_k}^k(x)+\frac{1}{k} \\
& = & 
\liminf_{k\to\infty} \varphi_{n_k}^k(x).
\end{array}
\]
% \[
%   \liminf_{k\to\infty} \varphi_{n_k}^k(x) \leq \limsup_{k\to\infty} \varphi_{n_k}^k(x)
%   \leq \inf_{k>\norm{x}_{\infty} } \varphi_{n_k}^k(x)+\frac{1}{k}
%   \leq \liminf_{k\to\infty} \varphi_{n_k}^k(x)+\frac{1}{k}
%   = \liminf_{k\to\infty} \varphi_{n_k}^k(x).
% \]

% \noindent
% This limit exists for all $x$ (and is equal to
% $\inf_{k>\norm{x}}\varphi_{n_k}^k(x)$),
% since for all $l\geq k>\norm{x}$
%
% \[
% \varphi_{n_l}^{l}(x) \leq \varphi_{n_l}^{k}(x)
% \leq \varphi_{n_k}^{k}(x) + \frac1{k}.
% \]

  We now claim that $\psi$ and the sequence $(\varphi_{n_k})_k$ are as required
  (that is, $\psi\in{\cal F}$ and~\eqref{eq:limvarphinkleqpsi} holds).
  To see that~\eqref{eq:limvarphinkleqpsi} holds, notice that
  $\sup_{l\geq k} \varphi_{n_{l}}(x)\leq \varphi_{n_k}^k(x)+\frac{1}{k}$
  for any $x$ and any $k>\norm{x}_{\infty}$, so that
  
 \begin{equation}\label{eq:phikdomphi}
 \limsup_{k\to\infty} \varphi_{n_k}(x) \leq \lim_{k\to\infty} \varphi_{n_k}^{k}(x) = \psi(x).
 \end{equation}
  To see that $\psi \in {\cal F}$, let $S=\{s_1,\dots,s_p\}\in {\cal S}$ be finite
  (observe it suffices to show $\sum_{x\in S} \psi(x) \leq 1$ for all
  finite $S \in {\cal S}$).
  Since $\norm{s_i-s_j}>1$ for all $i\neq j$, there is a $k_0$
  such that $\norm{s_i-s_j}>1+2^{-k_0}\rho$
%{\dd}( C_{s_i,k}, C_{s_j,k} ) > 1$ for all $i\neq j$ and
  for all $i\neq j$ where $\rho :=  \diam([0,1]^d)$.
%C_{x,k}$ again denotes the unique $C \in {\cal D}_k$
%with $x\in C$.
  Thus if $k \geq k_0$ then
\[ 
  \varphi_{n_k}^k(s_1)+\dots+\varphi_{n_k}^k(s_p) \leq 1,
\]
  and hence the same must hold for $\psi$.

  Also notice that the dominated convergence theorem (using $\psi, \varphi_{n_k} \leq 1$) gives that
\begin{equation}\label{eq:psiintislimint}
  \int_{B} \psi(x){\dd}x =
  \lim_{k\to\infty} \int_{B} \varphi_{n_k}^k(x){\dd}x
  \geq \lim_{n\to\infty} \int_{B} \varphi_n(x) {\dd}x,
\end{equation}

\noindent
using~\eqref{eq:phikdomphi} for the second equation.
  Furthermore, for any fixed $R > 0$ we have that
\begin{equation}\label{eq:Rconveq}
  \lim_{n\to\infty} \xi( \varphi_{n_k}^k 1_{B(0;R)}, t )
  =  \xi( \psi 1_{B(0,R)}, t ) \leq \xi( \psi, t ).
\end{equation}
  Here we have used parts~\ref{itm:phipsi} %and~\ref{itm:dominatedconvergence}
  and~\ref{itm:toinf} of Lemma~\ref{lem:xibasic}.
  Hence, there also is a sequence $(R_k)_k$ with %$R_k \geq 1$,
  $R_k$ tending to infinity and
  
  \begin{equation}\label{eq:xiphikdomxiphi}
  \limsup_{k\to\infty} \xi(\varphi_{n_k}^k 1_{B(0;R_k)},t) \leq \xi(\psi,t).
  \end{equation}
  
  \noindent
  To see this, notice that by~\eqref{eq:Rconveq} there exist $k_1 < k_2 < \dots$ such that
  $\xi( \varphi_{n_k}^k 1_{B(0;m)}, t ) \leq \xi(\psi,t) + \frac{1}{m}$
  for all $k\geq k_m$. Thus, we may put $R_k := \max\{ m : k_m\leq k\}$.  
  
  Let us put
\begin{equation} \label{eqn.inout-def}
  \psi_{k,i} := \varphi_{n_k} 1_{B(0;R_k)}, \quad
  \psi_{k,o} := \varphi_{n_k} 1_{\eR^d\setminus B(0;R_k+1)}, \quad
  \psi_{k} := \psi_{k,i} + \psi_{k,o}.
\end{equation}
  We may assume wlog that $R_k$ has been chosen in such a way that
  $\xi( \psi_k, t ) = (1+o(1)) \xi( \varphi_{n_k}, t )$.
  To see this note that for $s=s(\varphi_{n_k},t)$ there is an
  $\frac{R_k}{2} \leq R' \leq R_k$ s.t.
\[ 
  \int \varphi_{n_k} e^{s\varphi_{n_k}} 1_{B(0;R'+1)\setminus B(0;R')} \leq \frac{1}{\lfloor\frac{R_k}{2}\rfloor}\int \varphi_{n_k} e^{s\varphi_{n_k}}.
\]
  If we take such an $R'$ and set $\psi_k' := \varphi_{n_k} 1_{\eR^d \setminus B(0;R'+1) \cup B(0;R')}$
  then $s(\psi_k',t) \geq s(\varphi_{n_k},t)$ (by the definition of $s$, as $\psi_k'\leq \varphi_{n_k}$) so that
  $\xi(\psi_k',t) \geq (1-\frac{1}{\lfloor\frac{R_k}{2}\rfloor})\xi(\varphi_{n_k},t)$.
% \noindent
% We may assume wlog~that $R_k$ has been chosen in such a way that
% $\xi( \psi_k, t ) = (1+o(1)) \xi( \varphi_{n_k}, t )$.
% To see this note that $s(\psi_k,t) \geq s(\varphi_{n_k},t)$
% and that for any $s$ there is an $\frac{R_k}{2} \leq
% R' \leq R_k$ s.t.~$\int_{B(0;R'+1)\setminus B(0;R')}
% \varphi_{n_k} e^{s\varphi_{n_k}} \leq \frac{1}{\lfloor\frac{R_k}{2}\rfloor}\int_{\eR^d}\varphi_{n_k} e^{s\varphi_{n_k}}$.

  Observe that by our choice of $R_k$
\begin{equation} \label{eqn.psik}
  \lim_{k\to\infty}\xi( \psi_k,t ) =  \lim_{k\to\infty}\xi( \varphi_{n_k},t ) = \sup_{\varphi\in{\cal F}}\xi(\varphi,t) ,
\end{equation}
  and (since $R_k \geq 1$ for $k$ sufficiently large)
\begin{equation} \label{eqn.psik2}
    \lim_{k\to\infty}\int_B \psi_k =  \lim_{k\to\infty}\int_B \varphi_{n_k} = \lim_{n \to \infty} \int_B \varphi_n.
\end{equation}

  Let us define $\lambda(\varphi) := \sup_{S\in{\cal S}} \sum_{x\in S} \varphi(x)$.
  Since $\psi_k \leq \varphi_{n_k} \in {\cal F}$ we have
\begin{equation}\label{eq:lambdaio}
\lambda(\psi_k) = \lambda(\psi_{k,i}) + \lambda(\psi_{k,o} ) \leq 1.
\end{equation}
  For convenience let us write $\lambda_k := \lambda(\psi_{k,o})$.
  First let us suppose that $\lambda_k\to 0$ as $k \to \infty$.
  Notice that $\frac{1}{\lambda_k}\psi_{k,o} \in {\cal F}$, which implies
\[
\xi( \psi_{k,o}, t ) \leq \lambda_k \sup_{\varphi\in{\cal F}} \xi(\varphi, t) = o(1),
\]
  using part~\ref{itm:xiscalar} of Lemma~\ref{lem:xibasic}. 
Observe that 
 
\[ 
\xi(\psi_{k,i},t) \leq \xi(\psi_k,t) \leq \xi(\psi_{k,i},t) + \xi(\psi_{k,o},t)
\]

\noindent
by parts~\ref{itm:phipsi} and~\ref{itm:xisum} of Lemma~\ref{lem:xibasic}.
Thus, $\xi(\psi_k,t) = \xi(\psi_{k,i},t) + o(1)$. 
Using~\eqref{eqn.psik} and~\eqref{eq:xiphikdomxiphi}
\[
  \sup_{\varphi\in{\cal F}}\xi(\varphi,t) = \lim_{k\to\infty}\xi( \psi_k, t)
  %%%= \lim_{k\to\infty} \xi( \psi_{k,i}, t ) 
\leq \xi( \psi, t ),
\]
  so that the lemma follows in the case when $\lambda_k\to 0$.
  \smallskip

  In the remaining part of the proof we shall show that in fact we must have $\lambda_k\to 0$.
  Let us assume not, and that $\limsup \lambda_k > 0$.
  We may assume for convenience that $\lim \lambda_k = \lambda > 0$
  (by considering a subsequence if necessary).
  We next establish the following claim.
  \medskip
  
  \noindent
  {\bf Claim}
\begin{equation} \label{eqn.claim-volto0} 
   \vol( \{\psi_{k,o}\geq\eps\})\to 0 \;\;\;\; \mbox{ for all } \; \eps > 0.
\end{equation}
  
\begin{pf}{of~\eqref{eqn.claim-volto0}}  
Let us construct a new sequence of functions $\psi_k'$ as follows.
  For each $k$ pick an $x_k \in \eR^d\setminus B(0;R_k+1)$
  that maximises $\int_{B(x_k;1)} \psi_{k,o}$.

  To see that such an $x_k$ exists, let us write $I(x) := \int_{B(x;1)}\psi_{k,o}$.
Notice that $I$ is continuous ($\psi_{k,o}\leq 1$ so that $\tel{I(x)-I(y)}\leq \vol( B(x;1)\setminus B(y;1) )$).
Let us suppose that
$c := \sup_{x\in\eR^d\setminus B(0;R_k+1)} I(x) > 0$, for otherwise there
is nothing to prove as any $x\in\eR^d\setminus B(0;R_k+1)$ will do.
We first claim that the set $\{ x \in \eR^d\setminus B(0;R_k+1) : I(x) > \frac{c}{2} \}$
can be covered by at most $\lfloor\frac{2\volB}{c}\rfloor$ balls of radius two.
This is because if $I(x_1), \dots, I(x_k) > \frac{c}{2}$ there must exist $y_i\in B(x_i;1)$ for
$1\leq i\leq k$ with $\psi_{k,o}(y_i) > \frac{c}{2\volB}$.
By feasibility of $\psi_{k,o}$ we must have either
$k < \frac{2\volB}{c}$ or $\norm{y_i-y_j}\leq 1$ for some
$i\neq j$.
Thus, the $y_i$ can be covered by at most $\frac{2\volB}{c}$ balls of radius one, and hence the
$x_i$ can be covered by at most $\frac{2\volB}{c}$ balls of radius two, as claimed.
%Thus $\{ x \in \eR^d\setminus B(0;R_k+1) : I(x) > \frac{c}{2} \}$ can be covered by a
%finite number of balls of radius  three.
As $I$ is continuous and we can restrict ourselves to a compact subset of $\eR^d\setminus B(0;R_k+1)$ we see that the supremum $c = \sup_{x\in\eR^d\setminus B(0;R_k+1)} I(x)$ is indeed attained by some point $x_k\in\eR^d\setminus B(0;R_k+1)$.

  Now let $\psi_k' := \psi_{k,i} + \psi_{k,o} \circ T_k$ where
  $T_k : y\mapsto y+{x_k}$ is the translation that sends $0$ to $x_k$.
  By~\eqref{eq:lambdaio} we have $\psi_k' \in \cal F$.
%Notice that $\int\psi_k'1_{\{\psi_k'\geq a\}} \geq
%\int\psi_k1_{\{\psi_k\geq a\}}$ for all $a$.
  Notice that
\[ 
  \int\psi_k'1_{\{\psi_k'\geq a\}} \geq \int\psi_k1_{\{\psi_k\geq a\}}
\]
  for all $a$, because $\{\psi_k'\geq a\} \supseteq \{\psi_{k,i}\geq a\}\cup T_k^{-1}[\{\psi_{k,o}\geq a\}]$ so that
\[ \begin{array}{rcl}
  \int \psi_k'1_{\{\psi_k'\geq a\}}
 & \geq &
  \int\psi_{k,i}1_{\{\psi_{k,i}\geq a\}} +
  \int (\psi_{k,o}\circ T_k)1_{T_k^{-1}[\{\psi_{k,o}\geq a\}]} \\
 & = &
  \int\psi_{k,i}1_{\{\psi_{k,i}\geq a\}} +
  \int\psi_{k,o}1_{\{\psi_{k,o}\geq a\}} \\
 & = &
  \int\psi_k1_{\{\psi_k\geq a\}}.
\end{array} \]
  Part~\ref{itm:phiindicatorbiggest} of Lemma~\ref{lem:xibasic} therefore gives that
\[
  \xi(\psi_k', t ) \geq \xi( \psi_k, t ) = (1+o(1))\xi(\varphi_{n_k}, t).
\]
  Thus $\xi(\psi_k', t ) \to \sup_{\varphi \in {\cal F}} \xi(\varphi,t)$ as $k \to \infty$,
  and we saw earlier that $\psi_k' \in {\cal F}$.
  We therefore must have $\int_{B(x_k;1)} \psi_{k,o} \to 0$; for otherwise,
  using~(\ref{eqn.psik2}), we have
  $\limsup_k \int_B \psi'_k > \lim_k \int_B \psi_k = \lim_n \int_B \varphi_n$,
  %\m{check}
  and (a subsequence of) the $\psi_k'$ would contradict ~\eqref{eq:subjto}.
%%the choice of $(\varphi_n)_n$
%%(recall~\eqref{eq:subjto}).
  %to maximise $\lim_n \int_B \varphi_n$.
%But then it must also hold that
%$\vol( \{\psi_{k,o} \geq \eps \} ) \to 0$ for all
%$\eps > 0$.
  Now suppose that for some $\eps > 0$ we have
  $\limsup_k \vol( \{\psi_{k,o} \geq \eps \} ) = c > 0$.
  Because $\psi_{k,o}\in{\cal F}$ we can cover $\{\psi_{k,o} \geq \eps \}$ by
  at most $\lfloor\frac{1}{\eps}\rfloor$ balls of radius 1.
  But this gives
  $\limsup_k \int_{B(x_k;1)} \psi_{k,o}(x){\dd}x \geq c\eps$
  and we know this cannot happen. The claim~(\ref{eqn.claim-volto0}) follows.
  \end{pf}

  Recall that $\sigma_k := \frac{1}{\lambda_k}\psi_{k,o} \in {\cal F}$.
  Because $\lim_k \lambda_k=\lambda > 0$ the previous  also gives
  $\lim_k \vol(\{\sigma_k\geq\eps\}) = 0$ for all $\eps > 0$.
  Let us fix $\eps > 0$ for now and let
  $V_\eps, W_{k,\eps} \subseteq \eR^d$ be disjoint (measurable) sets with
  $\vol(V_\eps) = \frac{\volB}{\eps 2^d\delta}$ and
  $\vol(W_{k,\eps}) = \vol(\{\sigma_k\geq \eps\})$.
  Let us set $\tau_k := \eps 1_{V_\eps} + 1_{W_{k,\eps}}$.
  Then $\int\sigma_k1_{\{\sigma_k\geq a\}} \leq \int\tau_k1_{\{\tau_k\geq a\}}$
  for all $a$ (using $\sigma_k \leq 1$ and Lemma~\ref{prop:densesup}), so that
  parts~\ref{itm:phiindicatorbiggest},~\ref{itm:xiscalar} and~\ref{itm:xisum}
  of Lemma~\ref{lem:xibasic} give:
\[ \begin{array}{rcl}
  \xi( \sigma_k, t ) \;\; \leq \;\; \xi( \tau_k, t ) 
 & \leq & \eps \xi( 1_{V_\eps}, t ) + \xi( 1_{W_{k,\eps}}, t ) \\
 & = &
  \eps c(\vol(V_\eps),t) + c(\vol(W_{k,\eps}),t). 
 % \eps c(\eps) \vol(V_\eps) + \tilde{c}(k,\eps) \vol(W_{k,\eps})  \\
 % & = &
 % c(\eps) \frac{\theta}{2^d\delta} + \tilde{c}(k,\eps) \vol(W_{k,\eps}) ,
\end{array} \]
  Now  %for any $t \in (0,\infty]$, 
  $x=c(w,t) w$ solves $H(x) =1/(wt)$, so $x \to 1$ as $w \to 0$; and thus
  $c(w,t) \sim w$ as $w \to 0$.
  Hence $\eps c(\vol(V_\eps),t) \sim \eps \vol(V_\eps) = \volhalfBdelta$ as $\eps \to 0$;
  and for any fixed $\eps>0$
  $c(\vol(W_{k,\eps}),t) \to 0$ as $k \to \infty$.
  
  It follows that $\limsup_{k\to\infty}\xi( \sigma_k, t )\leq  \volhalfBdelta$. 
  Since $\sigma_k' := \frac{1}{1-\lambda_k}\psi_{k,i} \in {\cal F}$ by~\eqref{eq:lambdaio}, we thus have%,
%on noting that $\psi_k = \lambda_k \sigma_k + (1-\lambda_k) \sigma_k'$, that
\[
  \lim_{k\to\infty}\xi( \psi_k,t )
  = \lim_{k\to\infty} \xi(\lambda_k\sigma_k+(1-\lambda_k)\sigma_k',t) \leq \lambda  \volhalfBdelta % \frac{\theta}{2^d\delta}
  + (1-\lambda)\sup_{\varphi\in{\cal F}}\xi(\varphi,t) < \sup_{\varphi\in{\cal F}}\xi(\varphi,t),
\]
  using parts~\ref{itm:phipsi} and~\ref{itm:xiscalar} of Lemma~\ref{lem:xibasic}.
  But this contradicts equation~(\ref{eqn.psik}):
  so we must have $\lambda_k \to 0$, completing the proof of Lemma~\ref{lem:supismax}.
\end{proof}

\begin{lemma}\label{prop:xincr}
  Assume that $\delta<1$ (so $0<t_0<\infty$).
  Then the function $\frat(t)$ is continuous and strictly increasing for $t_0\leq t<\infty$.
\end{lemma}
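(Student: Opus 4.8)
The plan is to show that $\frat(t)=\fcol(t)/\fcli(t)$ is continuous and strictly increasing on $[t_0,\infty)$. Continuity is already in hand: we observed that $\fcol$ is continuous (Lemma~\ref{lem:fcolcont}) and $\xi(\varphi_0,t)=\fcli(t)>0$ is continuous in $t$, so $\frat$ is continuous on all of $(0,\infty)$. Hence everything hinges on strict monotonicity for $t\geq t_0$. First I would reduce this to a statement about a single optimizing feasible function. Fix $t_0\leq t_a<t_b<\infty$; I want $\frat(t_a)<\frat(t_b)$, equivalently $\fcol(t_a)\fcli(t_b)<\fcol(t_b)\fcli(t_a)$. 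By Lemma~\ref{lem:supismax}, either $\fcol(t_b)=\volhalfBdelta$, or the supremum defining $\fcol(t_b)$ is attained by some $\varphi^*\in\Fcal$. In the first case I would argue separately: $\fcol(t_b)=\volhalfBdelta$ forces $\fcol(t)=\volhalfBdelta$ for all $t\geq t_b$ (since $\fcol$ is non-increasing and bounded below by $\volhalfBdelta$), and then one compares the behaviour of $\fcli(t)=c(\volhalfB,t)$, which is \emph{strictly} decreasing, against the constant $\fcol$, to get strict increase of the ratio — but one must be careful that this regime is still inside $[t_0,\infty)$ and that $\fcol$ is not already equal to $\fcli$ there; in fact since $\delta<1$ we have $\volhalfBdelta>\volhalfB\geq\fcli(t)$-limit, so $\fcol>\fcli$ and the ratio exceeds $1$, consistent with $t>t_0$.

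In the main case, let $\varphi^*\in\Fcal$ attain $\fcol(t_b)=\xi(\varphi^*,t_b)$. The key computation is the derivative formula from the proof of part~\ref{itm:phipsi} of Lemma~\ref{lem:xibasic}: $\frac{\dd}{\dd t}\xi(\varphi,t)=-\frac{1}{t^2 s(\varphi,t)}$. I would use this to show that $\ln\xi(\varphi,t)$ has derivative $-\frac{1}{t^2 s(\varphi,t)\xi(\varphi,t)}$, and that along the optimal function the relevant logarithmic derivative of $\fcol$ at $t_b$ equals that of $\xi(\varphi^*,\cdot)$ at $t_b$ (an envelope-type argument: since $\xi(\varphi^*,t)\leq\fcol(t)$ for all $t$ with equality at $t_b$, the one-sided derivatives of $\fcol$ at $t_b$ are sandwiched by those of $\xi(\varphi^*,\cdot)$). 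So it suffices to compare the logarithmic derivatives of $\xi(\varphi^*,\cdot)$ and of $\fcli(\cdot)=\xi(\varphi_0,\cdot)$ and show the former is strictly larger (less negative). Concretely, $\frac{\dd}{\dd t}\ln\frat(t)$ at an appropriate point reduces to showing
\[
\frac{1}{s(\varphi^*,t)\,\xi(\varphi^*,t)} < \frac{1}{s(\varphi_0,t)\,\xi(\varphi_0,t)},
\]
i.e.\ $s(\varphi_0,t)\,\xi(\varphi_0,t) < s(\varphi^*,t)\,\xi(\varphi^*,t)$ whenever $\xi(\varphi^*,t)>\xi(\varphi_0,t)$ (which is exactly the situation for $t>t_0$). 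Here $s(\varphi^*,t)<s(\varphi_0,t)$ since $\varphi^*$ is ``more spread out'' in the $\xi$-sense, so the inequality is genuinely a comparison between the competing effects of smaller $s$ and larger $\xi$; I expect it follows from manipulating the two defining identities $\int H(e^{s\varphi^*})=\int H(e^{s\varphi_0})=1/t$ together with convexity of $H$.

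The main obstacle I anticipate is precisely that last comparison inequality $s(\varphi_0,t)\xi(\varphi_0,t)<s(\varphi^*,t)\xi(\varphi^*,t)$: it is not a pure monotonicity statement but a genuine cross-term estimate, and one needs to exploit the structure $H(x)=x\ln x-x+1$ (for instance, the relation $x H'(x)-H(x)=x-1$, giving $\int \varphi e^{s\varphi}\cdot s - \int H(e^{s\varphi}) = \int(e^{s\varphi}-1)$, so that $s\,\xi(\varphi,t)=\frac1t+\int(e^{s\varphi}-1)=\frac1t+\xi(\varphi,\infty)\text{-type correction}$; more precisely $s\,\xi(\varphi,t) = \tfrac{1}{t} + \int(e^{s\varphi(x)}-1)\dd x$). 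With this identity the needed inequality becomes $\int(e^{s_0\varphi_0}-1) < \int(e^{s^*\varphi^*}-1)$ where $s_0=s(\varphi_0,t)$, $s^*=s(\varphi^*,t)$; and since $\int H(e^{s_0\varphi_0})=\int H(e^{s^*\varphi^*})$ while $\int \varphi_0 e^{s_0\varphi_0}=\xi(\varphi_0,t)<\xi(\varphi^*,t)=\int\varphi^* e^{s^*\varphi^*}$, the desired strict inequality should fall out by combining these — again using that $x\mapsto e^x-1$ and $x\mapsto H(x)$ have a controlled relationship. Once this inequality is established, integrating the strict differential inequality $\frac{\dd}{\dd t}\ln\frat(t)>0$ (valid at every $t>t_0$, interpreting derivatives one-sidedly where $\fcol$ is only known to be a sup) over $[t_a,t_b]$ yields $\frat(t_a)<\frat(t_b)$, and combined with the already-noted continuity this completes the proof that $\frat$ is continuous and strictly increasing on $[t_0,\infty)$.
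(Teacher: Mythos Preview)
Your overall plan is sound and close in spirit to the paper's: continuity is immediate, the case $\fcol(t)=\volhalfBdelta$ is handled by strict monotonicity of $\fcli$, and in the remaining case you use Lemma~\ref{lem:supismax} to pick an optimiser $\varphi^*$ and compare derivatives via the formula $\frac{\dd}{\dd t}\xi(\varphi,t)=-\frac{1}{t^2 s(\varphi,t)}$. The envelope step is also fine: since $\frat$ is continuous and the right lower Dini derivative is bounded below by the derivative of $\xi(\varphi^*_t,\cdot)/\xi(\varphi_0,\cdot)$ at $t$, positivity of the latter at every $t>t_0$ does yield strict increase.

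The genuine gap is the key inequality $s(\varphi_0,t)\,\xi(\varphi_0,t) < s(\varphi^*,t)\,\xi(\varphi^*,t)$ under the hypothesis $\xi(\varphi^*,t)>\xi(\varphi_0,t)$. Your attempted reduction via the identity $s\,\xi(\varphi,t)=\tfrac{1}{t}+\int(e^{s\varphi}-1)$ is circular: subtracting the common $\tfrac1t$ shows that $s^*\xi^*>s_0\xi_0$ is \emph{equivalent} to $\int(e^{s^*\varphi^*}-1)>\int(e^{s_0\varphi_0}-1)$, so you have merely restated the inequality. The two pieces of information you list, namely $\int H(e^{s_0\varphi_0})=\int H(e^{s^*\varphi^*})$ and $\int\varphi_0 e^{s_0\varphi_0}<\int\varphi^* e^{s^*\varphi^*}$, do not combine trivially to give the conclusion: note that $H(e^y)=ye^y-(e^y-1)$, so knowing the $H$-integrals agree and the $\varphi e^{s\varphi}$-integrals are ordered tells you about $s$-weighted quantities, not directly about $\int(e^{s\varphi}-1)$. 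Your side remark that $s^*<s_0$ is also unjustified (you would need $\varphi_0\le\varphi^*$ pointwise, which you do not have).

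The paper proves the equivalent statement that for each $\lambda>1$ the level set $\{t:\xi(\varphi,t)=\lambda\,\xi(\varphi_0,t)\}$ contains at most one point, and does so by a substantive calculation: it introduces $\psi=\lambda\varphi_0$, defines $v(s)$ and $u(s)$ as the solutions of $\int\varphi e^{v\varphi}=\int\psi e^{s\psi}$ and $\int H(e^{u\varphi})=\int H(e^{s\psi})$ respectively, and shows $F(s)=v(s)-u(s)$ has at most one positive zero. The argument uses two structural facts you have not yet exploited: that $\varphi_0$ is an indicator (so $\psi^2=\lambda\psi$, which forces $v'(s)\ge\lambda>1$), and that $\varphi^*\le 1$ (so $(\varphi^*)^2\le\varphi^*$). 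These are exactly what is needed to control the competition between ``smaller $s$'' and ``larger $\xi$'' that you correctly flag as the difficulty; without them the inequality can fail for arbitrary pairs of functions. To close your gap you would need either to carry out this $F(s)$ analysis, or to find some other way to use $\varphi_0^2=\varphi_0$ and $\varphi^*\le 1$ to force $s^*\xi^*>s_0\xi_0$.
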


\begin{proof} %{of Lemma~\ref{prop:xincr}}
That $\frat(t)$ is continuous follows immediately from 
parts~\ref{itm:tobiaschi.intermediate} of Theorems~\ref{thm.tobiaschi} and~\ref{thm.tobiasomega}, which 
we have already established in sections~\ref{sec.clique-proof} and~\ref{sec.col-proof}.
Let us observe that, by definition~\eqref{eq:t0def} of $t_0$, 
it suffices to show that whenever $t>0$ is such that $\frat(t)>1$
then $\frat(t')>\frat(t)$ for all $t'>t$.

Consider a $t>t_0$ with $\frat(t)>1$. %%We aim to show that $\frat(t') > \frat(t)$ for all $t'>t$.
  First suppose that $\sup_{\varphi\in{\cal F}}\xi(\varphi,t) = \volhalfBdelta$. %\frac{\theta}{2^d\delta}$.
  Notice that the lower bound in Lemma~\ref{cor:chilb} then shows
  that also $\sup_{\varphi\in{\cal F}}\xi(\varphi,t') = \volhalfBdelta$
  for all $t'\geq t$, so that in this case $\frat(t') > \frat(t)$ for all $t'>t$ as
  $\xi(\varphi_0,t)$ is strictly decreasing in $t$.

So we may assume that $\sup_{\varphi\in{\cal F}}\xi(\varphi,t) > \volhalfBdelta$.
  By Lemma~\ref{lem:supismax} there is a $\varphi \in {\cal F}$ with $0<\int\varphi<\infty$ 
such that the supremum equals $\xi(\varphi,t)$.
Next, we claim that would suffice to prove that
for any $\lambda > 1$ there is at most one
$t' > 0$ that solves the equation $\xi(\varphi,t') = \lambda\xi(\varphi_0,t')$.
This is because, since $\xi(\varphi,t_0)\leq\xi(\varphi_0,t_0)$ and 
$t' \mapsto \xi(\varphi,t')/\xi(\varphi_0,t')$ is continuous in $t'$, we would
then also get that
 
\[ 
\frat(t') \geq \frac{\xi(\varphi,t')}{\xi(\varphi_0,t')} > \frac{\xi(\varphi,t)}{\xi(\varphi_0,t)} = \frat(t), 
\]

\noindent
for all $t'>t$.

  %(recall that $\xi(\varphi,t_0)\leq\xi(\varphi_0,t_0)$ so that this
  %indeed gives that $\frac{\xi(\varphi,t)}{\xi(\varphi_0,t)}$ is increasing
  %for every $t$ where this ratio is $>1$
  %
  %\begin{footnotesize}\begin{footnotesize}\end{footnotesize}\end{footnotesize}).
  %
  
  Set $\psi := \lambda \varphi_0$ with $\lambda > 1$.
By Lemma~\ref{lem:xibasic}, part~\ref{itm:xiscalar},
$\xi(\psi, t) = \lambda\xi(\varphi_0, t)$.
So to prove the lemma, it suffices to show that the system of equations

\begin{equation}\label{eq:top}
\int_{\eR^d} \varphi(x) e^{w\varphi(x)}{\dd}x =
\int_{\eR^d} \psi(x) e^{s\psi(x)}{\dd}x,
\end{equation}

\begin{equation}\label{eq:bottom}
\int_{\eR^d} H(e^{w\varphi(x)}){\dd}x =
\int_{\eR^d} H(e^{s\psi(x)}){\dd}x
%\int_{\eR^d} (w\varphi(x)e^{w\varphi(x)}
%- e^{w\varphi(x)} + 1 )dx =
%\int_{\eR^d} (s\psi(x)e^{s\psi(x)}-e^{s\psi(x)}+1) dx
\end{equation}
  has at most one solution $(w,s)$ with $w,s > 0$.
  For $s \geq 0$ let $v(s)$ be the unique solution of~\eqref{eq:top} and
  let $u(s)$ be the unique non-negative solution of~\eqref{eq:bottom}.
  Let us write
  
  \[ 
  F(s) := v(s) - u(s).
  \]
  
  \noindent
  Our goal for the remainder of the proof will be to show that there is 
  at most one solution $s > 0$ of $F(s)=0$, which implies that 
there is at most one solution $(w,s)$ with $w,s > 0$ of the
system given by~\eqref{eq:top} and~\eqref{eq:bottom} and thus also implies the lemma. 
 
  Differentiating both sides of equation \eqref{eq:top} wrt~$s$ we get
\[
  \int_{\eR^d} v'(s) \varphi^2(x) e^{v(s)\varphi(x)}{\dd}x =
  \int_{\eR^d} \psi^2(x) e^{s\psi(x)}{\dd}x,
\]
  where we have swapped integration wrt $x$ and differentiation wrt $s$.
  (This can be justified using the fundamental theorem of calculus and
  Fubini's theorem for nonnegative functions; and that $v$ is differentiable
can be justified using the implicit function theorem -- for more details see the footnotes 
in section~\ref{sec.scaling-function}.)
%% \footnote{
%%%%%%%%%%%%%%%%%%%%%%%%%%%%%%%%%%%%%%%%%%%%%%%%%%%%%%%%%%%%%%%%
%% Here we mean the following. If $g(x,u)$ denotes one of $\varphi(x)e^{u\varphi(x)}, \psi(x)e^{u\psi(x)}, H(e^{u\varphi(x)})$ or
%% $H(e^{u\psi(x)})$ then
%% $\int_{\eR^d} g(x,u)-g(x,0){\dd}x =
%% \int_{\eR^d}\int_0^u g_2(x,w){\dd}w{\dd}x =
%% \int_0^u\int_{\eR^d}g_2(x,w){\dd}x{\dd}w$, where $g_2$ denotes the derivative of $g$
%% wrt the second argument, and we have used Fubini's theorem to switch the order of
%% integration. Now the fundamental theorem of calculus
%% shows that $\frac{\dd}{{\dd}u}\int_{\eR^d} g(x,u){\dd}x
%% = \frac{\dd}{{\dd}u}\int_{\eR^d} g(x,u)-g(x,0){\dd}x
%% = \frac{\dd}{{\dd}u}\int_0^u\int_{\eR^d}g_2(x,w){\dd}x{\dd}w =
%% \int_{\eR^d}g_2(x,u){\dd}x$.
%%%%%%%%%%%%%%%%%%%%%%%%%%%%%%%%%%%%%%%%%%%%%%%%%%%%%%%%%%%%%%%%
%% }). 
This gives

\begin{equation}\label{eq:vprimedef}
v'(s) =\frac{\int_{\eR^d} \psi^2(x) e^{s\psi(x)}{\dd}x}{\int_{\eR^d} \varphi^2(x) e^{v(s)\varphi(x)}{\dd}x}.
\end{equation}

\noindent
Similarly, differentiating~\eqref{eq:bottom} wrt~$s$ we get that for $s>0$: 

\begin{equation}\label{eq:uprimedef} 
u'(s)
=  
\frac{
s \int_{\eR^d} \psi^2(x) e^{s\psi(x)}{\dd}x
}{
u(s) \int_{\eR^d} \varphi^2(x) e^{u(s)\varphi(x)}{\dd}x 
}.
\end{equation}

\noindent
Observe that from~\eqref{eq:vprimedef}
we get
 
\begin{equation}\label{eq:vprimebiggerone}
v'(s) =
\lambda\frac{\int_{\eR^d} \psi(x) e^{s\psi(x)}{\dd}x}{\int_{\eR^d} \varphi^2(x) e^{v(s)\varphi(x)}{\dd}x}
  \geq 
  \lambda \frac{\int_{\eR^d} \psi(x) e^{s\psi(x)}{\dd}x}{\int_{\eR^d} \varphi(x) e^{v(s)\varphi(x)}{\dd}x}
  =\lambda, %% > 1,
\end{equation}
  where we have used the specific form of $\psi$ as
  a constant times an indicator function, the fact that $\varphi^2(x) \leq \varphi(x)$
  (as $0\leq \varphi(x) \leq 1$ for all $x$) and the fact that $v(s)$ solves~\eqref{eq:top}.
  
\noindent
From \eqref{eq:vprimedef}
and~\eqref{eq:uprimedef} we see that: 

\begin{equation}\label{eq:obs} 
\text{ If  } u(s)=v(s) \text{  for some  } s > 0 \text{ then  }
u'(s) =  \frac{s}{v(s)} v'(s).%% < v'(s).
\end{equation}

\noindent
Let us first suppose that $v(0) \geq 0$.  Since $v'(s) \geq \lambda > 1$ for all $s>0$,
  it follows that $v(s) > s$ for all $s > 0$.
  Thus, from~\eqref{eq:obs} we see that whenever $v(s)=u(s)$ we have $u'(s)<v'(s)$.
  In other words, at every zero of $F$ we have $F'(s) =v'(s)-u'(s)>0$.
  This shows $F$ can have at most one zero, as required.

It remains to consider the case when $v(0) < 0$.
  We may assume that there is a solution $s>0$ to $v(s) = u(s)$, and since $F$ is
  continuous there is a least such solution $s_1$.
  Now $u(0)=0$ so $F(0)<0$, and thus $F(s)<0$ for each $0\leq s<s_1$.
  %Since $u(s) > v(s)$ for all $0\leq s < s_1$ (recall that $u(0)=0$), we must have that
  Hence $F'(s_1) = v'(s_1) - u'(s_1) \geq 0$.
From~\eqref{eq:obs} we now see that $v(s_1) \geq s_1$, so that using~\eqref{eq:vprimebiggerone}
we have $v(s) > s$ for all $s>s_1$.
Reasoning as before this gives that:

\begin{equation}\label{eq:obs2}
\text{ If } F(s_2) = 0 \text{ for some } s_2>s_1 \text{ then } F'(s_2) > 0.
\end{equation}

\noindent
Hence, if $F(s)>0$ for some $s>s_1$ then $F(s')>0$ for all $s' \geq s$:
for if not then there is a least $s'>s$ such that $F(s')=0$.
Such an $s'$ must then satisfy $F'(s') \leq 0$, contradicting~\eqref{eq:obs2}.

Now suppose there is an $s_2 > s_1$ such that $F(s_2)=0$.
By the above we must then have $F(s) \leq 0$ for all $s_1 < s < s_2$.
Hence, for $s_1< s < s_2$ we have
  
\[
  \frac{v'(s)}{u'(s)} = 
\frac{u(s)}{s} 
\frac{
\int_{\eR^d} \varphi^2(x)e^{u(s)\varphi(x)}  {\dd}x
}{
\int_{\eR^d} \varphi^2(x) e^{v(s)\varphi(x)}{\dd}x
}
\geq 
\frac{v(s)}{s} >1,
\]

\noindent
(using that $u(s) \geq v(s)$ and $v(s)>s$).
  Thus $F'(s)>0$ for all $s_1<s<s_2$.
But this implies $F(s_2)>F(s_1)=0$, contradicting the choice of $s_2$.

It follows that $s_1$ is the only zero of $F$, which concludes the proof of the lemma.
\end{proof}

\noindent
Lemmas~\ref{lem:t0pos} and~\ref{prop:xincr} give Theorem~\ref{thm.frat}.

%%%%%%%%%%%%%%%%%%%%%%%%%%%%%%%%%%%%%%%%%%%%%%%%%%%%%%%%%%%%%%%%%%%%%%%

\section{Remaining proofs}
%\section{Proofs of Theorem~\ref{thm.colinx} and Proposition~\ref{thm.verysparse}}
% remaining results on $\chi(G_n)$ and $\omega(G_n)$
\label{sec.rest-col-proofs}

  %In the first subsection below we prove that in Theorem~\ref{thm.colinx} the function $x(t)$
  %and the constant $t_0$ (when $\delta<1$) have the required properties. 
  %
There are still some loose ends. 
Here we will finish the proof of Theorems~\ref{thm.tobiaschi},~\ref{thm.tobiasomega},~\ref{thm.tobiasfrat} and~\ref{thm.chif} and
Proposition~\ref{thm.verysparse}.

%%%%%%%%%%%%%%%%%%%%%%%%%%%%%%%%%%%%%%%%%%%%%%%%%%%%%%%%%%%%%%%%%%%%%%%%%%%%%%%%%%%%%%

\subsection{Proof of parts~\ref{itm:tobiaschi.sparse} of Theorems~\ref{thm.tobiaschi}
and~\ref{thm.tobiasomega}}
\label{subsec.sparse-proof}

The following Lemma will immediately imply parts~\ref{itm:tobiaschi.sparse} of Theorems~\ref{thm.tobiaschi} and~\ref{thm.tobiasomega}; and it is also convenient for the proof of Theorem~\ref{thm.tobiasfrat} later on.
Recall that the notation $A_n$ a.a.a.s.~means that $\Pee( A_n \text{ holds for all but finitely many } n ) = 1$.

\begin{lemma}\label{lem:sparsereal}
For every $\eps>0$, there exists a $\beta=\beta(\sigma, \eps)$
such that if $n^{-\beta} \leq nr^d \leq \beta \ln n$ for
all sufficiently large $n$, then

\[ (1-\eps)k(n) \leq \omega(G_n), \chi(G_n) \leq (1+\eps)k(n) \text{ a.a.a.s., } \]

\noindent
where $k(n) := \ln n / \ln{\Big(}\frac{\ln n}{nr^d}{\Big)}$.
\end{lemma}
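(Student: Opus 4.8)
The plan is to derive this from the scan-statistic result Lemma \ref{lem:scanstatisticsparse} together with the sandwiching $\omega(G_n) \le \chi_f(G_n) \le \chi(G_n)$ and a clique-cover argument, in exactly the spirit of the existing proofs in Section \ref{sec.col-proof}. First I would observe that it suffices to produce, for every $\eps>0$, a $\beta>0$ such that for $n^{-\beta}\le nr^d\le\beta\ln n$ (for all large $n$) we have $(1-\eps)k(n)\le\omega(G_n)$ a.a.a.s.\ and $\chi(G_n)\le(1+\eps)k(n)$ a.a.a.s.; the two remaining inequalities then follow for free from $\omega\le\chi$. Here $k(n)=\ln n/\ln\!\big(\tfrac{\ln n}{nr^d}\big)$.

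For the lower bound on $\omega(G_n)$: any set of points lying in a common translate of $rB(0;\tfrac12)$ forms a clique, so $\omega(G_n)\ge M_{B(0;\frac12)}$. Applying Lemma \ref{lem:scanstatisticsparse} with $W=B(0;\tfrac12)$ gives a $\beta_1=\beta_1(\sigma,\eps)$ such that $M_{B(0;\frac12)}\ge(1-\eps)k(n)$ a.a.a.s.\ whenever $n^{-\beta_1}\le nr^d\le\beta_1\ln n$. For the upper bound on $\chi(G_n)$: I would reuse the clique-cover construction from the proof of Theorem \ref{thm.tobiasomegareal}. Fix a small $\eps'>0$ (to be related to $\eps$), let $A_1,\dots,A_m\subseteq\eps'\Zed^d$ be the finitely many subsets containing $0$ with $\diam(A_i)\le 1+2\eps'\rho$ where $\rho=\diam([0,1]^d)$, and set $W_i=\conv(A_i)$. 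The same argument shows that any clique of $G_n$, after translating one of its vertices to the origin and scaling by $r^{-1}$, lies inside one of the $W_i$; hence the vertex set of $G_n$ can be covered by cliques, and one checks that $\chi(G_n)\le(\text{number of such cliques})$. More precisely, a greedy-type covering argument gives $\chi(G_n)\le c\cdot\max_i M_{W_i}$ for a constant $c=c(\eps',d,\norm{\cdot})$ — but since each $W_i$ has diameter $\le1+2\eps'\rho$ it is contained in a translate of $(1+2\eps'\rho)B(0;\tfrac12)$, and $M_{W_i}$ for such a bounded set is governed by the \emph{same} $k(n)$ by Lemma \ref{lem:scanstatisticsparse} (the leading asymptotics of $\ln n/\ln(\tfrac{\ln n}{ns^d})$ are insensitive to replacing $r$ by a bounded multiple of $r$, since $ns^d$ only changes by a constant factor). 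Thus there is a $\beta_2=\beta_2(\sigma,\eps)$ with $\max_i M_{W_i}\le(1+\eps/2)k(n)$ a.a.a.s.\ in the stated range, and one more clean-up (covering the vertices of $G_n$ by the cliques coming from a single maximal $W_i$ per vertex, then colouring disjoint far-apart pieces with a shared palette, as in Lemma \ref{lem.detchi2}) upgrades this to $\chi(G_n)\le(1+\eps)k(n)$.

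Finally I would set $\beta:=\min(\beta_1,\beta_2)$, intersect the two a.a.a.s.\ events (a finite intersection of probability-one events has probability one), and conclude. The main obstacle is the clique-covering step for the chromatic upper bound: one must be careful that the overhead in the cover is genuinely multiplicative-plus-lower-order rather than introducing an extra constant factor, since here (unlike in the dense regime) $k(n)$ itself grows only like $\ln n/\ln\ln n$, so an additive $O(1)$ term is harmless but a spurious constant factor would not be. This is handled exactly as in the proof of Lemma \ref{lem.detchi2}: one tiles space into large blocks, colours each block's contribution using the LP/ILP rounding (which costs only an additive constant per block), and reuses colour classes across blocks that are far enough apart to be non-adjacent, so that the covering number per point is $(1+o(1))$ times the scan statistic rather than a constant multiple. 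Everything else is a routine application of Lemma \ref{lem:scanstatisticsparse} and the observation that $k(n,r)$ and $k(n,cr)$ are asymptotically equal for fixed $c>0$ in this regime.
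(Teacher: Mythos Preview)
Your lower bound is exactly what the paper does: $\omega(G_n)\ge M_{B(0;\frac12)}$ together with Lemma~\ref{lem:scanstatisticsparse}.

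For the upper bound, however, you are working much too hard, and part of your sketch is confused. First, the sentence ``hence the vertex set of $G_n$ can be covered by cliques, and one checks that $\chi(G_n)\le(\text{number of such cliques})$'' is not right: a clique cover bounds the chromatic number of the \emph{complement}, not of $G_n$ itself. The $W_i$ construction from the proof of Theorem~\ref{thm.tobiasomegareal} shows $\omega(G_n)\le\max_i M_{W_i}$, which is the wrong direction for what you need. You then try to rescue this via Lemma~\ref{lem.detchi2}; that \emph{can} be made to work (since the $\varphi_i$ there are bounded by $1$ with bounded support, so $M_{\varphi_i}\le M_W$ for a single fixed bounded $W$, to which Lemma~\ref{lem:scanstatisticsparse} applies), but it is a very long detour.

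The paper's proof is a two-line sandwich. Set $W_1=B(0;\tfrac12)$ and $W_2=\clo(B)$. Then
\[
M_{W_1}\le\omega(G_n)\le\chi(G_n)\le\Delta(G_n)+1\le M_{W_2},
\]
where the last inequality holds because any vertex together with all its neighbours lies in a closed ball of radius $r$. Now the crucial feature of Lemma~\ref{lem:scanstatisticsparse} is that its conclusion $M_W\sim k(n)$ depends on $W$ only through the hypothesis that $W$ be bounded with nonempty interior --- the asymptotic $k(n)$ is the \emph{same} for $W_1$ and $W_2$. Taking $\beta=\min\bigl(\beta(W_1,\sigma,\eps),\beta(W_2,\sigma,\eps)\bigr)$ finishes the proof immediately. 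You noticed this volume-insensitivity yourself (``the leading asymptotics \ldots\ are insensitive to replacing $r$ by a bounded multiple of $r$''), but did not exploit it via the elementary greedy bound $\chi\le\Delta+1$.
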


\begin{proof}
Set $W_1 := B(0;\frac12)$ and $W_2 := \clo(B)$, and observe that 

\begin{equation}\label{eq:MWomegachi} 
 M_{W_1} \leq \omega(G_n) \leq \chi(G_n) \leq \Delta(G_n)+1 \leq M_{W_2}, 
\end{equation}

\noindent
where $\Delta$ denotes the maximum degree.
Now set $\beta := \min( \beta(W_1,\sigma,\eps), \beta(W_2,\sigma,\eps) )$ with
$\beta(W,\sigma,\eps)$ as in Lemma~\ref{lem:scanstatisticsparse}. 
The statement immediately follows from Lemma~\ref{lem:scanstatisticsparse}
together with~\eqref{eq:MWomegachi}.
\end{proof}

\subsection{Proof of parts~\ref{itm:tobiaschi.vsparse} of Theorems~\ref{thm.tobiaschi}
and~\ref{thm.tobiasomega} and of Proposition~\ref{thm.verysparse}}
\label{subsec.vsparse-proof}

  Our plan is to use Lemma~\ref{lem:scanstatisticverysparse} on the generalised
  scan statistic.  To make use of this lemma, we `split $r$ into parallel sequences'.
  Let $K \in \eN$ be such that $\frac{1}{K} < \alpha$.
  For $k=0,1, \dots, K$ set $a_k := \frac{1}{k+\frac12}$
  and set:

\[ 
  r_k(n) := \left\{ \begin{array}{cl} r(n) & \textrm{if } n^{-a_{k-1}} \leq nr^d < n^{-a_k}, \\
  n^{-(a_{k-1}+1)/d} & \textrm{otherwise} \end{array} \right.,
\]

\noindent
for $1 \leq k \leq K$ and for $k=0$ set: 

\[ 
  r_0(n) := \left\{ \begin{array}{cl} r(n) & \textrm{if } nr^d < n^{-a_0}, \\
        n^{-4/d} & \textrm{otherwise} \end{array} \right..
\]

\noindent
Observe that $n^{-\frac{1}{k-\frac12}} \leq n r_k^d < n^{-\frac{1}{k+\frac12}}$ for $k=1,\dots,K$ and
$n r_0^d < n^{-2}$.
Hence

\begin{equation}\label{eq:kndef}
r(n) = r_k(n) \quad \text{ if and only if } \quad k = \kvsparse.
\end{equation}

\noindent
Let us now put $G_n^{(k)} := G(X_1,\dots,X_n ; r_k(n) )$ for $1\leq k\leq K$.
 Because $G_n$ coincides with some $G_n^{(k)}$ for each $n$, and the intersection of finitely 
many events of probability one has probability one itself, it suffices to show that
$\chi(G_n^{(k)}) = \omega(G_n^{(k)}) \in \{k,k+1\}$ a.a.a.s.~for  each $k$ separately.

Let us thus suppose that there is a fixed $0\leq k\leq K$ such that $r(n) = r_k(n)$ for all $n$.
Set $W_1 := B(0;\frac12)$ and $W_2 := B(0;100)$. Then we again have

\begin{equation}\label{eq:scanomegachi}
M_{W_1} \leq \omega(G_n) \leq \chi(G_n) \leq \Delta(G_n) + 1 \leq M_{W_2}. 
\end{equation}

\noindent
(The reason for choosing radius larger than 1 will become clear shortly.)
  Now notice that part~\ref{itm:scanverysparseub} of Lemma~\ref{lem:scanstatisticverysparse} 
shows that a.a.a.s.:

\begin{equation}\label{eq:vsparseM1M2conc1}
  M_{W_2} \leq k+1.
\end{equation}

\noindent
If $k \in \{0,1\}$ then $M_{W_1} \geq k$ holds trivially, and 
for $k \geq 2$ we can apply part~\ref{itm:scanverysparselb} of Lemma~\ref{lem:scanstatisticverysparse} to
show that a.a.a.s.:

\begin{equation}\label{eq:vsparseM1M2conc2}
  M_{W_1} \geq k.
\end{equation}

\noindent
Putting~\eqref{eq:scanomegachi},~\eqref{eq:vsparseM1M2conc1} and~\eqref{eq:vsparseM1M2conc2} together, 
we see that we have just proved parts~\ref{itm:tobiaschi.vsparse} of Theorems~\ref{thm.tobiaschi} 
and~\ref{thm.tobiasomega}.

To finish the proof we will derive (deterministically) that if~\eqref{eq:vsparseM1M2conc1} and~\eqref{eq:vsparseM1M2conc2} hold then $\chi(G_n)=\omega(G_n)$ must also hold. So let us assume that~\eqref{eq:vsparseM1M2conc1} and~\eqref{eq:vsparseM1M2conc2}  hold.  First note that  $\Delta(G_n) \in \{ \omega(G_n)-1, \omega(G_n)\}$.  If $\Delta(G_n)=\omega(G_n)-1$ then we are done (since always $\chi(G) \leq \Delta(G)+1$), so let us suppose that $\Delta(G_n)=\omega(G_n)$.  In this case Brooks' Theorem (see for example van Lint and Wilson~\cite{vanlintwilson}) tells us that $\chi(G_n) = \omega(G_n)$ unless $\omega(G_n)=2$ and $G_n$ contains an odd cycle of length at least 5.  
Let us therefore assume that $\omega(G_n)=2$. Then we must have $k\leq 2$ and hence $M_{W_2}\leq 3$.  
But now each component of $G_n$ must have at most 3 vertices: to see this note that if the subgraph induced by $X_{i_1}, X_{i_2}, X_{i_3}, X_{i_4}$ is connected then all four points are contained in a ball of radius $<100r$.  
Hence there are no odd cycles of length at least 5 and so $\chi(G_n) = 2$ as required.

\subsection{Proof of Theorem~\ref{thm.frat}}
\label{subsec.fratproof}

Let $r$ be any sequence of positive numbers that tends to 0, and let $\eps > 0$ be arbitrary, but fixed.
Our aim will be to prove

\begin{equation}\label{eq:fraclimaaas} 
1 - \eps \leq \frac{\chi(G_n)}{\frat(\frac{\sigma n r^d}{\ln n}) \omega(G_n)} < 1+\eps \text{ a.a.a.s.},
\end{equation}

\noindent
which clearly implies the theorem.

Set $\eps' := \frac{1+\eps}{1-\eps}-1$, and 
let $0<\beta \leq \beta(\sigma, \eps')$ where $\beta(\sigma, \eps')$ is as in Lemma~\ref{lem:sparsereal} above.
We can assume without loss of generality that $\beta < t_0 / \sigma$ with $t_0$ as in Theorem~\ref{thm.frat} if $\delta < 1$.
(If $\delta = 1$ then we do not need any additional assumption.)
We will apply the same trick we used in the previous section.
Set $r_1(n) := \min( r(n), n^{-(\beta+1)/d} )$,

\[ r_2(n) := \left\{ \begin{array}{cl}
                      r(n) & \text{ if } n^{-\beta } < nr^d < \beta \ln n, \\
		      \left(\frac{\beta\ln n}{n}\right)^{\frac{1}{d}} & \text{ otherwise. }
                     \end{array}
\right.,
\]

\noindent
and $r_3(n) := \max( \left(\frac{\beta \ln n}{n}\right)^{\frac{1}{d}}, r(n) )$.
Now set $G_n^{(k)} := G(X_1,\dots,X_n ; r_k(n) )$ for $k=1,2,3$.
It again suffices to prove~\eqref{eq:fraclimaaas} for each $G_n^{(k)}$ separately.

Notice that $\frac{\sigma n r_1^d}{\ln n} \to 0$ as $n\to\infty$, so that $\frat(\frac{\sigma n r_1^d}{\ln n})\to 1$.
The statement~\eqref{eq:fraclimaaas} for $G_n^{(1)}$ now follows immediately from Proposition~\ref{thm.verysparse}.

Since $\liminf_n \frac{\sigma n r_3^d}{\ln n} > 0$, the statement~\eqref{eq:fraclimaaas} for $G_n^{(3)}$ follows 
immediately from Theorems~\ref{thm.tobiaschireal} and~\ref{thm.tobiasomegareal}.

Finally, notice that $\frac{\sigma n r_2^d}{\ln n} < t_0$ for all $n$, so that in fact $\frat(\frac{\sigma n r_2^d}{\ln n})=1$
for all $n$.
Applying Lemma~\ref{lem:sparsereal} we see that 

\[ 
1 \leq \frac{\chi(G_n^{(2)})}{\omega(G_n^{(2)})} < \frac{1+\eps'}{1-\eps'} = 1+\eps \text{ a.a.a.s. }
\]

\noindent
This concludes the proof of Theorem~\ref{thm.tobiasfrat}

\subsection{Proof of Theorem~\ref{thm.chif}}

When $\delta=1$ then the theorem follows directly from $\omega(G_n) \leq \chi_f(G_n) \leq \chi(G_n)$ together 
with part~\ref{itm:thm.frat.i} of Theorem~\ref{thm.frat} and Theorem~\ref{thm.tobiasfrat}.

Let us thus suppose that $\delta < 1$.
We can again ``split into subsequences'' as in sections~\ref{subsec.vsparse-proof} and~\ref{subsec.fratproof}, 
to see that it suffices to prove the theorem for the case when $\sigma n r^d \leq t_0 \ln n$ 
and the case when $\sigma n r^d > t_0 \ln n$.

If $\sigma n r^d \leq t_0 \ln n$ then $\frat(\frac{\sigma n r^d}{\ln n}) = 1$ so that the argument for 
the $\delta=1$ case applies.

Finally, if $\sigma n r^d > t_0 \ln n$ then in particular $\liminf \frac{n r^d}{\ln n} > 0$; and now we 
may use~\eqref{target.chi-bdd} to complete the proof.

\section{Concluding remarks}
\label{sec.concl}

  In this paper we have proved a number of almost sure convergence
  results on the chromatic number of the random geometric graph and
  we have investigated its relation to the clique number.
  Amongst other things we have set out to describe the ``phase change'' regime when $nr^d=\Theta(\ln n)$.
  An important shift in the behaviour of the chromatic number occurs in this range of $r$
  (except in the less interesting case when the packing constant $\delta=1$).
  We have seen that (except when $\delta=1$) there exists a finite positive constant $t_0$ such that
  if $\sigma nr^d \leq t_0\ln n$ then the chromatic number and the clique number
  of the random geometric graph are essentially equal in the sense that
\[
  \frac{\chi(G_n)}{\omega(G_n)} \to 1 \text{ a.s. };
\]
  and if on the other hand $\sigma nr^d \geq (t_0+\eps) \ln n$ for
  some fixed (but arbitrarily small) $\eps > 0$ then
  the $\liminf$ of this ratio is bounded away from 1 almost surely.
  Moreover, if $nr^d\gg \ln n$ then
\[
  \frac{\chi(G_n)}{\omega(G_n)} \to \frac{1}{\delta} \text{ a.s. }
\]
  where $\delta$ is the packing constant.

  We have also given expressions for the almost sure limit $\fcol(t)$ of
  $\frac{\chi(G_n)}{\sigma nr^d}$ and the almost sure limit
  $\frat(t)$ of $\frac{\chi(G_n)}{\omega(G_n)}$ if $\sigma nr^d\sim t\ln n$ for some $t>0$.
  %Furthermore we have shown that if $nr^d$ is bounded above by a negative power of
  %$n$ then with probability one $\chi(G_n)=\omega(G_n)$ for all but finitely many $n$.
  An interesting observation is that $t_0$ and the limiting functions $\fcli(t)$, $\fcol(t)$ and $\frat(t)$
  do not depend on the choice of probability measure $\nu$, and that the only feature of the
  probability measure that plays any role in the proofs and
  results in this paper is the maximum density $\sigma$.
  % the essential supremum of the probability density.
  %We have not spelled this out, but it is quite straightforward to combine and adapt
  %some of the proofs given in this paper to show that $\frac{\chi(G_n)}{\chi_f(G_n)}\to 1$
  %almost surely, for any sequence $r$ with $r\to 0$.

  It should be mentioned that considering the ratio $\frac{\chi(G_n)}{\omega(G_n)}$,
  apart from the fact that it provides an easy to state summary of the results,
  can also be motivated by the fact that while colouring unit disk graphs
  (non-random geometric graphs when $d=2$ and $\norm{.}$ is the Euclidean norm)
  is NP-hard (Clark et al~\cite{clarkcolbourn}, Gr\"af et al~\cite{grafstumpfweissenfels}),
  their clique number may be found in polynomial time~\cite{clarkcolbourn}, unlike
  finding the clique number in general graphs.
  In fact the clique number of a unit disk graph may be found in polynomial time even if
  an embedding (that is, an explicit representation with points on the plane) is not given
  (Raghavan and Spinrad~\cite{RaghavanSpinrad03}).
  Thus, the results given here suggest that even though finding the chromatic number of a unit disk graph $G$ is NP-hard, for graphs that are not very sparse the polynomial approximation of finding the clique number and multiplying 
this by $\frac1\delta$
  %(which equals $\frac{2\sqrt{3}}{\pi} \approx 1.103$ for the Euclidean norm in the plane)
  might work quite well in practice. Note that for the Euclidean norm in the plane
  $\frac1\delta = \frac{2\sqrt{3}}{\pi} \approx 1.103$: also
  in this case always $\chi(G)/\omega(G) < 3$ (Peeters~\cite{Pee91}).
  \m{fuller version of \cite{Pee91}?}

  It is instructive to consider for comparison the ratio of chromatic number
  to clique number in the Erd\H{o}s-R\'enyi model, with expected degree similar to the values
  we have been investigating.  Let us consider $p=p(n)$ % bounded away from one.
  such that $np \to \infty$ with $np = o(n^{\frac13})$ as $n \to \infty$.
  Then $\omega(G(n,p))=3$ whp (see for example Bollob\'as~\cite{BollobasBoek} Theorem 4.13),
  and $\chi(G(n,p)) \sim \frac{np}{2 \ln np}$ whp
  ({\L}uczak~\cite{luczak91a}, or see~\cite{BollobasBoek} Theorem 11.29);
  and thus whp
\[
  \frac{\chi(G(n,p))}{\omega(G(n,p))} \sim
  \frac{ np} {6 \ln np} \; \to \infty \text{ whp }.
\]
 Also, when does $\chi(G(n,p)) = \omega(G(n,p))$ whp?
 This property holds if $np \to 0$ as $n \to \infty$,
 since $G(n,p)$ is then a forest whp; and conversely. if the property holds
 (and $p$ is bounded below one) then $np \to 0$.
  Thus, the results on the chromatic number given here, and in the earlier work of the
  first author~\cite{cmcdplane} and Penrose~\cite{penroseboek},
  highlight a dramatic difference between the Erd\H{o}s-R\'enyi model on the one hand and
  the random geometric model on the other hand.

  Although we have presented substantial progress on the current
  state of knowledge on the chromatic number of random geometric
  graphs in this paper, several questions remain. Our proofs for
  instance do not yield an explicit expression for $t_0$ (when
  $\delta<1$) and it would certainly be of interest to find such an
  expression or to give some (numerical) procedure to determine it,
  in particular for the euclidean norm in $\eR^2$.
  More generally, it is far from trivial to extract information from
  the expressions for $\fcol(t)$ and $\frat(t)$.
In particular we would be interested to know whether $\fcol$ is 
differentiable. In particular: is $\fcol$ differentiable at $t_0$?
Also Lemma~\ref{lem:supismax} suggests the question:
Is $\fcol(t) > \volhalfBdelta$ for all $0<t<\infty$?

Another natural question concerns the extent to which the chromatic number is `local' or `global'.  Define the random variable $R_n$ to be the infimum of the values $R>0$ such that $G_n$ contains a subgraph $H_n$ which is induced by the points in some ball of radius $R$ and which satisfies $\chi(H_n)=\chi(G_n)$.  In the very sparse case when $nr^d \leq n^{-\alpha}$ for some fixed $\alpha>0$, the proof of Proposition 1.6 shows that $R_n \leq \eps r$ a.a.a.s.~for any fixed $\eps>0$, so 
the behaviour is `very local'. Now suppose that $\sigma nr^d\sim t\ln n$ for some $t>0$.  
Then by Lemmas~\ref{lem:WW} and~\ref{lem:island}, for $t$ sufficiently small we have 
$R \leq 2r$ a.a.a.s.~so still the behaviour is local.  
But what happens for large $t$?

  A question that has not been addressed in this paper (except
in the ``very sparse'' case when $nr^d$ is bounded by a negative power of $n$) is the
probability distribution of $\chi(G_n)$.
In a recent paper by the second author~\cite{twopoint} it was
shown that whenever $nr^d\ll\ln n$ then $\chi(G_n)$ is two-point
concentrated, in the sense that

\[ \Pee( \chi(G_n)\in\{k(n),k(n)+1\} ) \to 1, \]

\noindent
as $n\to\infty$ for some sequence $k(n)$.
Analogous results were also shown to hold for the clique number, the maximum degree and the degeneracy of $G_n$.  
For other choices of $r$ the distributions of these random variables are not known.
However, it is possible to extend an argument in Penrose~\cite{penroseboek} to show that
if $\nu$ is the uniform distribution on the hypercube and $\ln n \ll nr^2 \ll (\ln n)^d$ then there 
are $(a_n)_n$ and $(b_n)_n$ 
such that $(\Delta(G_n)-a_n)/b_n$ tends to a Gumbel distribution, and if $nr^d = \Theta(\ln n)$ then $\Delta(G_n)$ is not 
finitely concentrated, but the  distribution does not look like any of the standard probability 
distributions.
%%%  \m{ref?}
%However, a fairly full story on the distribution of the
%scan statistic and maximum degree was told in~\cite{scanstatistic} and~\cite{tobithesis}.
%Perhaps a good first step towards this goal would be to determine the class of functions that may actually occur as the maximum.

%%%%%%%%%%%%%%%%%%%%%%%%%%%%%%%%%%%%%%%%%%%%%%%%%%%%%%%%%%%%%%%%%%%%%%%%%%%
%%%%%%%%%%%%%%%%%%%%%%%%%%%%%%%%%%%%%%%%%%%%%%%%%%%%%%%%%%%%%%%%%%%%%%%%%%%

\section{Acknowledgments}
  The authors would like to thank Joel Spencer for helpful discussions and e-mail correspondence related to the paper.
  We would also like to thank Gregory McColm, Mathew Penrose, Alex Scott, Miklos Simonovits and Dominic Welsh for helpful
  discussions related to the paper, and Karolyi B\"or\"oczky Jr. both for helpful discussions and for allowing us to 
present Lemma~\ref{prop:karcsi}  here.
  Finally we would like to thank a most meticulous and helpful referee,
  whose extensive comments led to a much improved presentation of our results.

\bibliographystyle{plain}
%{\small
\bibliography{References}
%}

%%%%%%%%%%%%%%%%%%%%%%%%%%%%%%%%%%%%%%%%%%%%%%%%%%%%%%%%%%%%%%%%%%%%%%%%%%%%%%%%%%

\appendix

\section{Proof of Lemma~\ref{prop:karcsi} of B\"or\"oczky} \label{sec:karcsi}

  Lemma~\ref{prop:karcsi} above is due to K.~B\"or\"oczky Jr.~and with his kind permission
  we give a proof here,
%  In this appendix we give a proof of Lemma~\ref{prop:karcsi} above,
%  due to K.~B\"or\"oczky Jr.~and we give the proof here with his kind permission,
  as it is not readily available from other sources.
%  The lemma is a generalisation of a statement proved by the second author. \\
  \smallskip

\begin{pf}{of Lemma~\ref{prop:karcsi}}
  Let $I := \cap_{a\in A} (a+C)$. Then $I$ is compact and convex.
  We may suppose wlog~that $\vol(I)>0$ (as otherwise there is nothing to prove).
  Let us remark that
\[
  I + \clo(-A) \subseteq C.
\]
  This is because for any $x\in I$ and $a\in A$ there exists a
  $c\in C$ such that $x=c+a$, by definition of $I$.
  Thus, for any $x\in I, a\in A$ we have $x-a\in C$:
  in other words $I+(-A)\subseteq C$.
  This also gives that $I+\clo(-A)=\clo(I+(-A)) \subseteq C$
  as $C$ is closed.

  We now use the Brunn-Minkowski inequality (see chapter 12 of Matousek~\cite{matousekboek} for a very readable proof).
  This states that, if $A,B\subseteq\eR^d$ are nonempty and compact, then
  $\vol(A+B) \geq \left( \vol(A)^{\frac{1}{d}} + \vol(B)^{\frac{1}{d}} \right)^d$.
%
%\begin{theorem}[Brunn-Minkowski inequality]\label{thm:brunnminkowski}
%Let $A,B\subseteq\eR^d$ be nonempty and compact.
%Then $\vol(A+B) \geq \left( \vol(A)^{\frac{1}{d}} + \vol(B)^{\frac{1}{d}} \right)^d$.
%\end{theorem}
%The Brunn-Minkowski inequality gives that
  By this inequality
%\begin{equation}\label{eq:karcsibrunnminkeq}
\[
  \vol(I)^{\frac1d}+\vol(\clo(-A))^\frac1d \leq
  \vol(I+\clo(-A))^\frac1d \leq \vol(C)^\frac1d.
\]
%\end{equation}
  Thus
\begin{equation}\label{eq:karcsieq1}
  \vol(I) \leq \left(\vol(C)^\frac1d-\vol(A)^\frac1d\right)^d.
\end{equation}
  The lemma will now follow if we show that equality holds in~\eqref{eq:karcsieq1} when
  $A$ is of the form $A=\lambda(-C)$ for some $\lambda>0$.
  % then equality holds in~\eqref{eq:karcsieq1}.
  Let us then suppose that $A = \lambda(-C)$ for some $0\leq \lambda<1$
  (note that $\lambda\geq 1$ would contradict $\vol(I)>0$).
  
  We claim that $(1-\lambda)C \subseteq I$.
  Let $x\in(1-\lambda)C$, and let $a\in A = \lambda(-C)$ be arbitrary.
  We can write $x=(1-\lambda)c_1$ and  $a=-\lambda c_2$ for some $c_1,c_2\in C$.
  Because $C$ is convex, $c_3 := (1-\lambda)c_1+\lambda c_2\in C$ and
  thus $x = a+c_3 \in (a+C)$. As $a\in A$ was arbitrary this gives $x\in I$.
  Thus indeed $(1-\lambda)C \subseteq I$, as claimed.  But now
  $\vol(I) \geq (1-\lambda)^d\vol(C)$ and $\vol(A)=\lambda^d\vol(C)$);
  and so equality holds in~\eqref{eq:karcsieq1}, and we are done.
\end{pf}

%%%%%%%%%%%%%%%%%%%%%%%%%%%%%%%%%%%%%%%%%%%%%%%%%%%%%%%%%%%%%%%%%%%%%%%%%%%%%%%%%%%%%%%%%%%%%

\end{document}